\documentclass[paper=a4]{article}
\usepackage[english]{babel}
\usepackage{amsmath,amsfonts,amsthm, amssymb}
\usepackage[margin=1in]{geometry}
\usepackage{graphicx}
\usepackage{float}
\usepackage{graphicx}
\usepackage[]{xcolor}
\usepackage{bm}
\usepackage{tikz}
\usepackage{xifthen}
\usepackage{mathrsfs}
\usepackage{enumitem}
\usepackage{cite}
\usepackage{tikzsymbols}

\usetikzlibrary{arrows,decorations.pathmorphing,backgrounds,positioning,fit,petri, arrows.meta, matrix, calc, decorations.markings,shapes}
\tikzset{>=latex}

\DeclareMathOperator{\Int}{Int}
\DeclareMathOperator{\ext}{Ext}
\def\fv{\ensuremath\mathcal{F}}

\theoremstyle{plain}
\newtheorem{thm}{Theorem}[section]
\newtheorem{cor}[thm]{Corollary}
\newtheorem{lem}[thm]{Lemma}

\newtheorem{rem}[thm]{Remark}

\theoremstyle{definition}

\setlength\parindent{0pt} %
\setlength\parskip{8pt}
\setlist[itemize]{noitemsep}
\setlist[enumerate]{noitemsep}

\title{\Huge \textbf{The degree-diameter problem for plane graphs with pentagonal faces}}
\author{Brandon Du Preez\\
Laboratory for Discrete Mathematics and Theoretical Computer Science\\
Department of Mathematics and Applied Mathematics\\
University of Cape Town\\
\texttt{brandon.dupreez@uct.ac.za}}
\date{January 2024}

\begin{document}
\maketitle

\begin{abstract}
	\noindent The degree-diameter problem consists of finding the maximum number of vertices $n$ of a graph with diameter $d$ and maximum degree $\Delta$.
	This problem is well studied, and has been solved for plane graphs of low diameter in which every face is bounded by a 3-cycle (triangulations), and plane graphs in which every face is bounded by a 4-cycle (quadrangulations). 
	In this paper, we solve the degree diameter problem for plane graphs of diameter 3 in which every face is bounded by a 5-cycle (pentagulations).
	We prove that if $\Delta \geq 8$, then $n \leq 3\Delta - 1$ for such graphs.
	This bound is sharp for $\Delta$ odd.
\end{abstract}

\section{Introduction}

The well known \textbf{degree-diameter problem} asks for the maximum order $n(\Delta, d)$ of a graph with maximum degree $\Delta$ and diameter $d$. 
By considering a $\Delta$-regular breadth-first tree, we easily obtain a trivial upper bound on $n(\Delta, d)$ known as the \textbf{Moore Bound}. 
The graphs attaining this bound for $\Delta > 2$ and $d > 1$ are called \textbf{Moore Graphs}, and there are only finitely many of them: the Peterson graph, the Hoffman-Singleton graph and --- conjecturally --- some `missing' Moore graph(s) of diameter 2 and maximum degree 57 \cite{mgd23_hoffman_60, fmg_bannai_73, mg_damerell_73}.
These Moore graphs are not planar, and the upper bounds attained on $n(\Delta, d)$ for planar graphs are substantially smaller than the Moore bound.

In \cite{lpg2_hell_93}, Hell and Seyffarth solve exactly the degree-diameter problem for planar graphs of diameter 2, showing that $n(\Delta, 2) = \frac{3}{2}\Delta + 1$ for such graphs. 
Further results for planar graphs are obtained in \cite{lpgmd_fellows_95} by Fellows, Hell and Seyffarth. 
They give bounds on $n(\Delta, 3)$ and show that for each fixed diameter $d$, there exists some constant $c$ such that $n(\Delta, d) \leq c\Delta^{\lfloor d/2 \rfloor}$.
For planar graphs with even diameter and large maximum degree, the degree-diameter problem was solved exactly by Tishchenko in \cite{mspg_tischenko_12}.

Further refining the problem, we consider plane graphs in which every face is bounded by a circuit or cycle of the same length $\rho$.
When $\rho = 3$, we obtain the well-studied maximal planar graphs / triangulations.
Seyffarth proved in \cite{mpgd2_seyffarth_89} that a triangulation of diameter 2 and $\Delta \geq 8$ has at most $\frac{3}{2}\Delta + 1$ vertices, and this bound is sharp. 
Interestingly, the bound is the same as the bound for general planar graphs obtained in \cite{lpg2_hell_93} --- and this fact is critical to the proof in \cite{lpg2_hell_93}.
Plane graphs with $\rho = 4$ are maximal planar bipartite graphs --- or quadrangulations.
For quadrangulations, Dalf\'{o}, Huemer and Salas prove the sharp bounds $n(\Delta, 2) = \Delta + 2$, $n(\Delta, 3) = 3\Delta - 1$ when $\Delta$ is odd and $n(\Delta, 3) = 3\Delta - 2$ when $\Delta$ is even \cite{ddpmpbg_dalfo_16}.
They also give approximate bounds on $n(\Delta, d)$ for quadrangulations with $d>3$ and $\Delta$ large.
The author considered plane graphs in which $\rho$ was (almost) as large as possible for fixed diameter $d$, obtaining the following sharp bounds: $n(\Delta,d) = 2d + 1$ when $\rho = 2d+1$ and $n(\Delta, d) = \Delta(d-1) + 2$ when $\rho = 2d$ \cite{pglf_dupreez_21}. 
The extremal graphs were also characterized.

The degree-diameter problem has been studied for graphs and triangulations on other surfaces -- see \cite{nmbg_siagiova_04, egdt_knor_97} --- as well as for highly structured graphs such as triangular and honeycomb networks \cite{ddphn_holub_14, ddptn_holub_15}.
In recent work, the problem was tackled for outerplanar graphs \cite{ddpop_dankelmann_17}, and a generalization of the degree-diameter problem is the subject of the 2022 paper \cite{nocmb_tuite_22}.
For a comprehensive overview of the degree-diameter problem, see Miller and \v{S}ir\'{a}\v{n}'s survey \cite{mgbsurvey_miller_13}. 
For the early version of this work, and related research, see \cite{dpg_dupreez_21}.

We call a plane graph in which every face is bounded by a cycle of length 5 a \textbf{pentagulation}. 
In this paper, we prove that $n(\Delta, 3) = 3\Delta - 1$ for pentagulations with $\Delta \geq 8$.
The paper begins with definitions and basic lemmas in Section \ref{sec:prelim}.
In Section \ref{sec:no_3_cycles}, we prove that a diameter 3 pentagulation is triangle-free.
The structure of 4-cycles and separating 5-cycles is explored in Section \ref{sec:sep_cycles}.
Section \ref{sec:disloc_4_cycle} introduces the notion of dislocated 4-cycles --- a concept central to the proof of the main theorem.
The proof that $n(\Delta, 3) \leq 3\Delta - 1$ for pentagulations is very involved, so we split it into three sections.
Section \ref{sec:bounding_1} considers pentagulations with a pair of dislocated 4-cycles, Section \ref{sec:bounding_2} proves the bound for pentagulations with a 4-cycle, but no dislocated pair, and Section \ref{sec:bounding_3} proves that a diameter 3 pentagulation with $\Delta \geq 8$ contains at least one 4-cycle, and gives examples to show the bound is sharp for $\Delta$ odd.
We conclude and give some further questions in Section \ref{sec:conclusion}.

\section{Preliminaries}
\label{sec:prelim}

For most definitions used, see \cite{gt_diestel_05}.
The distance between vertices $u$ and $v$ is denoted $\bm{d(u,v)}$, and for a set of vertices or subgraph $S$, let $\bm{d(u,S)} = \min\{d(u,w) : w\in S \}$.
Let $\bm{N_i(v)}$ be the set of vertices at distance $i$ from $v$.
A cycle $C$ in a plane graph $G$ partitions the plane into an \textbf{interior} bounded region denoted $\bm{\Int(C)}$, an \textbf{exterior} unbounded region $\bm{\ext(C)}$, and the cycle $C$ itself.
Denote $\bm{\Int[C]} = \Int(C) \cup C$, and $\bm{\ext[C]} = \ext(C)\cup C$.
If both $\Int(C)$ and $\ext(C)$ contain vertices, then $C$ is a \textbf{Jordan separating cycle}.
Consider a subgraph $H$ of a graph $G$. 
A \textbf{chord} of $H$ in $G$ is an edge $uv$ such that $u,v \in V(G)$ and $uv \in E(G) - E(H)$.
The \textbf{girth} of a graph is the length of its shortest cycle.

It is well known that a plane graph is 2-connected if and only if each face is bounded by a cycle, so all pentagulations are 2-connected.
The following remark is easy to prove, and follows immediately from some simple lemmas in \cite{pglf_dupreez_21}.

\begin{rem}
	Let $G$ be a pentagulation of diameter 3, and $C$ a cycle of $G$. 
	If $C$ is a Jordan separating cycle, then $C$ dominates its interior, or dominates its exterior.
	Further, if $C$ has length 3 or 4, then it is a Jordan separating cycle. 
	\label{rem:cycle_sep}
\end{rem}

\begin{lem}
	Every cycle of length 6 or 7 in a pentagulation is a Jordan separating cycle.
	\label{lem_slightly_long_cycle}
\end{lem}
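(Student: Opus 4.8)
The plan is to argue by contradiction using a short edge--face counting argument. Suppose $C$ is a cycle of length $k \in \{6,7\}$ that is \emph{not} a Jordan separating cycle. Then one of the two regions it bounds contains no vertices of $G$; passing to the sphere $S^2$ makes $\Int(C)$ and $\ext(C)$ interchangeable, so without loss of generality I assume $\Int(C)$ contains no vertices.

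Since $\Int(C)$ has no vertices, every edge of $G$ lying in $\Int[C]$ is either an edge of $C$ or a chord of $C$ joining two of its vertices. Let $c$ be the number of such chords. By planarity these chords are pairwise non-crossing, so they subdivide the disk $\Int[C]$ into exactly $c+1$ regions, and each such region is a face of $G$. Because $G$ is a pentagulation (hence $2$-connected), every face is bounded by a cycle of length $5$; in particular each of these $c+1$ interior faces has exactly $5$ edges on its boundary.

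Next I would count incidences between edges and interior faces. Each of the $k$ edges of $C$ borders exactly one interior face, while each of the $c$ chords borders exactly two. Summing the boundary length over the $c+1$ interior faces gives
\[ 5(c+1) = k + 2c, \]
which simplifies to $3c = k-5$. For $k=6$ this forces $c = 1/3$, and for $k=7$ it forces $c = 2/3$, neither of which is a nonnegative integer. This contradiction shows that both $\Int(C)$ and $\ext(C)$ contain vertices, so $C$ is a Jordan separating cycle.

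The argument is brief, and I expect the only genuine care to be needed in the second step: justifying that the chords partition $\Int[C]$ into precisely $c+1$ faces of $G$, each a \emph{bona fide} $5$-cycle. This relies on there being no vertices strictly inside $C$ (so that the regions of the chord-subdivision really are the faces of $G$) together with $2$-connectivity, which guarantees that each face boundary is a cycle rather than merely a closed walk. One should also record that the same bookkeeping disposes of the exterior case once the configuration is viewed on $S^2$, since the outer face of a pentagulation is itself bounded by a $5$-cycle.
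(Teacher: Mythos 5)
Your proof is correct, but it takes a genuinely different route from the paper's. The paper argues in two lines: since no face of a pentagulation is bounded by a 6- or 7-cycle, a vertex-free region of $C$ would force a chord in that region, and a chord of a cycle of length at most 7 produces a cycle of length 3 or 4, which Remark \ref{rem:cycle_sep} already guarantees is Jordan separating --- so its interior, hence $\Int(C)$, contains a vertex after all. You instead run a self-contained incidence count: a vertex-free region of a $k$-cycle must be tiled by pentagonal faces whose edges are the $k$ boundary edges plus $c$ pairwise non-crossing chords, and the identity $5(c+1)=k+2c$ forces $k\equiv 2\pmod 3$, ruling out $k=6,7$. Your bookkeeping is sound: with no interior vertices every edge in $\Int[C]$ other than those of $C$ is a chord, the $c$ chords cut the disk into exactly $c+1$ faces of $G$, 2-connectivity makes each face boundary a genuine 5-cycle so every edge of $C$ meets exactly one interior face and every chord exactly two, and the sphere trick together with the fact that the outer face of a pentagulation is also a pentagon disposes of the exterior case. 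As for what each approach buys: the paper's is shorter and recycles an already-established fact about short cycles, while yours needs no prior lemma and proves slightly more --- the only cycle lengths that can bound a vertex-free region in a pentagulation are those with $k\equiv 2\pmod 3$, which also explains why the lemma stops at 7 (an 8-cycle can bound a vertex-free region cut by one chord into two pentagons).
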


\begin{proof}
	Let $C$ be a cycle of length 6 or 7 in a pentagulation $G$. 
	The cycle $C$ does not bound any face of $G$, so its interior either contains a vertex, or has some chord $e$. 
	Since the length of $C$ is at most 7, $C\cup \{e\}$ induces a cycle of length 3 or 4.
	Applying Remark \ref{rem:cycle_sep}, we see that $\Int(C)$ contains some vertex.
	Similarly, $\ext(C)$ contains a vertex.
\end{proof}

For a cycle $C$ of length 5 in a pentagulation $G$, there are three distinct possibilities:
\begin{enumerate}[topsep=-\parskip]
	\item The cycle $C$ Jordan separates $G$,
	\item $C$ is a face-cycle that separates $G$, but necessarily does not Jordan separate $G$,
	\item $C$ is a face-cycle that does not separate $G$.
\end{enumerate}

\section{There are no 3-cycles}
\label{sec:no_3_cycles}

The following lemmas show that no 3-cycle in a pentagulation dominates its interior (or exterior).
We phrase our lemmas in terms of cycle interiors, but the same results are easily seen to hold for exteriors.

\begin{lem}
	Let $G$ be a pentagulation.
	If $C$ is a 3-cycle in $G$, then no single vertex of $C$ dominates the interior of $C$.
	\label{lem_5_3_1}
\end{lem}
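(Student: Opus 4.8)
The plan is to argue by contradiction using Euler's formula on the pentagulated disk bounded by $C$. Write $C = abc$ and suppose, for contradiction, that the vertex $a$ dominates $\Int(C)$, so that every vertex of $\Int(C)$ is adjacent to $a$. First I would check that $\Int(C) \neq \emptyset$, mirroring the argument of Lemma \ref{lem_slightly_long_cycle}: since every face is a pentagon, $C$ does not bound a face, so $\Int(C)$ must contain a vertex or $C$ must have a chord; but a $3$-cycle admits no chord, as its three vertices are already pairwise adjacent. Hence $\Int(C)$ contains a vertex, and I set $t = |\Int(C)| \geq 1$.

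Next I would pass to the plane graph $H = G[\Int[C]]$, whose outer face is bounded by the triangle $C$ and whose inner faces are precisely the faces of $G$ lying inside $C$, all of which are pentagons. Then $|V(H)| = t + 3$, and if $f_i$ denotes the number of inner faces, double counting edge--face incidences gives $2|E(H)| = 3 + 5f_i$, the outer triangle contributing $3$ and each inner pentagon contributing $5$. Feeding this into Euler's formula $|V(H)| - |E(H)| + (f_i + 1) = 2$ and eliminating $|E(H)|$ yields $f_i = (2t+1)/3$.

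Finally I would exploit the domination hypothesis. Since $a$ is adjacent to all $t$ vertices of $\Int(C)$ together with $b$ and $c$, we have $\deg_H(a) = t + 2$. Because $a$ lies on the outer triangle, the outer face occupies exactly one of the sectors around $a$ (the one between the edges $ab$ and $ac$), so $a$ is incident to exactly $t + 1$ distinct inner faces, each a pentagon. But a vertex cannot be incident to more inner faces than the graph possesses in total, so $t + 1 \leq f_i = (2t+1)/3$, i.e. $3t + 3 \leq 2t + 1$, which is absurd for $t \geq 1$. This contradiction proves that no vertex of $C$ dominates $\Int(C)$; the same argument applies verbatim to $b$, to $c$, and to the corresponding statement for exteriors.

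The only real obstacle is the face bookkeeping, so I would take care over it: I must be sure that the inner faces of $H$ coincide exactly with the pentagonal faces of $G$ inside $C$, and that the outer face of $H$ meets $a$ in a single corner, so that the count $\deg_H(a) - 1 = t + 1$ of inner faces at $a$ is exact rather than merely a lower bound. Both points are routine here: $G$ is $2$-connected and the triangle $C$ has no chords, so $H$ is exactly the restriction of $G$ to the closed disk $\Int[C]$ and its boundary is the cycle $C$, forcing $ab$ and $ac$ to be consecutive in the rotation at $a$.
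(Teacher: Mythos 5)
Your proof is correct, but it takes a genuinely different route from the paper. The paper argues locally: it picks a minimal dominated triangle, takes a neighbour $u$ of the dominating vertex inside it, and shows that any second neighbour of $u$ in the closed interior forces a strictly smaller dominated triangle, contradicting minimality. You instead argue globally by face-counting. Your bookkeeping holds up: since $C$ is a triangle in a simple graph it has no chords, so the induced subgraph $H$ on $\Int[C]$ really is the restriction of the drawing to the closed disk, its inner faces are exactly the pentagonal faces of $G$ inside $C$, and every face of $H$ (inner pentagons plus the outer triangle) is bounded by a cycle, so $H$ is 2-connected, each edge lies on exactly two faces, and each face meets a given vertex in exactly one corner. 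That justifies both $2|E(H)| = 3 + 5f_i$ and the count of $\deg_H(a)-1 = t+1$ distinct inner faces at $a$, and the inequality $t+1 \leq (2t+1)/3$ is indeed absurd for $t \geq 1$. What your approach buys is a short, self-contained argument with no minimality induction; what the paper's approach buys is a template of local adjacency analysis that it reuses almost verbatim for the companion statements (domination by two vertices of a 3-cycle, by one or two vertices of a 4-cycle, etc.), where the domination hypothesis no longer pins down $\deg_H$ of a single vertex and a clean Euler count is harder to extract.
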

\begin{proof}
	For the sake of contradiction, let $C:v_1, v_2, v_3$ be a 3-cycle, the interior of which is dominated by the single vertex $v_1$.
	Choose $C$ to be minimal, so there is no 3-cycle $C'$ such that $v_1$ dominates the interior of $C'$, and for which $\Int(C')\subset \Int(C)$. 
	By Remark \ref{rem:cycle_sep}, the cycle $C$ Jordan separates $G$, so there is some vertex $u \in \Int(C)$. 
	By assumption, $u$ and $v_1$ are adjacent. 
	As $G$ is a pentagulation, and thus 2-connected, the vertex $u$ has some neighbor other than $v_1$ in $\Int[C]$.
	This neighbor is not $v_2$, as then $v_1, v_2, u$ is a 3-cycle, contradicting the minimality of $C$. 
	Similarly, $u$ and $v_3$ are not adjacent. (see Figure \ref{fig:5_proof1}). 
	
	\begin{figure}[ht]
\centering
\begin{tikzpicture}
[scale = 0.8, inner sep=0.8mm, 
vertex/.style={circle,thick,draw},
dvertex/.style={rectangle,thick,draw, inner sep=1.3mm}, 
thickedge/.style={line width=1.5pt}] 

\fill[color=black!15] (0,0)--(-2.2,3)--(0, 2)--(0,0);

\node[vertex, thickedge, fill=black!50] (1) at (0,0) [label=below:$v_1$] {};
\node[vertex, thickedge, fill=white] (2) at (-2.2, 3) [label=above:$v_2$] {};
\node[vertex, thickedge, fill=white] (3) at (2.2, 3) [label=above:$v_3$] {};

\node[vertex, fill=white] (u) at (0, 2) [label=above:$u$] {};

\draw (1)--(u);

\draw[thickedge] (1)--(2)--(3)--(1);

\draw[dashed, color=black] (u)--(2);

\node at (0, -1) {(1)};

\begin{scope}[shift={(6,0)}]
\fill[color=black!15] (0,0)--(2.2,3)--(0, 2)--(0,0);

\node[vertex, thickedge, fill=black!50] (1) at (0,0) [label=below:$v_1$] {};
\node[vertex, thickedge, fill=white] (2) at (-2.2, 3) [label=above:$v_2$] {};
\node[vertex, thickedge, fill=white] (3) at (2.2, 3) [label=above:$v_3$] {};

\node[vertex, fill=white] (u) at (0, 2) [label=above:$u$] {};

\draw (1)--(u);

\draw[thickedge] (1)--(2)--(3)--(1);

\draw[dashed, color=black] (u)--(3);

\node at (0, -1) {(2)};
\end{scope}

\begin{scope}[shift={(12,0)}]
\fill[color=black!15] (0,0)--(-0.7, 2.3)--(0.7, 2.3)--(0,0);

\node[vertex, thickedge, fill=black!50] (1) at (0,0) [label=below:$v_1$] {};
\node[vertex, thickedge, fill=white] (2) at (-2.2, 3) [label=above:$v_2$] {};
\node[vertex, thickedge, fill=white] (3) at (2.2, 3) [label=above:$v_3$] {};

\node[vertex, fill=white] (u) at (-0.7, 2.3) [label=above:$u$] {};
\node[vertex, fill=white] (w) at (0.7, 2.3) [label=above:$w$] {};

\draw (1)--(u);
\draw (1)--(w);

\draw[thickedge] (1)--(2)--(3)--(1);

\draw[dashed, color=black] (u)--(w);

\node at (0, -1) {(3)};
\end{scope}

\end{tikzpicture}
\caption{Some steps in the proof of Lemma \ref{lem_5_3_1}.}
		\label{fig:5_proof1}
	\end{figure}
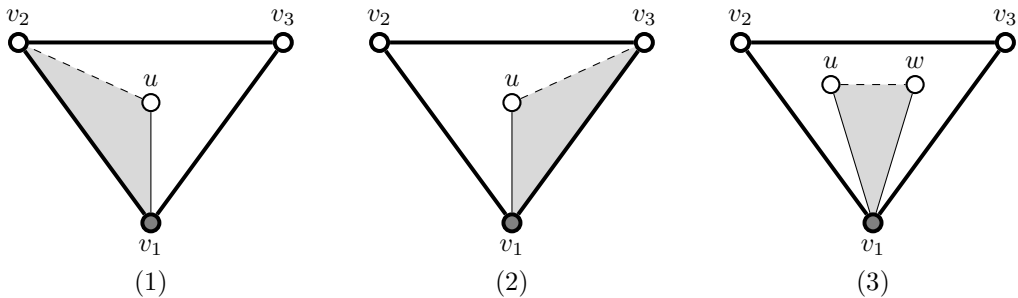
	
	Thus there is some other vertex $w$ in $\Int(C)$ that is adjacent to $u$.
	Since $v_1$ dominates $\Int(C)$, the vertices $v_1$, $u$ and $w$ form a 3-cycle, contradicting the minimality of $C$.
\end{proof}

\begin{lem}
	Let $G$ be a pentagulation, and let $C$ be a 3-cycle in $G$. 
	The interior of $C$ is not dominated by any two vertices of $C$.
	\label{lem_5_3_2}
\end{lem}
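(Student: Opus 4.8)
The plan is to argue by contradiction and reduce everything to the single-vertex case already settled in Lemma \ref{lem_5_3_1}. Suppose two vertices of $C: v_1, v_2, v_3$ dominate $\Int(C)$; after relabelling, assume every vertex of $\Int(C)$ is adjacent to $v_1$ or to $v_2$. By Lemma \ref{lem_5_3_1} neither $v_1$ nor $v_2$ dominates $\Int(C)$ by itself. The engine of the argument is the following reduction: if I can produce a $3$-cycle $T$ with $V(T) \subseteq \Int[C]$ that contains exactly one of $v_1, v_2$ — say $v_1 \in V(T)$ — and has $v_2 \in \ext(T)$, then I reach a contradiction. Indeed, $T$ is a triangle in a pentagulation, so it bounds no face; by Remark \ref{rem:cycle_sep} it is a Jordan separating cycle and $\Int(T)$ contains a vertex. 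Every vertex of $\Int(T)$ lies in $\Int(C)$ and is therefore adjacent to $v_1$ or $v_2$, but none can be adjacent to $v_2$: such an edge would join a point of $\Int(T)$ to $v_2 \in \ext(T)$ and hence cross the Jordan curve $T$, which is impossible in a plane graph. Thus $v_1$ dominates $\Int(T)$, contradicting Lemma \ref{lem_5_3_1}. It therefore suffices to locate one such $T$.

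Suppose first that $v_3$ has a neighbour $u \in \Int(C)$. Since $u$ is dominated, $u \sim v_1$ or $u \sim v_2$; say $u \sim v_1$. Then $T = v_1 v_3 u$ is a $3$-cycle sharing the edge $v_1 v_3$ with $C$, and the path $v_1 u v_3$ cuts the closed disk $\Int[C]$ into the triangle $T$ and a second region whose boundary contains the edges $v_1 v_2$ and $v_2 v_3$. Hence $v_2$ lies in that second region, that is, $v_2 \in \ext(T)$, and the reduction above applies immediately.

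It remains to treat the case where $v_3$ has no neighbour in $\Int(C)$. Here I would sweep the pentagonal faces meeting $v_1$ inside $C$. Writing the neighbours of $v_1$ in $\Int[C]$ in cyclic order as $v_3, n_1, \dots, n_t, v_2$, each consecutive pair bounds a pentagonal face. Consider a face $F$ lying between two \emph{interior} neighbours, $F = v_1 n_j r s n_{j+1}$ with $1 \le j \le t-1$; its ``far'' edge $rs$ has endpoints in $\Int(C)$ distinct from $v_2$ and $v_3$ (an edge from $v_1$ to $v_2$ or $v_3$ would be a chord of the face $F$). If neither $r$ nor $s$ is adjacent to $v_1$, then both are adjacent to $v_2$, so $T = v_2 r s$ is a triangle with $v_1 \in \ext(T)$, since the whole of $F$ lies on the side of $rs$ opposite to $v_2$; the reduction then finishes the argument with the roles of $v_1$ and $v_2$ exchanged. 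If instead, say, $r \sim v_1$, then $v_1 n_j r$ is itself a triangle containing $v_1$ but not $v_2$, and the reduction applies again once one confirms $v_2 \in \ext(v_1 n_j r)$.

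The only configurations left uncovered are those in which $v_1$ — and, by the symmetric argument run at $v_2$, also $v_2$ — has at most one neighbour inside $C$, so that no intermediate face $F$ exists. When both have exactly one such neighbour, the pentagonal faces forced along the edges $v_1 v_3$, $v_2 v_3$ and $v_1 v_2$ pin down $\Int(C)$ up to a single additional vertex, and that vertex turns out to be adjacent to neither $v_1$ nor $v_2$, contradicting domination outright; the cases with a zero count degenerate in the same way. I expect the Jordan-curve bookkeeping — verifying in each instance that the chosen separating vertex genuinely lies in $\ext(T)$, and ruling out stray chords that place an ``opposite'' vertex back in the neighbourhood of $v_1$ — to be the only delicate part. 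The central idea, namely manufacturing a singly-dominated triangle and invoking Lemma \ref{lem_5_3_1}, is robust and drives every case.
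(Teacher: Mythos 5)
Your core reduction --- manufacture a triangle $T$ with $V(T)\subseteq \Int[C]$ containing exactly one of $v_1,v_2$ and having the other in $\ext(T)$, note that $\Int(T)\neq\emptyset$ and $\Int(T)\subseteq\Int(C)$, and conclude that the single vertex of $\{v_1,v_2\}$ on $T$ dominates $\Int(T)$, contradicting Lemma \ref{lem_5_3_1} --- is sound, and it is exactly the engine of the paper's proof; your first case ($v_3$ has a neighbour in $\Int(C)$) reproduces the paper's opening claim. The gap is in the sweep around $v_1$. For a face $F = v_1, n_j, r, s, n_{j+1}$ between two interior neighbours you assert that $r$ and $s$ are interior vertices distinct from $v_2$, justified by the remark that an edge from $v_1$ to $v_2$ ``would be a chord of the face $F$.'' That is not a contradiction: a face's boundary cycle may well have a chord, drawn outside the face, and the configuration $r=v_2$ is perfectly consistent --- it simply means that the interior neighbour $n_j$ of $v_1$ is also adjacent to $v_2$, with the edge $v_1v_2$ lying in the exterior of $F$. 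In that configuration both of your sub-cases fail: since $r=v_2$ is adjacent to $v_1$, you land in the ``$r\sim v_1$'' branch, but the resulting triangle $v_1 n_j v_2$ contains \emph{both} dominating vertices, so the reduction does not apply; what you actually obtain is a smaller triangle whose interior is again dominated by two of its vertices, i.e.\ a smaller instance of the statement being proved. Without a minimality assumption on $C$ (which you never set up) the argument does not terminate here. The other admitted hole --- the case where $v_1$ and $v_2$ each have exactly one interior neighbour --- does in fact close (the pentagonal face at the edge $v_1v_2$ then needs a fifth boundary vertex which cannot be dominated), but you only gesture at it.

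The paper sidesteps both issues by a different choice of face: after establishing that no vertex of $\Int(C)$ is adjacent to $v_3$ (your Case A), it takes the face incident with the edge $v_1v_2$ inside $C$. Its boundary is then forced to be $u, v_1, v_2, w, x$ with $u,w,x\in\Int(C)$ (the $v_3$-claim excludes $v_3$ from appearing), and the fifth vertex $x$, being dominated, immediately yields a triangle $v_1,u,x$ or $v_2,w,x$ whose interior is dominated by a single vertex. Since $v_1$ and $v_2$ occur consecutively on that face, the problematic ``far vertex equal to $v_2$'' situation cannot arise. To repair your write-up, either switch to that face or add a minimality hypothesis on $\Int(C)$ so that the triangle $v_1 n_j v_2$ contradicts minimality.
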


\begin{proof}
	Let $C= v_1, v_2, v_3$ be a 3-cycle. 
	Assume to the contrary and without loss of generality that every vertex in $\Int(C)$ is dominated by $\{v_1,v_2\}$. 
	We claim that no vertex in $\Int(C)$ is adjacent to $v_3$.
	Assume to the contrary there is a vertex $v$ adjacent to $v_3$.
	Without loss of generality, $v$ is adjacent to $v_1$ as well, since $\{v_1,v_2\}$ dominates $\Int(C)$.
	Thus the triangle $v_1, v, v_3$ is dominated by $v_1$, contradicting Lemma \ref{lem_5_3_1} and proving the claim.
	
	The edge $v_1v_2$ lies on the boundary of two faces, one of which is in the interior of $C$.
	Call this interior face $f$, and note that the boundary of $f$ is a 5-cycle.
	Per Lemma \ref{lem_5_3_1}, the interior of $C$ is not dominated by a single vertex, so both $v_1$ and $v_2$ have some neighbor in $\Int(C)$. 
	Thus the cycle bounding $f$ is of the form $u, v_1, v_2, w, x$, where $u$, $w$ and $x$ are vertices in the interior of $C$. 
	As $\{v_1, v_2\}$ dominates $\Int(C)$, the vertex $x$ is adjacent to either $v_1$ or $v_2$. 
	If $x$ is adjacent to $v_1$, then $u,x,v_1$ is a triangle whose interior is dominated by $v_1$, and similarly if $x$ is adjacent to $v_2$ then $w,x,v_2$ is a triangle whose interior is dominated by $v_2$.
	Both possibilities contradict Lemma \ref{lem_5_3_1}, completing the proof.
\end{proof}

\begin{lem}
	Let $G$ be a pentagulation and $C$ be a 4-cycle in $G$.
	Then no vertex of $C$ dominates $\Int(C)$.
	\label{lem_5_4_1}
\end{lem}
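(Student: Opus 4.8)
The plan is to argue by contradiction: suppose $C = v_1v_2v_3v_4$ and that the single vertex $v_1$ dominates $\Int(C)$. By Remark~\ref{rem:cycle_sep} the $4$-cycle $C$ Jordan separates $G$, so $\Int(C)$ is nonempty and contains a vertex adjacent to $v_1$. The whole strategy is to manufacture a $3$-cycle whose bounded region lies inside $\Int(C)$; such a $3$-cycle would then be dominated by $v_1$, contradicting Lemma~\ref{lem_5_3_1}. The engine driving this is a simple observation: if any interior vertex $z$ is adjacent to $v_2$, then since $z$ is adjacent to $v_1$ (by domination) and $v_1v_2 \in E(G)$, the triangle $v_1v_2z$ has its bounded region contained in $\Int(C)$ and hence dominated by $v_1$ --- a contradiction. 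The same holds with $v_4$ in place of $v_2$, and likewise a chord $v_2v_4$ would yield the dominated triangle $v_1v_2v_4$. So I would first record that no interior vertex is adjacent to $v_2$ or $v_4$, and that $v_2v_4 \notin E(G)$.

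Next I would exploit the pentagonal face structure. The edge $v_1v_2$ lies on exactly one face $f$ inside $C$, and $f$ is bounded by a $5$-cycle $v_1, v_2, a, b, c$, where $a$ is the neighbor of $v_2$ and $c$ the neighbor of $v_1$ along $f$. The vertex $a$ cannot be interior by the observation above, so $a \in \{v_3, v_4\}$; and $a = v_4$ is ruled out since it would make $v_2v_4$ an edge, so $a = v_3$. It then remains to trace the last two vertices $b$ and $c$: in each case, either one of them is an interior vertex --- which together with $v_1$ and a suitable neighbor ($v_4$, or the other interior vertex via domination) closes up a $3$-cycle inside $\Int(C)$ --- or it is forced onto $C$ as $v_4$ and becomes adjacent to an interior vertex, again contradicting the observation. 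Every branch terminates in a $3$-cycle dominated by $v_1$, contradicting Lemma~\ref{lem_5_3_1}.

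I expect the main obstacle to be the planar bookkeeping rather than any deep idea. Two points require care. First, a vertex adjacent to $v_1$ does not automatically produce a chord of the face $f$ (the edge to $v_1$ may route elsewhere inside $C$), so I would avoid arguing via face-chords and instead exhibit the required triangle explicitly from edges already known to exist. Second, for Lemma~\ref{lem_5_3_1} to apply I must confirm at each step both that the manufactured $3$-cycle has its bounded region inside $\Int(C)$ (so domination by $v_1$ is inherited) and that this region is nonempty; the latter is free, since Remark~\ref{rem:cycle_sep} guarantees every $3$-cycle Jordan separates. The only genuinely fiddly part is the exhaustive case split over the positions of $b$ and $c$ together with the chord case $v_2v_4$, checking that no configuration escapes without producing a dominated triangle.
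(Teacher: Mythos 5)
Your proof is correct, but it takes a genuinely different route from the paper's. The paper argues by choosing $C$ to be a \emph{minimal} $4$-cycle dominated by $v_1$, picking the interior neighbour $u$ of $v_1$ on the face inside $C$ containing $v_1v_2$, and splitting on a second neighbour $w$ of $u$: the troublesome case $w=v_3$ is killed by minimality (the $4$-cycle $v_1,u,v_3,v_2$ is a smaller dominated $4$-cycle), and the remaining cases produce a triangle dominated by $v_1$, contradicting Lemma \ref{lem_5_3_1}. You dispense with minimality entirely: your preliminary observation (no interior vertex is adjacent to $v_2$ or $v_4$, and $v_2v_4$ is not a chord of $\Int(C)$) plus a full trace of the pentagonal face $v_1,v_2,a,b,c$ inside $C$ forces $a=v_3$, after which at least one of $b,c$ is interior; if exactly one is, it is adjacent to $v_4$ (contradiction with the observation), and if both are, the domination edge $v_1b$ together with the face edges $bc$ and $cv_1$ closes a triangle inside $\Int(C)$ dominated by $v_1$. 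So every branch terminates in Lemma \ref{lem_5_3_1} alone, and the case ``interior neighbour adjacent to $v_3$'' never needs special treatment. Your argument is in fact closer in spirit to the paper's proofs of Lemmas \ref{lem_5_3_2} and \ref{lem_5_4_2consecutive}, which likewise trace the $5$-face on $v_1v_2$; what it buys is the elimination of the minimal-counterexample device, at the cost of a slightly longer case split over the positions of $b$ and $c$. The planar bookkeeping you flag is handled correctly: every triangle you manufacture has all its edges in $\Int[C]$ and meets $C$ only in $v_1$ (or in the edge $v_1v_2$), so its bounded region lies in $\Int(C)$ and inherits domination by $v_1$, and nonemptiness of that region is supplied by Remark \ref{rem:cycle_sep} exactly as you say.
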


\begin{proof}
	Let $C = v_1, v_2, v_3, v_4$ be a 4-cycle. 
	Assume for the sake of contradiction that $v_1$ dominates $\Int(C)$, and choose $C$ to be minimal, i.e., there is no 4-cycle $C'$ dominated by $v_1$ such that $\Int(C') \subset \Int(C)$.
	Per Remark \ref{rem:cycle_sep}, $\Int(C) \neq \emptyset$.
	Let $u$ be the neighbor of $v_1$ in the interior of $C$ such that $uv_1$ and $v_1v_2$ both lie on the boundary of some common face. 
	Since $u$ is not an end-vertex, it is adjacent to some vertex $w$ in $\Int[C] - v_1$. 
	Up to symmetry, there are three possible choices for the vertex $w$.
	
	\textit{Case 1:} $w = v_2$ or $w = v_4$.\\
	If $w=v_2$, we obtain a 3-cycle $v_1, u, v_2$, the interior of which is dominated by $v_1$, contradicting Lemma \ref{lem_5_3_1}. 
	The situation is similar if $u$ is adjacent to $v_4$. 
	
	\textit{Case 2:} $w = v_3$. \\
	The interior of the 4-cycle $v_1, u, v_3, v_2$ is dominated by $v_1$, contradicting minimality of $C$.
	
	\textit{Case 3:} $w$ is a vertex in $\Int(C)$.\\
	By assumption, the vertex $v_1$ dominates $\Int(C)$, so $v_1$ and $w$ are adjacent. 
	Thus $v_1, u, w$ is a 3-cycle whose interior is dominated by $v_1$, contradicting Lemma \ref{lem_5_3_1}.
\end{proof}

\begin{lem}
	Let $C$ be a 4-cycle in a pentagulation. 
	No pair of vertices of $C$, that are adjacent in $C$, dominate $\Int(C)$.
	\label{lem_5_4_2consecutive}
\end{lem}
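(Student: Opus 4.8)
The plan is to mirror the proof of Lemma \ref{lem_5_3_2}, adapting it from triangles to $4$-cycles. Write $C = v_1, v_2, v_3, v_4$ and suppose for contradiction that the adjacent pair $\{v_1, v_2\}$ dominates $\Int(C)$; among all such $4$-cycles choose $C$ with $\Int(C)$ minimal. By Remark \ref{rem:cycle_sep} the interior is nonempty, and by Lemma \ref{lem_5_4_1} neither $v_1$ nor $v_2$ dominates $\Int(C)$ on its own, so each has a neighbour inside $C$.

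The engine of the argument is the following claim: \emph{no vertex of $\Int(C)$ is adjacent to $v_3$ or to $v_4$.} To prove it for $v_3$, suppose $v \in \Int(C)$ is adjacent to $v_3$. Since $\{v_1,v_2\}$ dominates, $v$ is adjacent to $v_1$ or to $v_2$. If $v$ is adjacent to $v_2$, then $v_2, v, v_3$ is a $3$-cycle using the edge $v_2 v_3$ of $C$; this triangle lies in $\Int[C]$ and separates its interior from $v_1$, so every vertex in its interior is dominated by $v_2$ alone, contradicting Lemma \ref{lem_5_3_1}. If instead $v$ is adjacent to $v_1$, then $v_1, v, v_3, v_2$ is a $4$-cycle on which $v_1 v_2$ is an edge; its interior is a proper subset of $\Int(C)$ (it excludes the side of the path $v_1, v, v_3$ containing $v_4$) and it is again dominated by the adjacent pair $\{v_1, v_2\}$, contradicting the minimality of $C$. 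The statement for $v_4$ follows by the symmetry exchanging $v_1 \leftrightarrow v_2$ and $v_3 \leftrightarrow v_4$.

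With the claim in hand, consider the face $f$ lying in $\Int(C)$ along the edge $v_1 v_2$; its boundary is a $5$-cycle $v_1, v_2, a, b, c$, where $a$ is the other neighbour of $v_2$ on $f$ and $c$ the other neighbour of $v_1$. A short case check using the claim shows that $a$, $b$, and $c$ all lie in $\Int(C)$: were any of them on $C$ it would have to be $v_3$ or $v_4$ and would force an interior vertex adjacent to $v_3$ or $v_4$, which the claim forbids. Since $b \in \Int(C)$, domination gives $b$ adjacent to $v_1$ or to $v_2$. If $b$ is adjacent to $v_1$, then $v_1, c, b$ is a triangle bounded by the edge $v_1 b$ and the boundary arc $v_1, c, b$ of $f$; as $f$ is a face, this triangle lies on the side of $v_1 b$ away from $v_2$, so $v_2$ is separated from its interior and $v_1$ dominates that interior, contradicting Lemma \ref{lem_5_3_1}. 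The case $b$ adjacent to $v_2$ is symmetric, via the triangle $v_2, a, b$.

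The main obstacle is the planar bookkeeping rather than any single computation: at each step I must verify, via the Jordan curve theorem, that the smaller cycle I construct separates out the \emph{other} dominating vertex, so that domination by the pair $\{v_1, v_2\}$ collapses to domination by a single vertex and Lemma \ref{lem_5_3_1} (or the minimality of $C$) can be invoked. Checking that the pentagonal face $f$ cannot ``borrow'' the vertices $v_3$ or $v_4$ — handled above by the claim — is where this bookkeeping is most delicate.
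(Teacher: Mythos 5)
Your proof is correct, and its decisive step is the same as the paper's: look at the pentagonal face $f$ on the interior side of $v_1v_2$, observe that its fifth vertex must be adjacent to $v_1$ or $v_2$, and extract a triangle one of whose sides is dominated by a single vertex, contradicting Lemma \ref{lem_5_3_1}. The main difference is that you reach this point via a minimal choice of $C$ and an intermediate claim (no interior vertex is adjacent to $v_3$ or $v_4$), modelled on the proof of Lemma \ref{lem_5_3_2}; the paper skips both, since in the case where the fifth vertex of $f$ is $v_3$ or $v_4$ one immediately gets a triangle (e.g.\ $v_2, w, v_3$) whose relevant side is dominated by $v_2$ or $v_1$ alone, so no minimality argument is needed. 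Your detour is harmless and the extra claim is proved correctly, but it buys nothing over the paper's more direct case split.
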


\begin{proof}
	Assume for the sake of contradiction that $C = v_1, v_2, v_3, v_4$ is a 4-cycle in a pentagulation whose interior is dominated by $\{v_1,v_2\}$. 
	By Lemma \ref{lem_5_4_1}, both $v_1$ and $v_2$ have at least one neighbor in $\Int(C)$. 
	Thus there is a face $f$ in the interior of $C$, bounded by a 5-cycle of the form $u, v_1, v_2, w, x$, where $u$ and $w$ are vertices in $\Int(C)$ and $x$ is a vertex in $\Int[C]$.
	If $x$ is either $v_3$ or $v_4$, then $\Int[C]$ contains a triangle whose interior is dominated by $v_1$ or $v_2$ respectively, contradicting Lemma \ref{lem_5_3_1}.
	If $x$ lies in $\Int(C)$, then it is adjacent to either $v_1$ or $v_2$.
	If $x$ is adjacent to $v_1$, then $v_1, u, x$ is a triangle whose interior is dominated by $v_1$, and if $x$ is adjacent to $v_2$, then the interior of the triangle $v_2, w, x$ is dominated by $v_2$.
	In any case, we obtain a triangle whose interior is dominated by a single vertex, contradicting Lemma \ref{lem_5_3_1}.
\end{proof}

\begin{lem}
	A 3-cycle in a pentagulation does not dominate its interior (or exterior).
	\label{lem_5_3_3}
\end{lem}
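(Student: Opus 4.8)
The plan is to assume, for contradiction, that some 3-cycle $C = v_1,v_2,v_3$ dominates its interior, and to combine the three preceding lemmas with a planarity argument. First I would record the central principle: if $T$ is any cycle with $V(T) \subseteq \Int[C]$, then its edges cannot cross $C$, so $T \subseteq \Int[C]$ and $\Int(T) \subseteq \Int(C)$; in particular $\Int(T)$ contains no vertex of $C$. Consequently, any $w \in \Int(T)$ lies in $\Int(C)$ and is therefore adjacent to some $v_i$, but by planarity $w$ can only be adjacent to vertices of $\Int[T]$, and since $V(C)\cap\Int[T] = V(C)\cap V(T)$, the set $V(C)\cap V(T)$ dominates $\Int(T)$. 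Thus, whenever I can exhibit a short cycle $T$ inside $C$ sharing exactly one vertex of $C$, Lemma \ref{lem_5_3_1} is contradicted; sharing two vertices of $C$ that are consecutive on a $4$-cycle contradicts Lemma \ref{lem_5_4_2consecutive}; and a triangle sharing two vertices of $C$ contradicts Lemma \ref{lem_5_3_2}.

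By Remark \ref{rem:cycle_sep} the 3-cycle $C$ is Jordan separating, so the edge $v_1v_2$ lies on a unique interior face, bounded by a pentagon which I write cyclically as $v_1, v_2, x, y, z$ (so $v_2 x, xy, yz, zv_1$ are edges and $y$ is the vertex opposite the edge $v_1v_2$). The vertices $x, y, z$ lie in $\Int[C]$, and each that lies in $\Int(C)$ is adjacent to some $v_i$ by the domination hypothesis. I would then split into cases according to whether one of $x, y, z$ coincides with $v_3$, and otherwise according to which $v_i$ dominates the middle vertex $y$.

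In the generic case $x, y, z \in \Int(C)$: if $y \sim v_1$ the triangle $v_1, z, y$ shares only $v_1$ with $C$, so $\{v_1\}$ dominates $\Int(v_1 z y)$, contradicting Lemma \ref{lem_5_3_1}; the case $y \sim v_2$ is symmetric via $v_2, x, y$; and if $y \sim v_3$ the $4$-cycle $v_1, z, y, v_3$ (using the edge $v_3v_1$ of $C$) has $v_1, v_3$ consecutive and is dominated by $\{v_1, v_3\}$, contradicting Lemma \ref{lem_5_4_2consecutive}. For the degenerate positions: $z = v_3$ yields the $4$-cycle $v_2, x, y, v_3$ dominated by $\{v_2, v_3\}$, the case $x = v_3$ is symmetric, and $y = v_3$ yields the triangle $v_1, z, v_3$ dominated by the two $C$-vertices $\{v_1, v_3\}$, contradicting Lemma \ref{lem_5_3_2}. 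Since every face of a pentagulation is a genuine $5$-cycle, these cases are exhaustive, and each produces a contradiction; the exterior statement follows by the symmetry noted at the start of the section. I expect the main obstacle to be the bookkeeping that justifies $\Int(T)\subseteq\Int(C)$ and the consequent restriction of each interior vertex's $C$-neighbours to $V(C)\cap V(T)$ in every case, together with checking that the constructed cycles are honest triangles and $4$-cycles on distinct vertices; once this planarity principle is established cleanly, the case analysis is routine.
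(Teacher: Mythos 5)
Your proof is correct, and the planarity principle you isolate (a cycle $T$ with $V(T)\subseteq \Int[C]$ has $\Int(T)\subseteq\Int(C)$, no vertex of $C$ in $\Int(T)$, and hence $\Int(T)$ dominated by $V(C)\cap V(T)$) does hold and carries each of your cases to a contradiction with Lemma \ref{lem_5_3_1}, \ref{lem_5_3_2} or \ref{lem_5_4_2consecutive}; the case split on the interior pentagon $v_1,v_2,x,y,z$ (all of $x,y,z$ interior versus exactly one of them equal to $v_3$) is exhaustive since the face boundary is a genuine 5-cycle already containing $v_1$ and $v_2$. The route differs from the paper's: you work with the pentagonal face incident to $v_1v_2$, in the style of the paper's proofs of Lemmas \ref{lem_5_3_2} and \ref{lem_5_4_2consecutive}, whereas the paper's proof of this lemma avoids the face structure entirely. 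It instead notes (via Lemmas \ref{lem_5_3_1} and \ref{lem_5_3_2}) that $v_1$ must have a neighbour $u$ in $\Int(C)$, takes a second neighbour $w$ of $u$ in $\Int[C]-v_1$ (which exists by 2-connectivity), and rules out every possible location for $w$ using the same three lemmas, so that $u$ would be an end-vertex --- a contradiction. Your version costs a few more cases and the explicit bookkeeping that $\Int(T)\subseteq\Int(C)$ and that the constructed 3- and 4-cycles have distinct vertices, but it is more uniform with the preceding proofs; the paper's version is leaner because it never needs to identify which region of the auxiliary cycle is its interior. Both arguments rest on exactly the same prior lemmas, so neither is more general than the other.
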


\begin{proof}
	Let $C:v_1, v_2, v_3$ be a 3-cycle in a pentagulation $G$. 
	Assume for the sake of contradiction that $C$ dominates its interior.
	By Lemmas \ref{lem_5_3_1} and \ref{lem_5_3_2}, no proper subset of $V(C)$ dominates $\Int(C)$, so every vertex of $C$ has at least one neighbor in $\Int(C)$. 
	Thus there exists a neighbor $u$ of $v_1$ in $\Int(C)$.
	Since $G$ is 2-connected, the vertex $u$ has some neighbor $w$ in $\Int[C]-v_1$.
	By Lemma \ref{lem_5_3_2}, the vertex $w$ is neither $v_2$ nor $v_3$, as this induces a 3-cycle whose interior is dominated by two vertices.
	Per Lemma \ref{lem_5_3_1}, $w$ is not adjacent to $v_1$, as this creates a 3-cycle whose interior is dominated by $v_1$.
	By Lemma \ref{lem_5_4_2consecutive}, neither $v_2$ nor $v_3$ is adjacent to $w$, since this induces a 4-cycle, the interior of which is dominated by two adjacent vertices. 
	Thus $u$ does not have a neighbor in $\Int[C] - v_1$, a contradiction.
\end{proof}

Lemma \ref{lem_5_3_3} and Remark \ref{rem:cycle_sep} easily yield the following corollary, which we make extensive use of.

\begin{cor}
	Pentagulations of diameter 3 contain no 3-cycles.
	\label{cor:5_3_notriangle}
\end{cor}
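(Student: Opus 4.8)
The plan is to argue by contradiction, chaining together the two ingredients already assembled in the excerpt. Suppose $G$ is a pentagulation of diameter $3$ that contains a $3$-cycle $C$. The first step is to invoke the second part of Remark \ref{rem:cycle_sep}: because $C$ has length $3$, it is automatically a Jordan separating cycle, so both $\Int(C)$ and $\ext(C)$ contain vertices. The second step is to apply the first part of the same remark, which tells me that every Jordan separating cycle in a diameter-$3$ pentagulation dominates either its interior or its exterior. Combining these two observations, the $3$-cycle $C$ must dominate at least one of its two sides.

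The contradiction then follows immediately from Lemma \ref{lem_5_3_3}, which states precisely that no $3$-cycle in a pentagulation dominates its interior, and — by the interior/exterior symmetry noted just before that lemma — also does not dominate its exterior. Since the existence of $C$ forces it to dominate one side, while Lemma \ref{lem_5_3_3} forbids it from dominating either, we conclude that no such $3$-cycle $C$ can exist, and hence $G$ is triangle-free.

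I do not expect any genuine obstacle here: the remark and the lemma dovetail exactly, and the whole argument is a two-step implication. The only point requiring a little care is the interior-versus-exterior bookkeeping. Remark \ref{rem:cycle_sep} only guarantees domination of \emph{at least one} side, so I must be sure that Lemma \ref{lem_5_3_3} rules out \emph{both}; this is exactly why the authors stated that lemma for interiors and exteriors simultaneously. With that symmetry in hand, the proof is simply the composition of Remark \ref{rem:cycle_sep} (to produce a dominating $3$-cycle) with Lemma \ref{lem_5_3_3} (to forbid it).
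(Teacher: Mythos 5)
Your argument is correct and is exactly the paper's intended proof: the paper derives the corollary directly from Remark \ref{rem:cycle_sep} (a 3-cycle is Jordan separating and hence dominates one side) combined with Lemma \ref{lem_5_3_3} (a 3-cycle dominates neither side). Your attention to the interior/exterior symmetry is precisely the point the paper flags before Lemma \ref{lem_5_3_1}.
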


\section{The structure of separating cycles}
\label{sec:sep_cycles}

We have shown that diameter 3 pentagulations do not contain 3-cycles (and, hence, that any 4-cycle or 5-cycle in a such a pentagulation is chordless). 
In this section, we describe the structure of 4-cycles and separating 5-cycles in diameter 3 pentagulations.

\begin{lem}
	If a pentagulation contains a Jordan separating 5-cycle $C$, then the interior of $C$ is dominated by neither a single vertex of $C$, nor by an adjacent pair of vertices in $C$.
	\label{lem_5_5}
\end{lem}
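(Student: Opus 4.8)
The plan is to prove the two assertions separately, adapting the interior-neighbour and interior-face arguments of Lemmas \ref{lem_5_4_1} and \ref{lem_5_4_2consecutive} and exploiting throughout that a diameter $3$ pentagulation has no $3$-cycles (Corollary \ref{cor:5_3_notriangle}); consequently every $4$-cycle that arises is automatically chordless, and by Remark \ref{rem:cycle_sep} it separates and so contains a vertex in its interior. Write $C = v_1 v_2 v_3 v_4 v_5$, so that $\Int(C) \neq \emptyset$ since $C$ Jordan separates $G$. For the single-vertex assertion I would suppose $v_1$ dominates $\Int(C)$, choose an interior neighbour $u$ of $v_1$, and use $2$-connectivity to find a further neighbour $w \in \Int[C] - v_1$ of $u$. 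If $w \in \Int(C)$ then $w \sim v_1$ and $v_1 u w$ is a triangle, and if $w \in \{v_2, v_5\}$ then $v_1 u w$ is again a triangle; both are forbidden by Corollary \ref{cor:5_3_notriangle}. If $w = v_3$ (respectively $w = v_4$) then $v_1 u v_3 v_2$ (respectively $v_1 u v_4 v_5$) is a $4$-cycle whose interior lies in $\Int(C)$ and is therefore dominated by $v_1$, contradicting Lemma \ref{lem_5_4_1}.

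For the adjacent-pair assertion, suppose $\{v_1, v_2\}$ dominates $\Int(C)$. By the single-vertex assertion neither endpoint dominates $\Int(C)$ on its own, so each of $v_1, v_2$ has an interior neighbour, and I would take the interior face $f$ on the edge $v_1 v_2$; as in Lemma \ref{lem_5_4_2consecutive} this is a $5$-cycle $u, v_1, v_2, w, x$ with $u, w \in \Int(C)$ (so $u \sim v_1$ and $w \sim v_2$) and $x \in \Int[C]$ adjacent to both $u$ and $w$. I would then case on $x$. If $x \in \Int(C)$ it is adjacent to $v_1$ or to $v_2$, yielding the triangle $u v_1 x$ or $w v_2 x$; if $x = v_3$ then $w v_2 v_3$ is a triangle; and if $x = v_5$ then $u v_1 v_5$ is a triangle. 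Each contradicts Corollary \ref{cor:5_3_notriangle}.

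This leaves $x = v_4$, which I expect to be the crux, since now $u \sim v_4$ and $w \sim v_4$ force no triangle. Here I would consider the $4$-cycle $Q = u v_1 v_5 v_4$, whose interior is a subregion of $\Int(C)$ --- indeed the region of $\Int(C)$ cut off by the path $v_1 u v_4$ on the $v_5$-side. The main obstacle is a planarity argument: this path separates $v_2$ from $\Int(Q)$, so $v_2 \in \ext(Q)$ and no edge can join $v_2$ to a vertex of $\Int(Q)$. Every vertex of $\Int(Q)$ then lies in $\Int(C)$ but is not dominated by $v_2$, hence is dominated by $v_1$, so $v_1$ dominates $\Int(Q)$, contradicting Lemma \ref{lem_5_4_1}. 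The delicate points are establishing this separation precisely and checking that the small cycles produced are genuine chordless $4$-cycles; beyond that, the argument reduces entirely to the absence of triangles and to Lemmas \ref{lem_5_4_1} and \ref{lem_5_4_2consecutive}.
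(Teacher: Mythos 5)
Your proof is correct, and the first assertion (no single vertex of $C$ dominates $\Int(C)$) is argued exactly as in the paper: take an interior neighbour $u$ of $v_1$, use $2$-connectivity to get a second neighbour of $u$, and observe that every possibility yields either a triangle or a $4$-cycle dominated by $v_1$. For the adjacent-pair assertion your decomposition differs from the paper's. The paper again follows an arbitrary second neighbour of the interior neighbour $u$ of $v_1$ and splits according to whether it is a neighbour of $v_1$, equal to $v_3$ or $v_4$, or a neighbour of $v_2$; the cases $v_3$ and ``neighbour of $v_2$'' are killed by Lemma~\ref{lem_5_4_2consecutive} (a $4$-cycle dominated by the adjacent pair $\{v_1,v_2\}$) and the case $v_4$ by Lemma~\ref{lem_5_4_1}. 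You instead case on the fifth vertex $x$ of the interior face at the edge $v_1v_2$ (the gadget the paper itself uses in Lemmas~\ref{lem_5_3_2} and~\ref{lem_5_4_2consecutive}); all cases except $x=v_4$ collapse to triangles via Corollary~\ref{cor:5_3_notriangle}, and $x=v_4$ gives the $4$-cycle $u,v_1,v_5,v_4$ whose interior is cut off from $v_2$ by planarity and is therefore dominated by $v_1$ alone, contradicting Lemma~\ref{lem_5_4_1}. Your separation argument for that last case is sound ($v_2$ lies in $\ext(Q)$, so no edge from $\Int(Q)$ reaches it), and a pleasant by-product of your route is that the pentagon case needs only Lemma~\ref{lem_5_4_1} and triangle-freeness, whereas the paper's version also leans on Lemma~\ref{lem_5_4_2consecutive}. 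Both arguments implicitly assume the diameter-$3$ setting of the section (so that Corollary~\ref{cor:5_3_notriangle} applies and $C$ is chordless), which is consistent with how the paper uses the lemma.
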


\begin{proof}
	Let $C=v_1, v_2, v_3, v_4, v_5$ be a Jordan separating cycle of a pentagulation $G$.
	We first prove that $\Int(C)$ is not dominated by a single vertex of $C$. 
	Assume to the contrary that $v_1$ dominates $\Int(C)$, and let $u$ be a neighbor of $v_1$ in $\Int(C)$.
	Since $G$ is 2-connected, $u$ has some neighbor in $\Int[C]-v_1$.
	If $u$ is adjacent to any neighbor of $v_1$ (including $v_2$ and $v_5$), then $G$ contains a triangle, contradicting Corollary \ref{cor:5_3_notriangle}. 
	If $u$ is adjacent to $v_3$ or $v_4$, we obtain a 4-cycle whose interior is dominated by the single vertex $v_1$, contradicting Lemma \ref{lem_5_4_1}. 
	Thus $u$ has no neighbor in $\Int[C] - \{v_1\}$, a contradiction.
	
	Now assume to the contrary that $\{v_1, v_2\}$ dominates $\Int(C)$. 
	Let $u$ be a neighbor of $v_1$ in the interior of $C$, and note that $u$ has some neighbor in $\Int[C] - v_1$.
	As in the previous argument, $u$ is not adjacent to any neighbor of $v_1$.
	If $u$ is adjacent to either $v_3$ or $v_4$, then $G$ contains a 4-cycle whose interior is either dominated by the single vertex $v_1$, or by the adjacent pair $\{v_1, v_2\}$, contradicting Lemma \ref{lem_5_4_1} or Lemma \ref{lem_5_4_2consecutive}, respectively.
	If $u$ is adjacent to some neighbor of $v_2$, then $G$ contains a 4-cycle whose interior is dominated by the adjacent pair $\{v_1, v_2\}$, yielding a contradiction.
\end{proof}

\begin{lem}
	Let $C$ be a 4-cycle of a pentagulation.
	If $C$ dominates its interior, then no two vertices which are adjacent in $C$ both have neighbors in $\Int(C)$.
	\label{lem_5_4_final}
\end{lem}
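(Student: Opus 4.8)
The plan is to argue by contradiction: suppose $C = v_1 v_2 v_3 v_4$ dominates $\Int(C)$ and, after relabelling, the adjacent vertices $v_1$ and $v_2$ both have a neighbour in $\Int(C)$. The strategy is to manufacture a smaller 4-cycle whose interior turns out to be dominated by an \emph{adjacent} pair, contradicting Lemma \ref{lem_5_4_2consecutive}; no minimality or induction should be needed.

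First I would examine the face $f$ lying inside $C$ and incident with the edge $v_1 v_2$. Since $G$ is a pentagulation this face is bounded by a 5-cycle, which I write as $v_1, x, y, z, v_2$ with $x \sim v_1$ and $z \sim v_2$. Because $v_1$ has a neighbour in $\Int(C)$, the edge of $f$ at $v_1$ other than $v_1 v_2$ must run into the interior, so $x \in \Int(C)$ (in particular $x \neq v_4$); symmetrically $z \in \Int(C)$ and $z \neq v_3$. Next I would locate the middle vertex $y$. Using triangle-freeness (Corollary \ref{cor:5_3_notriangle}) I can rule out $y \in \{v_3, v_4\}$: if $y = v_3$ then $v_2 z v_3$ is a triangle, and if $y = v_4$ then $v_1 x v_4$ is a triangle. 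Hence $y \in \Int(C)$, and since $C$ dominates its interior, $y$ is adjacent to some $v_i$. Adjacency to $v_1$ or $v_2$ again yields a triangle ($v_1 x y$ or $v_2 z y$), and adjacency to both $v_3$ and $v_4$ yields the triangle $y v_3 v_4$, so (up to the symmetry swapping the two ends of $v_1 v_2$, which also swaps $v_3$ with $v_4$) I may assume $y \sim v_3$ and $y \not\sim v_4$.

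The key step is then to consider the 4-cycle $C' = v_2, z, y, v_3$, whose edges $v_2 z$ and $z y$ come from $f$, whose edge $y v_3$ was just produced, and whose edge $v_3 v_2$ lies on $C$. Its interior is contained in $\Int(C)$, so every vertex $w \in \Int(C')$ is dominated by $C$; but $v_1$ and $v_4$ lie in $\ext(C')$, so by planarity no edge can join $w$ to $v_1$ or $v_4$, forcing $w$ to be adjacent to $v_2$ or $v_3$. Thus the pair $\{v_2, v_3\}$, which is adjacent in $C'$, dominates $\Int(C')$; and since $C'$ is a 4-cycle it has nonempty interior by Remark \ref{rem:cycle_sep}, so this genuinely contradicts Lemma \ref{lem_5_4_2consecutive}.

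I expect the main obstacle to be the bookkeeping around the planarity and Jordan-curve claims rather than the combinatorics. The two points that need care are: verifying that $f$ really has the form $v_1, x, y, z, v_2$ with $x, z$ interior --- this is precisely where the hypothesis that \emph{both} $v_1$ and $v_2$ have interior neighbours is used, and it is what lets me reject $y \in \{v_3,v_4\}$ cleanly; and verifying that $\Int(C') \subseteq \Int(C)$ with $v_1, v_4 \in \ext(C')$, which is what collapses the $C$-domination of $\Int(C')$ onto the single adjacent pair $\{v_2, v_3\}$. Once these topological facts are secured, the contradiction follows at once from the earlier domination lemmas.
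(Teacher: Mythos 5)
Your argument is essentially the paper's: both proofs examine the interior face on the edge $v_1v_2$, note that its boundary 5-cycle has the form $v_1,x,y,z,v_2$ with $x,z\in\Int(C)$ (this is where the hypothesis that both endpoints have interior neighbours enters), and then dispose of the possible positions of the fifth vertex $y$ by producing either a 3-cycle that dominates its interior (Lemma \ref{lem_5_3_3}) or a 4-cycle whose interior is dominated by an adjacent pair (Lemma \ref{lem_5_4_2consecutive}). Your final step, checking via the Jordan curve argument that $\{v_2,v_3\}$ alone dominates $\Int(C')$ because $v_1,v_4\in\ext(C')$, is exactly the justification the paper leaves implicit in the phrase ``dominated by two adjacent vertices of the 4-cycle.''

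The one genuine discrepancy is your repeated appeal to Corollary \ref{cor:5_3_notriangle}. That corollary is proved only for pentagulations of \emph{diameter 3}, whereas the lemma here (like the whole chain from Lemma \ref{lem_5_3_1} onward) is stated for arbitrary pentagulations, in which triangles are not excluded. As written, your eliminations of $y=v_3$, $y=v_4$, $y\sim v_1$, $y\sim v_2$, and $y$ adjacent to both $v_3$ and $v_4$ therefore only establish the diameter-3 case of the statement. The repair is what the paper actually does: each of these triangles has its interior contained in $\Int(C)$, hence dominated by $C$, hence by planarity dominated by the triangle's own vertices, contradicting Lemma \ref{lem_5_3_3} with no girth or diameter assumption. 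The same caveat applies to your closing use of Remark \ref{rem:cycle_sep} (also stated only for diameter 3), although nonemptiness of $\Int(C')$ is not needed anyway, since Lemma \ref{lem_5_4_2consecutive} yields the contradiction as stated. With those substitutions your proof coincides with the paper's.
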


\begin{proof}
	Let $C=v_1, v_2, v_3, v_4$ be a 4-cycle in a pentagulation, and suppose that $C$ dominates its interior.
	Assume to the contrary, and without loss of generality, that both $v_1$ and $v_2$ have some neighbor in $\Int(C)$.
	The edge $v_1v_2$ lies on some face in the interior of $C$.
	This face is bounded by a 5-cycle of the form $u, v_1, v_2, w, x$, where $u$ and $w$ are neighbors of $v_1$ and $v_2$ respectively, and $x \in \Int[C]$.
	Since $C$ dominates its interior, the vertex $x$ is either a vertex of $C$, or is adjacent to a vertex of $C$.
	If $x$ is a vertex of $C$, or if $x$ is adjacent to $v_1$ or $v_2$, then there is some 3-cycle in $\Int[C]$ that dominates its interior, contradicting Lemma \ref{lem_5_3_3}.
	If $x$ is adjacent to $v_3$ or $v_4$, then there is some 4-cycle in $\Int[C]$ whose interior is dominated by two adjacent vertices of the 4-cycle, contradicting Lemma \ref{lem_5_4_2consecutive}.
\end{proof}

\begin{lem}
	Let $C$ be a 6-cycle in a pentagulation.
	If the interior of $C$ is dominated by two vertices $u$ and $v$ of $C$ such that $d_C(u,v) = 3$, then no chord of $C$ lies in the interior of $C$.
	\label{lem_5_6_chordless}
\end{lem}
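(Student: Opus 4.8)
The plan is to label the cycle as $C : u, w_1, w_2, v, w_4, w_5$, so that $u$ and $v$ are the antipodal vertices realising $d_C(u,v)=3$, and to assume for contradiction that some chord $e$ lies in $\Int(C)$. A chord of a $6$-cycle joins two of its vertices at cyclic distance $2$ or $3$, so there are exactly two cases to dispose of. The guiding principle in both is a planarity (Jordan curve) observation: the chord $e$ splits $\Int(C)$ into two regions, each bounded by a shorter cycle built from $e$ together with an arc of $C$; a vertex lying strictly inside one of these regions can only be adjacent to vertices on its own bounding subcycle or inside the same region, so it can be dominated by $u$ or $v$ only if that member of $\{u,v\}$ actually lies on the subcycle. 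I would also record at the outset that each such region is nonempty, since a $3$-cycle or $4$-cycle cannot bound a face of a pentagulation and hence must contain a vertex in its interior.

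First I would treat the main-diagonal chords, those joining $w_i$ and $w_{i+3}$, which split $C$ into two $4$-cycles. If the chord is $uv$ itself, then in each resulting $4$-cycle the vertices $u$ and $v$ are adjacent (along $e$) and together dominate its interior, which is a subset of $\Int(C)$; this contradicts Lemma \ref{lem_5_4_2consecutive}. If instead the chord is $w_1w_4$ or $w_2w_5$, then it separates $u$ from $v$, so one of the two $4$-cycles contains exactly one of them; by the separation observation its interior is dominated by that single vertex of the $4$-cycle, contradicting Lemma \ref{lem_5_4_1}.

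Next I would treat the short chords, those joining $w_i$ and $w_{i+2}$. Such a chord cuts off a $3$-cycle $T$ on the three consecutive vertices $w_i, w_{i+1}, w_{i+2}$. Because $u$ and $v$ are antipodal, no three consecutive vertices of $C$ contain both of them, while every such triple contains exactly one; hence $T$ contains precisely one of $u, v$. By the separation observation, no vertex of $\Int(T)$ is adjacent to the member of $\{u,v\}$ not lying on $T$, so $\Int(T)$ is dominated by the single vertex of $T$ lying in $\{u,v\}$, contradicting Lemma \ref{lem_5_3_1}. Since every chord of $C$ falls into one of these cases, no chord can lie in $\Int(C)$.

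The step I expect to require the most care is the separation observation itself, namely verifying rigorously that a vertex interior to one of the cut-off regions has all its neighbours on the bounding subcycle or inside the same region, so that domination by $\{u,v\}$ collapses to domination by the one or two of these vertices actually incident to that region. Everything else is a finite case check that feeds directly into Lemmas \ref{lem_5_3_1}, \ref{lem_5_4_1}, and \ref{lem_5_4_2consecutive}.
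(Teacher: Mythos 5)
Your proposal is correct and follows essentially the same route as the paper: split chords by cyclic distance, and contradict the earlier domination lemmas for 3-cycles (distance 2) and 4-cycles (distance 3, with the $uv$ chord handled by Lemma \ref{lem_5_4_2consecutive} and the off-diagonal chords by Lemma \ref{lem_5_4_1}). The only cosmetic difference is that in the triangle case you pin down the unique dominating vertex and invoke Lemma \ref{lem_5_3_1}, where the paper invokes Lemma \ref{lem_5_3_3}; your explicit ``separation observation'' is exactly the step the paper leaves implicit.
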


\begin{proof}
	Let $C = v_1, v_2, v_3, v_4, v_5, v_6$ be a 6-cycle in a pentagulation, the interior of which is dominated by $\{v_1, v_4\}$. 
	Assume to the contrary that $e=v_iv_j$, with $|j-i| > 1 \text{ (mod 6)}$, is a chord of $C$ contained in $\Int[C]$. 
	If $|j-i| = 2$, then the chord induces a 3-cycle in $C$ that dominates its interior, contradicting Lemma \ref{lem_5_3_3}.
	Thus $|j-i| = 3$. 
	If $e=v_1v_4$ then the chord induces a 4-cycle whose interior is dominated by two adjacent vertices, contradicting Lemma \ref{lem_5_4_2consecutive}.
	If $e \neq v_1v_4$, then the chord induces a 4-cycle dominated by only one vertex, contradicting Lemma \ref{lem_5_4_1}. 
\end{proof}

\begin{lem}
	Let $C$ be a 6-cycle in a pentagulation. 
	If $\Int(C)$ is dominated by two vertices $u$ and $v$ with $d_C(u,v) = 3$, then there exists some vertex in $\Int(C)$ that is adjacent to both $u$ and $v$.
	\label{lem_5_6_vertex}
\end{lem}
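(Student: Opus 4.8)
Write $C = v_1, v_2, v_3, v_4, v_5, v_6$ with $u = v_1$ and $v = v_4$, so that the two arcs of $C$ joining $u$ to $v$ are $v_1 v_2 v_3 v_4$ and $v_1 v_6 v_5 v_4$. The plan is to argue by contradiction, assuming that no vertex of $\Int(C)$ is adjacent to both $v_1$ and $v_4$. First I would record the ambient structure: since $C$ is a $6$-cycle it is Jordan separating (Lemma \ref{lem_slightly_long_cycle}), so $\Int(C) \neq \emptyset$; by Lemma \ref{lem_5_6_chordless} no chord of $C$ lies inside $\Int[C]$; and throughout I would use that diameter $3$ pentagulations are triangle-free (Corollary \ref{cor:5_3_notriangle}). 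Because $\{v_1, v_4\}$ dominates $\Int(C)$, the contradiction hypothesis lets me partition the interior vertices as $\Int(C) = A \sqcup B$, where $A$ is the set of interior neighbours of $v_1$ and $B$ the set of interior neighbours of $v_4$; the hypothesis says precisely that $A \cap B = \emptyset$ and that no interior vertex avoids both $v_1$ and $v_4$.

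The heart of the argument is then to examine a single face. I would take $F$ to be the face of $G$ incident with the edge $v_1 v_2$ that lies inside $C$. Since $G$ is a pentagulation, $\partial F$ is a $5$-cycle, which I write as $v_1, v_2, \alpha, \beta, \gamma$ with edges $v_2\alpha$, $\alpha\beta$, $\beta\gamma$ and $\gamma v_1$. The aim is to classify each of $\alpha$, $\beta$, $\gamma$ as a vertex of $C$, a vertex of $A$, or a vertex of $B$, and to show that every admissible combination produces either a triangle or a chord of $C$ lying in $\Int[C]$, both of which are forbidden.

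Two simple observations drive the classification. First, $\gamma$ is a neighbour of $v_1$ and $\alpha$ a neighbour of $v_2$ via edges of $F$, which lie inside $C$; by chordlessness $\gamma$ is either $v_6$ or an interior vertex, and $\alpha$ is either $v_3$ or an interior vertex. An interior neighbour of $v_1$ lies in $A$, so under the partition it is non-adjacent to $v_4$, and symmetrically for $B$. Second, whenever two of the five vertices of $F$ that are consecutive on $\partial F$ both turn out to be adjacent to a common vertex of $\{v_1, v_4, v_3, v_5, v_6\}$, a triangle appears. Running through the cases for $\gamma$ (either $\gamma = v_6$ or $\gamma \in A$) and then for $\beta$ and $\alpha$, each branch collapses: for example, if $\gamma \in A$ and $\beta \in A$ then $v_1 \gamma \beta$ is a triangle; if $\beta = v_4$ then $\gamma$ is already a common neighbour of $v_1$ and $v_4$, contrary to hypothesis; and if $\beta \in B$ then classifying $\alpha$ as $v_3$, or as a member of $A$, or as a member of $B$ yields the triangle $v_3 v_4 \beta$, $v_1 v_2 \alpha$, or $v_4 \alpha \beta$ respectively.

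I expect this final case check to be the main obstacle, not because any single case is hard, but because one must be careful to enumerate exactly which placements of $\alpha, \beta, \gamma$ are geometrically realizable inside $C$ (so that the chordlessness of $C$ and the disjointness $A \cap B = \emptyset$ are applied correctly) and then confirm that each surviving configuration forces a forbidden triangle or interior chord. Once the two observations above are in place every branch is a one-line verification. Since all cases are impossible, the contradiction hypothesis fails, and some vertex of $\Int(C)$ is adjacent to both $v_1$ and $v_4$.
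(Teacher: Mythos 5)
Your approach is genuinely different from the paper's. The paper argues by taking a \emph{minimal} counterexample and following a single interior neighbour $w$ of $v_1$ to a second interior vertex $x$, showing $x$ must be adjacent to $v_4$ and thereby producing a strictly smaller dominated $6$-cycle with the same defect --- a descent argument. You instead fix the single face $F$ incident with $v_1v_2$ inside $C$ and classify its three remaining boundary vertices against the partition $\Int(C)=A\sqcup B$. I checked your case tree (including the branches you elide, such as $\gamma\in A$ with $\beta\in\{v_3,v_5,v_6\}$, where the contradiction comes from $\alpha$ rather than from $\beta$ or $\gamma$), and every branch does terminate in a triangle, an interior chord of $C$, or a common neighbour of $v_1$ and $v_4$. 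Your route avoids the minimality/induction entirely, which is a real simplification.

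There is, however, one genuine problem: you invoke Corollary \ref{cor:5_3_notriangle} to forbid triangles outright, but that corollary requires diameter $3$, and Lemma \ref{lem_5_6_vertex} is stated (and used, e.g.\ inside the proof of Corollary \ref{cor:5_5_vertex}) for an \emph{arbitrary} pentagulation. The paper is deliberately careful throughout Sections \ref{sec:no_3_cycles} and \ref{sec:sep_cycles} never to assume triangle-freeness; it only ever uses the weaker facts that no $3$-cycle dominates its interior (Lemmas \ref{lem_5_3_1}, \ref{lem_5_3_3}) and no $4$-cycle is dominated by one vertex or an adjacent pair (Lemmas \ref{lem_5_4_1}, \ref{lem_5_4_2consecutive}). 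So as written your argument proves a weaker statement than the one claimed. The fix is routine but must be made explicit: each triangle $T$ your cases produce lies in $\Int[C]$ and has at least one of $v_1,v_4$ on it (or, in the $v_2v_3\alpha$ branch, forces a second triangle through $v_1$ or $v_4$ according to whether $\alpha\in A$ or $\alpha\in B$); since any vertex of $\Int(T)\subseteq\Int(C)$ is adjacent to $v_1$ or $v_4$ and, by planarity, can only reach whichever of them lies on $T$, the triangle $T$ is dominated by a single vertex, contradicting Lemma \ref{lem_5_3_1}. With that substitution for every ``forbidden triangle'' step, your proof goes through in the stated generality.
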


\begin{proof}
	Let $G$ be a pentagulation.
	Assume to the contrary that $C = v_1, v_2, v_3, v_4, v_5, v_6$ is a 6-cycle in $G$ whose interior is dominated by $\{v_1, v_4\}$, and that no vertex in $\Int(C)$ is adjacent to both $v_1$ and $v_4$.
	Choose $C$ to be a minimal counterexample.
	That is, there is no 6-cycle $C'$ that has its interior dominated by $\{v_1,v_4\}$, and that does not contain any neighbor of both $v_1$ and $v_4$ in $\Int(C')$, and that satisfies $\Int(C')\subset \Int(C)$. 
	The cycle $C$ is chordless by Lemma \ref{lem_5_6_chordless}, and is a Jordan separating cycle by Lemma \ref{lem_slightly_long_cycle}, so there exists some vertex $w$ in $\Int(C)$.
	Without loss of generality, the vertex $w$ is adjacent to $v_1$. 
	Since $G$ is 2-connected, there is some neighbor $x$ of $w$ in $\Int[C]-\{v_1,v_4\}$.
	The vertex $x$ is neither $v_2$ nor $v_6$, as this would create a triangle $v_1,w,x,v_1$ that dominates its interior, contradicting Lemma \ref{lem_5_3_3}. 
	Further, $x$ is neither $v_3$ nor $v_5$ as either case induces a 4-cycle whose interior is dominated by $v_1$, contradicting Lemma \ref{lem_5_4_1}. 
	So $x$ lies in $\Int(C)$, and is adjacent to either $v_1$ or $v_4$. 
	If $x$ is adjacent to $v_1$, then $v_1, x, w$ is a triangle, the interior of which is dominated by $v_1$, contradicting Lemma \ref{lem_5_3_1}. 
	Thus $x$ is adjacent to $v_4$, and the two internally disjoint paths $v_1, v_2, v_3, v_4$ and $v_1, w, x, v_4$, induce a 6-cycle in $\Int[C]$. 
	The interior of this 6-cycle is dominated by $\{v_1, v_4\}$, and by assumption there is not a common neighbor of both $v_1$ and $v_4$ in the interior of this cycle, contradicting the minimality of $C$. 
\end{proof}

\begin{cor}
	Let $C$ be a Jordan separating 5-cycle in a pentagulation. 
	If $\Int(C)$ is dominated by two non-adjacent vertices $u$ and $v$ of $C$, then there is some vertex in $\Int(C)$ that is adjacent to both $v$ and $u$.
	\label{cor:5_5_vertex}
\end{cor}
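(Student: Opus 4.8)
The plan is to deduce this from the $6$-cycle version, Lemma \ref{lem_5_6_vertex}, by enlarging $C$ into a suitable $6$-cycle. Write $C = v_1 v_2 v_3 v_4 v_5$. Since $C$ is Jordan separating, Lemma \ref{lem_5_5} forbids $\Int(C)$ from being dominated by a single vertex or by an adjacent pair, so the two dominating vertices are non-adjacent and we may assume they are $v_1$ and $v_3$. These are joined along $C$ by the length-$2$ arc $v_1 v_2 v_3$ and the length-$3$ arc $v_1 v_5 v_4 v_3$. I would retain the long arc and replace the short arc by a path of length $3$ from $v_1$ to $v_3$ whose internal vertices lie in $\Int(C)$; splicing the two arcs yields a $6$-cycle $C'$ in which $v_1$ and $v_3$ are at cycle-distance $3$, with $\Int(C') \subseteq \Int(C)$.

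Granting such a path, the conclusion is immediate: every vertex of $\Int(C') \subseteq \Int(C)$ is adjacent to $v_1$ or $v_3$ by hypothesis, so $\{v_1, v_3\}$ dominates $\Int(C')$, and Lemma \ref{lem_5_6_vertex} then supplies a vertex of $\Int(C')$ adjacent to both $v_1$ and $v_3$. Since $\Int(C') \subseteq \Int(C)$, this vertex lies in $\Int(C)$, as required. So the whole argument rests on constructing the replacement path.

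The key observation is that $v_2$ has no neighbour in $\Int(C)$: any such neighbour $m$, being dominated by $\{v_1, v_3\}$, would be adjacent to $v_1$ or to $v_3$, and together with the edge $mv_2$ this produces a triangle, contradicting Corollary \ref{cor:5_3_notriangle}. Hence inside $C$ the vertex $v_2$ meets only the edges $v_1 v_2$ and $v_2 v_3$, and the interior face incident to $v_2$ is a single pentagon bounded by $v_1 v_2 v_3 s t$, where $s \sim v_3$, $t \sim v_1$, and $st \in E(G)$. Once I verify $s, t \in \Int(C)$, the path $v_1\, t\, s\, v_3$ is the one sought, the $6$-cycle is $C' = v_1\, t\, s\, v_3\, v_4\, v_5$, and the reduction above applies.

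The main work, and the step I expect to be the principal obstacle, is checking that $s$ and $t$ genuinely lie in $\Int(C)$ rather than on $C$. Triangle-freeness (Corollary \ref{cor:5_3_notriangle}) rules out $s \in \{v_1, v_5\}$ and $t \in \{v_3, v_4\}$, since each would force a chord of $C$ and hence a triangle. The delicate remaining cases are $s = v_4$ and $t = v_5$, which are symmetric; in the first, the interior face is $v_1 v_2 v_3 v_4 t$ (and $t = v_5$ is impossible, as then the face would be $C$ itself, which has non-empty interior). This leaves a $4$-cycle $v_1 v_5 v_4 t$ bounding the rest of $\Int(C)$; as no face of a pentagulation is a $4$-cycle, its interior is non-empty, yet it can only be dominated by the single vertex $v_1$ (since $v_3$ is separated from that region), contradicting Lemma \ref{lem_5_4_1}. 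Ruling these cases out confirms $s, t \in \Int(C)$ and completes the construction; I anticipate the bookkeeping of these degenerate identifications, rather than any conceptual difficulty, to be the crux.
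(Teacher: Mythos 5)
Your overall strategy is the same as the paper's: both proofs manufacture a $6$-cycle by splicing a length-$3$ $v_1$--$v_3$ path through $\Int(C)$ with the arc $v_1, v_5, v_4, v_3$, and then invoke Lemma \ref{lem_5_6_vertex}. The only difference is how that interior path is produced: the paper takes an arbitrary interior neighbour $w$ of $v_1$ and uses $2$-connectedness to extend it to a neighbour $x$ that is forced to lie in $\Int(C)$ and be adjacent to $v_3$, whereas you read the path off the boundary of the interior face at $v_2$. Your face-based construction works, and your handling of the degenerate identifications $s=v_4$ and $t=v_5$ is correct.

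The one genuine defect is the repeated appeal to Corollary \ref{cor:5_3_notriangle}. That corollary requires diameter $3$, but the statement you are proving --- like everything in Sections \ref{sec:no_3_cycles} and \ref{sec:sep_cycles} --- is asserted for an arbitrary pentagulation, and the paper's own proof deliberately avoids triangle-freeness, working instead with Lemma \ref{lem_5_3_3}. As written, your argument only establishes the corollary under an extra diameter hypothesis. The repair is mechanical: every triangle you produce has one side contained in $\Int[C]$, and since $v_1$ and $v_3$ cannot both reach into that region (one of them is separated from it by the triangle itself), the region is dominated by a single vertex of the triangle, contradicting Lemma \ref{lem_5_3_1} (or Lemma \ref{lem_5_3_3}); the same substitution handles the chords you exclude in the degenerate cases. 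With that change your proof is complete at the stated level of generality.
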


\begin{proof}
	Let $G$ be a pentagulation, and let $C = v_1, v_2, v_3, v_4, v_5$ be a Jordan separating 5-cycle in $G$ whose interior is dominated by $\{v_1, v_3\}$. 
	Since $C$ is Jordan separating, there exists a vertex $w$ in $\Int(C)$ that is, without loss of generality, adjacent to $v_1$.
	If $w$ is adjacent to $v_3$, we are done. 
	Suppose $w$ is not adjacent to $v_3$
	Since $G$ is 2-connected, $w$ has some neighbor $x$ in $\Int[C]-v_1$. 
	The vertex $x$ is not any neighbor of $v_1$, as then $v_1, w, x$ is a triangle that dominates its interior, contradicting Lemma \ref{lem_5_3_3}. 
	Note that $x \neq v_4$, as this would induce a 4-cycle dominated by $v_1$, contradicting Lemma \ref{lem_5_4_1}. 
	Thus $x$ is a vertex in $\Int(C)$ that is adjacent to $v_3$. 
	The internally disjoint paths $v_1, v_5, v_4, v_3$ and $v_1, w, x, v_3$ induce a 6-cycle whose interior is dominated by $\{v_1, v_3\}$. 
	By Lemma \ref{lem_5_6_vertex}, the interior of this 6-cycle contains some vertex adjacent to both $v_1$ and $v_3$, completing the proof.
\end{proof}

\begin{lem}
	Let $G$ be a pentagulation. 
	If $C$ is a 4-cycle that dominates its interior, then every vertex $u$ in $\Int(C)$ has degree 2.
	\label{lem_5_4_degree}
\end{lem}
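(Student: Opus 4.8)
The plan is to induct on $|\Int(C)|$; if $\Int(C)=\emptyset$ the claim is vacuous, so assume $\Int(C)\neq\emptyset$ and write $C = v_1v_2v_3v_4$. First I would determine which vertices of $C$ reach into the interior. Let $S\subseteq V(C)$ be the set of vertices having a neighbour in $\Int(C)$; as $C$ dominates $\Int(C)$, so does $S$. By Lemma~\ref{lem_5_4_1} no single vertex dominates $\Int(C)$, hence $|S|\ge 2$, and by Lemma~\ref{lem_5_4_final} the set $S$ is independent in $C$. The only independent sets of size at least two in a $4$-cycle are its two pairs of opposite vertices, so after relabelling $S = \{v_1,v_3\}$. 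Thus $v_2$ and $v_4$ have no neighbour in $\Int(C)$, and every interior vertex is adjacent to $v_1$ or to $v_3$.

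Call $m\in\Int(C)$ a \emph{both-vertex} if it is adjacent to both $v_1$ and $v_3$, and suppose such an $m$ exists. The edges $v_1m$ and $mv_3$ cut $\Int(C)$ into the interiors of the two $4$-cycles $D_1 = v_1v_2v_3m$ and $D_2 = v_1mv_3v_4$, giving a partition $\Int(C) = \Int(D_1)\sqcup\{m\}\sqcup\Int(D_2)$. Each $D_j$ dominates its interior, since any $z\in\Int(D_j)\subseteq\Int(C)$ is adjacent to $v_1$ or $v_3$, both of which lie on $D_j$. The only candidate interior chords of $D_1$ are its diagonals $v_1v_3$ and $v_2m$: the latter is impossible because $v_2\notin S$, and the former would create a triangle $v_1v_2v_3$ inside $C$ whose interior is dominated by $\{v_1,v_3\}$, contradicting Lemma~\ref{lem_5_3_2}. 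Since a $4$-cycle cannot bound a face, $\Int(D_1)\neq\emptyset$, and likewise $\Int(D_2)\neq\emptyset$. Re-running the first paragraph inside $D_1$, whose opposite pairs are $\{v_1,v_3\}$ and $\{v_2,m\}$, the dominating pair cannot be $\{v_2,m\}$ (else $m$ alone would dominate, violating Lemma~\ref{lem_5_4_1}), so it is $\{v_1,v_3\}$ and $m$ has no neighbour in $\Int(D_1)$; the same holds in $D_2$. Hence the only neighbours of $m$ are $v_1$ and $v_3$, so $\deg(m)=2$, and since $|\Int(D_j)|<|\Int(C)|$ the induction hypothesis (noting that $D_j$ separates, so degrees in $G$ agree with degrees inside $D_j$) gives degree $2$ to every vertex of $\Int(D_1)\cup\Int(D_2)$.

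It remains to treat the base case, in which no both-vertex exists. Here I would run a fan argument around $v_1$: list its neighbours in $\Int[C]$ in rotational order as $v_2 = w_0, w_1,\dots,w_k, w_{k+1} = v_4$ with $w_1,\dots,w_k\in\Int(C)$, so that consecutive pairs bound pentagonal faces $F_0,\dots,F_k$. A middle face $F_i = v_1w_ic_id_iw_{i+1}$ (for $1\le i\le k-1$) has $c_i,d_i\in\Int(C)$; neither is adjacent to $v_1$ (such an edge would cross the open face $F_i$) and, there being no both-vertex, neither equals $v_3$, so both are adjacent to $v_3$ and $T = v_3c_id_i$ is a triangle. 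The edge $c_id_i$ is shared by $T$ and $F_i$, so $v_1\in\partial F_i$ lies outside $T$; as $T$ bounds no face and has no chord, $\Int(T)\neq\emptyset$, and every vertex of $\Int(T)\subseteq\Int(C)$ is adjacent to $v_1$ or $v_3$ but is separated from $v_1$ by $T$, hence is adjacent to $v_3$. Thus $v_3$ dominates $\Int(T)$, contradicting Lemma~\ref{lem_5_3_1}. So $k\le 1$, and $v_1$ has a single interior neighbour $w_1$; by symmetry $v_3$ has a single interior neighbour $w'$. The two faces at $v_1$ are then $v_1v_2v_3w'w_1$ and $v_1w_1w'v_3v_4$, which already tile $\Int[C]$, forcing $\Int(C) = \{w_1,w'\}$ with $w_1$ adjacent to $v_1$ and $w'$, and $w'$ adjacent to $v_3$ and $w_1$, both of degree $2$.

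The main obstacle is precisely the both-vertices: a common neighbour of $v_1$ and $v_3$ defeats a one-shot fan/triangle argument, and handling it forces the splitting-and-induction of the second paragraph, where the delicate points are ruling out the diagonal chords to see that each $D_j$ has non-empty interior, and verifying that the dominating pair is again $\{v_1,v_3\}$ so that $m$ is cut off with degree exactly $2$. The remaining care is the routine planarity bookkeeping that places $v_1$ outside the triangle $T$.
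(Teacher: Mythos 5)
Your argument is correct in substance but takes a genuinely different, and much heavier, route than the paper. The paper's proof is a short local argument: it takes an arbitrary $w\in\Int(C)$ adjacent (without loss of generality) to $v_1$, supposes $w$ has two further neighbours $x_1,x_2$ in $\Int[C]-\{v_1\}$, and shows each $x_i$ must be $v_3$ or an interior neighbour of $v_3$, whence there appears either a triangle dominating its interior (contradicting Lemma \ref{lem_5_3_3}) or a $4$-cycle $x_1,w,x_2,v_3$ whose interior is dominated by $v_3$ alone (contradicting Lemma \ref{lem_5_4_1}). You instead reconstruct the global structure of $\Int[C]$ --- identifying the dominating opposite pair, splitting along common neighbours of $v_1$ and $v_3$, and inducting on $|\Int(C)|$ --- which is essentially the content of the paper's subsequent Theorem \ref{thm:4_cycle_description}, for which the present lemma is an ingredient. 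Both routes work; the paper's is far shorter, while yours delivers most of the alternating-path structure as a by-product.

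One step needs repair. In your base case you assert that $c_i$ and $d_i$ are not adjacent to $v_1$ because ``such an edge would cross the open face $F_i$.'' That is not a valid planarity argument: an edge $v_1c_i$ would simply be drawn elsewhere in the rotation about $v_1$ (indeed $c_i$ would then be another $w_j$), and nothing forces it to meet $F_i$. The claim is nonetheless true, and for the same reason as your triangle-$T$ step: if $v_1c_i\in E(G)$ then $v_1,w_i,c_i$ is a $3$-cycle contained in $\Int[C]$ and meeting $C$ only at $v_1$; its interior is a nonempty subset of $\Int(C)$ (a $3$-cycle in a pentagulation bounds no face and has no chords), every vertex of which is adjacent to $v_1$ or $v_3$ but is separated from $v_3$ by the triangle, so $v_1$ alone dominates it, contradicting Lemma \ref{lem_5_3_1}. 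With that substitution, and granting that the remaining topological bookkeeping (e.g.\ that $v_1\notin\Int(T)$ because $T$ meets $C$ only at $v_3$) is as you sketch it, the proof goes through.
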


\begin{proof}
	Let $G$ be a pentagulation, let $C = v_1, v_2, v_3, v_4$ be a 4-cycle in $G$ that dominates its interior, and let $w$ be a vertex in $\Int(C)$.
	Since $C$ dominates its interior, we assume without loss of generality that $w$ is adjacent to $v_1$.
	Because $G$ is 2-connected, $w$ has at least one neighbor in $\Int[C]-\{v_1\}$.
	Assume to the contrary that $w$ has at least two distinct such neighbors, call them $x_1$ and $x_2$.
	Neither $x_1$ nor $x_2$ is adjacent to $v_1$, as this would induce a triangle in $\Int[C]$ that dominates its interior, contrary to Lemma \ref{lem_5_3_3}.
	Further, neither $x_1$ nor $x_2$ are both adjacent to either of $v_2$ or $v_4$, and in $\Int(C)$, per Lemma \ref{lem_5_4_final}.
	Thus $x_1$ is either $v_3$, or is adjacent to $v_3$, and likewise for $x_2$.
	If $x_1 = v_3$, then $x_2 \neq v_3$, and hence $x_2$ is adjacent to $v_3$.
	However this induces a triangle $v_3, w, x_2$ that dominates its interior, contrary to Lemma \ref{lem_5_3_3}.
	We conclude that both $x_1$ and $x_2$ are neighbors of $v_3$ in $\Int(C)$. 
	But this induces a 4-cycle $x_1, w, x_2, v_3$ that is dominated by $v_3$, contradicting Lemma \ref{lem_5_4_1}.
\end{proof}

Per Remark \ref{rem:cycle_sep}, any 4-cycle in a pentagulation of diameter 3 dominates either its interior or exterior. 
The next theorem gives a complete description of the structure of this dominated region.
An example of such a region is given by Figure \ref{fig:5_4cycles}.

\begin{thm}
	Let $G$ be a pentagulation, and $C$ a 4-cycle in $G$.
	If $C$ dominates its interior, then there exist two non-adjacent vertices $u$ and $v$ of $C$, and a positive integer $k$ such that the induced subgraph $G[Int[C]]$ consists of exactly:\\
	(1) the cycle $C$,\\
	(2) $k$ $u-v$ paths of length 3, and\\
	(3) $k-1$ $u-v$ paths of length 2.\\
	All the paths in (2) and (3) are internally disjoint, do not contain any vertices of $C-\{u,v\}$, and the paths of length 2 and 3 alternate.
	\label{thm:4_cycle_description}
\end{thm}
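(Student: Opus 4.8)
The plan is to first pin down the two vertices $u,v$, then show that the interior decomposes into short $u$--$v$ paths, and finally use the pentagonal faces to force the alternation and the exact counts.

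\textbf{Identifying $u$ and $v$.} Since $C$ dominates its interior, every vertex of $\Int(C)$ has a neighbour among $v_1,\dots,v_4$; let $S$ be the set of vertices of $C$ having a neighbour in $\Int(C)$. By Lemma~\ref{lem_5_4_final} no two vertices of $S$ are adjacent on $C$, so $S$ is an independent set of the $4$-cycle, whence $S\subseteq\{v_1,v_3\}$ or $S\subseteq\{v_2,v_4\}$. The interior is nonempty, since a chordless $4$-cycle cannot bound a pentagonal face, while a chord would split $\Int(C)$ into triangular regions each dominated by the corresponding triangle, contradicting Lemma~\ref{lem_5_3_3}; hence $S\neq\emptyset$. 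Moreover $|S|=1$ would make a single vertex of $C$ dominate $\Int(C)$, contradicting Lemma~\ref{lem_5_4_1}. Thus $S=\{u,v\}$ is a non-adjacent pair, say $u=v_1$ and $v=v_3$, and no interior vertex is adjacent to $v_2$ or $v_4$.

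\textbf{Decomposing the interior into paths.} By Lemma~\ref{lem_5_4_degree} every vertex of $\Int(C)$ has degree $2$ in $G$, so the induced graph on the interior vertices has maximum degree $2$ and is a disjoint union of paths and cycles. A cycle component would consist entirely of vertices with no neighbour on $C$, contradicting domination by $\{u,v\}$; and in a path component with three or more vertices the internal vertices would again have no neighbour on $C$. So each component is a single vertex or a single edge. A single vertex $w$ has both its neighbours on $C$, necessarily $u$ and $v$, giving a $u$--$v$ path $u,w,v$ of length $2$. For an edge $w_1w_2$, each endpoint has exactly one further neighbour in $\{u,v\}$; if both were adjacent to the same vertex, say $u$, then $u w_1 w_2$ would be a triangle $T$ whose interior is nonempty (it cannot bound a pentagon and has no chord), and since $w_1,w_2$ carry no other edges while $v\in\ext(T)$, every vertex of $\Int(T)$ must be adjacent to $u$, so $u$ dominates $\Int(T)$, contradicting Lemma~\ref{lem_5_3_1}. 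Hence one endpoint is adjacent to $u$ and the other to $v$, giving a $u$--$v$ path of length $3$. Together with $u,v$ these components yield a family of internally disjoint $u$--$v$ paths of lengths $2$ and $3$, none passing through $v_2$ or $v_4$.

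\textbf{Alternation and counting.} Every edge of $G$ lying in $\Int[C]$ is now accounted for: the edges of $C$ form the two $u$--$v$ paths $u,v_2,v$ and $u,v_4,v$ of length $2$, and all remaining edges lie on the interior paths above. Thus $\Int[C]$ is a disc, subdivided by the internally disjoint $u$--$v$ paths (the two boundary arcs through $v_2,v_4$ together with the interior paths) into regions, each of which is a face of $G$ and hence a pentagon. Two consecutive paths in the rotational order about $u$ bound such a region, whose boundary is their union, a cycle of length equal to the sum of the two path-lengths; being a pentagon, this sum is $5$. Since each path has length $2$ or $3$, consecutive lengths are $\{2,3\}$, so the lengths alternate $2,3,2,3,\dots$ across the fan, and the two extreme paths---the boundary arcs through $v_2$ and $v_4$---have length $2$. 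The sequence therefore reads $2,3,2,\dots,3,2$: if there are $k$ paths of length $3$ then there are $k+1$ of length $2$, two of which are the boundary arcs, leaving exactly $k-1$ interior paths of length $2$ and $k$ interior paths of length $3$, as claimed; here $k\ge 1$, for otherwise the single region would be the $4$-cycle $C$ rather than a pentagon.

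\textbf{Main obstacle.} I expect the last step to be the delicate one: converting the degree/domination data into the planar statement that consecutive $u$--$v$ paths bound genuine pentagonal faces, and that the two arcs of $C$ are the extreme paths of the fan. This needs the standard embedding fact that non-crossing arcs sharing two boundary endpoints of a disc are linearly ordered and cut it into face-regions; once set up, the pentagon condition $\ell_P+\ell_Q=5$ forces both the alternation and the precise counts. Excluding the triangle sub-case via Lemma~\ref{lem_5_3_1} is the other point requiring a short separate argument.
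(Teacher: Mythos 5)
Your proof is correct and follows essentially the same route as the paper's: isolate the two non-adjacent vertices with interior neighbours via Lemmas \ref{lem_5_4_1}, \ref{lem_5_4_2consecutive}/\ref{lem_5_4_final}, use the degree-2 property of Lemma \ref{lem_5_4_degree} to decompose $\Int(C)$ into internally disjoint $u$--$v$ paths of length 2 or 3, and let the pentagonal faces force the alternation. You simply spell out the final counting argument (consecutive arcs bounding pentagons, the boundary arcs as extreme paths, hence $k$ and $k-1$) that the paper compresses into one sentence.
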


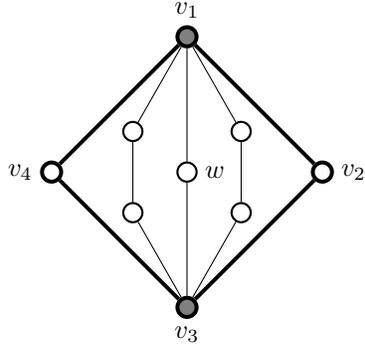
\begin{figure}[h]
\centering
\begin{tikzpicture}
[inner sep=0.9mm, scale=0.9, 
vertex/.style={circle,thick,draw},
dvertex/.style={rectangle,thick,draw, inner sep=1.3mm}, 
thickedge/.style={line width=1.5pt}] 

\node[vertex, thickedge, fill=black!50] (1) at (0,4) [label=above:$v_1$] {};
\node[vertex, thickedge] (2) at (2,2) [label=right:$v_2$] {};
\node[vertex, thickedge, fill=black!50] (3) at (0,0) [label=below:$v_3$] {};
\node[vertex, thickedge] (4) at (-2,2) [label=left:$v_4$] {};

\draw[thickedge] (1)--(2)--(3)--(4)--(1);

\node[vertex] (a) at (-0.8, 2.6) {};
\node[vertex] (b) at (-0.8, 1.4) {};
\node[vertex] (c) at (0, 2) [label=right:$w$] {};
\node[vertex] (d) at (0.8, 2.6) {};
\node[vertex] (e) at (0.8, 1.4) {};

\draw (1)--(a)--(b)--(3)--(c)--(1)--(d)--(e)--(3);

\end{tikzpicture}
\caption{A 4-cycle dominating its interior which has $k=2$ paths of length 3 and $k-1=1$ paths of length 2 between two non-cycle-adjacent vertices $v_1$ and $v_3$, illustrating Theorem \ref{thm:4_cycle_description}.}
\label{fig:5_4cycles}
\end{figure}

\begin{proof}
	Let $G$ be a pentagulation, and $C: v_1, v_2, v_3, v_4$ a 4-cycle in $G$ that dominates its interior. 
	By Lemmas \ref{lem_5_4_1} and \ref{lem_5_4_2consecutive}, exactly two non-adjacent vertices of $C$ have neighbors in $\Int(C)$.
	Suppose without loss of generality that these two vertices are $v_1$ and $v_3$.
	The interior of $C$ is chordless, as a chord would induce a 3-cycle that dominates its interior, contradicting Lemma \ref{lem_5_3_3}.
	Per Lemma \ref{lem_5_4_degree}, any vertex in $\Int(C)$ has degree 2. 
	Further, any vertex in $\Int(C)$ is adjacent to either $v_1$ or $v_3$, and there is no 3-cycle in the interior of $C$ by Lemma \ref{lem_5_3_3}.
	Thus every vertex in $\Int(C)$ lies on a $v_1-v_3$ path of length 2 or 3, and these paths are internally disjoint. 
	Since $G$ is a pentagulation and every face is bounded by a 5-cycle, the paths of length 2 and 3 must alternate.
\end{proof}

By Corollary \ref{cor:5_3_notriangle}, no diameter 3 pentagulation contains a triangle. 
Figure \ref{fig:5_HI} shows two diameter 3 pentagulations containing 4-cycles. 

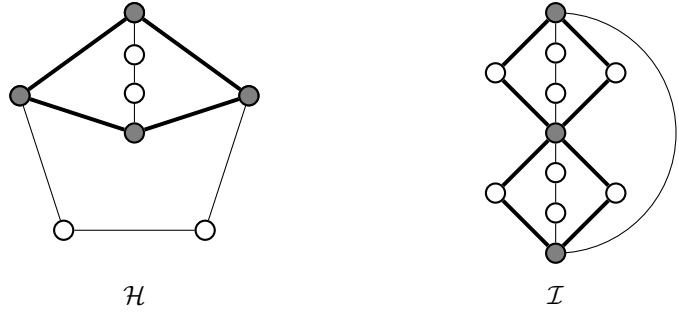
\begin{figure}[h]
\centering
\begin{tikzpicture}
[inner sep=0.9mm, scale=0.8,
vertex/.style={circle,thick,draw},
dvertex/.style={rectangle,thick,draw, inner sep=1.3mm},
thickedge/.style={line width=1.5pt}]

\foreach \x in {0,...,4} 
\node[vertex](\x) at (90+\x*72:2) {};

\node[vertex, fill=black!50] at (90+0*72:2) {};
\node[vertex, fill=black!50] at (90+1*72:2) {};
\node[vertex, fill=black!50] at (90+4*72:2) {};

\node[vertex, fill=black!50](c) at (0, 0) {};
\node[vertex](a) at (0,1.3) {};
\node[vertex](b) at (0,0.66) {};

\draw (0)--(a)--(b)--(c);
\draw (1)--(2)--(3)--(4);
\draw[thickedge] (4)--(0)--(1);
\draw[thickedge] (4)--(c)--(1);

\node at (0,-2.7) {$\mathcal{H}$};

\begin{scope}[shift={(7,0)}]

\newcommand\sca{1.35}

\draw (0,2) arc (90:-90:2);

\node[vertex, fill=black!50] (1) at (0,2) {};
\node[vertex=] (2) at (1,1) {};
\node[vertex, fill=black!50] (3) at (0,0) {};
\node[vertex] (4) at (-1,1) {};

\node[vertex] (6) at (1,-1) {};
\node[vertex, fill=black!50] (7) at (0,-2) {};
\node[vertex] (8) at (-1,-1) {};

\draw[thickedge] (1)--(2)--(3)--(4)--(1);
\draw[thickedge] (3)--(6)--(7)--(8)--(3);

\node[vertex] (a) at (0, 1.33) {};
\node[vertex] (b) at (0, 0.66) {};
\node[vertex] (c) at (0, -0.66) {};
\node[vertex] (d) at (0, -1.33) {};

\draw (1)--(a)--(b)--(3)--(c)--(d)--(7);

\node at (0,-2.7) {$\mathcal{I}$};

\end{scope}

\end{tikzpicture}
\caption{Two diameter three pentagulations that contain 4-cycles, $\mathcal{H}$ and $\mathcal{I}$.
Pairs of non-adjacent grey vertices dominate regions bounded by bold 4-cycles.}
\label{fig:5_HI}
\end{figure}

\section{Singling out a square with dislocated 4-cycles}
\label{sec:disloc_4_cycle}

In order to describe the structure of diameter 3 pentagulations, we need a new concept: \textit{dislocated} 4-cycles.
In Figure \ref{fig:5_4cycles}, consider the three 4-cycles $C_1: v_1, v_2, v_3, v_4$; $C_2: v_1, w, v_3, v_4$ and $C_3: v_1, w, v_3, v_2$.
Although these three cycles are distinct, both $C_2$ and $C_3$ are just `substructures' of $C_1$, formed by $C_1$ and the alternating paths in its interior (remember Theorem \ref{thm:4_cycle_description}).
Heuristically, two 4-cycles in a pentagulation are dislocated when they cannot be considered part of the same collection of alternating paths.

Consider two distinct 4-cycles, $C_1$ and $C_2$, in a pentagulation $G$.
We say that $C_1$ and $C_2$ are \textbf{dislocated} 4-cycles if there exist two regions $R_1\in \{\Int(C_1), \ext(C_1) \}$ and $R_2\in \{\Int(C_2), \ext(C_2) \}$, as well as two pairs of vertices $\{u_1, v_1\} \subset V(C_1)$ and $\{u_2, v_2\} \subset V(C_2)$, such that all three of the following conditions hold:
\begin{enumerate}[topsep=-\parskip]
	\item The regions $R_1$ and $R_2$ are dominated by $\{u_1, v_1\}$ and $\{u_2, v_2\}$, respectively,
	\item The sets $\{u_1, v_1\}$ and $\{u_2, v_2\}$ are not equal,
	\item The intersection $R_1\cap R_2$ is empty. 
\end{enumerate}
Note that per Lemma \ref{lem_5_4_2consecutive}, the edge $u_1v_1$ is not in $E(C_1)$, and $u_2v_2$ is not in $E(C_2)$.

\begin{figure}[h]
\centering
\begin{tikzpicture}
[scale=0.9, inner sep=0.8mm, scale=0.9, 
vertex/.style={circle,thick,draw},
dvertex/.style={rectangle,thick,draw, inner sep=1.3mm}, 
thickedge/.style={line width=1.5pt}] 

\node[vertex, thickedge, fill=black!50] (1) at (0,4) [label=above:$u_1$] {};
\node[vertex, thickedge] (2) at (2,2) [label=right:$u_2$] {};
\node[vertex, thickedge, fill=black!50] (3) at (0,0) [label=right:$u_3$] {};
\node[vertex, thickedge] (4) at (-2,2) [label=left:$u_4$] {};

\draw[thickedge] (1)--(2)--(3)--(4)--(1);

\node[vertex] (a) at (-0.8, 2.6) {};
\node[vertex] (b) at (-0.8, 1.4) {};
\node[vertex] (c) at (0, 2) [label=right:$u_5$] {};
\node[vertex] (d) at (0.8, 2.6) {};
\node[vertex] (e) at (0.8, 1.4) {};

\node[vertex] (s) at (-2,4.7) {};
\node[vertex] (t) at (2,4.7) {};
\node[vertex] (u) at (-3,2) {};
\node[vertex] (v) at (3,2) {};
\node[vertex] (w) at (0,-1) {};

\draw (1)--(a)--(b)--(3)--(c)--(1)--(d)--(e)--(3);
\draw[thickedge] (1)--(c)--(3);

\draw (4)--(s)--(t)--(2) (s)--(u)--(w);
\draw (t)--(v)--(w) (3)--(w);

\node at (0, -1.6) {$G$};

\begin{scope}[shift={(9,2)}]
\node[vertex] (v1) at (0,2) [label=above:$v_2$] {};
\node[vertex] (v3) at (0,-2) [label=below:$v_4$] {};
\node[vertex, thickedge, fill=black!50] (v2) at (-0.8,0) [label=right:$v_3$] {};
\node[vertex, thickedge, fill=black!50] (v4) at (-2.5,0) [label=left:$v_1$] {};

\node[vertex, thickedge, fill=black!50] (u2) at (2.5,0) [label=right:$v_6$] {};
\node[vertex, thickedge, fill=black!50] (u4) at (0.8,0) [label=left:$v_5$] {};

\draw[thickedge] (v3)--(v4)--(v1)--(v2)--(v3)--(u2)--(v1)--(u4)--(v3);

\node[vertex] (w1) at (-1.8,0) {};
\node[vertex] (w2) at (-1.3,0) {};
\node[vertex] (z1) at (1.8,0) {};
\node[vertex] (z2) at (1.3,0) {};

\node[vertex] (s) at (-2,2.7) {};
\node[vertex] (t) at (2,2.7) {};
\node[vertex] (u) at (0,0.7) {};
\node[vertex] (v) at (0,-0.7) {};

\draw (v4)--(w1)--(w2)--(v2); 
\draw (u4)--(z2)--(z1)--(u2);

\draw (v4)--(s)--(t)--(u2);
\draw (v1)--(u)--(v)--(v3);

\node at (0, -3.6) {$H$};

\end{scope}

\end{tikzpicture}
\caption{In $G$, there is no pair of dislocated 4-cycles.
In $H$, any pair of 4-cycles in which both cycles dominate their interior or exterior is dislocated.}
\label{fig:5_disloc_example}
\end{figure}
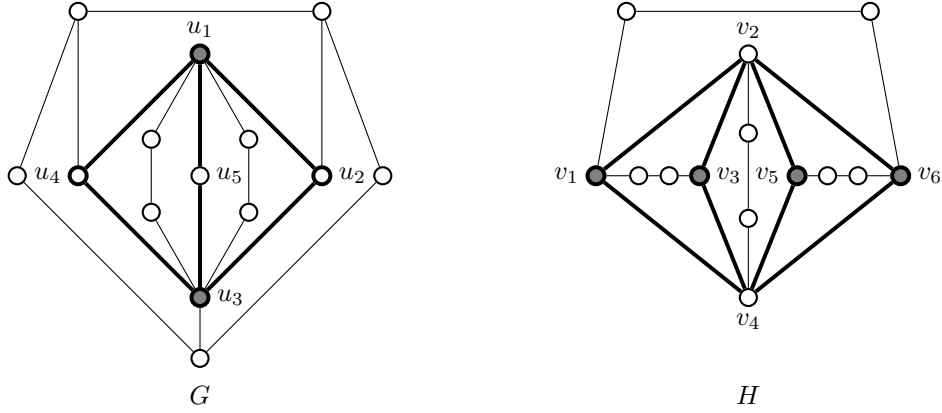

For an example, consider Figure \ref{fig:5_disloc_example}.
No two of these cycles in $G$ are dislocated, as they fail either condition (2) or (3) of the definition.
In $H$, any pair $C$ and $D$ of 4-cycles such that $C$ and $D$ both dominate one of their two regions is a dislocated pair.

\section{Bounding the order, part I: An abundance of 4-cycles}
\label{sec:bounding_1}

In this section, we consider pentagulations containing two or more dislocated 4-cycles.
But first, we handle a simple case, for which we recall the well-known theorem stating that if a graph of order $n$ and maximum degree $\Delta$ is dominated by $\gamma$ vertices, then $n\leq \gamma(\Delta + 1)$ (see, for example, \cite{gd_chartrand_96}).

\begin{lem}
	Let $G$ be a pentagulation of order $n$ and maximum degree $\Delta \geq 3$.
	If any 4-cycle of $G$ dominates $G$, then $n \leq 3\Delta -1$.
	\label{lem_4_cycle_dom_bound}
\end{lem}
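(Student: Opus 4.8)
The plan is to exploit the hypothesis that $C$ dominates \emph{all} of $G$, which forces $C$ to dominate both of the regions it bounds, so that Theorem \ref{thm:4_cycle_description} applies on each side. Write $C = v_1, v_2, v_3, v_4$. By Remark \ref{rem:cycle_sep} the 4-cycle $C$ is Jordan separating, so both $\Int(C)$ and $\ext(C)$ contain vertices, and since $C$ dominates $G$ it dominates each of these regions. Theorem \ref{thm:4_cycle_description} (and its exterior analogue, licensed earlier) then describes each region completely: $\Int[C]$ consists of $C$ together with alternating length-2 and length-3 paths joining one non-adjacent pair of $C$, governed by a parameter $k \geq 1$, and $\ext[C]$ is built the same way from a pair governed by a parameter $k' \geq 1$. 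Counting internal vertices, a region with parameter $k$ contributes $2k + (k-1) = 3k-1$ vertices, so $n = 4 + (3k-1) + (3k'-1) = 3(k+k') + 2$. The whole problem thus reduces to bounding $k+k'$ by $\Delta - 1$.

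The only two non-adjacent pairs in $C$ are $\{v_1,v_3\}$ and $\{v_2,v_4\}$, and I would split on whether the interior and exterior are governed by the same pair or by different pairs. If the same pair, say $\{v_1,v_3\}$, governs both regions, then $\{v_1,v_3\}$ dominates all of $G$ (the vertices $v_2,v_4$ being dominated along $C$), and the quoted domination bound \cite{gd_chartrand_96} gives $n \leq 2(\Delta+1) = 2\Delta + 2 \leq 3\Delta - 1$, using $\Delta \geq 3$. This case is immediate.

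The main work is the remaining case, where $\Int(C)$ is governed by $\{v_1,v_3\}$ (with parameter $k$) and $\ext(C)$ by $\{v_2,v_4\}$ (with parameter $k'$). Here I would read the corner degrees directly off the two path-systems. The vertex $v_1$ is an endpoint of each of the $k$ interior paths of length 3 and each of the $k-1$ interior paths of length 2, giving $2k-1$ interior neighbours; it has no exterior neighbour, since the exterior paths join $v_2$ and $v_4$; and it has exactly its two cycle-neighbours along $C$. Hence $\deg(v_1) = 2 + (2k-1) = 2k+1 \leq \Delta$, so $2k \leq \Delta - 1$. The symmetric computation at $v_2$ gives $2k' \leq \Delta - 1$. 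Adding these and substituting into $n = 3(k+k') + 2$ yields $n \leq 3(\Delta - 1) + 2 = 3\Delta - 1$, as required.

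The step I expect to be most delicate is the degree bookkeeping in this last case. The bound is tight precisely because, when the two governing pairs differ, each corner vertex's degree is controlled by a single region's parameter, so $k$ and $k'$ may each grow up to $(\Delta-1)/2$; trying instead to force both regions onto the same pair overloads one vertex and gives a strictly smaller bound. The one place where the full strength of the structure theorem is genuinely needed is in asserting that $v_1$ has no exterior neighbour at all: this is exactly what Theorem \ref{thm:4_cycle_description} guarantees, since the exterior paths are internally disjoint from $C - \{v_2,v_4\}$, and it is what a crude domination count alone could not supply.
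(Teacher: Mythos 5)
Your proposal is correct and follows essentially the same route as the paper: both split on whether the interior and exterior are governed by the same non-adjacent pair (disposing of that case via the $\gamma(\Delta+1)$ domination bound) or by different pairs, and in the latter case both read the bound $k,k' \leq \frac{\Delta-1}{2}$ off the degrees of the governing vertices using Theorem \ref{thm:4_cycle_description} and then count $4 + (3k-1) + (3k'-1) \leq 3\Delta-1$. Your write-up is merely a little more explicit about the degree bookkeeping at $v_1$ than the paper's.
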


\begin{proof}
	Let $G$ be a pentagulation of order $n$ and maximum degree $\Delta$ that is dominated by the 4-cycle $C: v_1, v_2, v_3, v_4$.
	Since $\Int(C)$ is dominated by $C$, we have without loss of generality, per Theorem \ref{thm:4_cycle_description}, that every vertex of $\Int(C)$ lies on a $v_1-v_3$ path of length 2 or 3.
	There are at most $\frac{\Delta - 1}{2}$ paths of length 3 in $\Int(C)$, and at most $\frac{\Delta-3}{2}$ paths of length 2 in $\Int(C)$.
	Because $\ext(C)$ is dominated by $C$, we have by Theorem \ref{thm:4_cycle_description} that every vertex of $\ext(C)$ lies on either a $v_1-v_3$ path, or a $v_2-v_4$ path, and any such path has length 2 or 3. 
	If the vertices of $\ext(C)$ lies on $v_1-v_3$ paths, then $\{v_1, v_3\}$ dominates $\ext(C)$, so $G$ is dominated by two vertices.
	Thus $n \leq 2\Delta + 2 \leq 3\Delta - 1$.
	
	Therefore the vertices of $\ext(C)$ lie on $v_2-v_4$ paths.
	As before, the number of paths of length 3 is bounded above by $\frac{\Delta-1}{2}$, and the number of paths of length 2 is at most $\frac{\Delta - 3}{2}$.
	Each path of length 3 in $\Int(C)$ ($\ext(C)$) contributes 2 to the number $|V(\Int(C))|$ ($|V(\ext(C))|$), and each path of length 2 contributes 1 to $|V(\Int(C))|$ ($|V(\Int(C))|$).
	Thus:
	\begin{align*}
	n = |V(C)| + |V(\Int(C))| + |V(\ext(C))|
	\leq 4 + 2\left[2\left(\frac{\Delta - 1}{2} \right) + 1\left(\frac{\Delta-3}{2} \right) \right] = 3\Delta - 1.
	\end{align*}
\end{proof}

In the proofs of Lemmas \ref{lem_H_as_subgraph} and \ref{lem_I_as_subgraph} to follow, we refer to specific vertices and faces of the graphs $\mathcal{H}$ and $\mathcal{I}$ by the labels given in Figure \ref{fig:5_HI_detail}.

\begin{figure}[h]
\centering
\begin{tikzpicture}
[scale = 0.9, inner sep=0.9mm, scale=0.9, 
vertex/.style={circle,thick,draw},
dvertex/.style={rectangle,thick,draw, inner sep=1.3mm}, 
thickedge/.style={line width=1.5pt}] 

\newcommand\sca{1.5}

\foreach \x in {0,...,4} 
\node[vertex](\x) at (90+\x*72:2*\sca) {};

\node[vertex, fill=black!50] at (90+0*72:2*\sca) [label=above:$v_1$]{};
\node[vertex, fill=black!50] at (90+1*72:2*\sca) [label=left:$v_2$]{};
\node[vertex, fill=black!50] at (90+4*72:2*\sca) [label=right:$v_4$]{};

\node[vertex] at (90+2*72:2*\sca) [label=below:$z_1$]{};
\node[vertex] at (90+3*72:2*\sca) [label=below:$z_2$]{};

\node[vertex, fill=black!50](c) at (0, 0) [label=below:$v_3$]{};
\node[vertex](a) at (0,1.3*\sca) [label=right:$w_1$]{};
\node[vertex](b) at (0,0.66*\sca) [label=right:$w_2$]{};

\draw (0)--(a)--(b)--(c);
\draw (1)--(2)--(3)--(4);
\draw[thickedge] (4)--(0)--(1);
\draw[thickedge] (4)--(c)--(1);

\node at (0,-3.8) {$\mathcal{H}$};

\node at (45:1.65) {\bm{$r_2$}};
\node at (135:1.65) {\bm{$r_1$}};
\node at (270:\sca) {\bm{$r_3$}};

\begin{scope}[shift={(7,0)}]

\draw (0,2*\sca) arc (90:-90:2*\sca);

\node[vertex, fill=black!50] (1) at (0,2*\sca) [label=left:$v_1$]{};
\node[vertex=] (2) at (\sca,\sca) [label=right:$v_2$]{};
\node[vertex, fill=black!50] (3) at (0,0) [label=left:$v_3$]{};
\node[vertex] (4) at (-\sca,\sca) [label=left:$v_7$]{};

\node[vertex] (6) at (\sca,-\sca) [label=right:$v_4$]{};
\node[vertex, fill=black!50] (7) at (0,-2*\sca) [label=left:$v_5$]{};
\node[vertex] (8) at (-\sca,-\sca) [label=left:$v_6$]{};

\draw[thickedge] (1)--(2)--(3)--(4)--(1);
\draw[thickedge] (3)--(6)--(7)--(8)--(3);

\node[vertex] (a) at (0, 1.33*\sca) [label=right:$w_1$]{};
\node[vertex] (b) at (0, 0.66*\sca) [label=right:$w_2$]{};
\node[vertex] (c) at (0, -0.66*\sca) [label=right:$z_1$]{};
\node[vertex] (d) at (0, -1.33*\sca) [label=right:$z_2$]{};

\draw (1)--(a)--(b)--(3)--(c)--(d)--(7);

\node at (0,-3.8) {$\mathcal{I}$};

\node at (-0.5*\sca, \sca) {\bm{$r_1$}};
\node at (0.5*\sca, \sca) {\bm{$r_2$}};
\node at (-0.5*\sca, -\sca) {\bm{$r_3$}};
\node at (0.5*\sca, -\sca) {\bm{$r_4$}};
\node at ( 1*\sca,0) {\bm{$r_5$}};
\node at ( -1*\sca,0) {\bm{$r_0$}};

\end{scope}

\end{tikzpicture}
\caption{The graphs $\mathcal{H}$ and $\mathcal{I}$, with the labels used in the proofs of Lemmas \ref{lem_H_as_subgraph} and \ref{lem_I_as_subgraph}.}
\label{fig:5_HI_detail}
\end{figure}
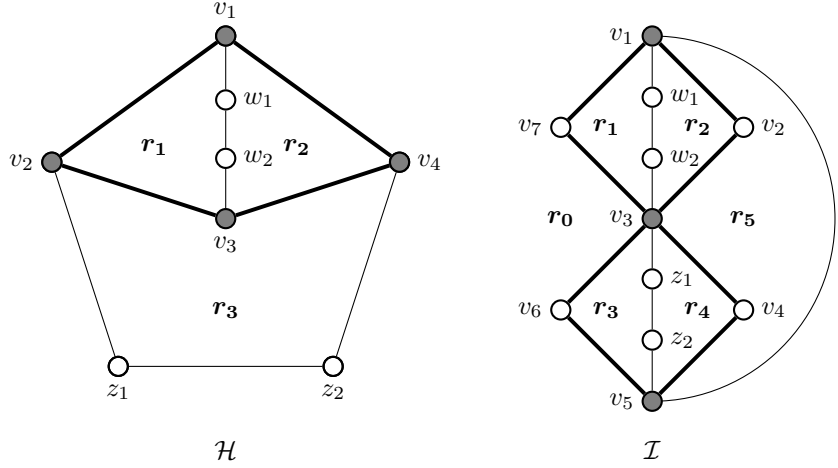

\begin{lem}
	Let $G$ be a pentagulation of diameter 3, order $n$ and maximum degree $\Delta$.
	If $G$ contains $\mathcal{H}$ as a subgraph, then $n \leq 3\Delta - 1$.
	\label{lem_H_as_subgraph}
\end{lem}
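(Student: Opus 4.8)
The plan is to reduce to Lemma~\ref{lem_4_cycle_dom_bound} by showing that the bold $4$-cycle $C = v_1v_2v_3v_4$ coming from the copy of $\mathcal H$ dominates all of $G$. Using the labels of Figure~\ref{fig:5_HI_detail}, the embedded $\mathcal H$ places the path $v_1w_1w_2v_3$ on one side of $C$ (call the corresponding region $A$) and the path $v_2z_1z_2v_4$ on the other (region $B$). By Remark~\ref{rem:cycle_sep} the cycle $C$ Jordan separates $G$ and dominates one of $A$, $B$; since $\mathcal H$ admits the automorphism $v_1\mapsto v_2\mapsto v_3\mapsto v_4\mapsto v_1$, $w_i\leftrightarrow z_i$, which interchanges the two sides of $C$, I may assume without loss of generality that $C$ dominates $A$. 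By Lemma~\ref{lem_5_4_degree} every vertex of $A$ then has degree $2$; in particular $N(w_1)=\{v_1,w_2\}$ and $N(w_2)=\{w_1,v_3\}$, and by Theorem~\ref{thm:4_cycle_description} the region $A$ is a union of alternating $v_1$--$v_3$ paths. If I can show that $C$ also dominates $B$, then $C$ dominates $G$ and Lemma~\ref{lem_4_cycle_dom_bound} gives $n\le 3\Delta-1$ at once.

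To control $B$ I would exploit the two degree-$2$ vertices $w_1$ and $w_2$ together with the diameter-$3$ hypothesis. A breadth-first search from $w_1$, whose only neighbours are $v_1$ and $w_2$ (and $w_2$ has only the further neighbour $v_3$), shows that every vertex of $G$ lies within distance $2$ of $v_1$ or is a neighbour of $v_3$; symmetrically, the search from $w_2$ shows every vertex lies within distance $2$ of $v_3$ or is a neighbour of $v_1$. Intersecting these two conclusions, any vertex $p\in B$ that is adjacent to no vertex of $C$ must be at distance exactly $2$ from both $v_1$ and $v_3$, with the intermediate vertex lying in $B$; hence both $v_1$ and $v_3$ would have a neighbour in $B$ adjacent to $p$. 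Consequently, proving that $C$ dominates $B$ reduces to the single claim that neither $v_1$ nor $v_3$ has a neighbour in $B$: once this holds, $B\subseteq N(v_2)\cup N(v_4)$, so $C$ (indeed $\{v_2,v_4\}$) dominates $B$.

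The main obstacle is this last claim, and here I would argue locally with the pentagonal faces. Suppose, say, that $v_3$ has a neighbour $s\in B$. Since $v_3$ is adjacent to $v_2$ and $v_4$ on $C$ and $B$ lies on the side of $C$ away from the $A$-neighbours of $v_3$, the edge $v_3s$ lies inside the angular sector at $v_3$ that, in $\mathcal H$, is occupied by the single pentagonal face bounded by $v_3v_2z_1z_2v_4$. Thus this $5$-cycle becomes a Jordan separating cycle of $G$ with $s$ in its interior, and by Lemma~\ref{lem_5_5} that interior can be dominated neither by a single vertex nor by an adjacent pair. Tracking the pentagonal face incident with the edge $v_2v_3$ (respectively $v_3v_4$) on the $B$-side, and using that the edges $z_1v_2$ and $z_2v_4$ are already present, I expect to force either a chord of $v_3v_2z_1z_2v_4$ --- producing a triangle or a $4$-cycle dominated by one or two adjacent vertices, contradicting Corollary~\ref{cor:5_3_notriangle} or Lemmas~\ref{lem_5_4_1} and~\ref{lem_5_4_2consecutive} --- or a configuration directly violating Lemma~\ref{lem_5_5}. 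The analogous argument applied to $v_1$, now using the $5$-cycle $v_1v_2z_1z_2v_4$ (with the neighbour lying in its exterior, so that Remark~\ref{rem:cycle_sep} and Lemma~\ref{lem_5_5} apply to the exterior), rules out a $B$-neighbour of $v_1$. Enumerating how the mandatory pentagons can close up around $v_3$ and $v_1$ against the fixed path $v_2z_1z_2v_4$ is where I anticipate essentially all of the work lies; the distance bookkeeping of the previous paragraph is routine by comparison.

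With the claim established, $C$ dominates both $A$ and $B$, hence all of $G$, and Lemma~\ref{lem_4_cycle_dom_bound} yields $n\le 3\Delta-1$, completing the proof.
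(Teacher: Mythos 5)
Your overall strategy (show the $4$-cycle $C$ dominates all of $G$, then invoke Lemma~\ref{lem_4_cycle_dom_bound}) is exactly the paper's, and your first two paragraphs are sound. But the proof has a genuine gap at its crux: you reduce everything to the claim that neither $v_1$ nor $v_3$ has a neighbour in the region $B$, and then you do not prove that claim --- the third paragraph is an explicit plan (``I would argue locally\dots I expect to force\dots I anticipate essentially all of the work lies'' there) rather than an argument. The sketch you give does not close: if $v_3$ had a neighbour $s$ in the face $r_3$, the $5$-cycle $v_2,z_1,z_2,v_4,v_3$ would indeed Jordan separate $G$ and dominate its interior, but Lemma~\ref{lem_5_5} only forbids domination by a single vertex or an adjacent pair, and nothing you have written rules out domination by, say, the non-adjacent pair $\{v_2,v_4\}$ or $\{v_3,z_1\}$. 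So the contradiction you ``expect to force'' is not forced by the cited lemmas, and the claim itself is stronger than what the lemma needs.

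The frustrating part is that your own paragraph two already contains everything required. You correctly showed that an undominated $p\in B$ must have a neighbour $q_1\in B\cap N(v_1)$ and a neighbour $q_3\in B\cap N(v_3)$. Now just observe where these live: the only face of $\mathcal H$ inside $B$ incident with $v_1$ is the outer face, and the only one incident with $v_3$ is $r_3$, so $q_1$ and $q_3$ lie in different faces of $\mathcal H$, separated by the path $v_2,z_1,z_2,v_4$; the vertex $p$ lies in one of these two regions and hence cannot be adjacent to both $q_1$ and $q_3$ by planarity. That one observation finishes the proof and makes the unproven claim unnecessary. For comparison, the paper's proof is even more direct: it notes that any $u\in\ext(C)$ with $d(u,C)\ge 2$ lies in $r_3$ or in the outer face, and computes $d(u,w_1)\ge 4$ in the first case and $d(u,w_2)\ge 4$ in the second, contradicting the diameter. (A cosmetic slip: the symmetry of $\mathcal H$ you invoke sends $w_1\mapsto z_1$, $w_2\mapsto z_2$ but $z_1\mapsto w_2$, $z_2\mapsto w_1$, not $w_i\leftrightarrow z_i$; the WLOG itself is fine.)
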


\begin{proof}
	Assume $G$ contains $\mathcal{H}$ (Figure \ref{fig:5_HI}) as a subgraph, and let $C:v_1, v_2, v_3, v_4$ be the 4-cycle of $H$.
	Label the remaining vertices of $\mathcal{H}$ so that $v_1, w_1, w_2, v_3$ and $v_2, z_1, z_2, v_4$ are paths of length 3 (see Figure \ref{fig:5_HI_detail}), with $w_1$ and $w_2$ lying in $\Int(C)$ and $z_1$ and $z_2$ lying in $\ext(C)$.
	Since $G$ has diameter 3, we know that, without loss of generality, the cycle $C$ dominates its interior by Remark \ref{rem:cycle_sep}. 
	Assume to the contrary that $C$ does not dominate its exterior. 
	Then there is a vertex $u\in \ext(C)$ such that $d(u,C) \geq 2$. 
	If $u$ lies in the outer face of $\mathcal{H}$, then $d(u,w_2) \geq 4$.
	If $u$ lies in $r_3$, then $d(u,w_1) \geq 4$.
	In either case, we obtain a contradiction, so $C$ dominates its exterior and is thus a dominating 4-cycle.
	That $n\leq 3\Delta - 1$ follows immediately from Lemma \ref{lem_4_cycle_dom_bound}.
\end{proof}

\begin{thm}
	Let $G$ be a pentagulation of diameter 3, order $n$, and maximum degree $\Delta \geq 3$. 
	If $G$ contains two dislocated 4-cycles, $C_1$ and $C_2$, then $G$ contains $\mathcal{I}$ as a subgraph (see Figure \ref{fig:5_HI}), or $n \leq 3\Delta - 1$.
	\label{thm:2_disloc_cycles_HI_subgraph}
\end{thm}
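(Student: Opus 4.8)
The plan is to first dispose of the easy case and then force the two dominating pairs to coincide with the corner-sharing structure of $\mathcal{I}$. By Remark \ref{rem:cycle_sep} each of $C_1, C_2$ dominates at least one of its two sides; if either cycle dominates both sides, and hence all of $G$, then Lemma \ref{lem_4_cycle_dom_bound} immediately gives $n \le 3\Delta - 1$. So I would assume henceforth that $C_1$ dominates exactly the region $R_1$ and $C_2$ exactly $R_2$, where $R_1 \cap R_2 = \emptyset$ and the dominating pairs $D_1 = \{u_1, v_1\}$, $D_2 = \{u_2, v_2\}$ are distinct non-adjacent diagonal pairs (non-adjacency by the remark following the definition of dislocation, which invokes Lemma \ref{lem_5_4_2consecutive}). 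Applying Theorem \ref{thm:4_cycle_description} to each dominated region $R_i$ presents it as a system of internally disjoint, alternating $u_i$–$v_i$ paths of lengths $2$ and $3$; in particular each region contains at least one path of length $3$, say $P_1 = u_1, p, q, v_1 \subseteq R_1$ and $P_2 = u_2, r, s, v_2 \subseteq R_2$, whose internal vertices all have degree $2$ by Lemma \ref{lem_5_4_degree}.

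The heart of the argument is a ``deep vertex'' distance computation that uses diameter $3$ to pin the two pairs together. Since $R_1 \cap R_2 = \emptyset$, the four internal vertices $p,q,r,s$ are distinct, and $N(p) = \{u_1, q\}$, $N(q) = \{p, v_1\}$, $N(r) = \{u_2, s\}$, $N(s) = \{r, v_2\}$. Because $G$ has diameter $3$, every geodesic realising $d(p,r)\le 3$, $d(p,s)\le 3$, $d(q,r)\le 3$ and $d(q,s)\le 3$ is constrained to leave $p, q$ only through $u_1$ or $v_1$ and to reach $r, s$ only through $u_2$ or $v_2$. Tracing these short paths, I would show that $D_1$ and $D_2$ cannot be vertex-disjoint with no edge between them: they must share a vertex or be joined by an edge. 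The bookkeeping here is where the planar embedding must be watched carefully, since one has to exclude the degenerate identifications (for instance an endpoint of one diagonal lying in the interior of the other cycle) that disjointness of $R_1, R_2$ together with chordlessness (Corollary \ref{cor:5_3_notriangle}) actually forbid.

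With the pairs linked, I would split into two outcomes. If $D_1 \cap D_2 = \{x\}$, then $x$ is a diagonal endpoint of both $C_1$ and $C_2$, each carrying a length-$3$ path into its own (disjoint) dominated region out of $x$; assembling $C_1$, $C_2$, $P_1$ and $P_2$ then exhibits precisely the two corner-sharing $4$-cycles each with an internal length-$3$ path that constitute $\mathcal{I}$ (Figure \ref{fig:5_HI_detail}), and I would verify there are no coincidences among the remaining vertices obstructing the embedding. If instead $D_1$ and $D_2$ are disjoint but joined by an edge, I would argue that this edge, together with the two chordless $4$-cycles and their alternating path systems, forces a single $4$-cycle dominating all of $G$ (or reduces to that case after an identification), so that Lemma \ref{lem_4_cycle_dom_bound} again yields $n \le 3\Delta - 1$.

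The step I expect to be the main obstacle is the topological case analysis underpinning the ``pairs must be linked'' claim: because the dominated region of a $4$-cycle may be its exterior as readily as its interior, one must handle nested, side-by-side and overlapping placements of $C_2$ relative to $C_1$ uniformly, repeatedly using $R_1 \cap R_2 = \emptyset$, chordlessness, and the degree-$2$ property of interior vertices to rule out configurations in which a short path between deep vertices could avoid passing through both pairs. Once that linking is established, extracting $\mathcal{I}$ or collapsing to a single dominating $4$-cycle should be comparatively mechanical.
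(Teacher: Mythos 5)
Your overall strategy is the paper's: exploit the degree-$2$ internal vertices of the length-$3$ paths guaranteed by Theorem \ref{thm:4_cycle_description} and Lemma \ref{lem_5_4_degree}, and use the diameter-$3$ condition on these ``deep'' vertices to force edges between the two dominating pairs. But the step you yourself flag as the main obstacle --- that $D_1$ and $D_2$ must share a vertex or be joined by an edge --- is not a preliminary to the proof; it \emph{is} the proof. The paper establishes it by a five-way case analysis on $|V(C_1)\cap V(C_2)|$ (sharing an edge, three vertices, one vertex, nothing, or two non-adjacent vertices), each with further subcases determined by which diagonal of $C_2$ dominates, and several of those subcases require auxiliary constructions (e.g.\ Case 5 builds a third $4$-cycle $C$ and shows $C$ is dislocated from $C_1$ or $C_2$ to reduce to an earlier case). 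Deferring all of this to ``bookkeeping'' leaves the theorem unproved.

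Moreover, the two-outcome endgame you propose is inaccurate in both branches. If $D_1\cap D_2=\{x\}$ you do not in general obtain $\mathcal{I}$: the graph $\mathcal{I}$ requires the two $4$-cycles themselves to meet in exactly one vertex, which does not follow from the dominating pairs meeting in one vertex (in the paper's Case 2.2, where $C_1$ and $C_2$ share three vertices and the pairs share $v_3$, the configuration is outright impossible because $d(w_1,z_2)>3$, not an instance of $\mathcal{I}$). And when $D_1$ and $D_2$ are joined by edges, neither $C_1$ nor $C_2$ becomes a dominating $4$-cycle; what the distance arguments actually force is a \emph{new} $4$-cycle on vertices of $D_1\cup D_2$ carrying a length-$3$ path on each of its two sides --- the configuration $\mathcal{H}$ of Figure \ref{fig:5_HI} --- and one then needs the separate argument of Lemma \ref{lem_H_as_subgraph} (again a diameter computation from the deep vertices) to conclude that this new cycle dominates $G$ before Lemma \ref{lem_4_cycle_dom_bound} applies. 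Without introducing $\mathcal{H}$ and proving that intermediate domination statement, your reduction to a ``single $4$-cycle dominating all of $G$'' does not go through.
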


\begin{proof}
	Let $G$ be a pentagulation of diameter 3, order $n$ and maximum degree $\Delta \geq 3$.
	Suppose that $G$ contains two dislocated 4-cycles $C_1 : v_1, v_2, v_3, v_4$ and $C_2 : u_1, u_2, u_3, u_4$.
	We consider all the possible configurations for the two dislocated 4-cycles.
	Note that if any 4-cycle dominates $G$, or if $G$ contains an $\mathcal{H}$ subgraph, then $n \leq 3\Delta-1$ by Lemmas \ref{lem_4_cycle_dom_bound} and \ref{lem_H_as_subgraph}.
	Assume without loss of generality that $C_1$ dominates its interior.
	Per Theorem \ref{thm:4_cycle_description}, and without loss of generality, the region $\Int(C_1)$ is dominated by $\{v_1, v_3\}$, and there exist vertices $w_1$ and $w_2$ in $\Int(C_1)$ such that $P_1: v_1, w_1, w_2, v_3$ is a path in $G$.
	
	\textit{Case 1:} The cycles $C_1$ and $C_2$ have exactly two adjacent vertices in common.\\
	By symmetry, we may assume without loss of generality that $v_2 = u_1$ and $v_3 = u_4$ (See Figure \ref{fig:5_H1_Configs}, (1)).
	
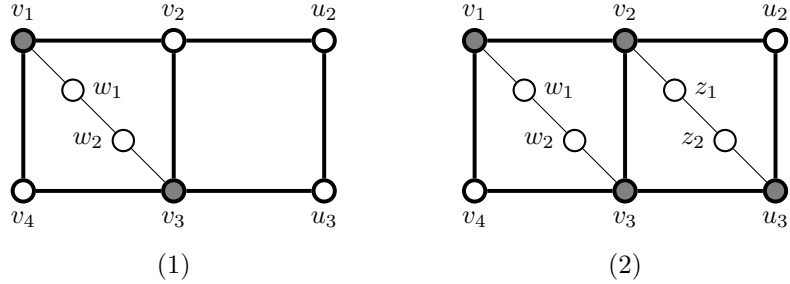
\begin{figure}[h]
\centering
\begin{tikzpicture}
[inner sep=1mm, 
vertex/.style={circle,thick,draw},
dvertex/.style={rectangle,thick,draw, inner sep=1.3mm}, 
thickedge/.style={line width=1.5pt}] 

\node[vertex, fill=black!50, thickedge](v1) at (0,0) [label=above:$v_1$]{};
\node[vertex, thickedge](v2) at (2,0) [label=above:$v_2$]{};
\node[vertex, fill=black!50, thickedge](v3) at (2,-2) [label=below:$v_3$]{};
\node[vertex, thickedge](v4) at (0,-2) [label=below:$v_4$]{};
\node[vertex, thickedge](u2) at (4,0) [label=above:$u_2$]{};
\node[vertex, thickedge](u3) at (4,-2) [label=below:$u_3$]{};

\node[vertex](w1) at (0.66,-0.66) [label=right:$w_1$]{};
\node[vertex](w2) at (1.33,-1.33) [label=left:$w_2$]{};

\draw[thickedge] (v1)--(v2)--(u2)--(u3)--(v3)--(v4)--(v1) (v2)--(v3);
\draw (v1)--(w1)--(w2)--(v3);

\node at (2, -3) {(1)};

\begin{scope}[shift={(6,0)}]

\node[vertex, fill=black!50, thickedge](v1) at (0,0) [label=above:$v_1$]{};
\node[vertex, thickedge, fill=black!50](v2) at (2,0) [label=above:$v_2$]{};
\node[vertex, fill=black!50, thickedge](v3) at (2,-2) [label=below:$v_3$]{};
\node[vertex, thickedge](v4) at (0,-2) [label=below:$v_4$]{};
\node[vertex, thickedge](u2) at (4,0) [label=above:$u_2$]{};
\node[vertex, thickedge, fill=black!50](u3) at (4,-2) [label=below:$u_3$]{};

\node[vertex](w1) at (0.66,-0.66) [label=right:$w_1$]{};
\node[vertex](w2) at (1.33,-1.33) [label=left:$w_2$]{};

\node[vertex](z1) at (2.66,-0.66) [label=right:$z_1$]{};
\node[vertex](z2) at (3.33,-1.33) [label=left:$z_2$]{};

\draw[thickedge] (v1)--(v2)--(u2)--(u3)--(v3)--(v4)--(v1) (v2)--(v3);
\draw (v1)--(w1)--(w2)--(v3);
\draw (v2)--(z1)--(z2)--(u3);

\node at (2, -3) {(2)};

\end{scope}

\end{tikzpicture}
\caption{Two dislocated 4-cycles, $C_1$ and $C_2$, that share an edge, as in Case 1 of the proof of Theorem \ref{thm:2_disloc_cycles_HI_subgraph}.}
\label{fig:5_H1_Configs}
\end{figure}
	
	Since $C_1$ and $C_2$ are dislocated, both $u_2$ and $u_3$ lie in $\ext(C_1)$.
	By Corollary \ref{cor:5_3_notriangle}, the pentagulation $G$ is triangle-free, so $d_G(w_1, C_2) =2$. 
	Since $C_2$ dominates either its interior or exterior, we have that $C_2$ dominates its interior. 
	By Theorem \ref{thm:4_cycle_description}, there exist vertices $z_1$ and $z_2$ in $\Int(C_2)$ such that either $P_2: v_2, z_1, z_2, u_3$ is a path in $G$, or $P_2': u_2, z_1, z_2, v_3$ is a path in $G$. 
	If $G$ contains the path $P_2'$, then there is a $z_1-w_1$ path $R$ of length at most 3 in $G$.
	Since $G$ is triangle free, the vertex $w_1$ is only adjacent to $v_1$ and $w_2$, and $z_1$ is only adjacent to $u_2$ and $z_2$.
	Thus, since $G$ is a plane graph and $d_G(w_1,z_1) \leq 3$, $v_1$ and $u_2$ are adjacent. 
	This induces a triangle, which is impossible.
	Therefore $G$ contains the path $P_2$, not the path $P_2'$ (see Figure \ref{fig:5_H1_Configs}, (2)).
	Since $G$ has diameter 3, there exists some $w_1-z_2$ path of length at most 3. 
	By the same argument as in the prior paragraph, we deduce that $v_1$ and $u_3$ are adjacent 
	But now we have induced $\mathcal{H}$ as a subgraph of $G$, with the 4-cycle of $\mathcal{H}$ corresponding to the 4-cycle of $G$ on $v_1, v_2=u_1, v_3=u_4, u_3$.
	By Lemma \ref{lem_H_as_subgraph}, we have $n \leq 3\Delta - 1$.
	
	\textit{Case 2:} The dislocated cycles $C_1$ and $C_2$ have exactly three vertices in common.\\
	Up to symmetry, there are two different ways that $C_1$ could share three vertices with $C_2$: the cycles may share both the dominating vertices $v_1$ and $v_3$, or only one of them.
	
	\textit{Case 2.1:} The vertices $v_1$ and $v_3$ are in both $C_1$ and $C_2$.\\
	Assume without loss of generality that $v_1 = u_1$, $v_2 = u_4$ and $v_3 = u_3$ 
	(see Figure \ref{fig:5_H2_Configs} (1)).
	
\begin{figure}[h]
\centering
\begin{tikzpicture}
[inner sep=0.9mm, scale=0.9, 
vertex/.style={circle,thick,draw},
dvertex/.style={rectangle,thick,draw, inner sep=1.3mm}, 
thickedge/.style={line width=1.5pt}] 

\node[vertex, thickedge, fill=black!50](v1) at (0,1.5) [label=above:$v_1$] {};
\node[vertex, thickedge, fill=black!50](v3) at (0,-1.5) [label=below:$v_3$] {};
\node[vertex, thickedge](v2) at (1.5,0) [label=right:$v_2$] {};
\node[vertex, thickedge](v4) at (-1.5,0) [label=left:$v_4$] {};
\node[vertex](w1) at (0, 0.4) [label=right:$w_1$] {};
\node[vertex](w2) at (0,-0.4) [label=right:$w_2$] {};

\draw[thickedge] (v1)--(v2)--(v3)--(v4)--(v1);
\draw (v1)--(w1)--(w2)--(v3);

\node[vertex, thickedge](u2) at (3,0) [label=right:$u_2$] {};

\draw[thickedge] (v1)--(u2)--(v3);

\node at (0.75, -3) {(1)};

\begin{scope}[shift={(7,1)}]

\node[vertex, thickedge, fill=black!50](v1) at (0,1.5) [label=above:$v_1$] {};
\node[vertex, thickedge, fill=black!50](v3) at (0,-1.5) [label=below:$v_3$] {};
\node[vertex, thickedge](v2) at (1.5,0) [label=right:$v_2$] {};
\node[vertex, thickedge](v4) at (-1.5,0) [label=left:$v_4$] {};
\node[vertex](w1) at (0, 0.4) [label=right:$w_1$] {};
\node[vertex](w2) at (0,-0.4) [label=right:$w_2$] {};

\draw[thickedge] (v1)--(v2)--(v3)--(v4)--(v1);
\draw (v1)--(w1)--(w2)--(v3);

\node[vertex, thickedge](u3) at (0,-3) [label=right:$u_3$] {};

\draw[thickedge] (v2)--(u3)--(v4);

\node at (0, -4) {(2)};

\end{scope}

\end{tikzpicture}
\caption{Case 2.1 in the proof of Theorem \ref{thm:2_disloc_cycles_HI_subgraph} has the two dislocated 4-cycles $C_1$ and $C_2$ sharing $v_1$, $v_2$ and $v_3$. 
	Case 2.2 has the cycles sharing $v_2$, $v_3$ and $v_4$.}
\label{fig:5_H2_Configs}
\end{figure}
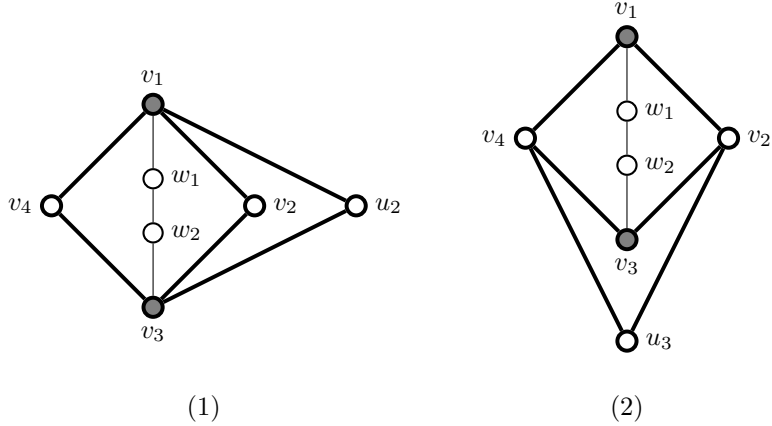
	
	Since $C_1$ and $C_2$ are dislocated, the set $\{u_2, v_2\}$ dominates either the interior or exterior of $C_2$. 
	We claim the set dominates the interior of $C_2$.
	By Lemma \ref{lem_5_4_final}, the vertex $v_2$ does not have any neighbor in $\Int(C_1)$, and thus has no neighbors in $\ext(C_2)$. 
	Per Lemma \ref{lem_5_4_1}, no single vertex of $C_2$ dominates the exterior of $C_2$, so the set $\{v_2, u_2\}$ does not dominate $\ext(C_2)$, proving the claim.
	
	Since $\{u_2, v_2\}$ dominates $\Int(C_2)$, there are two vertices $z_1$ and $z_2$ in $\Int(C_2)$ such that $P_2: v_2, z_1, z_2, u_2$ is a path in $G$.
	The vertices of $C_1\cup C_2 \cup P_1 \cup P_2$ induce an $\mathcal{H}$ subgraph in $G$.
	Thus $n\leq 3\Delta - 1$ per Lemma \ref{lem_H_as_subgraph}.
	
	\textit{Case 2.2:} Only one of $v_1$ and $v_3$ is common to both $C_1$ and $C_2$.\\
	Assume without loss of generality that $v_2=u_2$, $v_3=u_1$ and $v_4=u_4$ (see Figure \ref{fig:5_H2_Configs} (2)).
	Since $G$ is triangle-free, the distance $d_G(w_1, C_2) = 2$, so $C_2$ does not dominate its exterior and thus dominates its interior.
	By Theorem \ref{thm:4_cycle_description}, there are vertices $z_1$ and $z_2$ in $\Int(C_2)$ such that either $P_2: v_3, z_1, z_2, u_3$ is a path of $G$, or $P_2': v_2, z_1, z_2, v_4$ is a path of $G$.
	In the latter case, we obtain an $\mathcal{H}$ subgraph on $C_1\cup C_2\cup P_1\cup P_2'$.
	The former case, we have $d(w_1,z_2) > 3$.
	
	\textit{Case 3:} The cycles $C_1$ and $C_2$ have exactly one vertex in common.\\
	Since $C_1$ and $C_2$ only share one vertex, and $G$ is triangle-free, either $d(w_1, V(C_2)) \geq 2$ or $d(w_2, V(C_2)) \geq 2$. 
	As such, $C_2$ does not dominate its exterior, and thus dominates its interior.
	Up to symmetry, there are four possible cases.
	
	\textit{Case 3.1:} The dislocated cycles $C_1$ and $C_2$ share the vertex $v_2 = u_4$ and $\Int(C_2)$ is dominated by $\{u_1, u_3\}$ (see Figure \ref{fig:5_H3_Configs} (1)).\\
	
\begin{figure}[h]
\centering
\begin{tikzpicture}
[inner sep=0.9mm, scale=0.9, 
vertex/.style={circle,thick,draw},
dvertex/.style={rectangle,thick,draw, inner sep=1.3mm}, 
thickedge/.style={line width=1.5pt}] 

\node[vertex, thickedge, fill=black!50] (v1) at (0, 1.5) [label=above:$v_1$] {};
\node[vertex, thickedge, fill=black!50] (v3) at (0, -1.5) [label=below:$v_3$] {};
\node[vertex, thickedge] (v2) at (1.5, 0) [label=above:$v_2$] {};
\node[vertex, thickedge] (v4) at (-1.5, 0) [label=left:$v_4$] {};

\node[vertex, thickedge, fill=black!50] (u1) at (3, 1.5) [label=above:$u_1$] {};
\node[vertex, thickedge, fill=black!50] (u3) at (3, -1.5) [label=below:$u_3$] {};
\node[vertex, thickedge] (u2) at (4.5, 0) [label=right:$u_2$] {};

\draw[thickedge] (v1)--(v2)--(u1)--(u2)--(u3)--(v2)--(v3)--(v4)--(v1);

\node[vertex] (w1) at (0, 0.4) [label=right:$w_1$] {};
\node[vertex] (w2) at (0, -0.4) [label=right:$w_2$] {};
\node[vertex] (z1) at (3, 0.4) [label=right:$z_1$] {};
\node[vertex] (z2) at (3, -0.4) [label=right:$z_2$] {};

\draw (v1)--(w1)--(w2)--(v3) (u1)--(z1)--(z2)--(u3) ;

\node at (1.5, -2.5) {(1)};

\begin{scope}[shift={(8,0)}]

\node[vertex, thickedge, fill=black!50] (v1) at (0, 1.5) [label=above:$v_1$] {};
\node[vertex, thickedge, fill=black!50] (v3) at (0, -1.5) [label=below:$v_3$] {};
\node[vertex, thickedge, fill=black!50] (v2) at (1.5, 0) [label=above:$v_2$] {};
\node[vertex, thickedge] (v4) at (-1.5, 0) [label=left:$v_4$] {};

\node[vertex, thickedge] (u1) at (3, 1.5) [label=above:$u_1$] {};
\node[vertex, thickedge] (u3) at (3, -1.5) [label=below:$u_3$] {};
\node[vertex, thickedge, fill=black!50] (u2) at (4.5, 0) [label=right:$u_2$] {};

\draw[thickedge] (v1)--(v2)--(u1)--(u2)--(u3)--(v2)--(v3)--(v4)--(v1);

\node[vertex] (w1) at (0, 0.4) [label=right:$w_1$] {};
\node[vertex] (w2) at (0, -0.4) [label=right:$w_2$] {};
\node[vertex] (z1) at (2.6, 0) [label=above:$z_1$] {};
\node[vertex] (z2) at (3.4, 0) [label=above:$z_2$] {};

\draw (v1)--(w1)--(w2)--(v3) (v2)--(z1)--(z2)--(u2) ;

\node at (1.5, -2.5) {(2)};

\end{scope}

\end{tikzpicture}
\caption{In both figures, the dislocated 4-cycles $C_1$ and $C_2$ share the vertex $v_2 = u_4$. 
	We have (1) when the interior of $C_2$ is dominated by $\{u_1, u_3\}$, as in Case 3.1, and we have (2) when the interior of $C_2$ is dominated by $\{u_2, u_4\}$, as in Case 3.2 of the proof of Theorem \ref{thm:2_disloc_cycles_HI_subgraph}.}
\label{fig:5_H3_Configs}
\end{figure}
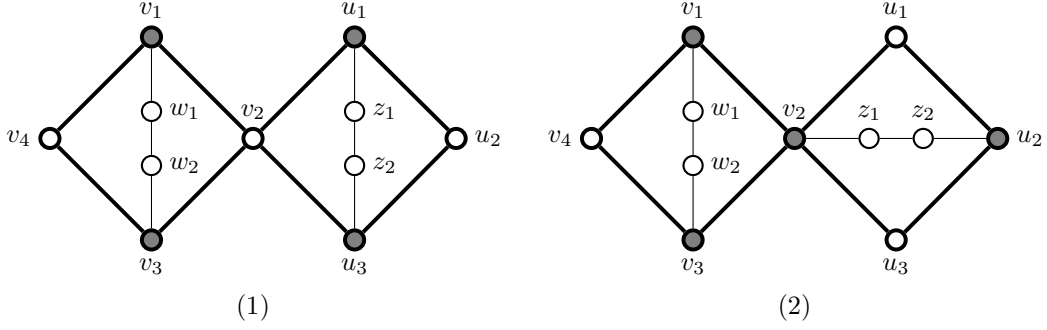
	
	By Theorem \ref{thm:4_cycle_description}, there is a vertex $z_1$ in $\Int(C_2)$ that is adjacent to $u_1$, but not to any other vertex of $C_2$. 
	But then $d_G(w_1, z_1) > 3$, a contradiction.
	
	\textit{Case 3.2:} The dislocated cycles $C_1$ and $C_2$ share the vertex $v_2 = u_4$ and $\Int(C_2)$ is dominated by $\{u_2, u_4\}$ (see Figure \ref{fig:5_H3_Configs} (2)).\\
	By Theorem \ref{thm:4_cycle_description}, there are two vertices $z_1$ and $z_2$ in the interior of $C_2$ such that $P_2: v_2, z_1, z_2, u_2$ is a path in $G$.
	Since $G$ is a triangle-free plane graph, and both $d_G(z_2, w_1) \leq 3$ and $d_G(z_2, w_w) \leq 3$, we have that $u_2$ is adjacent to both $v_1$ and $v_3$. 
	Thus $G$ contains $\mathcal{H}$ as a subgraph.
	
	\textit{Case 3.3:} The dislocated cycles $C_1$ and $C_2$ share the vertex $v_3 = u_1$ and $\Int(C_2)$ is dominated by $\{u_2, u_4\}$ (see Figure \ref{fig:5_H3_Configs2} (1)).\\
	
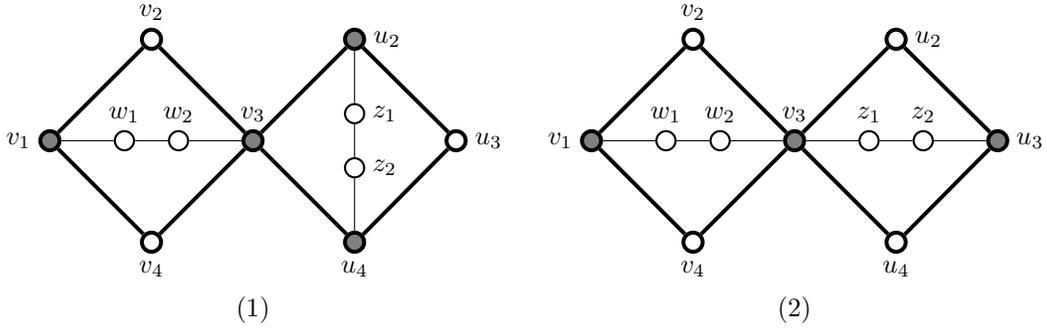
\begin{figure}[h]
\centering
\begin{tikzpicture}
[inner sep=0.9mm, scale=0.9, 
vertex/.style={circle,thick,draw},
dvertex/.style={rectangle,thick,draw, inner sep=1.3mm}, 
thickedge/.style={line width=1.5pt}] 

\node[vertex, thickedge, fill=black!50] (v1) at (-1.5, 0) [label=left:$v_1$] {};
\node[vertex, thickedge, fill=black!50] (v3) at (1.5, 0) [label=above:$v_3$] {};
\node[vertex, thickedge] (v2) at (0, 1.5) [label=above:$v_2$] {};
\node[vertex, thickedge] (v4) at (0,-1.5) [label=below:$v_4$] {};

\node[vertex, thickedge] (u3) at (4.5, 0) [label=right:$u_3$] {};
\node[vertex, thickedge, fill=black!50] (u4) at (3, -1.5) [label=below:$u_4$] {};
\node[vertex, thickedge, fill=black!50] (u2) at (3,1.5) [label=right:$u_2$] {};

\draw[thickedge] (v3)--(v4)--(v1)--(v2)--(v3)--(u2)--(u3)--(u4)--(v3);

\node[vertex] (w1) at (-0.4,0) [label=above:$w_1$] {};
\node[vertex] (w2) at (0.4,0) [label=above:$w_2$] {};
\node[vertex] (z1) at (3, 0.4) [label=right:$z_1$] {};
\node[vertex] (z2) at (3, -0.4) [label=right:$z_2$] {};

\draw (v1)--(w1)--(w2)--(v3) (u2)--(z1)--(z2)--(u4) ;

\node at (1.5, -2.5) {(1)};

\begin{scope}[shift={(8,0)}]

\node[vertex, thickedge, fill=black!50] (v1) at (-1.5, 0) [label=left:$v_1$] {};
\node[vertex, thickedge, fill=black!50] (v3) at (1.5, 0) [label=above:$v_3$] {};
\node[vertex, thickedge] (v2) at (0, 1.5) [label=above:$v_2$] {};
\node[vertex, thickedge] (v4) at (0,-1.5) [label=below:$v_4$] {};

\node[vertex, thickedge, fill=black!50] (u3) at (4.5, 0) [label=right:$u_3$] {};
\node[vertex, thickedge] (u4) at (3, -1.5) [label=below:$u_4$] {};
\node[vertex, thickedge] (u2) at (3,1.5) [label=right:$u_2$] {};

\draw[thickedge] (v3)--(v4)--(v1)--(v2)--(v3)--(u2)--(u3)--(u4)--(v3);

\node[vertex] (w1) at (-0.4,0) [label=above:$w_1$] {};
\node[vertex] (w2) at (0.4,0) [label=above:$w_2$] {};
\node[vertex] (z1) at (2.6, 0) [label=above:$z_1$] {};
\node[vertex] (z2) at (3.4, 0) [label=above:$z_2$] {};

\draw (v1)--(w1)--(w2)--(v3) (v3)--(z1)--(z2)--(u3) ;

\node at (1.5, -2.5) {(2)};

\end{scope}

\end{tikzpicture}
\caption{In both figures, the dislocated 4-cycles $C_1$ and $C_2$ share the vertex $v_3 = u_1$. 
	When the interior of $C_2$ is dominated by $u_2$ and $u_4$, as in Case 3.3 of the proof of Theorem \ref{thm:2_disloc_cycles_HI_subgraph}, (1) occurs.
	When the interior of $C_2$ is dominated by $u_1$ and $u_3$, as in Case 3.4, (2) occurs.}
\label{fig:5_H3_Configs2}
\end{figure}
	
	Reversing the roles of the cycles $C_1$ and $C_2$, we observe that this case is identical to Case 3.2, hence $G$ contains $\mathcal{H}$ as a subgraph, so $n \leq 3\Delta - 1$.
	
	\textit{Case 3.4:} The dislocated cycles $C_1$ and $C_2$ share the vertex $v_3 = u_1$ and $\Int(C_2)$ is dominated by $\{u_1, u_3\}$ (see Figure \ref{fig:5_H3_Configs2} (2)).\\
	By Theorem \ref{thm:4_cycle_description}, there are vertices $z_1$ and $z_2$ in $\Int(C_2)$ such that $P_2: v_3, z_1, z_2, u_3$ is a path in $G$.
	Since $d(w_1, z_2) \leq 3$ $w_1-z_2$, we have that $v_1$ and $u_3$ are adjacent. 
	Thus $\mathcal{I}$ is a subgraph of $G$.
	
	\textit{Case 4:} The dislocated cycles $C_1$ and $C_2$ are disjoint.\\
	In this case, no vertex of $C_2$ is adjacent to $w_1$, so $C_2$ dominates its interior.
	Per Theorem \ref{thm:4_cycle_description}, and without loss of generality, there are vertices $z_1$ and $z_2$ in the interior of $C_2$ and edges $u_1z_1$, $z_1z_2$ and $z_2u_3$.
	Since $G$ has diameter 3, we have that $d_G(w_i,z_j) \leq 3$ for any indices $i$ and $j$ in $\{1,2\}$.
	Since $G$ is triangle-free, it contains all four edges of the form $u_iw_k$, where $i$ and $k$ are in $\{1,3\}$.
	However, noting the 4-cycle on $v_1, u_1, v_3, u_3$, we see that $G$ contains $\mathcal{H}$ as a subgraph.
	
	\textit{Case 5:} The dislocated cycles $C_1$ and $C_2$ share exactly two non-adjacent vertices.\\
	
\begin{figure}[h]
\centering
\begin{tikzpicture}
[inner sep=0.9mm, scale=1.0, 
vertex/.style={circle,thick,draw},
dvertex/.style={rectangle,thick,draw, inner sep=1.3mm}, 
thickedge/.style={line width=1.5pt}] 

\node[vertex, thickedge, fill=black!50] (v1) at (0,2) [label=above:$v_1$] {};
\node[vertex, thickedge, fill=black!50] (v3) at (0,-2) [label=below:$v_3$] {};
\node[vertex, thickedge] (v2) at (-0.8,0) [label=right:$v_2$] {};
\node[vertex, thickedge] (v4) at (-2.5,0) [label=left:$v_4$] {};

\node[vertex, thickedge, fill=black!50] (u2) at (2.5,0) [label=right:$u_2$] {};
\node[vertex, thickedge, fill=black!50] (u4) at (0.8,0) [label=left:$u_4$] {};

\draw[thickedge] (v3)--(v4)--(v1)--(v2)--(v3)--(u2)--(v1)--(u4)--(v3);

\node[vertex] (w1) at (-1.2,0.5) [label=210:$w_1$] {};
\node[vertex] (w2) at (-1.2,-0.5) [label=150:$w_2$] {};

\draw (v1)--(w1)--(w2)--(v3);

\node at (0, -3.2) {(1)};

\begin{scope}[shift={(7,0)}]
\node[vertex] (v1) at (0,2) [label=above:$v_2$] {};
\node[vertex] (v3) at (0,-2) [label=below:$v_4$] {};
\node[vertex, thickedge, fill=black!50] (v2) at (-0.8,0) [label=right:$v_3$] {};
\node[vertex, thickedge, fill=black!50] (v4) at (-2.5,0) [label=left:$v_1$] {};

\node[vertex, thickedge, fill=black!50] (u2) at (2.5,0) [label=right:$u_2$] {};
\node[vertex, thickedge, fill=black!50] (u4) at (0.8,0) [label=left:$u_4$] {};

\draw[thickedge] (v3)--(v4)--(v1)--(v2)--(v3)--(u2)--(v1)--(u4)--(v3);

\node[vertex] (w1) at (-1.8,0) [label=85:$w_1$] {};
\node[vertex] (w2) at (-1.3,0) [label=280:$w_2$] {};

\draw (v4)--(w1)--(w2)--(v2); 

\node at (0, -3.2) {(2)};

\end{scope}

\end{tikzpicture}
\caption{In (1), the dislocated 4-cycles $C_1$ and $C_2$ share vertices $v_1 = u_1$ and $v_3 = u_3$, as in Case 5.1 of Theorem \ref{thm:2_disloc_cycles_HI_subgraph}.
In Figure (2), the cycles share vertices $v_2 = u_1$ and $v_4 = u_3$, as in Case 5.2.}
\label{fig_5_H3_Configs3}
\end{figure}
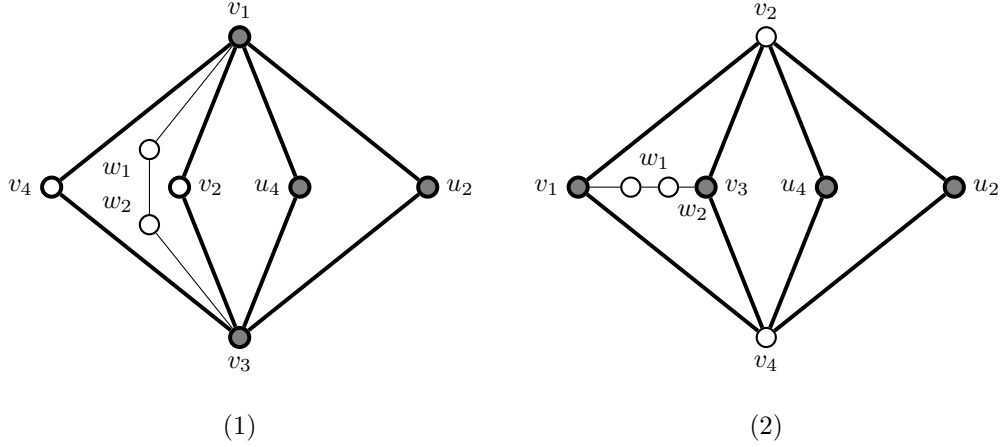
	
	Up to symmetry, there are two subcases to consider. 
	Either $v_1 = u_1$ and $v_3 = u_3$ are common to both $C_1$ and $C_2$, or the vertices $v_2 = u_2$ and $v_4 = u_4$ are.
	In both cases, since $C_1$ and $C_2$ are dislocated, the set $\{u_2, u_4\}$ of vertices dominates the interior of $C_2$ (it does not dominate the exterior, as neither is adjacent to $w_1$).
	Thus, in both cases, per Theorem \ref{thm:4_cycle_description}, there are vertices $z_1$ and $z_2$ in $\Int(C_2)$ such that $P_2: u_2, z_1, z_2, u_4$ is a path in $G$.
	
	\textit{Case 5.1:} The vertices $v_1$ and $v_3$ are common to $C_1$ and $C_2$ (See Figure \ref{fig_5_H3_Configs3} (1)).\\
	Consider the cycle $C: v_1, v_2, v_3, u_4$.
	Since $z_1$ is not adjacent to a vertex of $C$, the cycle $C$ dominates its interior.
	If $\{v_1, v_3\}$ dominates $\Int(C)$, then $C$ and $C_2$ are dislocated 4-cycles sharing three vertices, and by Case 2 we have that $n \leq 3\Delta - 1$.
	Similarly, if $\{v_2, u_4\}$ dominates $\Int(C)$, then $C$ and $C_1$ are dislocated.
	
	\textit{Case 5.2:} The vertices $v_2$ and $v_4$ are common to $C_1$ and $C_2$ (See Figure \ref{fig_5_H3_Configs3} (2)).\\
	Denote by $C'$ the cycle on $v_2, v_3, v_4, u_4$. 
	By the argument of the preceding paragraph, $C'$ and $C_1$ are dislocated 4-cycles.
	Thus, by Case 2, $n\leq 3\Delta - 1$.
\end{proof}

\begin{lem}
	Let $G$ be a pentagulation of diameter 3, order $n$ and maximum degree $\Delta$.
	If $G$ contains $\mathcal{I}$ as a subgraph, then $n \leq 3\Delta - 1$.
	\label{lem_I_as_subgraph}
\end{lem}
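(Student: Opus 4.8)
The plan is to prove the stronger statement that the three shaded vertices $v_1, v_3, v_5$ of $\mathcal{I}$ (Figure \ref{fig:5_HI_detail}) dominate $G$, and then to read off the bound by an inclusion–exclusion count. First I fix the notation of Figure \ref{fig:5_HI_detail}: the two $4$-cycles are $C_1 : v_1 v_2 v_3 v_7$ and $C_2 : v_3 v_4 v_5 v_6$, they meet only in $v_3$, there is a chord $v_1 v_5$, and $v_1 w_1 w_2 v_3$, $v_3 z_1 z_2 v_5$ are paths of length $3$ lying inside $C_1$ and $C_2$. A preliminary step is to check that each $C_i$ dominates the side carrying its length-$3$ path. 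Indeed, if $C_1$ dominated its exterior, then by Remark \ref{rem:cycle_sep} and Theorem \ref{thm:4_cycle_description} the whole region $\ext(C_1)$ would be a bundle of internally disjoint alternating paths between a non-adjacent pair of $C_1$; but $v_5 \in \ext(C_1)$ is not a vertex of $C_1$ and has degree at least $3$ in $G$, so it can be neither an endpoint nor a degree-$2$ internal vertex of such a bundle, a contradiction. Hence $C_1$, and symmetrically $C_2$, dominates its interior, and by Lemma \ref{lem_5_4_degree} the vertices $w_1, w_2, z_1, z_2$ have degree $2$ in $G$, with exactly the neighbours shown.

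Next I would convert the degree-$2$ structure of these four ``deep'' vertices into distance information. Since $w_1$ is reachable from outside $C_1$ only through $v_1$, or through $w_2$ and thence $v_3$, every vertex $u$ satisfies $d(u,w_1)\le 3$ only if $d(u,v_1)\le 2$ or $d(u,v_3)\le 1$; the vertices $w_2, z_1, z_2$ give the three analogous statements with $v_1, v_3, v_5$ permuted. Feeding in $\operatorname{diam}(G)=3$, any $u$ that is \emph{not} dominated by $\{v_1,v_3,v_5\}$ must have $d(u,v_1)=d(u,v_3)=d(u,v_5)=2$. Such a $u$ is adjacent to none of $v_1,v_3,v_5$, so it lies on no alternating path inside $C_1$ or $C_2$ (every vertex there is adjacent to $v_1$, $v_3$ or $v_5$); hence $u$ lies outside both $4$-cycles, i.e.\ in the interior of one of the two $5$-cycles $D : v_1 v_2 v_3 v_4 v_5$ or $D' : v_1 v_7 v_3 v_6 v_5$ into which the chord $v_1 v_5$ splits the region exterior to $C_1\cup C_2$.

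The heart of the argument, and the step I expect to be the main obstacle, is to rule out such a $u$. Take $u\in\Int(D)$, so that $D$ is a Jordan separating $5$-cycle. One first settles which side $D$ dominates (Remark \ref{rem:cycle_sep}) and, passing to an innermost separating $5$- or $6$-cycle around $u$ if necessary, reduces to a cycle dominating its interior; Lemma \ref{lem_5_5} then rules out domination by a single vertex or by an adjacent pair, forcing $u$ to be adjacent to a ``side'' vertex such as $v_2$ or $v_4$. I would then examine the three length-$2$ connections realising $d(u,v_1)=d(u,v_3)=d(u,v_5)=2$: they cut $\Int(D)$ into one region bounded by a chordless $5$-cycle through the chord $v_1 v_5$ and two regions bounded by $6$-cycles. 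Feeding these short separating cycles into Corollary \ref{cor:5_5_vertex} and Lemma \ref{lem_5_6_vertex}, together with the fact that every face is a chordless pentagon, should force one of: a triangle or a small dominated cycle (contradicting Corollary \ref{cor:5_3_notriangle} or Lemma \ref{lem_5_4_1}); a copy of $\mathcal{H}$, giving $n\le 3\Delta-1$ by Lemma \ref{lem_H_as_subgraph}; or a single $4$-cycle dominating all of $G$, giving $n\le 3\Delta-1$ by Lemma \ref{lem_4_cycle_dom_bound}. Since the two $6$-cycle regions must themselves be pentagulated, I anticipate a nested minimal-counterexample argument in the spirit of Lemma \ref{lem_5_6_vertex} (with Lemma \ref{lem_slightly_long_cycle} used to locate interior vertices of the $6$-cycles), and this bookkeeping over the sub-cases is where the real difficulty lies.

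Once no undominated vertex survives, $\{v_1,v_3,v_5\}$ dominates $G$ and the count is immediate. Because $G$ is triangle-free (Corollary \ref{cor:5_3_notriangle}) and $v_1 v_5 \in E(G)$, the adjacent pair $v_1,v_5$ has no common neighbour, so the six \emph{distinct} vertices $v_2, v_7 \in N[v_1]\cap N[v_3]$, $v_4, v_6 \in N[v_3]\cap N[v_5]$ and $v_1, v_5 \in N[v_1]\cap N[v_5]$ each lie in at least two of the closed neighbourhoods $N[v_1], N[v_3], N[v_5]$, while none lies in all three. Counting incidences then gives $n = |N[v_1]\cup N[v_3]\cup N[v_5]| \le 3(\Delta+1) - 6 = 3\Delta - 3 \le 3\Delta - 1$, as required.
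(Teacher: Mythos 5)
Your opening moves match the paper's: establish that $C_1$ and $C_2$ dominate their interiors, use the degree-$2$ structure of $w_1,w_2,z_1,z_2$ to conclude that any vertex not dominated by $\{v_1,v_3,v_5\}$ sits at distance exactly $2$ from each of them, and localize such a vertex to one of the two pentagonal regions outside $C_1\cup C_2$. The final inclusion--exclusion count is also fine \emph{conditional on} the domination claim. But the step you yourself flag as the heart of the argument --- ruling out a vertex $u$ with $d(u,v_1)=d(u,v_3)=d(u,v_5)=2$ --- is a genuine gap, and not just an unfinished computation: such a vertex cannot in general be ruled out, so the stronger statement you are aiming for ($\{v_1,v_3,v_5\}$ dominates $G$) is not the right target. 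The paper's proof treats exactly this configuration as a live case: the undominated vertex $x$ acquires neighbours $y_1,y_2,y_3$ joined to $v_1,v_3,v_5$ (or, in its Case 2, is adjacent to $v_2$ and reaches $v_5$ and $v_3$ through new vertices $y_1,y_2$), and the resulting subgraph $\mathcal{I}'$ (resp.\ $\mathcal{I}''$) contains no triangle, need not contain $\mathcal{H}$, and need not yield a dominating $4$-cycle. None of the three contradictions you hope to extract is forced.

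The correct resolution, which your plan is missing, is not a contradiction but a recount: one shows the enlarged subgraph is induced, uses Lemma \ref{lem_5_5}, Corollary \ref{cor:5_5_vertex} and Theorem \ref{thm:2_disloc_cycles_HI_subgraph} to prove that every vertex \emph{outside} it is adjacent to one of $v_1,v_3,v_5$, and then bounds $n$ by $|V(\mathcal{I}')|$ plus the residual degrees $(d(v_i)-d_{\mathcal{I}'}(v_i))$. The extra base vertices ($15$ or $14$ instead of $11$) are exactly offset by the larger degrees of $v_1,v_3,v_5$ inside $\mathcal{I}'$, and the bound $n\le 3\Delta-2$ survives. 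So your skeleton can be repaired, but only by replacing "derive a contradiction from $u$" with "absorb $u$ and its access paths into the counted kernel," which is a substantively different (and the genuinely laborious) part of the proof.
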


\begin{proof}
	Let $G$ be a pentagulation of diameter 3, order $n$ and maximum degree $\Delta$ that contains $\mathcal{I}$ as a subgraph. 
	Let the vertices of $\mathcal{I}$ be labeled as they are in Figure \ref{fig:5_HI_detail}, such that the vertices $w_1$ and $z_1$ lie in the interiors of the 4-cycles $C_1 : v_1, v_2, v_3, v_7$ and $C_2 : v_3, v_4, v_5, v_6$, respectively.
	Since $G$ is triangle-free (Corollary \ref{cor:5_3_notriangle}), the subgraph $\mathcal{I}$ is an induced subgraph of $G$. 
	Therefore, $d_G(z_2, C_1) = 2$, and by a similar argument, $d_G(w_1, C_2) = 2$.
	Hence the cycles $C_1$ and $C_2$ dominate their interiors per Remark \ref{rem:cycle_sep}. 
	In particular, the set $\{v_1, v_3\}$ dominates $\Int(C_1)$, and $\{v_3, v_5\}$ dominates $\Int(C_2)$.
	We refine our choice of embedding of $G$ (or equivalently, our choice of subgraph isomorphic to $\mathcal{I}$), so that the interiors of the cycles $C_1$ and $C_2$ are maximal.
	In other words, there does not exist a 4-cycle $C_1'$ such that $\Int(C_1) \subset \Int(C_1')$ and $\Int(C_1')$ is dominated by $\{v_1, v_3\}$, and likewise for $C_2$.
	Assume for the sake of contradiction that $n> 3\Delta - 1$.
	Suppose that every vertex of $V(G)-V(\mathcal{I})$ is adjacent to at least one of $v_1$, $v_3$ or $v_5$. Then:
	\begin{align*}
	n &= |V(\mathcal{I})| + |V(G)-V(\mathcal{I})|\\
	&\leq 11 + (d(v_1) -4) + (d(v_3) - 6) + (d(v_5) - 4)\\
	&\leq 11 + 3\Delta - 14 = 3\Delta - 3 < 3\Delta - 1
	\end{align*}
	
	Thus assume that $G$ contains vertices in $V(G)-V(\mathcal{I})$ that are not adjacent to any of $v_1$, $v_3$ or $v_5$. 
	Let $x$ be such a vertex, and label the faces $r_0, r_1, \dots, r_5$ of $\mathcal{I}$ as they are labeled in Figure \ref{fig:5_HI_detail}. 
	The regions $r_1\cup r_2$, and $r_3\cup r_4$ are dominated by the 4-cycles $C_1$ and $C_2$, respectively, and as such any vertex added to these regions is adjacent to a vertex in the set $\{v_1, v_3, v_5\}$. 
	Thus we assume that $x$ is not in any of the regions $r_1$, $r_2$, $r_3$ or $r_4$.
	By the symmetry of $r_0$ and $r_5$, we assume without loss of generality that $x$ is in $r_5$. 
	If $x$ is adjacent to $v_2$ and $v_4$, then we induce a 4-cycle $C: v_2, x, v_4, v_3$ which shares an edge with the cycle $C_1$.
	Since $d(w_1, C) = 2$, $C$ dominates its interior.
	Thus $C$ and $C_1$ are dislocated 4-cycles that share an edge, so $n \leq 3\Delta - 1$ by Theorem \ref{thm:2_disloc_cycles_HI_subgraph}, a contradiction. 
	Hence we assume that $x$ is not adjacent to both $v_2$ and $v_4$. 
	There are two cases to consider.
	
	\textit{Case 1:} The vertex $x$ is not adjacent to either $v_2$ or $v_4$.\\
	Since the diameter of $G$ is 3, $x$ is within distance 3 of each of $w_1, w_2, z_1, z_2$.
	Thus $x$ has neighbors $y_1$, $y_2$ and $y_3$ in $r_5$ such that $y_1v_1$, $y_2v_3$ and $y_3v_5$ are all edges in $G$. 
	Note that $y_1\neq y_3$ as this induces a triangle with vertex set $\{v_1, y_1, v_5\}$. 
	We claim that $y_1\neq y_2$.
	Assume to the contrary that $y_1 = y_2$, and let $C$ be the 4-cycle on $v_1, v_2, v_3, y_1, v_1$. 
	Note that that $d_G(z_2, C) = 2$, so $C$ dominates its interior. 
	By the maximality of $C_1$, we deduce that $C$ and $C_1$ are dislocated 4-cycles that share more than one vertex.
	Thus $n \leq 3\Delta - 1$ by Theorem \ref{thm:2_disloc_cycles_HI_subgraph}, proving the claim.
	Similarly $y_2 \neq y_3$, so the three vertices $y_1$, $y_2$ and $y_3$ are distinct.
	The paths $Q_1: v_1, y_1, x$, $Q_2: v_3, y_2, x$ and $Q_3: v_5, y_3, x$ divide $r_5$ up into three sub-regions.
	Let $r_6$ denote the region with vertices $v_1, v_2, v_3, y_2, x, y_1$ on its boundary, let $r_7$ be bounded by $v_3, y_2, x, y_3, v_5, v_4$, and let $r_8$ be bounded by $v_1, y_1, x, y_3, v_5$. 
	
	We claim that the subgraph $\mathcal{I}' = \mathcal{I}\cup Q_1 \cup Q_2 \cup Q_3$ of $G$ is an induced subgraph. 
	Any edge between two vertices on the boundary of any region $r_0, \dots, r_4$ induces a triangle, which is not possible since $G$ is triangle-free. 
	Similarly, no edge crosses $r_8$.
	Any edge crossing $r_6$ either creates a triangle, which is not possible, or a 4-cycle $C$ such that $C_1$ and $C$ are two dislocated 4-cycles which share at least two vertices.
	By Theorem \ref{thm:2_disloc_cycles_HI_subgraph}, we have $n \leq 3\Delta - 1$, contrary to assumption. 
	The argument that no edges cross the region $r_7$ is similar to the argument for $r_6$, just replace the role of $C_1$ with $C_2$.
	This proves the claim.
	
	If there exists a vertex in $r_6$, it is adjacent to $v_1$ or $v_3$ since it is within distance 3 of $z_2$.
	Similarly, any vertex in $r_7$ is adjacent to $v_3$ or $v_5$ as it is within distance 3 of $w_1$.
	No vertex lies in $r_8$, as it would be adjacent to both $v_1$ and $v_5$ to be within distance 3 of $w_2$ and $z_1$ respectively, inducing a triangle on $y_4, v_1, v_5$. 
	Any vertex of $r_0$ is distance at most 3 from $x$, and thus adjacent to one of $v_1, v_3$ or $v_5$. 
	The subgraph $\mathcal{I}'$ has 15 vertices, and every vertex of $G -\mathcal{I}'$ is adjacent to one of $v_1$, $v_3$ or $v_5$. 
	Noting that $d_{\mathcal{I}'}(v_1) = 5$, $d_{\mathcal{I}'}(v_3) = 7$ and $d_{\mathcal{I}'}(v_5) = 5$, we can bound the order of $G$:
	\begin{align*}
	n	&\leq 15 + (d(v_1) - 5) + (d(v_3) - 7) + (d(v_5) - 5)\\
		&\leq 3\Delta -2 < 3\Delta - 1.
	\end{align*}
	
	\textit{Case 2:} The vertex $x$ is adjacent to $v_2$.\\
	By assumption, $x$ is not adjacent to any of $v_1$, $v_3$, $v_4$ or $v_5$, and $d(x, z_2) \leq 3$.
	As no two vertices on the boundary of $r_5$ are adjacent, there exists some vertex $y_1$ in $r_5$ such that there is a path $S_1: v_2, x, y_1, v_5$ in $G$.
	We claim that $\mathcal{I}\cup S_1$ is an induced subgraph of $G$.
	Since $G$ is triangle-free, no edges crosses a region bounded by a 5-cycle. 
	Thus the only possible region of $\mathcal{I}\cup S_1$ with a chord is the region bounded by the two paths $S_1$ and $v_2, v_3, v_4, v_5$. 
	However, any edge between the vertices bounding this region creates either a triangle, which is impossible, or two 4-cycles $A_1$ and $A_2$.
	In all cases, every vertex of $A_1$ and $A_2$ is distance at least 2 from $w_1$, so $A_1$ and $A_2$ dominate their interiors.
	Thus, for some $i$ and $j$ in $\{1,2\}$, the cycles $C_i$ and $A_j$ are a pair of dislocated 4-cycles that share at least two vertices.
	By Theorem \ref{thm:2_disloc_cycles_HI_subgraph}, we have $n \leq 3\Delta - 1$, proving the claim.
	
	Because $d_G(y_1, w_2) \leq 3$, and since $\mathcal{I}\cup S_1$ is an induced subgraph of $G$, there exists some vertex $y_2$ in $r_5 - \{x, y_1\}$ such that $G$ contains the path $S_2: y_1, y_2, v_3$.
	Let $\mathcal{I}'' = \mathcal{I}\cup S_1 \cup S_2$, and note that the paths $S_1$ and $S_2$ divide $r_5$ into three sub-regions: $r_6= \Int(v_1, v_2, x, y_1, v_5)$, $r_7= \Int(v_2, v_3, y_2, y_1, x)$ and $r_8 = \Int(v_3, y_2, y_1, v_5, v_4)$. 
	We show that any vertex in $G - \mathcal{I}''$ is adjacent to one of $v_1$, $v_3$ or $v_5$. 
	Since $G$ is triangle-free, and every face of $\mathcal{I}''$ is bounded by a 5-cycle, $\mathcal{I}''$ is an induced subgraph of $G$.
	As such, the only vertices on the boundary of $r_6$ within distance 2 of $w_2$ are $v_1$ and $v_2$.
	The region $r_6$ is empty per Lemma \ref{lem_5_5}, as it is dominated by two adjacent vertices.
	Similarly $r_7$ is empty, as the only vertices on the boundary of $r_7$ within distance 2 of $w_1$ are the adjacent pair $v_2$ and $v_3$.
	Any vertex in $r_8$ is adjacent to either $v_3$ or $v_5$, as it is distance at most 3 from $w_1$.
	Any vertex in $r_0$ is adjacent to one of $v_1$, $v_3$ or $v_5$ as it is distance at most 3 from $x$.
	Note that $\mathcal{I}''$ has 14 vertices, and that $d_{\mathcal{I}''}(v_1) = 4$, $d_{\mathcal{I}''}(v_3) = 7$ and $d_{\mathcal{I}''}(v_5) = 5$.
	Any vertex of $G-\mathcal{I}''$ is adjacent to one of $v_1$, $v_2$ or $v_3$, so we can bound the order of $G$: 
	\begin{align*}
		n \leq 14 + (d(v_1) - 4) + (d(v_3) - 7) + (d(v_5) - 5) \leq 3\Delta - 2.
	\end{align*}
	In every case, we have derived a contradiction, completing the proof.
\end{proof}

Theorem \ref{thm:5_two4cycles} follows immediately from Lemma \ref{lem_4_cycle_dom_bound}, Theorem \ref{thm:2_disloc_cycles_HI_subgraph} and Lemma \ref{lem_I_as_subgraph}

\begin{thm}
	Let $G$ be a pentagulation of diameter 3, order $n$ and maximum degree $\Delta \geq 8$. 
	If $G$ contains either a dominating 4-cycle, or two dislocated 4-cycles, then $n \leq 3\Delta - 1$.
	\label{thm:5_two4cycles}
\end{thm}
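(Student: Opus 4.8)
The plan is to treat this statement as an immediate consequence of the three preceding results, organized by a short case split on the hypothesis. Since $\Delta \geq 8 \geq 3$, the maximum-degree requirements of Lemma \ref{lem_4_cycle_dom_bound} and Theorem \ref{thm:2_disloc_cycles_HI_subgraph} are automatically satisfied, so all three cited results are available without any additional argument.

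First I would dispose of the case in which $G$ contains a dominating 4-cycle. Here Lemma \ref{lem_4_cycle_dom_bound} applies verbatim and yields $n \leq 3\Delta - 1$ with no further work.

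The remaining case is that $G$ contains two dislocated 4-cycles $C_1$ and $C_2$. I would apply Theorem \ref{thm:2_disloc_cycles_HI_subgraph}, whose conclusion is a dichotomy: either $n \leq 3\Delta - 1$ already holds, in which case we are done, or else $G$ contains a copy of $\mathcal{I}$ as a subgraph. In the latter subcase I would invoke Lemma \ref{lem_I_as_subgraph}, which converts the presence of an $\mathcal{I}$ subgraph into exactly the bound $n \leq 3\Delta - 1$. Chaining these two steps closes the dislocated case, and since both cases terminate with the same inequality, the theorem follows.

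I do not expect a genuine obstacle here: all of the combinatorial difficulty has already been absorbed into Theorem \ref{thm:2_disloc_cycles_HI_subgraph} and Lemma \ref{lem_I_as_subgraph} (and, for the degenerate situation, Lemma \ref{lem_4_cycle_dom_bound}). The only point requiring care is the bookkeeping of the dichotomy in Theorem \ref{thm:2_disloc_cycles_HI_subgraph}: one must remember that the \emph{first} horn of that dichotomy already delivers the bound directly, so the appeal to Lemma \ref{lem_I_as_subgraph} is needed only on the \emph{second} horn, where an $\mathcal{I}$ subgraph is produced.
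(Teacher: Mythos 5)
Your proposal is correct and matches the paper exactly: the paper states that the theorem follows immediately from Lemma \ref{lem_4_cycle_dom_bound}, Theorem \ref{thm:2_disloc_cycles_HI_subgraph} and Lemma \ref{lem_I_as_subgraph}, which is precisely the case split and chaining you describe. No gap.
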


\section{Bounding the order, part II: The lonely 4-cycle}
\label{sec:bounding_2}

We show that if a pentagulation contains some 4-cycle, but no dislocated pair of them, then it satisfies $n \leq 3\Delta - 1$.
Throughout this section, we work with pentagulations of diameter 3 that contain some 4-cycle $C$. 
Assume without loss of generality that $C$ dominates its interior. 
This motivates the following terminology.
The 4-cycle $C$ of a plane graph is \textbf{interior maximal} if it dominates its interior, and there does not exist any other 4-cycle $C'$ such that $C'$ dominates its interior, and $\Int(C) \subset \Int(C')$.

\begin{lem}
	Let $G$ be a pentagulation of diameter 3 that does not contain two dislocated 4-cycles, and let $C$ be an interior maximal 4-cycle of $G$.
	If $D$ is any cycle in $\ext[C]$ of length at most 7, then $D$ is chordless.
	\label{lem_chordless_part_2}
\end{lem}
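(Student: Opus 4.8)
The plan is to assume, toward a contradiction, that some cycle $D \subseteq \ext[C]$ of length at most $7$ has a chord, and to extract from this chord a second $4$-cycle $D_1$ that either forms a dislocated pair with $C$ (contradicting the hypothesis) or witnesses that $C$ is not interior maximal.

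First I would reduce the length. Since $G$ is triangle-free (Corollary \ref{cor:5_3_notriangle}), any $4$- or $5$-cycle is automatically chordless, so I may assume $|D| \in \{6,7\}$. A chord $e$ of $D$ cannot join two vertices at cyclic distance $2$, as that would create a $3$-cycle; hence $e$ joins vertices at cyclic distance $3$, splitting $D$ into a $4$-cycle $D_1$ together with a cycle of length $4$ (if $|D|=6$) or $5$ (if $|D|=7$). In either case $D_1$ is a $4$-cycle, and since its vertices lie in $\ext[C]$ while $\Int(C)$ admits no chord of $C$ (as noted in the proof of Theorem \ref{thm:4_cycle_description}), the edge $e$ --- and hence all of $D_1$ --- lies in $\ext[C]$. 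Writing $\{a,c\}$ for the non-adjacent pair of $V(C)$ dominating $\Int(C)$ (Theorem \ref{thm:4_cycle_description}), and noting that $D_1$ dominates its interior or its exterior by Remark \ref{rem:cycle_sep}, I let $R_2$ be a dominated region of $D_1$ and $\{a',c'\}$ its dominating pair.

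The main dichotomy is whether $\{a,c\} \subseteq V(D_1)$. In the generic case $\{a,c\} \not\subseteq V(D_1)$, I claim $(C, D_1)$ is a dislocated pair with $R_1 = \Int(C)$ and $R_2$. Condition (2) of the definition holds because $\{a',c'\} \subseteq V(D_1)$ while $\{a,c\}\not\subseteq V(D_1)$. For condition (3), I observe that $\Int(C)$ is connected and disjoint from $D_1$, so it lies entirely inside one region of $D_1$; were $\Int(C) \subseteq R_2$, every (degree-$2$) vertex of $\Int(C)$ would be dominated by $D_1$, but by Theorem \ref{thm:4_cycle_description} such a vertex meets $C$ only at $a$ or $c$ and cannot lie on $D_1 \subseteq \ext[C]$, forcing $\{a,c\}\subseteq V(D_1)$ --- contrary to assumption. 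Hence $\Int(C) \cap R_2 = \emptyset$, and $(C,D_1)$ is dislocated, contradicting that $G$ has no dislocated pair.

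It remains to treat the degenerate case $\{a,c\} \subseteq V(D_1)$, where $C = a\,p\,c\,q$ and $D_1 = a\,s\,c\,t$ share the non-adjacent pair $\{a,c\}$. If $D_1$ dominates its region via its other pair $\{s,t\}$, then the dominating pairs differ and, by the argument of the previous paragraph, no interior vertex of $C$ can be adjacent to $s$ or $t$, so $\Int(C) \cap R_2 = \emptyset$ and $(C,D_1)$ is again dislocated --- a contradiction. The one genuinely delicate situation is when $D_1$ too dominates via $\{a,c\}$: here both $\Int[C]$ and the closure of $R_2$ are fans of internally disjoint alternating $a$-$c$ paths (Theorem \ref{thm:4_cycle_description}) sharing only $a$ and $c$, which planarity arranges into a single larger alternating fan between $a$ and $c$. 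A suitable bounding $4$-cycle $C'$ of this enlarged fan is dominated by $\{a,c\}$ and satisfies $\Int(C) \subsetneq \Int(C')$, contradicting the interior-maximality of $C$. When $D_1$ surrounds $C$ (that is, $\Int(C)\subseteq \Int(D_1)$ and $R_2 = \Int(D_1)$) this is immediate, since $D_1$ itself serves as such a $C'$; the hard part is the configuration where $D_1$ dominates its exterior with $\Int(C) \subseteq \ext(D_1)$, since then the outermost paths of the fan bound the pentagonal outer face rather than a $4$-cycle, and I must instead locate two length-$2$ fan-paths straddling $\Int(C)$ --- at least one contributed by $D_1$ --- to build the required strictly larger dominating $4$-cycle. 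This last piece of planar bookkeeping is where I expect the main effort to lie.
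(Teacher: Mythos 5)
Your overall strategy is the paper's: a chord of a cycle of length at most $7$ cannot close a triangle (Corollary \ref{cor:5_3_notriangle}), so it closes a $4$-cycle $D_1$, which must either be dislocated from $C$ or contradict the interior maximality of $C$. The paper's proof is three lines long and simply asserts this dichotomy; you have verified most of it carefully and correctly --- the reduction to $|D|\in\{6,7\}$, the placement of $D_1$ in $\ext[C]$, and, when the dominating pairs differ, the dislocation argument via the observation that a vertex of $\Int(C)$ has all its neighbours in $\Int[C]$ and meets $V(C)$ only at $a$ or $c$, so $\Int(C)$ cannot lie in the region of $D_1$ dominated by its pair, forcing $R_1\cap R_2=\emptyset$.

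However, the proof as written is incomplete, and you say so yourself. In the last sub-case --- $\{a,c\}\subseteq V(D_1)$, with $D_1$ dominating its \emph{exterior} via the same pair $\{a,c\}$ and $\Int(C)\subseteq\ext(D_1)$ --- you defer the argument to unfinished ``planar bookkeeping.'' This is a genuine gap, not routine bookkeeping: there, condition (2) of the dislocation definition fails because the two dominating pairs coincide (and neither $\ext(C)$ nor $\Int(D_1)$ is known to be dominated, so no alternative choice of $R_1,R_2$ is available), while $D_1$ itself does not witness non-maximality because $\Int(C)\not\subseteq\Int(D_1)$. One must actually exhibit a $4$-cycle $C'$, formed from two length-$2$ $a$--$c$ paths of the combined fan, whose \emph{bounded} region in the fixed embedding both properly contains $\Int(C)$ and is dominated; which side of a candidate $C'$ is bounded depends on where the outer face of $G$ lies, and in degenerate configurations (e.g.\ $C$ and $D_1$ sharing three vertices, with the outer face incident to the face-gap adjacent to $C$) the natural candidates $a,s,c,q$ and $a,p,c,t$ can both fail, so a further argument is required. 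Until that sub-case is closed the lemma is not proved --- though, to be fair, the paper's own proof is silent at exactly the same point.
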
 

\begin{proof}
	Assume to the contrary $D$ has some chord $e$.
	By Corollary \ref{cor:5_3_notriangle}, $D\cup \{e\}$ has no 3-cycle, so $D\cup \{e\}$ induces a 4-cycle.
	Either this 4-cycle contradicts the maximality of $C$, or is dislocated from $C$, and both cases yield a contradiction.
\end{proof}

\begin{lem}
	Let $G$ be a pentagulation of diameter 3 that does not contain two dislocated 4-cycles, and let $C$ be an interior maximal 4-cycle of $G$.
	If $D$ is any 5-cycle in $G$ such that both $\Int(D) \subset \ext(C)$ and $\Int(D)$ is dominated by two or fewer vertices of $D$, then $\Int(D)$ does not contain any vertex of $G$.
	\label{lem_5_cycle_empty}
\end{lem}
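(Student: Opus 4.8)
The plan is to argue by contradiction: assume $\Int(D)$ contains a vertex and manufacture a pair of dislocated 4-cycles (or else contradict the interior-maximality of $C$), both of which are forbidden. First I would check that $D$ is a Jordan separating cycle. Its interior is nonempty by assumption, and its exterior contains a vertex because $C$ dominates its interior, so $\Int(C)\neq\emptyset$ by Remark \ref{rem:cycle_sep}, while the hypothesis $\Int(D)\subset\ext(C)$ places the vertices of $\Int(C)$ in $\ext(D)$. With $D$ Jordan separating, Lemma \ref{lem_5_5} rules out $\Int(D)$ being dominated by a single vertex or by an adjacent pair; since $\Int(D)$ is dominated by at most two vertices of $D$, it must be dominated by exactly two non-adjacent vertices $u$ and $v$ of $D$. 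Corollary \ref{cor:5_5_vertex} then supplies a vertex $w\in\Int(D)$ adjacent to both $u$ and $v$, and together with the length-2 arc of $D$ joining $u$ to $v$ this yields a 4-cycle $C'$ with $\Int(C')\subseteq\Int(D)\subset\ext(C)$. Being a 4-cycle, $C'$ Jordan separates $G$ by Remark \ref{rem:cycle_sep}, so $\Int(C')\neq\emptyset$.

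Next I would determine which region $C'$ dominates. By Remark \ref{rem:cycle_sep} it dominates its interior or its exterior, and I expect to rule out the exterior: if $C'$ dominated $\ext(C')\supseteq\Int(C)$, then two vertices of $C'$ (all lying in $\ext[C]$) would dominate $\Int(C)$, forcing one to coincide with a dominating vertex of $C$ and contradicting the alternating-path structure of Theorem \ref{thm:4_cycle_description} together with the diameter-3 constraint. Assuming then that $C'$ dominates $\Int(C')$, Lemmas \ref{lem_5_4_1} and \ref{lem_5_4_2consecutive} force the dominating set to be a non-adjacent pair of $C'$: either the diagonal $\{v_2^D,w\}$ through $w$, or $\{u,v\}$. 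In the first case this pair differs from the dominating pair of $C$ because $w\notin V(C)$, and since $\Int(C')\cap\Int(C)=\emptyset$ (the regions lie on opposite sides of $C$), the cycles $C$ and $C'$ meet all three conditions of dislocation, contradicting the standing hypothesis that $G$ has no dislocated pair.

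The remaining possibility is the real obstacle: $C'$ is dominated by $\{u,v\}$, and this is exactly the dominating pair of $C$, so $u,v$ are opposite vertices of both $C$ and $C'$, dominating the disjoint regions $\Int(C)$ and $\Int(C')\subset\ext(C)$. Here I would invoke interior-maximality. Writing $C=u,c_1,v,c_2$ and $C'=u,v_2^D,v,w$, the four internally disjoint $u$--$v$ arcs appear in a planar cyclic order, and the two outermost arcs bound a 4-cycle $C^{+}$ whose interior strictly contains $\Int(C)$ (it also contains $v_2^D$ and $\Int(C')$). The claim to verify is that $C^{+}$ again dominates its interior by $\{u,v\}$: any vertex caught between two consecutive arcs sits inside a 4-cycle through $u$ and $v$, which by the same dichotomy is empty, dominated by $\{u,v\}$, or yields a dislocated pair, so the nesting terminates and $C^{+}$ is a dominated 4-cycle with $\Int(C)\subsetneq\Int(C^{+})$, contradicting the maximality of $C$.

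The hard part is precisely this last step: controlling the auxiliary 4-cycles formed between consecutive $u$--$v$ arcs and showing the enlargement stabilizes at a cycle that dominates its \emph{interior} rather than its exterior. The tools I would use are Theorem \ref{thm:4_cycle_description} for the exact interior structure of a dominated 4-cycle, the chordlessness of short cycles in $\ext[C]$ from Lemma \ref{lem_chordless_part_2}, and the triangle-freeness of Corollary \ref{cor:5_3_notriangle} together with the diameter-3 bound, which together prevent stray vertices from hiding between the arcs and keep each auxiliary cycle from dominating its exterior.
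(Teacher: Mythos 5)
Your proposal follows the paper's proof almost exactly: Lemma \ref{lem_5_5} forces the dominating set to be a non-adjacent pair $\{u,v\}$ of $D$, Corollary \ref{cor:5_5_vertex} supplies the common neighbour $w$ in $\Int(D)$, and the resulting 4-cycle inside $\Int[D]$ (which dominates its interior because $\Int(D)$ is dominated by $\{u,v\}\subset V(C')$) is either dislocated from $C$ or contradicts interior-maximality. The paper dispatches that final alternative in a single clause, so your third and fourth paragraphs---the enlarged cycle $C^{+}$ built from the outermost $u$--$v$ arcs when the two dominating pairs coincide---are extra detail filling in what the paper leaves implicit; the one point there still needing care is a region between consecutive arcs whose bounding 4-cycle dominates its \emph{exterior} rather than its interior, a sub-case your ``empty, dominated by $\{u,v\}$, or dislocated'' trichotomy does not explicitly cover.
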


\begin{proof}
	By Lemma \ref{lem_5_5}, the interior of $D$ is not dominated by either a single vertex of $D$, or an adjacent pair of vertices in $D$.
	Assume to the contrary that there is a vertex $w$ in $\Int(D)$, and let $u$ and $v$ be two non-adjacent vertices of $D$ that dominate $\Int(D)$. 
	Per Corollary \ref{cor:5_5_vertex}, the vertex $w$ is adjacent to both $u$ and $v$. 
	Thus, there exists some 4-cycle $A$ in $\Int[D]$ that dominates its interior. 
	The cycle $A$ either contradicts the maximality of $C$, or $A$ and $C$ are dislocated.
\end{proof}

\begin{thm}
	Let $G$ be a pentagulation of diameter 3, order $n$ and maximum degree $\Delta \geq 8$. 
	If $G$ contains a 4-cycle, then $n\leq 3\Delta - 1$.
	\label{thm:5_one4cycle}
\end{thm}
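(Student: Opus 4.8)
The plan is to take an interior maximal 4-cycle $C$ in $G$, which we may assume dominates its interior by Remark \ref{rem:cycle_sep}. Since $G$ contains no two dislocated 4-cycles, every 4-cycle we can locate must either be nested inside $C$ (violating maximality) or share its dominating pair with $C$. The strategy is to show that almost every vertex of $G$ lies within distance $1$ of a small, controlled set of vertices—ideally the two dominating vertices $\{v_1,v_3\}$ of $C$ together with a bounded number of extra vertices—and then bound $n$ by a sum of degrees, exactly as in the proofs of Lemmas \ref{lem_4_cycle_dom_bound} and \ref{lem_I_as_subgraph}. The key structural input is Theorem \ref{thm:4_cycle_description}: the dominated interior of $C$ is a union of alternating $v_1$--$v_3$ paths of lengths $2$ and $3$, so $|\Int(C)|$ contributes roughly $\tfrac{3}{2}\Delta$ vertices attached to $\{v_1,v_3\}$.

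First I would analyze $\ext(C)$. Because $C$ does not dominate $\ext(C)$ (otherwise $C$ dominates $G$ and Lemma \ref{lem_4_cycle_dom_bound} gives the bound), there is a vertex at distance $\geq 2$ from $C$ in the exterior. I would walk outward along faces, using that every face is a $5$-cycle and that $G$ is triangle-free (Corollary \ref{cor:5_3_notriangle}). Lemma \ref{lem_5_cycle_empty} is the crucial tool here: any $5$-cycle $D$ with $\Int(D)\subset\ext(C)$ dominated by two or fewer of its vertices must have empty interior, since otherwise Corollary \ref{cor:5_5_vertex} produces an interior $4$-cycle that either breaks maximality of $C$ or is dislocated from $C$. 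Combined with Lemma \ref{lem_chordless_part_2} (short exterior cycles are chordless), this forces the exterior region reachable from $C$ to have a rigid tree-like/ladder structure built from faces sharing edges with $C$. I expect to identify a small ``frame'' subgraph $\mathcal{F}$—the exterior analogue of the $\mathcal{I}$-frame in Lemma \ref{lem_I_as_subgraph}—whose vertices, together with $\{v_1,v_3\}$, dominate all of $G$.

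The heart of the argument is then a diameter-$3$ domination count. I would pick a vertex $w_1\in\Int(C)$ (distance $2$ from much of the exterior) and use $d(w_1,x)\leq 3$ for every $x$ to force each far exterior vertex to attach, through a length-$2$ connector, to one of finitely many ``hub'' vertices that are themselves neighbors of $v_1$ or $v_3$; the triangle-free and planarity constraints (as repeatedly exploited in Cases 1–5 of Theorem \ref{thm:2_disloc_cycles_HI_subgraph}) pin down these connectors. Once every vertex of $V(G)\setminus V(\mathcal{F})$ is shown adjacent to one of a fixed set $\{v_1,v_3,\dots\}$ of dominating vertices, the bound follows by the estimate $n\leq |V(\mathcal{F})| + \sum (d(v_i)-d_{\mathcal{F}}(v_i))$, tuned so the constants land at $3\Delta-1$; the hypothesis $\Delta\geq 8$ is what makes the arithmetic close (as in Lemma \ref{lem_I_as_subgraph}, where the slack $3\Delta-2$ or $3\Delta-3$ appears).

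The main obstacle I anticipate is controlling the exterior region without a dislocated partner to lean on. In the two-cycle regime of Section \ref{sec:bounding_1}, the second cycle hands us a rich frame ($\mathcal{H}$ or $\mathcal{I}$) for free; here the whole point is that no such partner exists, so I must generate the frame purely from $C$, the $5$-cycle faces, and the diameter constraint, while constantly checking that no step secretly produces a second dominating $4$-cycle (which would either contradict maximality or yield a dislocated pair and finish via Theorem \ref{thm:5_two4cycles}). Carefully enumerating how exterior faces can attach to $C$—and ruling out configurations that push a vertex to distance $\geq 4$ from some $w_i\in\Int(C)$—is where the bulk of the casework and the real difficulty will lie.
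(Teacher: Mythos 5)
Your high-level strategy is exactly the one the paper follows: fix an interior maximal $4$-cycle $C_1$ dominated by a non-adjacent pair $\{v_1,v_3\}$, extract the path $v_1,w_1,w_2,v_3$ from Theorem \ref{thm:4_cycle_description}, use Lemmas \ref{lem_chordless_part_2} and \ref{lem_5_cycle_empty} to keep exterior cycles chordless and exterior $5$-cycles empty, grow an induced ``frame'' subgraph in $\ext(C_1)$ forced by the conditions $d(\cdot,w_1)\leq 3$ and $d(\cdot,w_2)\leq 3$, and finish with a count of the form $n\leq |V(\mathcal{F})|+\sum_i\bigl(d(v_i)-d_{\mathcal{F}}(v_i)\bigr)$ over two or three hubs. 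So there is no divergence of method to report.

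The gap is that the proposal stops precisely where the proof begins. For this theorem the entire mathematical content is the enumeration you defer to ``the bulk of the casework'': the paper's argument splits on whether $v_2$ or $v_4$ has an exterior neighbour, and within each branch runs through roughly a dozen nested configurations (the frames $\mathcal{G}$, $\mathcal{G}'$, $\mathcal{G}^{*}$, $\mathcal{G}^{**}$, $\mathcal{G}^{\flat}$, $\mathcal{G}^{\sharp}$, $\mathcal{K}$, $\mathcal{K}'$, \dots, $\mathcal{L}^{*}$), each of which must be shown induced, each of whose bounded regions must be shown empty or dominated, and each of which must close numerically at $3\Delta-1$ with hub sets that vary from case to case ($\{v_1,v_3\}$ alone in some, $\{v_1,v_3,y_2\}$ or $\{v_1,v_3,y_3\}$ or $\{v_3,y_3\}$-type sets in others). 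None of this is exhibited, and it is not routine: several branches do not terminate in a direct count but instead loop back by producing an $\mathcal{H}$-subgraph or a dislocated pair (handing the case to Lemma \ref{lem_H_as_subgraph} or Theorem \ref{thm:5_two4cycles}), and one branch (the $\mathcal{L}^{\flat}$ claim in Case 2.2.2) is reused to absorb two further subcases. A referee could not verify the theorem from what you have written; you have correctly predicted the shape of the proof but not supplied one.
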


\begin{proof}
	Assume to the contrary that $G$ contains a 4-cycle $C_1 = v_1, v_2, v_3, v_4$, and has order $n > 3\Delta -1$. 
	By Theorem \ref{thm:5_two4cycles}, there are no two dislocated 4-cycles in $G$.
	Assume without loss of generality that $C_1$ is interior maximal, and that $\Int(C_1)$ is dominated by $\{v_1, v_3\}$.
	By Theorem \ref{thm:4_cycle_description}, there exist vertices $w_1$ and $w_2$ in $\Int(C_1)$ such that $P_1 : v_1, w_1, w_2, v_3$ is a path in $G$. 
	If every vertex of $G$ is adjacent to either $v_1$ or $v_3$, then $n\leq 2\Delta < 3\Delta - 1$, so there exists some vertex of $G$ in $\ext(C_1)$ which is not adjacent to $v_1$ or to $v_3$. 
	We consider two cases, according to whether or not the vertices $v_2$ and $v_4$ have neighbors in $\ext(C_1)$.
	
	\begin{figure}[h]
\centering
\begin{tikzpicture}
[inner sep=0.9mm, scale=0.9, 
vertex/.style={circle,thick,draw},
dvertex/.style={rectangle,thick,draw, inner sep=1.3mm}, 
thickedge/.style={line width=1.5pt}] 

\node[vertex, thickedge, fill=black!50] (v1) at (0, 1.5) [label=above:$v_1$] {};
\node[vertex, thickedge, fill=black!50] (v3) at (0, -1.5) [label=below:$v_3$] {};
\node[vertex, thickedge] (v2) at (1.5, 0) [label=left:$v_2$] {};
\node[vertex, thickedge] (v4) at (-1.5,0) [label=left:$v_4$] {};

\draw[thickedge] (v1)--(v2)--(v3)--(v4)--(v1);

\node[vertex] (w1) at (0,0.4) [label=left:$w_1$] {};
\node[vertex] (w2) at (0, -0.4) [label=left:$w_2$] {};

\draw (v1)--(w1)--(w2)--(v3) ;

\node[vertex] (y1) at (2.5, 0) [label=below:$y_1$] {};
\node[vertex] (y2) at (3.5, 0) [label=below:$y_2$] {};

\draw (v2)--(y1)--(y2);

\node at (1, -2.5) {(1)};

\begin{scope}[shift={(7,0)}]

\node[vertex, thickedge, fill=black!50] (v1) at (0, 1.5) [label=above:$v_1$] {};
\node[vertex, thickedge, fill=black!50] (v3) at (0, -1.5) [label=below:$v_3$] {};
\node[vertex, thickedge] (v2) at (1.5, 0) [label=left:$v_2$] {};
\node[vertex, thickedge] (v4) at (-1.5,0) [label=left:$v_4$] {};

\draw[thickedge] (v1)--(v2)--(v3)--(v4)--(v1);

\node[vertex] (w1) at (0,0.4) [label=left:$w_1$] {};
\node[vertex] (w2) at (0, -0.4) [label=left:$w_2$] {};

\draw (v1)--(w1)--(w2)--(v3) ;

\node[vertex] (y1) at (2.5, 0) [label=below:$y_1$] {};
\node[vertex] (y2) at (3.5, 0) [label=below:$y_2$] {};

\draw (v2)--(y1)--(y2);

\node[vertex] (y3) at (2, 1.5) [label=below:$y_3$] {};
\node[vertex] (y4) at (2, -1.5) [label=above:$y_4$] {};

\draw (y2)--(y3)--(v1) (y2)--(y4)--(v3);

\node at (1, -2.5) {(2)};

\end{scope}

\end{tikzpicture}
\caption{In Case 1, since the vertex $y_1$ is not an end-vertex, there exists some neighbour $y_2$ of $y_1$ (1). 
Since the diameter of $G$ is 3, it contains $y_2-w_1$ and $y_2-w_2$ paths, forcing the subgraph $\mathcal{G}$ (2).}
		\label{fig:5_one4cycle}
	\end{figure}
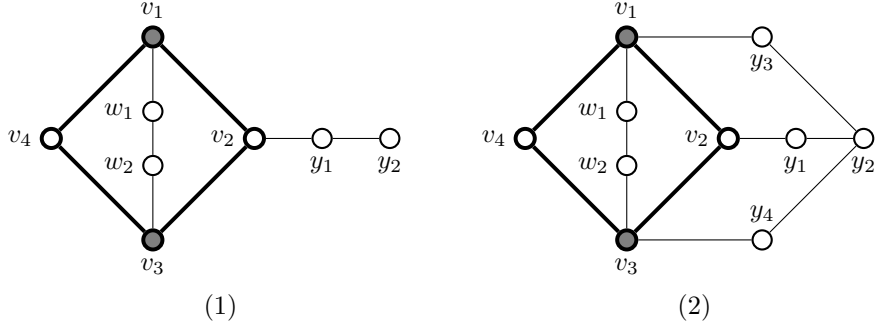
	
	\textit{Case 1:} The vertex $v_2$ has at least one neighbor in $\ext(C_1)$.\\
	Let $y_1$ be a vertex in the exterior of $C_1$ that is adjacent to $v_2$. 
	The vertex $y_1$ is not adjacent to either $v_1$ or $v_3$ as this induces a triangle, contradicting Corollary \ref{cor:5_3_notriangle}. 
	Further, $y_1$ is not adjacent to $v_4$ as this induces a 4-cycle on the vertices $v_2, y_1, v_3, v_4$, contradicting the fact that $G$ does not contain two dislocated 4-cycles. 
	Since $G$ is 2-connected, there is some vertex $y_2$ in $\ext(C_1)$ to which $y_1$ is adjacent (See Figure \ref{fig:5_one4cycle} (1)).
	
	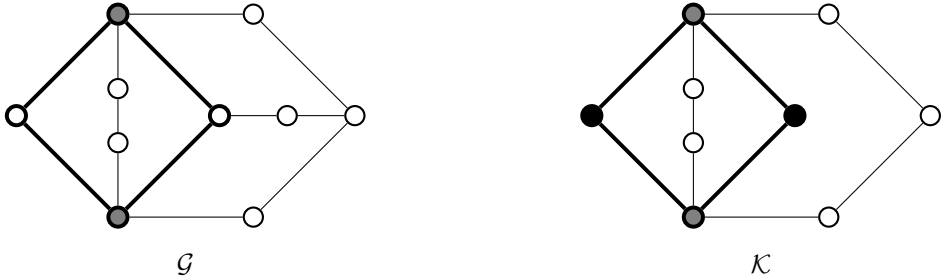
\begin{figure}[h]
\centering
\begin{tikzpicture}
[inner sep=0.9mm, scale=0.9, 
vertex/.style={circle,thick,draw},
dvertex/.style={rectangle,thick,draw, inner sep=1.3mm}, 
thickedge/.style={line width=1.5pt}] 

\node[vertex, thickedge, fill=black!50] (v1) at (0, 1.5) {};
\node[vertex, thickedge, fill=black!50] (v3) at (0, -1.5) {};
\node[vertex, thickedge] (v2) at (1.5, 0) {};
\node[vertex, thickedge] (v4) at (-1.5,0) {};

\draw[thickedge] (v1)--(v2)--(v3)--(v4)--(v1);

\node[vertex] (w1) at (0,0.4) {};
\node[vertex] (w2) at (0, -0.4) {};

\draw (v1)--(w1)--(w2)--(v3) ;

\node[vertex] (y1) at (2.5, 0) {};
\node[vertex] (y2) at (3.5, 0) {};

\draw (v2)--(y1)--(y2);

\node[vertex] (y3) at (2, 1.5) {};
\node[vertex] (y4) at (2, -1.5) {};

\draw (y2)--(y3)--(v1) (y2)--(y4)--(v3);

\node at (1, -2.2) {$\mathcal{G}$};

\begin{scope}[shift={(8.5,0)}]

\node[vertex, thickedge, fill=black!50] (v1) at (0, 1.5) {};
\node[vertex, thickedge, fill=black!50] (v3) at (0, -1.5) {};
\node[vertex, thickedge, fill=black!100] (v2) at (1.5, 0) {};
\node[vertex, thickedge, fill=black!100] (v4) at (-1.5,0)  {};

\draw[thickedge] (v1)--(v2)--(v3)--(v4)--(v1);

\node[vertex] (w1) at (0,0.4) {};
\node[vertex] (w2) at (0, -0.4) {};

\draw (v1)--(w1)--(w2)--(v3) ;

\node[vertex] (y2) at (3.5, 0) {};

\node[vertex] (y3) at (2, 1.5) {};
\node[vertex] (y4) at (2, -1.5) {};

\draw (y2)--(y3)--(v1) (y2)--(y4)--(v3);

\node at (1, -2.2) {$\mathcal{K}$};

\end{scope}

\end{tikzpicture}
\caption{If $G$ is a diameter 3 pentagulation that contains some 4-cycle, but no two dislocated 4-cycles, it must contain one of $\mathcal{G}$ or $\mathcal{K}$ as a subgraph, per Cases 1 and 2 respectively in the proof of Theorem \ref{thm:5_one4cycle}. 
	The black vertices of $\mathcal{K}$ are not adjacent to any vertices of $G-\mathcal{K}$.}
		\label{fig:5_one4cycle_preproof}
	\end{figure}
	
	Note that $d(y_2) \geq 2$, and there exist $y_2-w_1$ and $y_2-w_2$ paths of length at most 3. 
	Since $G$ is triangle-free, the vertices $y_2$ and $v_2$ are not adjacent. 
	Further, $y_2$ is not adjacent to either $v_1$ or $v_3$, as this induces a 4-cycle dislocated from $C_1$ on the vertices $v_1, v_2, y_1, y_2$ or $v_3, v_2, y_1, y_2$ respectively. 
	Finally, $y_1$ is not adjacent to $v_4$, as this induces $\mathcal{H}$ as a subgraph of $G$, which yields a contradiction by Lemma \ref{lem_H_as_subgraph}.
	Since no $y_2-w_1$ or $y_2-w_2$ geodesic can be formed with the vertices mentioned thus far, there exist vertices $y_3$ and $y_4$ in $\ext(C_1)$ such that $y_2y_3$, $y_3v_1$, $y_2y_4$ and $y_4v_3$ are edges in $G$ (See Figure \ref{fig:5_one4cycle} (2)). 
	Note that $y_3\neq y_4$, as this would again induce $\mathcal{H}$ as a subgraph of $G$. 
	Let $\mathcal{G}$ denote the subgraph of $G$ constructed thus far (See Figure \ref{fig:5_one4cycle_preproof}).
	Applying Lemma \ref{lem_chordless_part_2}, deduce that $\mathcal{G}$ is an induced subgraph of $G$.
	Thus, the only two vertices of the 5-cycle $C_3: v_1, v_2, y_1, y_2, y_3$ within distance 2 of $w_2$ are $v_1$ and $v_2$, so $\{v_1, v_2\}$ dominates $\Int(C_2)$. 
	Hence, by Lemma \ref{lem_5_5}, there is no vertex in $\Int(C)$. 
	Similarly, there is no vertex in the region bounded by the cycle $C_4: v_2, y_1, y_2, y_4, v_3$.
	Any vertex of $G$ not adjacent to $v_1$ or $v_3$ for which we have not yet accounted lies in the external region of the cycle $C_2 : v_1, y_3, y_2, y_4, v_3, v_4$.
	There are four subcases to consider.
	
	\textit{Case 1.1:} There exists some vertex $u_1$ in $\ext(C_2)$ adjacent to $v_4$.\\
	Since $G$ is triangle-free, $u_1$ is not adjacent to either $v_1$ or $v_3$.
	Because $G$ does not contain two dislocated 4-cycles, $u_1$ is adjacent to neither $y_3$ nor $y_4$.
	Thus, any $u_1-y_1$ geodesic contains the vertex $y_2$. 
	Either $u_1$ is adjacent to $y_2$, or there exists a vertex $u_2$ in the exterior of $C_2$ such that $P_2: u_1, u_2, y_2, y_1$ is a geodesic in $G$.
	If $u_1$ and $y_2$ are adjacent, then $\ext(C_2)$ is subdivided into 2 regions: the region $r_1$ with vertices $u_1$, $v_4$, $v_1$, $y_3$ and $y_2$ on its boundary, and the region $r_2$ with $u_1$, $v_4$, $v_3$, $y_4$ and $y_2$ on its boundary. 
	The subgraph $\mathcal{G}\cup \{u_1, u_1v_4, u_1y_2\}$ is an induced subgraph of $G$, so the only vertices on the boundary of $r_1$ within distance 2 of $w_2$ are the adjacent pair $v_1$ and $v_4$. 
	The region $r_1$ is dominated by two adjacent vertices of the 5-cycle bounding it, so by Lemma \ref{lem_5_5}, $r_1$ is empty. 
	Similarly, the region $r_2$ is empty, so every vertex of $G$ not yet mentioned is adjacent to either $v_1$ or $v_3$, and we can bound the order of $G$:
	\begin{align*}
	n &= |V(\mathcal{G})\cup \{u_1\}| + |V(G) - V(\mathcal{G}) - \{u_1\}| \\
	&\leq 11 + (d(v_1) - 4) + (d(v_3) - 4) \leq 2\Delta + 3 \leq 3\Delta - 1.
	\end{align*}
	
	This contradicts our assumption, and so the geodesic contains $u_2$ (see Figure \ref{fig:5_one4cycle_2}). 
	Let $\mathcal{G}' = \mathcal{G}\cup P_2 \cup \{u_1v_4\}$. 
	By Lemma \ref{lem_chordless_part_2}, $\mathcal{G}'$ is an induced subgraph of $G$.
	Since $d(u_2, w_1) \leq 3$, there is some vertex $u_3$ that is adjacent to both $u_2$ and $v_1$.
	Similarly, because $d(u_2, w_2) \leq 3$, there exists a vertex $u_4 \neq u_3$ that is adjacent to $u_2$ and $v_4$ (see Figure \ref{fig:5_one4cycle_2}).
	
	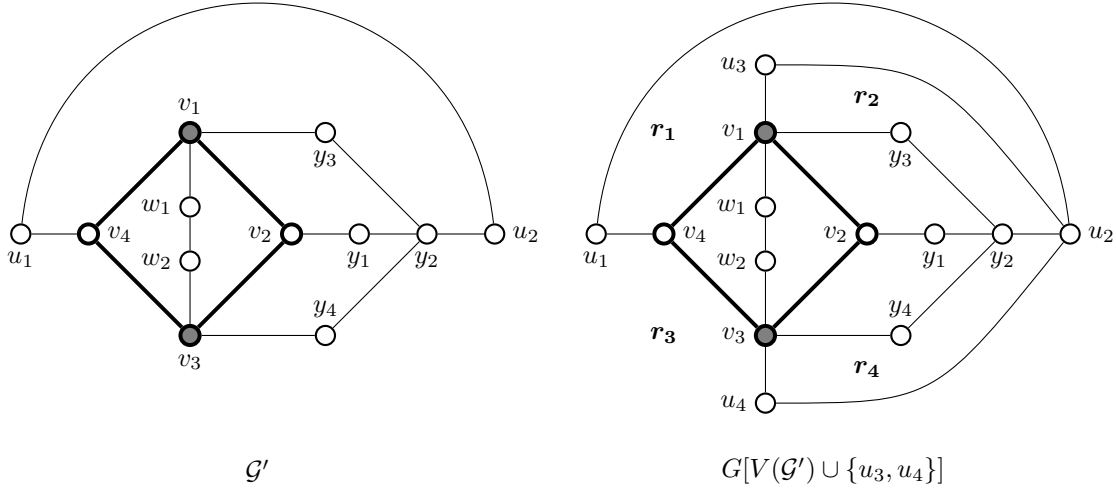
\begin{figure}[h]
\centering
\begin{tikzpicture}
[inner sep=0.9mm, scale=0.9, 
vertex/.style={circle,thick,draw},
dvertex/.style={rectangle,thick,draw, inner sep=1.3mm}, 
thickedge/.style={line width=1.5pt}] 

\node[vertex, thickedge, fill=black!50] (v1) at (0, 1.5) [label=above:$v_1$] {};
\node[vertex, thickedge, fill=black!50] (v3) at (0, -1.5) [label=below:$v_3$] {};
\node[vertex, thickedge] (v2) at (1.5, 0) [label=left:$v_2$] {};
\node[vertex, thickedge] (v4) at (-1.5,0) [label=right:$v_4$] {};

\draw[thickedge] (v1)--(v2)--(v3)--(v4)--(v1);

\node[vertex] (w1) at (0,0.4) [label=left:$w_1$] {};
\node[vertex] (w2) at (0, -0.4) [label=left:$w_2$] {};

\draw (v1)--(w1)--(w2)--(v3) ;

\node[vertex] (y1) at (2.5, 0) [label=below:$y_1$] {};
\node[vertex] (y2) at (3.5, 0) [label=below:$y_2$] {};

\draw (v2)--(y1)--(y2);

\node[vertex] (u1) at (-2.5, 0) [label=below:$u_1$] {};
\node[vertex] (u2) at (4.5, 0) [label=right:$u_2$] {};

\node[vertex] (y3) at (2, 1.5) [label=below:$y_3$] {};
\node[vertex] (y4) at (2, -1.5) [label=above:$y_4$] {};

\draw (y2)--(y3)--(v1) (y2)--(y4)--(v3);

\draw (u1)--(v4) (u2)--(y2);
\draw (u1) .. controls (-2, 4.5) and (4,4.5) .. (u2);

\node at (1, -3.5) {$\mathcal{G}'$};

\begin{scope}[shift={(8.5,0)}]

\node[vertex, thickedge, fill=black!50] (v1) at (0, 1.5) [label=left:$v_1$] {};
\node[vertex, thickedge, fill=black!50] (v3) at (0, -1.5) [label=left:$v_3$] {};
\node[vertex, thickedge] (v2) at (1.5, 0) [label=left:$v_2$] {};
\node[vertex, thickedge] (v4) at (-1.5,0) [label=right:$v_4$] {};

\draw[thickedge] (v1)--(v2)--(v3)--(v4)--(v1);

\node[vertex] (w1) at (0,0.4) [label=left:$w_1$] {};
\node[vertex] (w2) at (0, -0.4) [label=left:$w_2$] {};

\draw (v1)--(w1)--(w2)--(v3) ;

\node[vertex] (y1) at (2.5, 0) [label=below:$y_1$] {};
\node[vertex] (y2) at (3.5, 0) [label=below:$y_2$] {};

\draw (v2)--(y1)--(y2);

\node[vertex] (u1) at (-2.5, 0) [label=below:$u_1$] {};
\node[vertex] (u2) at (4.5, 0) [label=right:$u_2$] {};

\node[vertex] (y3) at (2, 1.5) [label=below:$y_3$] {};
\node[vertex] (y4) at (2, -1.5) [label=above:$y_4$] {};

\draw (y2)--(y3)--(v1) (y2)--(y4)--(v3);

\draw (u1)--(v4) (u2)--(y2);
\draw (u1) .. controls (-2, 4.5) and (4,4.5) .. (u2);

\node[vertex] (u3) at (0, 2.5) [label=left:$u_3$] {};
\node[vertex] (u4) at (0, -2.5) [label=left:$u_4$] {};

\draw (v1)--(u3) .. controls (2.5, 2.5) .. (u2);
\draw (v3)--(u4) .. controls (2.5, -2.5) .. (u2);

\node at (-1.5, 1.5) {\bm{$r_1$}};
\node at (-1.5, -1.5) {\bm{$r_3$}};
\node at (1.5, 2) {\bm{$r_2$}};
\node at (1.5, -2) {\bm{$r_4$}};

\node at (1, -3.5) {$G[V(\mathcal{G}')\cup \{u_3, u_4\}]$};

\end{scope}

\end{tikzpicture}
\caption{In Case 1.1 of the proof of Theorem \ref{thm:5_one4cycle}, we assume that there is a vertex $u_1$ adjacent to $v_4$. 
As a result, we obtain first that $\mathcal{G}'$ is a subgraph of $G$ (left), and then that $G$ also contains the vertices $u_3$ and $u_4$ (right).}
		\label{fig:5_one4cycle_2}
	\end{figure}
	
	The region $\ext(C_2)$ is divided into four subregions, all of which are bounded by 5-cycles.
	Label these regions: $r_1 = \Int(u_1, v_4, v_1, u_3, u_2)$, $r_2 = \Int(v_1, u_3, u_2, y_2, y_3)$, $r_3 = \ext(u_1, v_4, v_3, u_4, u_2)$, $r_4 = \Int(v_3, y_4, y_2, u_2, u_4)$ (see Figure \ref{fig:5_one4cycle_2}). 
	The only two vertices on the boundary of $r_1$ within distance 2 of $w_2$ are $v_1$ and $v_4$. 
	Thus the adjacent pair $\{v_1, v_4\}$ dominates $r_1$, and by Lemma \ref{lem_5_5}, $r_1$ is empty. 
	Similarly, $r_3$ is empty.
	The only vertex on the boundary of $r_2$ within distance 2 of $w_2$ is $v_1$, and so $r_2$ is dominated by $v_1$. 
	Per Lemma \ref{lem_5_5}, the regions $r_2$ and $r_4$ are empty.
	We deduce that all vertices of $G$ not yet mentioned lie in the interior of $C_1$, and hence are adjacent to either $v_1$ or $v_3$.
	This allows us to bound the order of $G$:
	\begin{align*}
	n &= |V(\mathcal{G}')\cup \{u_3, u_4\}| + |V(G) - V(\mathcal{G})' - \{u_3, u_4\}|\\
	&\leq 14 + (d(v_1) - 5) + (d(v_3) - 5) \leq 2\Delta +4 \leq 3\Delta - 1.
	\end{align*}
	
	\textit{Case 1.2:} There is some vertex $u_1$ in $\ext(C_2)$ that is adjacent to $y_2$, but no vertex in $\ext(C_2)$ adjacent to $v_4$.\\
	Since $G$ is triangle-free, $u_1$ is adjacent to neither $y_3$ nor $y_4$. 
	Because $G$ does not contain two dislocated 4-cycles, $u_1$ is adjacent to neither $v_1$ nor $v_3$.
	Because $d(u_1, w_1) \leq 3$ and $d(u_1, w_2) \leq 3$, there are vertices $u_2$ and $u_3$ in $\ext(C_2)$ such that $Q_1: u_1, u_2, v_1$ and $Q_2: u_1, u_3, v_3$ are paths in $G$.
	Note that $u_2\neq u_3$, as this would induce a 4-cycle on the vertex set $\{u_2, v_1, v_4, v_3\}$.
	This 4-cycle is either dislocated from $C_1$, contradicting our assumption, or it is not dislocated from $C_1$, contradicting the maximality of $C_1$. 
	Denote by $\mathcal{G}^*$ the graph $\mathcal{G} \cup Q_1 \cup Q_2 \cup \{y_2u_1\}$, and observe that $\mathcal{G}^*$ is chordless per Lemma \ref{lem_chordless_part_2} (See Figure \ref{fig:5_one4cycle_3}).
	
	\begin{figure}[ht]
\centering
\begin{tikzpicture}
[inner sep=0.9mm, scale=0.9, 
vertex/.style={circle,thick,draw},
dvertex/.style={rectangle,thick,draw, inner sep=1.3mm}, 
thickedge/.style={line width=1.5pt}] 

\node[vertex, thickedge, fill=black!50] (v1) at (0, 1.5) [label=left:$v_1$] {};
\node[vertex, thickedge, fill=black!50] (v3) at (0, -1.5) [label=left:$v_3$] {};
\node[vertex, thickedge] (v2) at (1.5, 0) [label=left:$v_2$] {};
\node[vertex, thickedge, fill=black!100] (v4) at (-1.5,0) [label=right:$v_4$] {};

\draw[thickedge] (v1)--(v2)--(v3)--(v4)--(v1);

\node[vertex] (w1) at (0,0.4) [label=left:$w_1$] {};
\node[vertex] (w2) at (0, -0.4) [label=left:$w_2$] {};

\draw[double] (v1)--(w1)--(w2)--(v3) ;

\node[vertex] (y1) at (2.5, 0) [label=below:$y_1$] {};
\node[vertex] (y2) at (3.5, 0) [label=below:$y_2$] {};

\draw (v2)--(y1)--(y2);

\node[vertex] (u1) at (4.5, 0) [label=right:$u_1$] {};

\node[vertex] (y3) at (2, 1.5) [label=below:$y_3$] {};
\node[vertex] (y4) at (2, -1.5) [label=above:$y_4$] {};

\draw (y2)--(y3)--(v1) (y2)--(y4)--(v3);

\draw (u1)--(y2);

\node[vertex] (u2) at (0, 2.5) [label=left:$u_2$] {};
\node[vertex] (u3) at (0, -2.5) [label=left:$u_3$] {};

\draw (v1)--(u2) .. controls (2.5, 2.5) .. (u1);
\draw (v3)--(u3) .. controls (2.5, -2.5) .. (u1);

\node at (1, -4.1) {$\mathcal{G}^*$};

\begin{scope}[shift={(8.5,0)}]

\node[vertex, thickedge, fill=black!50] (v1) at (0, 1.5) [label=left:$v_1$] {};
\node[vertex, thickedge, fill=black!50] (v3) at (0, -1.5) [label=left:$v_3$] {};
\node[vertex, thickedge] (v2) at (1.5, 0) [label=left:$v_2$] {};
\node[vertex, thickedge, fill=black!100] (v4) at (-1.5,0) [label=right:$v_4$] {};

\draw[thickedge] (v1)--(v2)--(v3)--(v4)--(v1);

\node[vertex] (w1) at (0,0.4) [label=left:$w_1$] {};
\node[vertex] (w2) at (0, -0.4) [label=left:$w_2$] {};

\draw[double] (v1)--(w1)--(w2)--(v3) ;

\node[vertex] (y1) at (2.5, 0) [label=below:$y_1$] {};
\node[vertex] (y2) at (3.5, 0) [label=below:$y_2$] {};

\draw (v2)--(y1)--(y2);

\node[vertex] (u1) at (4.5, 0) [label=below:$u_1$] {};

\node[vertex] (y3) at (2, 1.5) [label=below:$y_3$] {};
\node[vertex] (y4) at (2, -1.5) [label=above:$y_4$] {};

\draw (y2)--(y3)--(v1) (y2)--(y4)--(v3);

\draw (u1)--(y2);

\node[vertex] (u2) at (0.5, 2.5) [label=left:$u_2$] {};
\node[vertex] (u3) at (0.5, -2.5) [label=left:$u_3$] {};

\draw (v1)--(u2) .. controls (2.5, 2.5) .. (u1);
\draw (v3)--(u3) .. controls (2.5, -2.5) .. (u1);

\node[vertex] (x1) at (5.5, 0) [label=right:$x_1$] {};
\node[vertex] (x2) at (-1, 3.5) [label=left:$x_2$] {};
\node[vertex] (x3) at (-1, -3.5) [label=left:$x_3$] {};

\draw (u1)--(x1);
\draw (v1)--(x2) .. controls (2.5, 3.5) .. (x1);
\draw (v3)--(x3) .. controls (2.5, -3.5) .. (x1);

\node at (0.5, 3) {\bm{$r_1$}};
\node at (1.5, 2) {\bm{$r_2$}};
\node at (0.5, -3) {\bm{$r_4$}};
\node at (1.5, -2) {\bm{$r_3$}};

\node at (1, -4.1) {$\mathcal{G}^{**}$};

\end{scope}

\end{tikzpicture}
\caption{In Case 1.2, we obtain first that $\mathcal{G}^*$, and then $\mathcal{G}^{**}$, are subgraphs of $G$.
The black vertex $v_4$ does not have any neighbours in $G$ besides $v_1$ and $v_3$.}
		\label{fig:5_one4cycle_3}
	\end{figure}
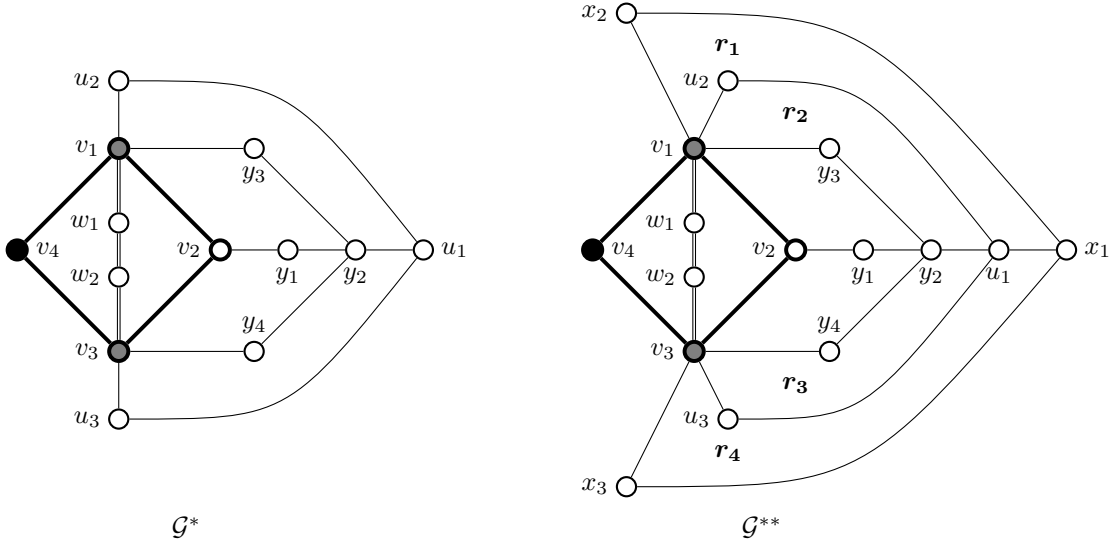
	
	Consider the cycle $C_5: v_1, u_2, u_1, y_2, y_3$. 
	The only vertex on the boundary of $\Int(C_5)$ that is within distance 2 of $w_2$ is $v_1$, so $v_1$ dominates $\Int(C_5)$. 
	By Lemma \ref{lem_5_5}, $\Int(C_5)$ is empty.
	Similarly, the interior of the cycle $C_6: v_3, u_3, u_1, y_2, y_4$ is empty.
	Observe that if every vertex of $G-\mathcal{G}^*$ were adjacent to $v_1$ or $v_3$, then the order of $G$ would be bounded as follows:
	\begin{align*}
	n &= |V(\mathcal{G}^*)| + |V(G) - V(\mathcal{G}^*)|\\
	n &\leq 13 + (d(v_1) - 5) + (d(v_3) - 5) \leq 2\Delta + 3 \leq 3\Delta - 1.
	\end{align*}
	This contradicts our assumption, and thus there is a vertex $x_1$ of $G-\mathcal{G}^*$ not adjacent to $v_1$ or $v_3$. 
	This vertex lies in the face of $\mathcal{G}^*$ bounded by $C_7 = u_2, u_1, u_3, v_3, v_4, v_1$, which we will refer to, without loss of generality, as the exterior of $C_7$.
	Since $\mathcal{G}^*$ is an induced subgraph of $G$, the distance $d_G(y_1, C_7) = 2$, and $\{v_1, v_3, u_1\}$ is the set of vertices of $C_7$ that are distance exactly 2 from $y_1$.
	Because $G$ has diameter 3, we conclude that $x_1$ is adjacent to $u_1$.
	Since $G$ is both triangle-free and does not contain a pair of dislocated 4-cycles, the vertex $x_1$ is not adjacent to any of the vertices of $V(C_7) - \{ u_1 \}$. 
	As $d_G(x_1, w_1)\leq 3$ and $d_G(x_1, w_2)\leq 3$, there exist vertices $x_2$ and $x_3$ in $\ext(C_7)$ such that $Q_3: x_1, x_2, v_1$ and $Q_4: x_1, x_3, v_3$ are paths in $G$.
	These two vertices are distinct, for if they were not, the 4-cycle on $x_2, v_1, v_4, v_3$ would be dislocated from $C_1$, a contradiction.
	Let $\mathcal{G}^{**} = \mathcal{G}^* \cup Q_3 \cup Q_4$ (See Figure \ref{fig:5_one4cycle_3} ($\mathcal{G}^{**}$)).
	We now label the regions of $\mathcal{G}^{**}$ as follows.
	Let $r_1 = \Int(v_1, x_2, x_1, u_1, u_2)$, $r_2 = \Int(v_1, u_2, u_1, y_2, y_3)$, $r_3 = \Int(v_3, u_3,\allowbreak u_1, y_2, y_4)$, $r_4 = \Int(v_3, x_3,\allowbreak x_1, u_1, u_3)$ and $r_0 = \ext(v_1, x_2, x_1, x_3, v_3, v_4)$. 
	Other than $r_0$, all of these regions are bounded by 5-cycles. 
	The regions $r_1$ and $r_2$ are both empty, as the only vertex on either of their boundaries within distance 2 of $w_2$ is $v_1$, and by Lemma \ref{lem_5_5}, no one vertex of a Jordan separating 5-cycle dominates that cycle's interior. 
	Similarly, the regions $r_3$ and $r_4$ are empty as the only vertex on their boundaries within distance 2 of $w_1$ is $v_3$. 
	Any vertex of $r_0$ is adjacent to one of $v_1$ or $v_3$, as these are the only two vertices on the boundary of $r_0$ within distance 2 of $y_1$. 
	Thus all vertices of $G-\mathcal{G}^{**}$ are adjacent to either $v_1$ or $v_3$.
	This contradicts yields the following contradiction, and shows no vertex of $\ext(C_2)$ is adjacent to $y_2$:
	\begin{align*}
	n 	&= |V(\mathcal{G}^{**})| + |V(G) - V(\mathcal{G}^{**})|\\
		&\leq 16 + (d(v_1) -6) + (d(v_3) - 6)) \leq 2\Delta - 4 \leq 3\Delta - 1.
	\end{align*}
	
	\textit{Case 1.3:} There exists some vertex $u_1$ in $\ext(C_2)$ that is adjacent to $y_3$, and no vertex of $\ext(C_2)$ is adjacent to either $y_2$ or $v_4$.\\
	Since $G$ contains neither any 3-cycles, nor any pair of dislocated 4-cycles, the vertex $u_1$ is not adjacent to any vertex of $C_2-\{v_3\}$.
	Thus there are only two ways we can have $d(u_1, w_2) \leq 3$.
	Either $G$ contains the edge $u_1v_3$, or there is some vertex $u_2$ in $\ext(C_2)$ such that $S_1: y_3, u_1, u_2, v_3$ is a path in $G$ (see Figure \ref{fig:5_one4cycle_4}).
	
	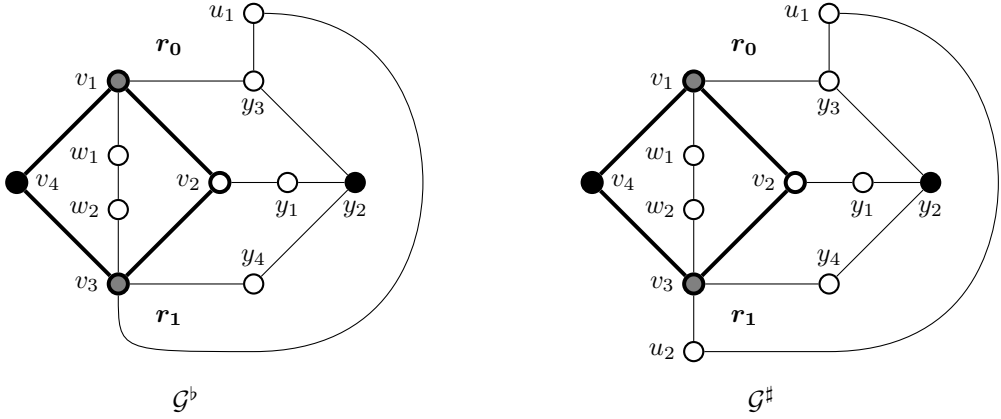
\begin{figure}[h]
\centering
\begin{tikzpicture}
[inner sep=0.9mm, scale=0.9, 
vertex/.style={circle,thick,draw},
dvertex/.style={rectangle,thick,draw, inner sep=1.3mm}, 
thickedge/.style={line width=1.5pt}] 

\node[vertex, thickedge, fill=black!50] (v1) at (0, 1.5) [label=left:$v_1$] {};
\node[vertex, thickedge, fill=black!50] (v3) at (0, -1.5) [label=left:$v_3$] {};
\node[vertex, thickedge] (v2) at (1.5, 0) [label=left:$v_2$] {};
\node[vertex, thickedge, fill=black!100] (v4) at (-1.5,0) [label=right:$v_4$] {};

\draw[thickedge] (v1)--(v2)--(v3)--(v4)--(v1);

\node[vertex] (w1) at (0,0.4) [label=left:$w_1$] {};
\node[vertex] (w2) at (0, -0.4) [label=left:$w_2$] {};

\draw (v1)--(w1)--(w2)--(v3) ;

\node[vertex] (y1) at (2.5, 0) [label=below:$y_1$] {};
\node[vertex, fill=black!100] (y2) at (3.5, 0) [label=below:$y_2$] {};

\draw (v2)--(y1)--(y2);

\node[vertex] (y3) at (2, 1.5) [label=below:$y_3$] {};
\node[vertex] (y4) at (2, -1.5) [label=above:$y_4$] {};

\draw (y2)--(y3)--(v1) (y2)--(y4)--(v3);

\node[vertex] (u1) at (2, 2.5) [label=left:$u_1$] {};

\draw (u1)--(y3);

\draw (u1) .. controls (4.0, 2.5) and (4.5, 1) .. (4.5,0) .. controls (4.5, -1) and (4, -2.5) .. (2, -2.5) .. controls (0, -2.5) .. (v3);

\node at (0.75, 2) {\bm{$r_0$}};
\node at (0.75, -2) {\bm{$r_1$}};

\node at (1, -3.2) {$\mathcal{G}^\flat$};

\begin{scope}[shift={(8.5,0)}]

\node[vertex, thickedge, fill=black!50] (v1) at (0, 1.5) [label=left:$v_1$] {};
\node[vertex, thickedge, fill=black!50] (v3) at (0, -1.5) [label=left:$v_3$] {};
\node[vertex, thickedge] (v2) at (1.5, 0) [label=left:$v_2$] {};
\node[vertex, thickedge, fill=black!100] (v4) at (-1.5,0) [label=right:$v_4$] {};

\draw[thickedge] (v1)--(v2)--(v3)--(v4)--(v1);

\node[vertex] (w1) at (0,0.4) [label=left:$w_1$] {};
\node[vertex] (w2) at (0, -0.4) [label=left:$w_2$] {};

\draw (v1)--(w1)--(w2)--(v3) ;

\node[vertex] (y1) at (2.5, 0) [label=below:$y_1$] {};
\node[vertex, fill=black!100] (y2) at (3.5, 0) [label=below:$y_2$] {};

\draw (v2)--(y1)--(y2);

\node[vertex] (y3) at (2, 1.5) [label=below:$y_3$] {};
\node[vertex] (y4) at (2, -1.5) [label=above:$y_4$] {};

\draw (y2)--(y3)--(v1) (y2)--(y4)--(v3);

\node[vertex] (u1) at (2, 2.5) [label=left:$u_1$] {};
\node[vertex] (u2) at (0, -2.5) [label=left:$u_2$] {};

\draw (u1)--(y3);
\draw (u2)--(v3);

\draw (u1) .. controls (4.0, 2.5) and (4.5, 1) .. (4.5,0) .. controls (4.5, -1) and (4, -2.5) .. (2, -2.5) -- (u2);

\node at (0.75, 2) {\bm{$r_0$}};
\node at (0.75, -2) {\bm{$r_1$}};

\node at (1, -3.2) {$\mathcal{G}^\sharp$};

\end{scope}

\end{tikzpicture}
\caption{Case 1.3 assumes that there is a vertex $u_1$ adjacent to $y_1$. 
	In this case, either $\mathcal{G}^\flat$ or $\mathcal{G}^\sharp$ is a subgraph of $G$.
The black vertices may not have neighbours in $G$ not shown in the diagrams. }
		\label{fig:5_one4cycle_4}
	\end{figure}
	
	Suppose that $u_1$ and $v_3$ are adjacent. 
	Denote by $S_2$ the path $y_3, u_1, v_3$, and let $\mathcal{G}^{\flat} = \mathcal{G}\cup S_2$.
	By applications of Lemma \ref{lem_chordless_part_2}, $\mathcal{G}^\flat$ is an induced subgraph of $G$.
	The path $S_2$ divides $\ext(C_2)$ into two regions bounded by 5-cycles, $r_0 = \ext(v_1, y_3, u_1, v_3, v_4)$ and $r_1 = \Int(y_3, u_1, v_3, y_4, y_2)$. 
	The only vertices on the boundary of $r_0$ within distance 2 of $y_1$ are $v_1$, $v_3$ and $y_3$, so any vertex in $r_0$ is adjacent to one of these three. 
	The only vertices on the boundary of $r_1$ within distance 2 of $w_1$ are $v_3$ and $y_3$, so the set $\{v_3, y_3\}$ dominates $r_1$, and we can bound the order of $G$.
	\begin{align*}
	n 	&= |V(\mathcal{G}^\flat)| + |V(G) - (V(\mathcal{G}^\flat)|\\
		&\leq 11 + (d(v_1) - 4) + (d(v_3) - 5) + (d(y_3) - 3) \leq 3\Delta - 1.
	\end{align*}
	Since this contradicts our assumption, the graph $G$ contains the path $S_1$. 
	Let $\mathcal{G}^\sharp = \mathcal{G}\cup S_1$, and observe per Lemma \ref{lem_chordless_part_2} that $\mathcal{G}^\sharp$ is an induced subgraph of $G$.
	The region $\ext(C_2)$ is divided into two sub-regions bounded by 6-cycles, $r_0 = \ext(v_1, y_3, u_1, u_2, v_3, v_4)$ and $r_1 = \Int(y_3, u_1, u_2, v_3, y_4, y_2)$. 
	The are only two vertices, $y_3$ and $v_3$, on the 6-cycle bounding $r_1$ within distance 2 of $w_1$.
	Thus $\{y_3, v_3\}$ dominates $r_1$, and so by Lemma \ref{lem_5_6_vertex}, there is some vertex $u_3$ in $r_1$ that is adjacent to both $y_3$ and $v_3$. 
	Let $\mathcal{G}^{\sharp \sharp} =\mathcal{G}^{\sharp}\cup \{u_3, u_3y_3, u_3v_3\}$.
	The only vertices on the boundary of $r_0$ within distance 2 of $y_1$ are $v_1$, $v_3$ and $y_3$, so every vertex of $r_0$ is adjacent to one of these three vertices. 
	Thus:
	\begin{align*}
	n &= |V(\mathcal{G}^{\sharp \sharp})| + |V(G) - (V(\mathcal{G}^{\sharp \sharp})|\\
	&\leq 13 + (d(v_1) - 4) + (d(v_3) - 6) + (d(y_3) - 4) \leq 3\Delta - 1.
	\end{align*}
	This contradicts our assumption, and hence $y_3$ does not have a neighbor in $\ext(C_2)$.
	By the same argument, the vertex $y_4$ also does not have a neighbor in $\ext(C_2)$.
	
	\textit{Case 1.4:} The vertices $v_4$, $y_2$, $y_3$ and $y_4$ do not have any neighbors in $\ext(C_2)$.\\
	By cases 1.1 to 1.3, the only vertices of $C_2$ that can have neighbors in $\ext(C_2)$ are $v_1$ and $v_3$. 
	Further, both $v_1$ and $v_3$ are at distance 2 from $y_1$, so any vertex in $\ext(C_2)$ is adjacent to either $v_1$ or $v_3$ in order to be within distance 3 of $y_1$. 
	Hence we get the following bound on $n$:
	\begin{align*}
	n &= |V(\mathcal{G})| + |V(G) - V(\mathcal{G})|\\
	&\leq 10 + (d(v_1) - 4) + (d(v_3) - 4) \leq 2\Delta + 2 \leq 3\Delta - 1.
	\end{align*}
	In all subcases, $n\leq 3\Delta - 1$, and so the vertex $v_2$ does not have a neighbor in $\ext(C_1)$.
	By symmetry, we further conclude that $v_4$ does not have any neighbors in $\ext(C_1)$.
	
	\textit{Case 2:} Neither $v_2$ nor $v_4$ have any neighbors in $G$ besides $v_1$ and $v_3$.\\
	As $n>3\Delta - 1$, there is some vertex $y_1$ in $G$ that is not adjacent to either $v_1$ or $v_3$. 
	Note that $d(y_1, C_1) > 1$, but $d(y_1, w_1) \leq 3$ and $d(y_1, w_2) \leq 3$.
	Therefore, there exist vertices $y_2$ and $y_3$ in the exterior of $C_1$ such that $P_2: y_1, y_2, v_1$ and $P_3: y_1, y_3, v_3$ are paths in $G$ (See Figure \ref{fig:5_one4cycle_5} ($\mathcal{K}$)). 
	Note that $y_2\neq y_3$.
	If $y_2 = y_3$, then there is a 4-cycle on $y_2, v_1, v_2, v_3$, contradicting either the maximality of $C_1$, or the assumption that $G$ does not contain two dislocated 4-cycles. 
	Let $\mathcal{K} = C_1 \cup P_1 \cup P_2 \cup P_3$, and name the cycle $C_2: v_1, y_2, y_1, y_3, v_3, v_4$ (See Figure \ref{fig:5_one4cycle_5}).
	Observe that, by Lemma \ref{lem_chordless_part_2}, the subgraph $\mathcal{K}$ is an induced subgraph of $G$.
	Since $n> 3\Delta - 1$ by assumption, there exists some vertex $u_1$ in $G-\mathcal{K}$ that is not adjacent to either $v_1$ or $v_3$. 
	We may assume without loss of generality that $u_1$ is in $\ext(C_2)$. 
	The vertex $u_1$ is not adjacent to both of $y_2$ and $y_3$ as this creates a 4-cycle dislocated from $C_1$, contradicting our assumption. 
	There are two cases to consider. 
	
	\textit{Case 2.1:} The vertex $u_1$ is adjacent to $y_2$.\\
	Since $G$ contains neither triangles nor dislocated 4-cycles, $u_1$ is not adjacent to any vertex of $C_2-\{y\}$. 
	Since $d_G(u_1, w_2) \leq 3$, there is some vertex $u_2$ in $\ext(C_2)$ such that $Q_1: y_2, u_1, u_2, v_3$ is a path in $G$. 
	Per Lemma \ref{lem_chordless_part_2}, the graph $\mathcal{K}\cup Q_1$ is an induced subgraph of $G$.
	Thus the interior of the 6-cycle $C_3: y_2, u_1, u_2, v_3, y_3, y_1$ is dominated by $y_2$ and $v_3$, as these are the only vertices of the cycle within distance 2 of $w_1$. 
	By Lemma \ref{lem_5_6_vertex}, there exists a vertex $u_3$ in $\Int(C_3)$ such that $Q_2: y_2, u_3, v_3$ is a path in $G$. 
	The path $Q_2$ divides the region bounded by $C_3$ into two regions, each bounded by a 5-cycle. 
	By Corollary \ref{cor:5_5_vertex}, neither region contains any vertex of $G$.
	Let $\mathcal{K}'$ denote the graph $\mathcal{K}\cup Q_1 \cup Q_2$  (See Figure \ref{fig:5_one4cycle_5}), and observe by Lemma \ref{lem_chordless_part_2} that it is an induced subgraph of $G$.
	
	\begin{figure}[h]
\centering
\begin{tikzpicture}
[inner sep=0.9mm, scale=0.9, 
vertex/.style={circle,thick,draw},
dvertex/.style={rectangle,thick,draw, inner sep=1.3mm}, 
thickedge/.style={line width=1.5pt}] 

\node[vertex, thickedge, fill=black!50] (v1) at (0, 1.5) [label=left:$v_1$] {};
\node[vertex, thickedge, fill=black!50] (v3) at (0, -1.5) [label=left:$v_3$] {};
\node[vertex, thickedge, fill=black!100] (v2) at (1.5, 0) [label=left:$v_2$] {};
\node[vertex, thickedge, fill=black!100] (v4) at (-1.5,0) [label=right:$v_4$] {};

\draw[thickedge] (v1)--(v2)--(v3)--(v4)--(v1);

\node[vertex] (w1) at (0,0.4) [label=left:$w_1$] {};
\node[vertex] (w2) at (0, -0.4) [label=left:$w_2$] {};

\draw (v1)--(w1)--(w2)--(v3) ;

\node[vertex] (y1) at (3.5, 0) [label=below:$y_1$] {};

\node[vertex] (y2) at (2, 1.5) [label=below:$y_2$] {};
\node[vertex] (y3) at (2, -1.5) [label=above:$y_3$] {};

\draw (y1)--(y2)--(v1) (y1)--(y3)--(v3);

\node at (1, -3.0) {$\mathcal{K}$};


\begin{scope}[shift={(8.5,0)}]

\node[vertex, thickedge, fill=black!50] (v1) at (0, 1.5) [label=left:$v_1$] {};
\node[vertex, thickedge, fill=black!50] (v3) at (0, -1.5) [label=left:$v_3$] {};
\node[vertex, thickedge, fill=black!100] (v2) at (1.5, 0) [label=left:$v_2$] {};
\node[vertex, thickedge, fill=black!100] (v4) at (-1.5,0) [label=right:$v_4$] {};

\draw[thickedge] (v1)--(v2)--(v3)--(v4)--(v1);

\node[vertex] (w1) at (0,0.4) [label=left:$w_1$] {};
\node[vertex] (w2) at (0, -0.4) [label=left:$w_2$] {};

\draw (v1)--(w1)--(w2)--(v3) ;

\node[vertex] (y1) at (3.5, 0) [label=below:$y_1$] {};

\node[vertex] (y2) at (2.5, 1.5) [label=below:$y_2$] {};
\node[vertex] (y3) at (2.5, -1.5) [label=above:$y_3$] {};

\draw (y1)--(y2)--(v1) (y1)--(y3)--(v3);

\node[vertex, fill=white] (u1) at (4.5, 0.5) [label=above:$u_1$] {};
\node[vertex] (u2) at (4.5, -1.5) [label=below:$u_2$] {};
\node[vertex] (u3) at (4.1, -0.85) [label=below:$u_3$] {};

\draw (v3) to[bend right] (u2);
\draw (u2) to[bend right] (u1);
\draw (u1) to[bend right] (y2);
\draw (v3) to[bend right = 40] (u3);
\draw (u3) to[bend right] (y2);

\node at (1, -3.0) {$\mathcal{K}'$};

\end{scope}

\end{tikzpicture}
\caption{In Case 2, neither $v_4$ nor $v_2$ have neighbours other than $v_1$ and $v_3$.
In this Case, $G$ contains $\mathcal{K}$ as a subgraph.
In Case 2.1, $G$ contains $\mathcal{K}'$ as a subgraph.}
		\label{fig:5_one4cycle_5}
	\end{figure}
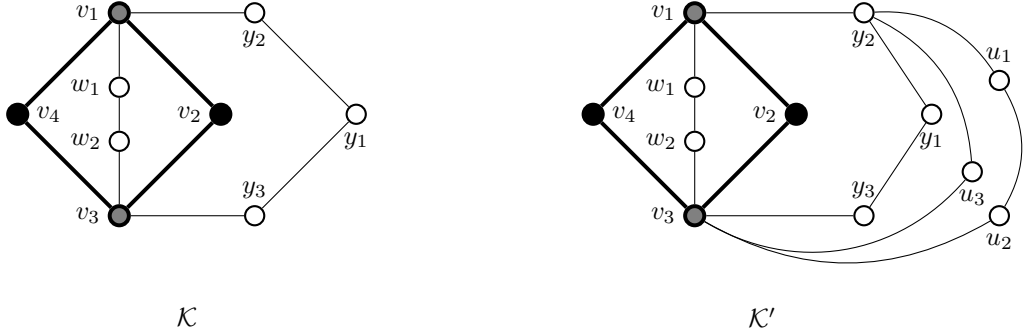
	
	If every vertex of $G-\mathcal{K}'$ is adjacent to one of $v_1$, $v_3$ or $y_2$, then we obtain the following contradiction:
	\begin{align*}
	n \leq 12 + (d(v_1) - 4) + (d(v_3) - 6) + (d(y_2) - 4) \leq 3\Delta - 2.
	\end{align*}
	So there exists some vertex $x_1$ not adjacent to any of $v_1$, $v_3$ or $y_2$. 
	Noting the symmetry between the interior of the cycle $C_4: v_1, y_2, y_1, y_3, v_3, v_2$ and the exterior of the cycle $C_5: v_1, y_2, u_1, u_2, v_3, v_4$, we may assume without loss of generality that $x_1$ is in the interior of $C_4$. 
	
	\begin{figure}[h]
\centering
\begin{tikzpicture}
[inner sep=0.9mm, scale=0.9, 
vertex/.style={circle,thick,draw},
dvertex/.style={rectangle,thick,draw, inner sep=1.3mm}, 
thickedge/.style={line width=1.5pt}] 

\node[vertex, thickedge, fill=black!50] (v1) at (0, 1.5) [label=left:$v_1$] {};
\node[vertex, thickedge, fill=black!50] (v3) at (0, -1.5) [label=left:$v_3$] {};
\node[vertex, thickedge, fill=black!100] (v2) at (1.5, 0) [label=left:$v_2$] {};
\node[vertex, thickedge, fill=black!100] (v4) at (-1.5,0) [label=right:$v_4$] {};

\draw[thickedge] (v1)--(v2)--(v3)--(v4)--(v1);

\node[vertex] (w1) at (0,0.4) [label=left:$w_1$] {};
\node[vertex] (w2) at (0, -0.4) [label=left:$w_2$] {};

\draw[double] (v1)--(w1)--(w2)--(v3) ;

\node[vertex] (y1) at (3.5, 0) [label=below:$y_1$] {};

\node[vertex] (y2) at (2.5, 1.5) [label=below:$y_2$] {};
\node[vertex] (y3) at (2.5, -1.5) [label=above:$y_3$] {};

\draw (y1)--(y2)--(v1) (y1)--(y3)--(v3);

\node[vertex, fill=white] (u1) at (4.5, 0.5) [label=above:$u_1$] {};
\node[vertex] (u2) at (4.5, -1.5) [label=below:$u_2$] {};
\node[vertex] (u3) at (4.1, -0.85) [label=below:$u_3$] {};

\draw (v3) to[bend right] (u2);
\draw (u2) to[bend right] (u1);
\draw (u1) to[bend right] (y2);
\draw (v3) to[bend right = 40] (u3);
\draw (u3) to[bend right] (y2);

\node[vertex] (x1) at (2.75, 0) [label=below:$x_1$] {};
\node[vertex] (x2) at (1.75, 1) [label=below:$x_2$] {};
\node[vertex] (x3) at (1.75, -1) [label=above:$x_3$] {};

\draw (y1) -- (x1) -- (x2) -- (v1) (x1) -- (x3) -- (v3);

\node at (2, -3.0) {$\mathcal{K}''$};


\begin{scope}[shift={(7.5,0)}]

\node[vertex, thickedge, fill=black!50] (v1) at (0, 1.5) [label=left:$v_1$] {};
\node[vertex, thickedge, fill=black!50] (v3) at (0, -1.5) [label=left:$v_3$] {};
\node[vertex, thickedge, fill=black!100] (v2) at (1.5, 0) [label=left:$v_2$] {};
\node[vertex, thickedge, fill=black!100] (v4) at (-1.5,0) [label=right:$v_4$] {};

\draw[thickedge] (v1)--(v2)--(v3)--(v4)--(v1);

\node[vertex] (w1) at (0,0.4) [label=left:$w_1$] {};
\node[vertex] (w2) at (0, -0.4) [label=left:$w_2$] {};

\draw[double] (v1)--(w1)--(w2)--(v3) ;

\node[vertex] (y1) at (4.5, 0) [label=below:$y_1$] {};

\node[vertex] (y2) at (3.5, 1.5) [label=below:$y_2$] {};
\node[vertex] (y3) at (3.5, -1.5) [label=right:$y_3$] {};

\draw (y1)--(y2)--(v1) (y1)--(y3)--(v3);

\node[vertex, fill=white] (u1) at (5.5, 0.5) [label=above:$u_1$] {};
\node[vertex] (u2) at (5.5, -1.5) [label=below:$u_2$] {};
\node[vertex] (u3) at (5.1, -0.85) [label=below:$u_3$] {};

\draw (v3) to[bend right] (u2);
\draw (u2) to[bend right] (u1);
\draw (u1) to[bend right] (y2);
\draw (v3) to[bend right = 40] (u3);
\draw (u3) to[bend right] (y2);

\node[vertex] (x1) at (2.85, -0.5) [label=left:$x_1$] {};
\node[vertex] (x2) at (2.3, 0.5) [label=left:$x_2$] {};
\node[vertex] (x3) at (3.4, 0.25) [label=right:$x_3$] {};

\draw (y3) -- (x1) --(x2) -- (v1);
\draw (y3) to[bend right=15] (x3);
\draw (x3) to[bend right=15] (v1);

\node at (3, -3.0) {$\mathcal{K}'''$};

\end{scope}

\end{tikzpicture}
\caption{In Case 2.1.1, $G$ has the graph $\mathcal{K}''$ as a subgraph.
In Case 2.1.2, the graph $\mathcal{K}'''$ is a subgraph of $G$.}
		\label{fig:5_one4cycle_6}
	\end{figure}
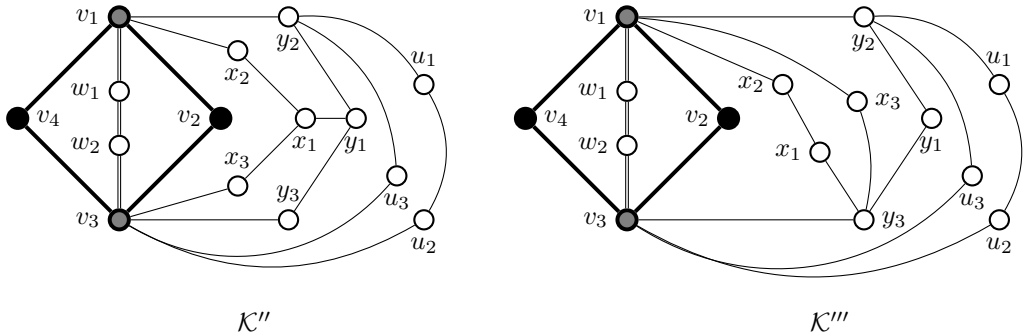
	
	\textit{Case 2.1.1:} The vertex $x_1$ is adjacent to $y_1$.\\
	Since $G$ contains neither triangles nor dislocated 4-cycles, $x_1$ has no neighbors in $C_4-\{y_1\}$.
	Since there exist $x_1-w_1$ and $x_1-w_2$ geodesics, there are vertices $x_2$ and $x_3$ in $\Int(C_4)$ such that $Q_3: y_1, x_1, x_2, v_1$ and $Q_4: y_1, x_1, x_3, v_3$ are paths in $G$. 
	Since $C_1$ is maximal and $G$ does not contain dislocated 4-cycles, the vertices $x_2$ and $x_3$ are distinct.
	Denote $\mathcal{K}'' = \mathcal{K'} \cup Q_3 \cup Q_4$ (See Figure \ref{fig:5_one4cycle_6}).
	
	The exterior of the cycle on $v_1, y_2, u_1, u_2, v_3, v_4$ is dominated by $\{v_1, v_3, y_2\}$, as these are the only vertices of the cycle within distance 2 of $x_1$. 
	The two regions bounded by the 5-cycles on $v_1, y_2, y_1, x_1, x_2$ and $v_3, y_3, y_1, x_1, x_3$ do not contain any vertices by Lemma \ref{lem_5_5}, as only $v_1$ of the former cycle is within distance 2 of $w_2$, and only $v_3$ of the latter is within distance 2 of $w_1$. 
	Finally, the 6-cycle on the vertices $v_1, x_2, x_1, x_3, v_3, v_2$ is dominated by $v_1$ and $v_3$, as these are the only two vertices of the cycle within distance 2 of $u_1$. 
	Thus every vertex of $G-\mathcal{K}''$ is adjacent to $v_1$, $v_3$ or $y_2$, and we obtain a contradiction:
	\begin{align*}
	n &= |V(\mathcal{K}'')| + |V(G) - V(\mathcal{K}'')|\\
	&\leq 15 + (d(v_1) - 5) + (d(v_3) - 7) + (d(y_2) - 4) \leq 3\Delta -1.
	\end{align*}
	
	\textit{Case 2.1.2:} The vertex $x_1$ is adjacent to $y_3$.\\
	The vertex $x_1$ is not adjacent to any vertex of $\mathcal{K}-\{y_3\}$.
	Since $d_G(x_1, w_1) \leq 3$, there exists a vertex $x_2$ such that $Q_5: y_3, x_1, x_2, v_1$ a path in $G$. 
	Consider the 6-cycle $C_6: v_1, y_2, y_1, y_3, x_1, x_2$. 
	The only vertices of $C_6$ within distance 2 of $w_2$ are $v_1$ and $y_3$. 
	So by Lemma \ref{lem_5_6_vertex}, there is a vertex $x_3$ in $\Int(C_6)$ such that $Q_6: v_1, x_3, y_3$ is a path in $G$. 
	The path $Q_6$ divides $\Int(C_6)$ into two regions bounded by 5-cycles, both dominated by $\{v_1, y_3\}$.
	Denote $C_7 : v_1, y_2, u_1, u_2, v_3, v_4$.
	The only vertices of $C_7$ within distance 2 of $x_1$ are $v_1$ and $v_3$, so $\ext(C_7)$ is dominated by $\{v_1, v_3\}$. 
	The interior of the 6-cycle on $v_1, x_2, x_1, y_3, v_3, v_2$ is dominated by $v_1$ and $v_3$, as these are the only two vertices of the cycle within distance 2 of $u_1$. 
	Thus, letting $\mathcal{K}''' = \mathcal{K}' \cup Q_5 \cup Q_6$ (See Figure \ref{fig:5_one4cycle_6}), we derive a contradiction:
	\begin{align*}
	n &= |V(\mathcal{K}''')| + |V(G) - V(\mathcal{K}''')|\\
	&\leq 15 + (d(v_1) + 6) + (d(v_3) - 6) + (d(y_3)-4) \leq 3\Delta - 1.
	\end{align*}
	
	\textit{Case 2.1.3:} The vertex $x_1$ is not adjacent to any vertex of $\mathcal{K}'$.\\
	By the same argument as in Case 2.1.1, there are distinct vertices $x_1$ and $x_2$ in $\Int(C_2)$ such that $Q_7: x_1, x_2, v_1$ and $Q_8: x_1, x_3, v_3$ are paths in $G$. 
	Denote $\mathcal{K}^* = \mathcal{K}'\cup Q_7\cup Q_8$ of $G$ and consider the cycle $C_8: x_1, x_2, v_1, y_2, y_1, y_3, v_3, x_3.$
	By Lemma \ref{lem_chordless_part_2}, the interior of $C_8$ is the only region of $\mathcal{K}^*$ that may contain a chord of $\mathcal{K}^*$.
	Because $G$ contains neither triangles nor dislocated 4-cycles, and $x_1$ is not adjacent to $y_1$, the only possible chords of $\mathcal{K}^*$ are $x_2y_3$ and $x_3y_2$.
	Since $d(u_1, x_1) \leq 3$, either $x_3$ is adjacent to $y_2$, or there is some vertex $y_4$ adjacent to both $x_1$ and $y_2$.
	
	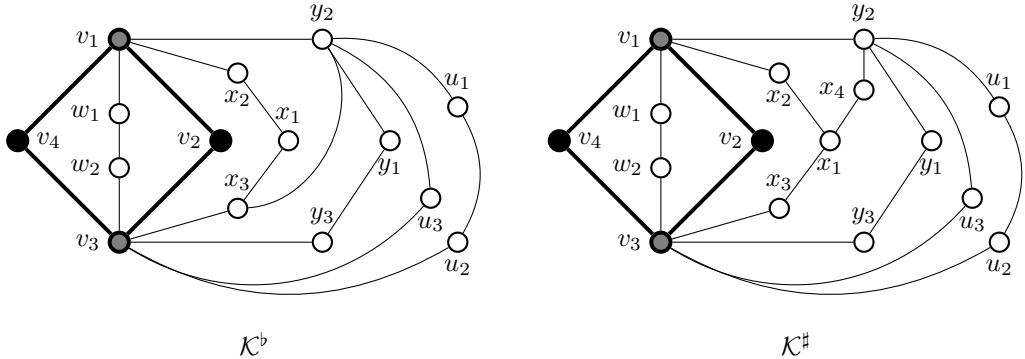
\begin{figure}[h]
\centering
\begin{tikzpicture}
[inner sep=0.9mm, scale=0.9, 
vertex/.style={circle,thick,draw},
dvertex/.style={rectangle,thick,draw, inner sep=1.3mm}, 
thickedge/.style={line width=1.5pt}] 

\node[vertex, thickedge, fill=black!50] (v1) at (0, 1.5) [label=left:$v_1$] {};
\node[vertex, thickedge, fill=black!50] (v3) at (0, -1.5) [label=left:$v_3$] {};
\node[vertex, thickedge, fill=black!100] (v2) at (1.5, 0) [label=left:$v_2$] {};
\node[vertex, thickedge, fill=black!100] (v4) at (-1.5,0) [label=right:$v_4$] {};

\draw[thickedge] (v1)--(v2)--(v3)--(v4)--(v1);

\node[vertex] (w1) at (0,0.4) [label=left:$w_1$] {};
\node[vertex] (w2) at (0, -0.4) [label=left:$w_2$] {};

\draw (v1)--(w1)--(w2)--(v3) ;

\node[vertex] (y1) at (4, 0) [label=below:$y_1$] {};

\node[vertex] (y2) at (3, 1.5) [label=above:$y_2$] {};
\node[vertex] (y3) at (3, -1.5) [label=above:$y_3$] {};

\draw (y1)--(y2)--(v1) (y1)--(y3)--(v3);

\node[vertex, fill=white] (u1) at (5, 0.5) [label=above:$u_1$] {};
\node[vertex] (u2) at (5, -1.5) [label=below:$u_2$] {};
\node[vertex] (u3) at (4.6, -0.85) [label=below:$u_3$] {};

\draw (v3) to[bend right] (u2);
\draw (u2) to[bend right] (u1);
\draw (u1) to[bend right] (y2);
\draw (v3) to[bend right = 40] (u3);
\draw (u3) to[bend right] (y2);

\node[vertex] (x1) at (2.5, 0) [label=above:$x_1$] {};
\node[vertex] (x2) at (1.75, 1) [label=below:$x_2$] {};
\node[vertex] (x3) at (1.75, -1) [label=above:$x_3$] {};

\draw (x1) -- (x2) -- (v1) (x1) -- (x3) -- (v3);
\draw (x3) to[bend right=60] (y2);

\node at (2, -3.0) {$\mathcal{K}^\flat$};


\begin{scope}[shift={(8,0)}]

\node[vertex, thickedge, fill=black!50] (v1) at (0, 1.5) [label=left:$v_1$] {};
\node[vertex, thickedge, fill=black!50] (v3) at (0, -1.5) [label=left:$v_3$] {};
\node[vertex, thickedge, fill=black!100] (v2) at (1.5, 0) [label=left:$v_2$] {};
\node[vertex, thickedge, fill=black!100] (v4) at (-1.5,0) [label=right:$v_4$] {};

\draw[thickedge] (v1)--(v2)--(v3)--(v4)--(v1);

\node[vertex] (w1) at (0,0.4) [label=left:$w_1$] {};
\node[vertex] (w2) at (0, -0.4) [label=left:$w_2$] {};

\draw (v1)--(w1)--(w2)--(v3) ;

\node[vertex] (y1) at (4, 0) [label=below:$y_1$] {};

\node[vertex] (y2) at (3, 1.5) [label=above:$y_2$] {};
\node[vertex] (y3) at (3, -1.5) [label=above:$y_3$] {};

\draw (y1)--(y2)--(v1) (y1)--(y3)--(v3);

\node[vertex, fill=white] (u1) at (5, 0.5) [label=above:$u_1$] {};
\node[vertex] (u2) at (5, -1.5) [label=below:$u_2$] {};
\node[vertex] (u3) at (4.6, -0.85) [label=below:$u_3$] {};

\draw (v3) to[bend right] (u2);
\draw (u2) to[bend right] (u1);
\draw (u1) to[bend right] (y2);
\draw (v3) to[bend right = 40] (u3);
\draw (u3) to[bend right] (y2);

\node[vertex] (x1) at (2.5, 0) [label=below:$x_1$] {};
\node[vertex] (x2) at (1.75, 1) [label=below:$x_2$] {};
\node[vertex] (x3) at (1.75, -1) [label=above:$x_3$] {};
\node[vertex] (x4) at (3, 0.75) [label=left:$x_4$] {};

\draw (x1) -- (x2) -- (v1) (x1) -- (x3) -- (v3);
\draw (x1) -- (x4) -- (y2);

\node at (2, -3.0) {$\mathcal{K}^\sharp$};

\end{scope}

\end{tikzpicture}
\caption{In the first sub-case of 2.1.3, the vertices $y_2$ and $x_3$ are adjacent, and $G$ contains the subgraph $\mathcal{K}^\flat$.
	In the second sub-case, there is a vertex $x_4$ adjacent to both $x_1$ and $y_2$, and $G$ contains the subgraph $\mathcal{K}^\sharp$.}
		\label{fig:5_one4cycle_7}
	\end{figure}
	
	\textit{Subcase 2.1.3 - 1:} The vertices $x_3$ and $y_2$ are adjacent.\\
	Observe by Lemma \ref{lem_chordless_part_2} that $\mathcal{K}^* \cup \{x_3y_2\}$ is an induced subgraph of $G$.
	Since $d(x_2, y_3) \leq 3$, there is a vertex $x_4$ adjacent to both $v_3$ and $x_2$. 
	Denote $\mathcal{K}^\flat = \mathcal{K}^* \cup \{x_4, x_3y_2, v_3x_4, x_2x_4\}$.
	The exterior of the cycle on $v_1, y_2, u_1, u_2, v_3, v_4$ is dominated by $v_1$, $v_3$ and $y_2$, as these are the only vertices of the cycle within distance 2 of $x_1$. 
	The interior of the 5-cycle on $v_1, x_2, x_4, v_3, v_2$ is dominated by $v_1$ and $v_3$, as only these vertices of the cycle are within distance 2 of $u_1$. 
	The cycle on $x_2, x_1, x_3, v_3, x_4$ is dominated by $x_3$ and $v_3$ as these are the only two vertices within distance 2 of $u_1$, and so by Lemma \ref{lem_5_5} the interior of this cycle contains no vertices. 
	The interior of the 5-cycle on $y_2, y_1, y_3, v_3, x_3$ is dominated by $v_3$ and $y_2$, as only these vertices of the cycle are distance 2 from $w_1$. 
	The interior of 5-cycle on $v_1, y_2, x_3, x_1, x_2$ is also empty by Lemma \ref{lem_5_5}, as only $y_2$ and $x_3$ are within distance 2 of $y_3$. 
	Since the vertices of $G$ not in $\mathcal{K}^\flat$ are all adjacent to one of $v_1$, $v_3$ or $y_2$, we can bound the order of $G$.
	\begin{align*}
	n 	&= |V(\mathcal{K}^\flat)| + |V(G) - V(\mathcal{K}^\flat)|\\
		&\leq 16 + (d(v_1) - 5) + (d(v_3) - 8) + (d(y_2) - 5) \leq 3\Delta - 2.
	\end{align*}
	
	\textit{Subcase 2.1.3 - 2:} The graph $G$ contains a vertex $x_4$ that is adjacent to $x_1$ and $y_2$.\\
	Let $\mathcal{K}^\sharp$ be the subgraph $\mathcal{K}^*\cup \{x_4, x_1x_4, y_2x_4\}$ of $G$, and observe per Lemma \ref{lem_chordless_part_2} that $\mathcal{K}^\sharp$ is an induced subgraph of $G$.
	The exterior of the cycle on $v_1, y_2, u_1, u_2, v_3, v_4$ is dominated by $v_1$, $v_3$ and $y_2$, as these are the only vertices of the cycle within distance 2 of $x_1$. 
	The 7-cycle on $y_2, y_1, y_3, v_3, x_3, x_1, x_4$ is dominated by $y_2$ and $v_3$ as these are the only vertices within distance 2 of $w_1$. 
	The interior of the 5-cycle on $v_1, y_2, x_4, x_1, x_2$ is empty by Lemma \ref{lem_5_5}, as it is dominated by $v_1$, the only vertex of the cycle within distance 2 of $w_2$. 
	The interior of the 6-cycle on $v_1, x_2, x_1, x_3, v_3, v_2$ is dominated by $v_1$ and $v_3$, the only vertices of the cycle within distance 2 of $u_1$. 
	Every vertex of $G$ that is not in $\mathcal{K}^\sharp$ is adjacent to one of $v_1$, $v_3$ or $y_2$, so the order of $G$ is bounded above:
	\begin{align*}
	n &= |V(\mathcal{K}^\sharp)| + |V(G) - V(\mathcal{K}^\sharp)|\\
	&\leq 16 + (d(v_1) - 5) + (d(v_3) - 7) + (d(y_2) - 5) \leq 3\Delta - 1.
	\end{align*}
	
	\textit{Case 2.2:} The vertex $u_1$ is not adjacent to $y_2$ or $y_3$.\\
	Since $d_G(u_1, w_1) \leq 3$ and $d_G(u_1, w_2) \leq 3$, there exist vertices $u_2$ and $u_3$ in $G$ such that $S_1: u_1, u_2, v_1$ and $S_2: u_1, u_3, v_3$ are paths in $G$. 
	The vertices $u_2$ and $u_3$ are distinct, by the maximality of $C_1$ and the fact that $G$ contains no dislocated 4-cycles.
	By Case 2.1, neither $y_2$ nor $y_3$ can have a neighbor in $G-\mathcal{K}$ which is not adjacent to $v_1$ or to $v_3$. 
	By symmetry, neither $u_2$ nor $u_3$ can have any neighbor in $G-\{u_1\}$ that is not adjacent to $v_1$ or to $v_3$. 
	Since $G$ contains neither triangles nor dislocated 4-cycles, the only possible chords of the cycle on $v_1, u_2, u_1, u_3, v_3, y_3, y_1, y_2$ are $y_1u_1$, $y_2u_3$ and $y_3u_2$. 
	Up to symmetry, this leaves three possible ways to construct a $u_1-y_1$ geodesic in $G$: with the edge $y_2u_3$, with the edge $u_1y_1$, or by (possibly repeated) subdivision of the edge $u_1y_1$.
	We let $\mathcal{L} = \mathcal{K}\cup S_1 \cup S_2$ (See Figure \ref{fig:5_one4cycle_8}).
	
	\begin{figure}[h]
\centering
\begin{tikzpicture}
[inner sep=0.9mm, scale=0.9, 
vertex/.style={circle,thick,draw},
dvertex/.style={rectangle,thick,draw, inner sep=1.3mm}, 
thickedge/.style={line width=1.5pt}] 

\node[vertex, thickedge, fill=black!50] (v1) at (0, 1.5) [label=left:$v_1$] {};
\node[vertex, thickedge, fill=black!50] (v3) at (0, -1.5) [label=left:$v_3$] {};
\node[vertex, thickedge, fill=black!100] (v2) at (1.5, 0) [label=left:$v_2$] {};
\node[vertex, thickedge, fill=black!100] (v4) at (-1.5,0) [label=right:$v_4$] {};

\draw[thickedge] (v1)--(v2)--(v3)--(v4)--(v1);

\node[vertex] (w1) at (0,0.4) [label=left:$w_1$] {};
\node[vertex] (w2) at (0, -0.4) [label=left:$w_2$] {};

\draw (v1)--(w1)--(w2)--(v3) ;

\node[vertex] (y1) at (3.5, 0) [label=below:$y_1$] {};
\node[vertex] (y2) at (2.5, 1.5) [label=right:$y_2$] {};
\node[vertex] (y3) at (2.5, -1.5) [label=right:$y_3$] {};

\draw (y1)--(y2)--(v1) (y1)--(y3)--(v3);

\node[vertex] (u1) at (5, 0) [label=right:$u_1$] {};
\node[vertex] (u2) at (4, 2.5) [label=above:$u_2$] {};
\node[vertex] (u3) at (4, -2.5) [label=below:$u_3$] {};

\draw (u1)--(u2)--(v1) (u1)--(u3)--(v3);

\node at (2, -3.0) {$\mathcal{L}$};


\begin{scope}[shift={(8,0)}]

\node[vertex, thickedge, fill=black!50] (v1) at (0, 1.5) [label=left:$v_1$] {};
\node[vertex, thickedge, fill=black!50] (v3) at (0, -1.5) [label=left:$v_3$] {};
\node[vertex, thickedge, fill=black!100] (v2) at (1.5, 0) [label=left:$v_2$] {};
\node[vertex, thickedge, fill=black!100] (v4) at (-1.5,0) [label=right:$v_4$] {};

\draw[thickedge] (v1)--(v2)--(v3)--(v4)--(v1);

\node[vertex] (w1) at (0,0.4) [label=left:$w_1$] {};
\node[vertex] (w2) at (0, -0.4) [label=left:$w_2$] {};

\draw (v1)--(w1)--(w2)--(v3) ;

\node[vertex] (y1) at (3.5, 0) [label=below:$y_1$] {};
\node[vertex] (y2) at (2.5, 1.5) [label=right:$y_2$] {};
\node[vertex] (y3) at (2.5, -1.5) [label=right:$y_3$] {};

\draw (y1)--(y2)--(v1) (y1)--(y3)--(v3);

\node[vertex] (u1) at (5, 0) [label=right:$u_1$] {};
\node[vertex] (u2) at (4, 2.5) [label=above:$u_2$] {};
\node[vertex] (u3) at (4, -2.5) [label=below:$u_3$] {};

\draw (u1)--(u2)--(v1) (u1)--(u3)--(v3);

\draw (u3) to[bend right=45] (y2);

\node[vertex] (x1) at (2.25, 0.5) [label=above:$x_1$] {};

\draw (y3) -- (x1) -- (v1);

\node at (2, -3.0) {$\mathcal{L}'$};

\end{scope}

\end{tikzpicture}
\caption{The graph $G$ contains the subgraph $\mathcal{L}$ in Case 2.2.
It contains the subgraph $\mathcal{L}'$ in Case 2.2.1.}
		\label{fig:5_one4cycle_8}
	\end{figure}
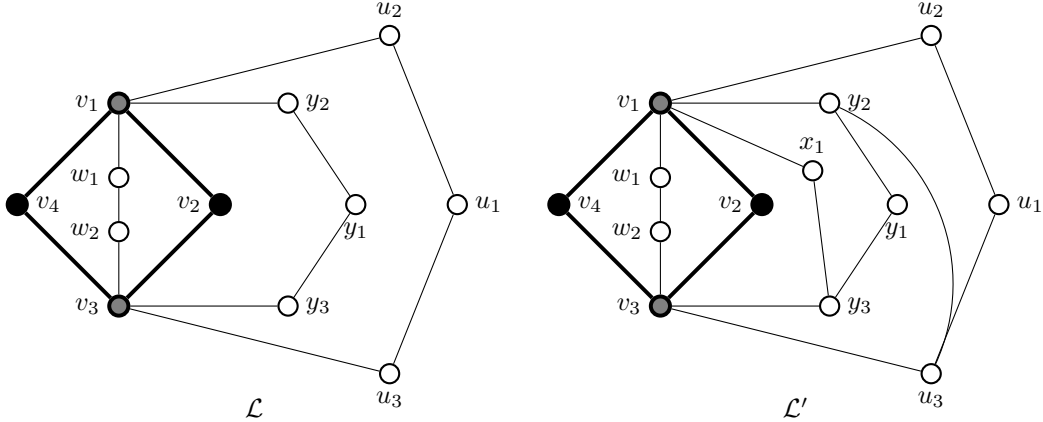
	
	\textit{Case 2.2.1:} The vertices $y_2$ and $u_3$ are adjacent.\\
	Per Lemma \ref{lem_chordless_part_2}, the subgraph $\mathcal{L}\cup \{y_2u_3\}$ is an induced subgraph of $G$.
	Since $d_G(y_3, u_2) \leq 3$, there exists some vertex $x_1$ in $G$ such that either $S_3: y_3, x_1, v_1$ or $S_4: y_3, v_3, x_1, u_2$ is a path in $G$. 
	Up to relabeling of the vertices and choosing the region bounded by $v_1, y_2, y_1, y_3, v_3, v_2$ to be the exterior region of our subgraph, these possibilities are the same. 
	Hence we assume without loss of generality that $S_3$ is a $y_3-u_2$ geodesic, and we denote by $\mathcal{L}'$ the graph $\mathcal{L}\cup \{y_2u_3\} \cup S_3$ (See Figure \ref{fig:5_one4cycle_8}). 
	The interior of the 5-cycle on $v_1, v_2, v_3, y_3, x_1$ is dominated by $v_1$ and $v_3$ as these are the only vertices of the cycle within distance 2 of $u_1$. 
	The interiors of the two 5-cycles on $v_1, y_2, y_1, y_3, x_1$ and $v_1, u_2, u_1, u_3, y_2$ are dominated by the pairs $v_1$, $y_3$ and $v_1$, $u_3$ respectively, as these are the only vertices on the cycles within distance 2 of $w_2$. 
	The interior of the 5-cycle on $y_2, u_3, v_3, y_3, y_1$ is dominated by $y_2$ and $v_3$, these being the only vertices of the cycle within distance 2 of $w_1$.
	By Lemma \ref{lem_5_cycle_empty}, all four of the regions mentioned are empty.
	All vertices of $G$ not in $\mathcal{L}'$ lie in the exterior of the cycle on $v_1, u_2, u_1, u_3, v_3, v_4$. 
	The vertices of this cycle within distance 2 of $y_1$ are $v_1$, $v_3$ and $u_3$. 
	Hence:
	\begin{align*}
	n &= |V(\mathcal{L}')| + |V(G) - (V(\mathcal{L}'))|\\
	&\leq 13 + (d(v_1) - 6) + (d(v_3) - 5) + (d(u_3) - 3) \leq 3\Delta - 1.
	\end{align*}
	This contradicts our assumption, so $y_2$ and $u_3$ are not adjacent. 
	By symmetry, $y_3$ and $u_2$ are not adjacent.
	
	\textit{Case 2.2.2:} The vertices $u_1$ and $y_1$ are adjacent.\\
	Note the interiors of the two 5-cycles on $v_1, u_2, u_1, y_1, y_2$ and $v_3, u_3, u_1, y_1, y_3$ are dominated by only the vertices $v_1$ and $v_3$ respectively, these being the only vertices of the cycles within distance 2 of $w_2$ and $w_1$ respectively. 
	Thus by Lemma \ref{lem_5_5}, both interiors are empty.
	Since $n> 3\Delta - 1$, there exists some vertex $x_1$ in $G-\mathcal{L}$ that is not adjacent to $v_1$ or $v_3$. 
	By symmetry between the exterior of the cycle on $v_1, u_2, u_1, u_3, v_3, v_4$ and the interior of the cycle on $v_1, y_2, y_1, y_3, v_3, v_2$, we assume without loss of generality that $x_1$ is in the interior of the latter cycle. 
	By Case 2.1, the vertex $x_1$ is not adjacent to $y_2$ or $y_3$.
	By the same argument as the one at the start of Case 2.2, there exist distinct vertices $x_2$ and $x_3$ in $G$ such that $S_5: x_1, x_2, v_1$ and $S_6: x_1, x_3, v_3$ are paths in $G$.
	Let $\mathcal{L}''$ denote the graph $\mathcal{L} \cup \{y_1u_1\} \cup S_5 \cup S_6$.
	Using both Lemma \ref{lem_chordless_part_2}, and the fact that $G$ contains neither triangles nor dislocated 4-cycles, we see that the only possible chords of $\mathcal{L}''$ are $x_1y_1$, $x_2y_3$ and $x_3y_2$. 
	The only possibilities for an $x_1-u_1$ geodesic of length at most 3 require that $G$ contains the edge $x_1y_1$, or path $x_1, z_1, y_1$, containing some new vertex $z_1$.
	Let $\mathcal{L}^\flat = \mathcal{L}'' \cup \{x_1y_1\}$ and $\mathcal{L}^\sharp = \mathcal{L}'' \cup \{z_1, x_1z_1, z_1y_1\}$ (See Figure \ref{fig:5_one4cycle_9}). 
	
	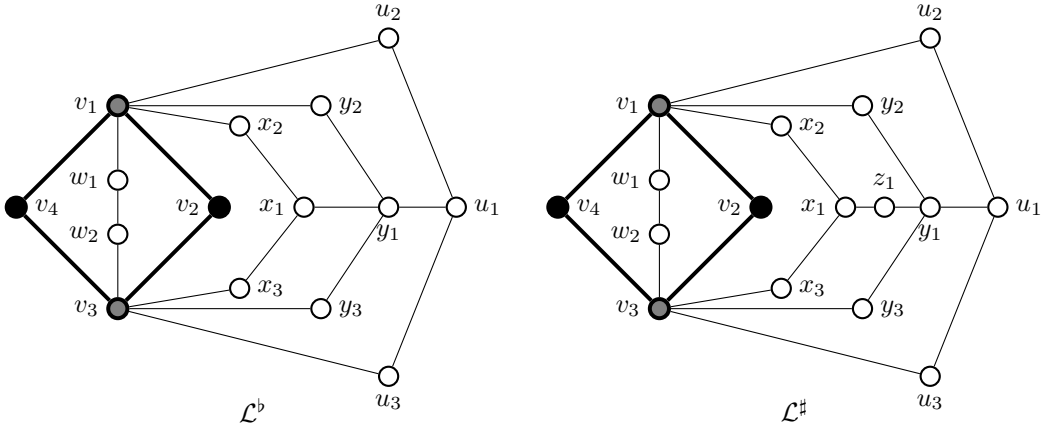
\begin{figure}[h]
\centering
\begin{tikzpicture}
[inner sep=0.9mm, scale=0.9, 
vertex/.style={circle,thick,draw},
dvertex/.style={rectangle,thick,draw, inner sep=1.3mm}, 
thickedge/.style={line width=1.5pt}] 

\node[vertex, thickedge, fill=black!50] (v1) at (0, 1.5) [label=left:$v_1$] {};
\node[vertex, thickedge, fill=black!50] (v3) at (0, -1.5) [label=left:$v_3$] {};
\node[vertex, thickedge, fill=black!100] (v2) at (1.5, 0) [label=left:$v_2$] {};
\node[vertex, thickedge, fill=black!100] (v4) at (-1.5,0) [label=right:$v_4$] {};

\draw[thickedge] (v1)--(v2)--(v3)--(v4)--(v1);

\node[vertex] (w1) at (0,0.4) [label=left:$w_1$] {};
\node[vertex] (w2) at (0, -0.4) [label=left:$w_2$] {};

\draw (v1)--(w1)--(w2)--(v3) ;

\node[vertex] (y1) at (4, 0) [label=below:$y_1$] {};
\node[vertex] (y2) at (3, 1.5) [label=right:$y_2$] {};
\node[vertex] (y3) at (3, -1.5) [label=right:$y_3$] {};

\draw (y1)--(y2)--(v1) (y1)--(y3)--(v3);

\node[vertex] (u1) at (5, 0) [label=right:$u_1$] {};
\node[vertex] (u2) at (4, 2.5) [label=above:$u_2$] {};
\node[vertex] (u3) at (4, -2.5) [label=below:$u_3$] {};

\draw (u1) -- (y1);

\draw (u1)--(u2)--(v1) (u1)--(u3)--(v3);

\node[vertex] (x1) at (2.75, 0) [label=left:$x_1$] {};
\node[vertex] (x2) at (1.8, 1.2) [label=right:$x_2$] {};
\node[vertex] (x3) at (1.8, -1.2) [label=right:$x_3$] {};

\draw (v1)--(x2)--(x1)--(x3)--(v3);
\draw (x1) -- (y1);

\node at (2, -3.0) {$\mathcal{L}^\flat$};


\begin{scope}[shift={(8,0)}]

\node[vertex, thickedge, fill=black!50] (v1) at (0, 1.5) [label=left:$v_1$] {};
\node[vertex, thickedge, fill=black!50] (v3) at (0, -1.5) [label=left:$v_3$] {};
\node[vertex, thickedge, fill=black!100] (v2) at (1.5, 0) [label=left:$v_2$] {};
\node[vertex, thickedge, fill=black!100] (v4) at (-1.5,0) [label=right:$v_4$] {};

\draw[thickedge] (v1)--(v2)--(v3)--(v4)--(v1);

\node[vertex] (w1) at (0,0.4) [label=left:$w_1$] {};
\node[vertex] (w2) at (0, -0.4) [label=left:$w_2$] {};

\draw (v1)--(w1)--(w2)--(v3) ;

\node[vertex] (y1) at (4, 0) [label=below:$y_1$] {};
\node[vertex] (y2) at (3, 1.5) [label=right:$y_2$] {};
\node[vertex] (y3) at (3, -1.5) [label=right:$y_3$] {};

\draw (y1)--(y2)--(v1) (y1)--(y3)--(v3);

\node[vertex] (u1) at (5, 0) [label=right:$u_1$] {};
\node[vertex] (u2) at (4, 2.5) [label=above:$u_2$] {};
\node[vertex] (u3) at (4, -2.5) [label=below:$u_3$] {};

\draw (u1) -- (y1);

\draw (u1)--(u2)--(v1) (u1)--(u3)--(v3);

\node[vertex] (x1) at (2.75, 0) [label=left:$x_1$] {};
\node[vertex] (x2) at (1.8, 1.2) [label=right:$x_2$] {};
\node[vertex] (x3) at (1.8, -1.2) [label=right:$x_3$] {};

\draw (v1)--(x2)--(x1)--(x3)--(v3);

\node[vertex] (z1) at (3.325, 0) [label=above:$z_1$] {};

\draw (x1) -- (z1) -- (y1);

\node at (2, -3.0) {$\mathcal{L}^\sharp$};

\end{scope}

\end{tikzpicture}
\caption{In Cases 2.2.2 and 2.2.3, the graph $G$ always contains $\mathcal{L}^\flat$ as a subgraph. 
If, in Case 2.2.2, $G$ contains $\mathcal{L}^\sharp$ as a subgraph, it will inevitably also have a $\mathcal{L}^\flat$ subgraph.} 
		\label{fig:5_one4cycle_9}
	\end{figure}
	
	Suppose that $G$ contains the path $x_1, z_1, y_1$.
	By Lemma \ref{lem_chordless_part_2}, the subgraph $\mathcal{L}^\sharp$ is an induced subgraph of $G$.
	Since $d_G(z_1, w_1) \leq 3$ and $d_G(z_1, w_2) \leq 3$, there exist vertices $z_2$ and $z_3$ such that $S_7: z_1, z_2, v_1$ and $S_8: z_1, z_3, v_3$ are paths in $G$. 
	By swapping the labels $z_1 \leftrightarrow x_1$, $z_2 \leftrightarrow x_2$ and $z_3 \leftrightarrow x_3$, we obtain $\mathcal{L}^\flat$ as a subgraph of $G$. 
	Thus to complete the proof of Case 2.2.2, it suffices to prove the following claim.
	
	\textit{Claim:} If $G$ contains $\mathcal{L}^\flat$ as a subgraph, then $n\leq 3\Delta - 1$.\\
	Consider the subgraph $\mathcal{L}^\flat$, and note that it is an induced subgraph of $G$ by Lemma \ref{lem_chordless_part_2}.
	There exist $x_2-u_3$ and $x_3-u_2$ geodesics of length at most 3 in $G$. 
	Since $\mathcal{L}^\flat$ is an induced subgraph of $G$, there are only two possible $x_2-u_3$ geodesics, both of which use some vertex $t_1$ in $G-\mathcal{L}^\flat$.
	These possible geodesics are $X_1: x_2, v_1, t_1, u_3$ and $X_2: x_2, t_1, v_3, u_3$. 
	Up to relabeling of the vertices, and making the face of $\mathcal{L}^\flat$ bounded by $v_1, x_2, x_1, x_3, v_3, v_2$ the outer face of the graph, the two plane graphs $\mathcal{L}^\flat \cup X_1$ and $\mathcal{L}^\flat \cup X_2$ are the same.
	Thus we assume without loss of generality that $X_1$ is a geodesic in $G$.
	Per Lemma \ref{lem_chordless_part_2}, the subgraph $\mathcal{L}^\flat \cup X_1$ is an induced subgraph of $G$.
	The only possible $x_3-u_2$ geodesic is $X_3: x_3, t_2, v_1, u_2$, where $t_2$ is not among the vertices mentioned thus far.
	Let $\mathcal{L}^* = \mathcal{L}^\flat \cup X_1 \cup X_2$, and observe that it is an induced subgraph of $G$ by Lemma \ref{lem_chordless_part_2}. 
	The interior of the 5-cycle on $v_1, t_2, x_3, v_3, v_2$ is dominated by $v_1$ and $v_3$, these being the only vertices of the cycle within distance 2 of $u_1$. 
	The interior of the 5-cycle on $v_1, x_2, x_1, x_3, t_2$ is dominated by $v_1$ and $x_3$, as these are the only vertices of the cycle within distance 2 of $w_2$.
	Similarly, the two regions bounded by 5-cycles that contain the vertex $t_1$ are also dominated by just two vertices.
	The interiors of the two 5-cycles on $v_1, y_2, y_1, x_1, x_2$ and $v_3, y_3, y_1, x_1, x_3$ are dominated by only $v_1$ and $v_3$ respectively, these being the only vertices of each cycle within distance 2 of $w_1$ and $w_1$, respectively. 
	Thus, all the regions mentioned above are empty by Lemma \ref{lem_5_cycle_empty}.
	As such, every vertex of $G-\mathcal{L}^*$ is in the interior of $C_1$, and hence adjacent to $v_1$ or to $v_3$. 
	Hence we prove the claim with the following contradiction:
	\begin{align*}
	n &= |V(\mathcal{L}^*)| + |V(G) - V(\mathcal{L}^*)|\\
	&\leq 17 + (d(v_1) - 8) + (d(v_3) - 6) \leq 2\Delta + 3 \leq 3\Delta - 1.
	\end{align*}
	
	\textit{Case 2.2.3:} The $y_1-u_1$ geodesic is the single edge $y_1u_1$, subdivided either once or twice into a path of length 2 or 3 respectively.
	Assume there exists some vertex $x_1$ in $G-\mathcal{L}$ on the path $Y_1: y_1, x_1, u_1$ in $G$, and note that $\mathcal{L}\cup Y_1$ is an induced subgraph of $G$ by Lemma \ref{lem_chordless_part_2}.
	Since the distance between $x_1$ and the vertices $w_1$ and $w_2$ is at most 3, there are paths $x_1, x_2, v_1$ and $x_1, x_3, v_3$ in $G$. 
	But now we see that $\mathcal{L}^\flat$ is a subgraph of $G$, and $n\leq 3\Delta - 1$ by the claim in Case 2.2.2.
	If we instead assume that there are vertices $x_1$ and $z_1$ on the path $Y_2: y_1, x_1, z_1, u_1$, we again see that $\mathcal{L}\cup Y_2$ is an induced subgraph of $G$, and that $G$ contains paths $x_1, x_2, v_1$ and $x_1, x_3, v_3$. 
	Similarly, the graph $G$ will also have paths $z_1, z_2, v_1$ and $z_1, z_3, v_3$, and we see that $G$ contains $\mathcal{L}^\flat$ as a subgraph. 
	Again invoke the claim in Case 2.2.2 to complete the proof.
\end{proof}   

\section{Bounding the order, part III: Not a 4-cycle in sight}
\label{sec:bounding_3}

In this section, we show that a pentagulation $G$ of diameter 3, order $n$ and maximum degree $\Delta \geq 8$ contains at least one 4-cycle.
The restriction $\Delta \geq 8$ is used heavily.
As demonstrated by Figure \ref{fig:delta_need_be_large}, the result is false if $\Delta \leq 3$. 
The author does not know whether or not $n \leq 3\Delta - 1$ when $\Delta$ is between 4 and 7.

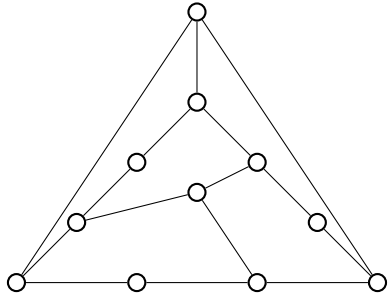
\begin{figure}[h]
\centering
\begin{tikzpicture}
[inner sep=0.8mm, scale=0.8, 
vertex/.style={circle,thick,draw},
dvertex/.style={rectangle,thick,draw, inner sep=1.3mm}, 
thickedge/.style={line width=1.5pt}] 

\node[vertex] (1) at (0,0) {};
\node[vertex] (2) at (2,0) {};
\node[vertex] (3) at (4,0) {};
\node[vertex] (4) at (6,0) {};
\node[vertex] (5) at (1,1) {};
\node[vertex] (6) at (2,2) {};
\node[vertex] (7) at (3,3) {};
\node[vertex] (8) at (4,2) {};
\node[vertex] (9) at (5,1) {};
\node[vertex] (10) at (3,1.5) {};
\node[vertex] (11) at (3,4.5) {};

\draw (1)--(5)--(6)--(7)--(8)--(9)--(4)--(3)--(2)--(1);
\draw (10)--(5) (10)--(3) (10)--(8);
\draw (11)--(1) (11)--(7) (11)--(4);

\end{tikzpicture}
\caption{A pentagulation of diameter 3, girth 5 and maximum degree less than 8.}
\label{fig:delta_need_be_large}
\end{figure}

\begin{lem}
	Let $G$ be a pentagulation with girth 5, and let $v$ be a vertex of $G$.
	Then $N(v)$ is an independent set, every vertex of $N_2(v)$ has a unique neighbor in $N(v)$, and every vertex of $N(v)$ has at least one neighbor in $N_2(v)$.
	\label{lem_nhood_rules}
\end{lem}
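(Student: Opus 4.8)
The plan is to obtain all three conclusions from two facts already available: that $G$ contains no cycle of length $3$ or $4$ (its girth is $5$), and that $G$, being a pentagulation, is $2$-connected and hence has minimum degree at least $2$. Each of the three assertions will be established by assuming it fails and exhibiting a forbidden short cycle.

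For the independence of $N(v)$, I would argue that an edge $ab$ with $a,b \in N(v)$ would close up, together with the edges $va$ and $vb$, into a triangle $v,a,b$, which girth $5$ forbids. For the second assertion, existence of a neighbour of $u \in N_2(v)$ in $N(v)$ is immediate from $d(v,u)=2$; and if $u$ had two such neighbours $a \neq b$, then—noting that $d(v,u)=2$ forces $u \notin N(v)\cup\{v\}$, so that $v,a,u,b$ are four distinct vertices—the edges $va,au,ub,bv$ would form a $4$-cycle, again contradicting girth $5$. This pins the neighbour in $N(v)$ down to a unique one.

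For the third assertion I would invoke $2$-connectivity: any $a \in N(v)$ has degree at least $2$, so it has a neighbour $b \neq v$. This $b$ cannot lie in $N(v)$, as that would create a triangle $v,a,b$; and since $b \neq v$ while the path $v,a,b$ shows $d(v,b)\le 2$, we conclude $b \in N_2(v)$, as required.

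I do not expect a genuine obstacle here: the entire lemma is a direct consequence of triangle- and quadrilateral-freeness together with minimum degree $2$. The only step demanding even minor care is verifying that the four vertices of the putative $4$-cycle in the uniqueness argument are genuinely distinct, which is precisely what the hypothesis $d(v,u)=2$ delivers.
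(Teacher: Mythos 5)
Your proof is correct and follows essentially the same route as the paper: girth $5$ rules out the triangle and the $4$-cycle in the first two assertions, and $2$-connectivity plus triangle-freeness forces each neighbour of $v$ to have a neighbour in $N_2(v)$. The extra care you take in checking that the four vertices of the putative $4$-cycle are distinct is a detail the paper leaves implicit, but there is no substantive difference.
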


\begin{proof}
	Since $G$ contains no triangles, $N(v)$ is an independent set.
	Because $G$ contains no 4-cycles, any vertex of $N_2(v)$ has exactly one neighbor in $N(v)$.
	As $G$ is 2-connected and triangle-free, every vertex of $N(u)$ has a neighbor in $N_2(v)$.
\end{proof}

\begin{lem}
	If $G$ is a pentagulation of girth 5, then $G$ is either the cycle $C_5$, or $G$ does not contain two adjacent vertices of degree 2.
	\label{lem_deg_two_nbours}
\end{lem}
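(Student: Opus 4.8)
The plan is to assume that $G$ contains two adjacent degree-2 vertices $u$ and $v$ and to deduce that $G$ must be $C_5$. Write $a$ for the neighbor of $u$ other than $v$, and $b$ for the neighbor of $v$ other than $u$; these exist and are well-defined because $u$ and $v$ each have degree exactly $2$. First I would clear away the easy degeneracies: $a \neq v$ and $b \neq u$ by definition, and $a \neq b$, since $a = b$ would make $u, v, a$ a triangle, contradicting girth $5$. Hence $a, u, v, b$ is a path on four distinct vertices.

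The crux is a simple structural observation about degree-2 vertices. Since $u$ has degree $2$, any face boundary of $G$ passing through $u$ must enter along one of $u$'s two edges and leave along the other; consequently \emph{both} faces incident to the edge $uv$ are also incident to the edge $ua$, and symmetrically both are incident to $vb$. As every face of a pentagulation is bounded by a $5$-cycle, each of the two faces meeting $uv$ is bounded by a $5$-cycle containing the length-$3$ path $a, u, v, b$. Such a $5$-cycle is completed by exactly one additional vertex adjacent to both $a$ and $b$; call this fifth vertex $c_1$ for one face and $c_2$ for the other.

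I would then split on whether $c_1 = c_2$. If $c_1 \neq c_2$, then, since each $c_i$ is the fifth vertex of a $5$-cycle it is distinct from $a, u, v, b$, so $a, c_1, b, c_2$ are four distinct vertices with both $c_1$ and $c_2$ adjacent to each of $a$ and $b$; this produces a $4$-cycle $a, c_1, b, c_2$, contradicting girth $5$. Therefore $c_1 = c_2 =: c$, and the single $5$-cycle $a, u, v, b, c$ bounds both faces incident to $uv$. Since $G$ is $2$-connected and these two faces lie on opposite sides of $uv$, they are the interior and exterior regions determined by this cycle; a cycle bounding both its interior and exterior face contains all of $G$, so $G = C_5$.

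The only place that requires genuine care is the bookkeeping of distinctness: at each step one must invoke girth $5$ to guarantee that the cycles produced in the two contradiction branches (the triangle when $a=b$, and the $4$-cycle when $c_1 \neq c_2$) are honest short cycles rather than degenerate closed walks. The structural claim that a degree-$2$ vertex forces both of its incident faces through its unique pair of edges is immediate and carries the whole argument, so I expect no substantive obstacle beyond these routine distinctness checks.
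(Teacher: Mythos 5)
Your proof is correct and follows essentially the same route as the paper: both use the fact that the two faces incident to the edge between the degree-2 vertices each contain the length-3 path through them, so each is completed by a single fifth vertex, and distinctness of those fifth vertices forces a 4-cycle. The only difference is organizational — you handle the case of a common fifth vertex by concluding $G = C_5$ directly, whereas the paper assumes $G \neq C_5$ at the outset and asserts the two fifth vertices are distinct.
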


\begin{proof}
	Assume to the contrary that $G$ is a pentagulation of girth 5 other than $C_5$ that contains two adjacent vertices $x$ and $y$ of degree 2.
	Let $w$ be the single vertex of $N_1(x)-\{y\}$ and $z$ the vertex of $N_1(y) - \{x\}$.
	The path $P: w, x, y, z$ lies on the boundary of two distinct faces $f_1$ and $f_2$ of $G$, each bounded by 5-cycles.
	Thus there exist two distinct vertices $u$ and $v$ that are both adjacent to $w$ and $z$.
	Hence there is a 4-cycle $u, w, v, x$, contradicting the girth of $G$.
\end{proof}

Consider a vertex $v$ in a pentagulation $G$.
Let $\bm{\fv}$ be the subgraph of $G$ consisting of the edges and vertices that lie on the boundary of any face incident with $v$. 
Given two vertices $x$ and $y$ of $N_2(v)$, call an $x-y$ path $Q$ of length $k$ a $\bm{k}$\textbf{-chord} (with respect to $v$) if both $(Q-\{x,y\}) \cap N_2(v) = \emptyset$ and $E(Q) \cap E(\fv) = \emptyset$.

For example, consider the subgraph of a girth 5 pentagulation shown in Figure \ref{fig:delta_eight_nhood}.
The path $P: w_1, w_5$ is a 1-chord with respect to $v$, while $Q: w_5, z, w_8$ is a 2-chord.
The edge $w_1w_2$ is not a 1-chord, since it belongs to $\fv$.
Notice that $\fv \cup P$ contains a cycle $C_P: w_1, w_5, u_3, v, u_1$ formed by taking the union of the $w_1-w_5$ 1-chord $P$ and the two unique $v-w_1$ and $v-w_5$ geodesics. 
One can construct another cycle $C_Q: w_5, z, w_8, u_5, v, u_3$ in the same fashion.

\begin{figure}[h]
\centering
\begin{tikzpicture}
[scale = 0.9, inner sep=0.8mm, 
vertex/.style={circle,thick,draw},
dvertex/.style={rectangle,thick,draw, inner sep=1.3mm}, 
thickedge/.style={line width=1.5pt}] 

\node[vertex, fill=black] (v) at (0,0)  [label = 270:{$v$}] {};

\node[vertex] (u1) at (180:2) [label = 270:{$u_1$}] {};
\node[vertex] (u2) at (135:2) [label = 225:{$u_2$}] {};
\node[vertex] (u3) at (90:2) [label = 180:{$u_3$}] {};
\node[vertex] (u4) at (45:2) [label = 315:{$u_4$}] {};
\node[vertex] (u5) at (0:2) [label = 270:{$u_5$}] {};

\node[vertex] (w1) at (180:4) [label = 270:{$w_1$}] {};
\node[vertex] (w2) at (160:4) [label = 90:{$w_2$}] {};
\node[vertex] (w3) at (140:4) [label = 225:{$w_3$}] {};
\node[vertex] (w4) at (120:4) {};
\node[vertex] (w5) at (100:4) [label = 90:{$w_5$}] {};
\node[vertex] (w6) at (80:4) {};
\node[vertex] (w7) at (45:4) {};
\node[vertex] (w8) at (0:4) [label = 270:{$w_8$}] {};

\node[vertex] (z) at (50:5.3) [label = 220:{$z$}] {};

\draw (v)--(u1) (v)--(u2) (v)--(u3) (v)--(u4) (v)--(u5);
\draw (u1)--(w1) (u2)--(w2) (u2)--(w3) (u2)--(w4) (u3)--(w5) (u3)--(w6) (u4)--(w7) (u5)--(w8);
\draw (w1)--(w2) (w4)--(w5) (w6)--(w7) (w7)--(w8);
\draw (w1) .. controls (150:6) and (130:6) .. (w5);
\draw (w5) .. controls (75:5) .. (z) ..controls (25:5) .. (w8);

\draw[dashed] (v)--(210:2) (v)--(-30:2);

\node at (140:6) {$P: w_1, w_5$};
\node at (42:6.3) {$Q: w_5, z, w_8$};

\end{tikzpicture}
\caption{A vertex $v$ in a pentagulation of girth five, and some of the edges and vertices near it.
The dashed lines indicate some edges to parts of the graph not shown.}
\label{fig:delta_eight_nhood}
\end{figure}
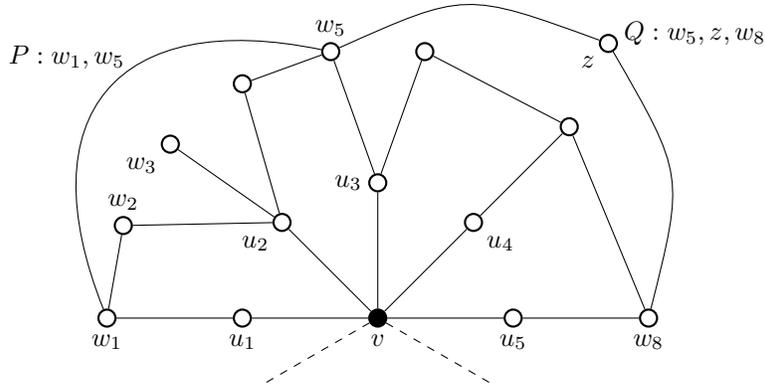

As the next lemma demonstrates, 1-chords and 2-chords with respect to some vertex will always induce cycles in the same manner that $P$ and $Q$ induce $C_P$ and $C_Q$. 

\begin{lem}
	Let $G$ be a pentagulation with girth 5, and let $v$ be a vertex of $G$ such that $d(v) \geq 8$.
	Given distinct vertices $x$ and $y$ of $N_2(v)$, let $P:x, \dots, y$ be a $k$-chord of $v$, and let $u_x$ and $u_y$ denote the unique vertices in $N(v) \cap N(x)$ and $N(v) \cap N(y)$ respectively.
	If $k \leq 2$, then $u_x$ and $u_y$ are distinct, and $P, u_y, v, u_x$ is a Jordan separating cycle.
	\label{lem_unique_nbours_distinct}
\end{lem}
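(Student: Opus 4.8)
The plan is to study the closed walk $C : v, u_x, x, \dots, y, u_y$ formed by concatenating the edges $vu_x$ and $u_x x$, the $k$-chord $P$, and the edges $yu_y$ and $u_y v$; this walk has length $k+4 \le 6$. I would first check that $C$ is a genuine simple cycle, and then that it Jordan separates $G$. The separation is nearly free once $C$ is known to be a simple cycle: when $k=2$ the cycle has length $6$ and is Jordan separating by Lemma \ref{lem_slightly_long_cycle}, so all the substance lies in proving that the listed vertices are pairwise distinct, the key point being $u_x \neq u_y$, and in treating the length-$5$ case $k=1$ by hand.

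For $u_x \neq u_y$, suppose instead $u_x = u_y = u$. Then $u$ is adjacent to both endpoints $x,y$ of $P$, so $P$ together with the edges $ux$ and $uy$ is a closed walk of length $k+2$. If $k=1$ this is the triangle $u,x,y$, contradicting girth $5$; if $k=2$, writing $P:x,m,y$, the vertices $u,x,m,y$ form a $4$-cycle whenever $m \neq u$, again impossible at girth $5$. The one delicate sub-case is $k=2$ with $m=u$, i.e.\ the internal vertex of the $2$-chord coinciding with the common neighbour $u \in N_1(v)$; this must be excluded from the chord conditions (see the last paragraph). Granting $u_x \neq u_y$, the remaining distinctness is routine from the distance classes: $v \in N_0(v)$, $u_x,u_y \in N_1(v)$ and $x,y \in N_2(v)$ are forced to be distinct, while the internal vertices of $P$ avoid $N_2(v)$ by definition of a chord and are too far from $v$ to equal $v, u_x$ or $u_y$. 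Hence $C$ is a simple cycle.

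The separation for $k=2$ is exactly Lemma \ref{lem_slightly_long_cycle}, as noted. For $k=1$ the cycle $C : v, u_x, x, y, u_y$ has length $5$, so it is not automatically separating and needs a short argument. It is chordless, since any chord of a $5$-cycle induces a $3$- or $4$-cycle, impossible at girth $5$. Moreover $C$ bounds no face of $G$: since $v \in C$, any face whose boundary is $C$ would be incident with $v$, forcing every edge of $C$ — in particular the $1$-chord edge $xy$ — to lie in $\fv$, whereas $E(P) \cap E(\fv) = \emptyset$ by definition of a $1$-chord. A chordless cycle bounding no face must contain a vertex on each side, for otherwise the empty chordless side would itself constitute a face. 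Thus both $\Int(C)$ and $\ext(C)$ contain a vertex and $C$ is Jordan separating.

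I expect the main obstacle to be the sub-case flagged above: a $2$-chord $P:x,m,y$ whose internal vertex $m$ equals $u_x = u_y \in N_1(v)$. Here $m \notin N_2(v)$ by definition and $m \neq v$ (as $v$ has no neighbour in $N_2(v)$), so the only thing to rule out is $m \in N_1(v)$; I would do this by showing the internal vertex of a $2$-chord lies in $N_3(v)$. By Lemma \ref{lem_nhood_rules} the vertices $x,y$ each have $u$ as their unique neighbour in $N(v)$, so $ux$ and $uy$ are the terminal edges of the unique $v$--$x$ and $v$--$y$ geodesics, and the argument must reconcile the edge-avoidance condition $E(P)\cap E(\fv)=\emptyset$ with $u$ being adjacent to $v$, using the pentagonal face structure around $u$ and the hypothesis $d(v)\ge 8$. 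This local reconciliation is the one genuinely technical step; with it in hand, the girth argument of the second paragraph and the face argument of the third complete the proof.
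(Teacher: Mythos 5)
Your proof follows essentially the same route as the paper's: the identical girth-$5$ argument gives $u_x\neq u_y$ (the paper phrases it as ``the cycle $P,u_y$ has length $k+2<5$''), and the same two observations --- $C_P$ is chordless by girth, and $C_P$ bounds no face because a face through $v$ would force $E(P)\subseteq E(\fv)$ --- give the separation. The paper does not split into $k=1$ and $k=2$; your use of Lemma \ref{lem_slightly_long_cycle} for the length-$6$ case is a harmless shortcut, and your explicit length-$5$ argument is exactly the paper's compressed one.

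The one point where you go beyond the paper is the ``delicate sub-case'' of a $2$-chord $x,m,y$ with $m=u_x=u_y\in N_1(v)$. You are right that this needs attention: the paper's closed walk $P,u_y$ is not a cycle when $m=u_y$, so its contradiction does not apply there, and the paper simply does not address it. Be warned, though, that your proposed repair --- deducing $m\in N_3(v)$ from the face structure around $u_x$ and $d(v)\geq 8$ --- does not go through from the definition as literally written. A vertex $u\in N(v)$ has exactly two of its edges into $N_2(v)$ lying in $E(\fv)$ (one for each face containing the edge $uv$), so if $d(u)\geq 5$ there are at least two edges $ux,uy$ with $x,y\in N_2(v)$ avoiding $E(\fv)$, and the path $x,u,y$ then satisfies both clauses of the $k$-chord definition while violating the lemma's conclusion. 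The hypothesis $d(v)\geq 8$ constrains $v$, not $u$, and cannot exclude this. The honest fix is to strengthen the definition (require the interior vertices of $Q$ to avoid $N_1(v)$ as well, i.e.\ to lie in $N_3(v)$), which costs nothing downstream since every $2$-chord the paper subsequently constructs has its middle vertex explicitly placed in $N_3(v)$.
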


\begin{proof}
	There are unique vertices $u_x$ and $u_y$ as described, per Lemma \ref{lem_nhood_rules}.
	Assume to the contrary that $k \leq 2$, but that $u_x = u_y$.
	The cycle $P, u_y$ has length $k+2 < 5$, which contradicts the fact that $g(G) =5$.
	Thus $u_x \neq u_y$, and so $C_P : P, u_y, v, u_x$ is a cycle.
	It remains to show that $C_P$ is Jordan separating. 
	Since $C_P$ is a cycle of length 5 or 6, and $E(P) \cap E(\fv) = \emptyset$, the cycle $C_P$ is neither a face-cycle ($P$ does not share an edge with a face incident to $v$), nor does it have any chords (as the girth of $G$ is 5).
	Thus $C_P$ is a Jordan separating cycle.
\end{proof}
 
Let $v$ be a vertex of a girth 5 pentagulation, and let the path $Q: x, \dots, y$ be a $k$-chord, for $k \in \{1,2\}$, with respect to $v$. 
If $u_x$ and $u_y$ are the unique vertices of $N(v)$ adjacent to $x$ and $y$ respectively, then the cycle $C_Q : Q, u_y, v, u_x$ is the \textbf{cycle under} $\bm{Q}$. 
The chord $Q$ is said to be \textbf{minimal} if $C_Q$ dominates its interior, and there does not exist any $k$-chord (of the same length) $Q'$ such that $\Int(C_{Q'}) \subset \Int(C_Q)$.

\begin{thm}
	Let $G$ be a diameter 3, girth 5 pentagulation of maximum degree $\Delta$, and let $v$ be a vertex of $G$ with maximum degree. 
	If $\Delta \geq 8$, then there do not exist any 1-chords with respect to $v$.
	\label{thm_len_1_chords}
\end{thm}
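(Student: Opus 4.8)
The plan is to argue by contradiction. Suppose some 1-chord with respect to $v$ exists, say $P : x, y$ with $x, y \in N_2(v)$, and let $u_x, u_y$ be the unique neighbours of $x, y$ in $N(v)$ (Lemma \ref{lem_nhood_rules}). By Lemma \ref{lem_unique_nbours_distinct} these are distinct and the cycle under $P$, namely $C_P : x, y, u_y, v, u_x$, is a Jordan separating $5$-cycle. By Remark \ref{rem:cycle_sep}, $C_P$ dominates its interior or its exterior; since the roles of $\Int(C_P)$ and $\ext(C_P)$ are symmetric, I may assume $C_P$ dominates $\Int(C_P)$, and among all $1$-chords with this property I would choose $P$ so that $\Int(C_P)$ is minimal, i.e. so that $P$ is a minimal chord.

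The payoff is short once the dominated region is known to be controlled by only two vertices of $C_P$. By Lemma \ref{lem_5_5}, $\Int(C_P)$ cannot be dominated by a single vertex or by an adjacent pair, so two vertices of $C_P$ can dominate only via a non-adjacent pair. The non-adjacent pairs of $C_P$ are $\{u_x, u_y\}$, $\{x, u_y\}$, $\{u_x, y\}$, $\{v, x\}$ and $\{v, y\}$. For each of the first three, Corollary \ref{cor:5_5_vertex} produces a vertex $w \in \Int(C_P)$ adjacent to both members of the pair, whence $w, u_x, v, u_y$, or $w, x, y, u_y$, or $w, y, x, u_x$ is a $4$-cycle, each contradicting girth $5$. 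The pairs $\{v, x\}$ and $\{v, y\}$ are impossible outright: by Lemma \ref{lem_nhood_rules} the only common neighbour of $v$ and $x$ is $u_x$ (and of $v$ and $y$ is $u_y$), which lies on $C_P$ rather than inside it, so the interior common neighbour demanded by Corollary \ref{cor:5_5_vertex} cannot exist. Hence $\Int(C_P)$ is dominated by no two vertices of $C_P$.

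The heart of the argument is therefore to rule out the remaining possibility, in which $C_P$ dominates $\Int(C_P)$ only through its full vertex set. Here the plan is to exploit the pentagonal faces of $G$ incident to $v$ that lie inside $C_P$. Using girth $5$ and the uniqueness of neighbours in $N(v)$, one classifies each interior vertex by its distance from $v$: an interior vertex of $N_3(v)$ must be adjacent to $x$ or $y$, while an interior vertex of $N_2(v)$ is adjacent to its unique neighbour in $N(v)$, which is either $u_x$, $u_y$, or an interior neighbour of $v$. I would first show that an interior neighbour of $v$ cannot occur: tracing the pentagon bounded by two consecutive edges at $v$ inside $C_P$ and invoking the domination hypothesis forces a new edge between two vertices of $N_2(v)$ that is again a $1$-chord, but whose cycle has interior strictly contained in $\Int(C_P)$, contradicting the minimality of $P$. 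With $v$ having no interior neighbour, the interior is dominated by $\{x, y, u_x, u_y\}$, and a further analysis of the pentagon incident to the edge $xy$ is intended to collapse this to domination by a non-adjacent pair, returning to the clean contradiction of the previous paragraph.

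The main obstacle is precisely this middle step: upgrading whole-cycle domination to domination by two vertices, equivalently eliminating interior neighbours of $v$ and simplifying the interior. This is where the hypotheses $\Delta \geq 8$ and the maximality of $d(v)$ are genuinely needed — both to guarantee via Lemma \ref{lem_unique_nbours_distinct} that $C_P$ has the stated length and separates, and to control how many neighbours of $v$ can be forced into the dominated region. The bookkeeping of the interior faces, together with repeated appeals to minimality and to girth $5$, is the delicate part of the argument; everything after it reduces to the $4$-cycle contradictions established above, and having exhausted all cases we conclude that no $1$-chord with respect to $v$ can exist.
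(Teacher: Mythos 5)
Your opening moves match the paper's: assume a 1-chord exists, pass to a minimal one $P$ whose cycle $C_P$ dominates its interior (Lemma \ref{lem_unique_nbours_distinct}, Remark \ref{rem:cycle_sep}). Your second paragraph --- ruling out domination of $\Int(C_P)$ by any two vertices of $C_P$ via Lemma \ref{lem_5_5} and Corollary \ref{cor:5_5_vertex} --- is correct and is a clean observation the paper does not use in this form. The problem is that everything after that is where the theorem actually lives, and your plan for it does not close.

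First, a concrete flaw in your step eliminating interior neighbours of $v$: the edge between the two $N_2(v)$-vertices of a pentagonal face incident with $v$ lies in $E(\fv)$ and is therefore \emph{not} a 1-chord by definition, so minimality of $P$ gives you nothing there. The paper's Claim~1 is more delicate: for an interior neighbour $u_i$ with $i$ not adjacent in the rotation to $u_0$ or $u_j$, it takes a vertex of $N_2(v)\cap N(u_i)$ and shows domination forces an edge to $w_0$ or $w_j$ that genuinely avoids $E(\fv)$; this only yields $j<3$, i.e.\ it does \emph{not} eliminate interior neighbours of $v$ --- one may remain. Second, and more fundamentally, the residual case cannot be killed by interior analysis at all. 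Take $C_P: x,y,u_y,v,u_x$ and put a path $z_1,z_2,z_3$ inside with $z_1\sim x$, $z_2\sim u_y$, $z_3\sim u_x$. All three interior faces are pentagons, the girth is preserved, $\Int(C_P)$ is dominated by $C_P$, no interior vertex is adjacent to $v$, and the minimal dominating set from $C_P$ is $\{x,u_x,u_y\}$ --- three vertices, no two of which suffice. Your intended ``collapse to a non-adjacent pair'' has no purchase on this configuration, so the ``clean contradiction'' of your second paragraph is unreachable. The paper's proof gets its contradiction from the \emph{exterior}: since $j<3$ and $d(v)=\Delta\geq 8$, at least five neighbours of $v$ lie outside $C_Q$, and a long case analysis (Cases 1--3, split by which of $w_0,w_j$ have interior neighbours) locates a vertex on a far exterior face $f_{j+1}, f_{j+2}, f_{\Delta-1},\dots$ that is at distance greater than $3$ from an interior vertex, violating the diameter. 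Your proposal never plays the diameter condition against the exterior of $C_P$, which is the step that genuinely uses $\Delta\geq 8$; without it the statement is false, as Figure \ref{fig:delta_need_be_large} shows.
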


\begin{proof}
	We assume to the contrary that there exist vertices $w_0'$ and $w_j'$ in $N_2(v)$, and some 1-chord $Q': w_1', w_j'$ with respect to $v$.
	Label the vertices of $N(v) = \{u_0', u_1', \dots, u_{\Delta-1}' \}$ in clockwise order, so that $u_i'$ and $u_{i+1}'$ always lie on the boundary of the same face (subscripts taken modulo $\Delta$).
	Let $u_0'$ and $u_j'$ be the unique, distinct neighbors of $w_0'$ and $w_j'$ respectively (these exist by Lemmas \ref{lem_nhood_rules} and \ref{lem_unique_nbours_distinct}).
	Let $C_{Q'}$ denote the cycle under $Q'$ with respect to $v$.
	By Lemma \ref{lem_unique_nbours_distinct}, $C_{Q'}$ is a Jordan separating cycle 
	Since the diameter of $G$ is 3, the cycle $C_{Q'}$ dominates either its interior or its exterior. 
	Embed $G$ such that $C_{Q'}$ dominates its interior, and let $Q$ be a minimal 1-chord in $\Int[C_{Q'}]$ (it is possible that $Q = Q'$).
	Relabel the vertices of $N(v)$ and $N_2(v)$ so that the start and end vertices of $Q$ are labeled $w_0$ and $w_j$ respectively, the neighbors $u_i$ of $N(v)$ are still in clockwise order, and $w_0u_0$, $w_ju_j$ are edges of $E(G)$.
	Let $f_i$ be the face incident with $v$ that has vertices $u_i$ and $u_{i+1}$ on its boundary.
	
	\textit{Claim 1:} The inequality $j < 3$ holds (i.e., the interior of $C_Q$ contains at most two faces incident with $v$).\\
	We first assume to the contrary that $j \geq 4$ (See Figure \ref{fig_1_chord_j_geq_4}).
	Let $w_2$ be a vertex of $N_2(v)\cap N(u_2)$ (which exists by Lemma \ref{lem_nhood_rules}).
	Since $C_Q$ dominates its interior, $w_2$ is adjacent to some vertex of $C_Q$. 
	Because $G$ has girth 5, $w_2$ is not adjacent to any of $u_0$, $v$ or $u_j$.
	By the minimality of $Q$, $w_2$ is not adjacent to either $w_0$ or $w_j$, a contradiction.
	
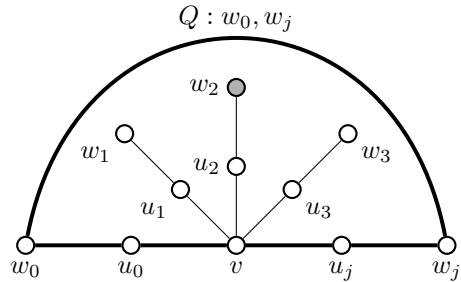
\begin{figure}[h]
\centering
\begin{tikzpicture}
[scale =0.7, inner sep=0.8mm, 
vertex/.style={circle,thick,draw},
dvertex/.style={rectangle,thick,draw, inner sep=1.3mm}, 
thickedge/.style={line width=1.5pt}] 

\node[vertex] (v) at (0,0)  [label = 270:{$v$}] {};

\node[vertex] (u1) at (180:2) [label = 270:{$u_0$}] {};
\node[vertex] (u2) at (135:1.5) [label = 225:{$u_1$}] {};
\node[vertex] (u3) at (90:1.5) [label = 180:{$u_2$}] {};
\node[vertex] (u4) at (45:1.5) [label = 315:{$u_3$}] {};
\node[vertex] (u5) at (0:2) [label = 270:{$u_j$}] {};

\node[vertex] (w1) at (180:4) [label = 270:{$w_0$}] {};
\node[vertex] (w2) at (135:3) [label = 225:{$w_1$}] {};
\node[vertex, fill=black!30] (w3) at (90:3) [label = 180:{$w_2$}] {};
\node[vertex] (w4) at (45:3) [label = 315:{$w_3$}] {};
\node[vertex] (w5) at (0:4) [label = 270:{$w_j$}] {};

\draw (v)--(u1) (v)--(u2) (v)--(u3) (v)--(u4) (v)--(u5);
\draw (w1) .. controls (120:6) and (60:6) .. (w5);
\draw (u1)--(w1) (u2)--(w2) (u3)--(w3) (u4)--(w4) (u5)--(w5);

\draw[thickedge] (w1) .. controls (120:6) and (60:6) .. (w5);
\draw[thickedge] (v)--(u1) (v)--(u5) (u1)--(w1) (u5)--(w5);

\node at (90:4.3) {$Q: w_0, w_j$};

\end{tikzpicture}
\caption{This figure shows Claim 1 of Theorem \ref{thm_len_1_chords}. The cycle $C_Q$ under the 1-chord $Q$ is bold, and the unique $N_2(v)$ neighbour $w_2$ of $u_2$ is grey.}
\label{fig_1_chord_j_geq_4}
\end{figure}
	
	Now suppose for the sake of contradiction that $j = 3$.
	Let $w_1$ be a vertex of $N(u_1) \cap N_2(v)$, and $w_2$ a vertex of $N(u_2) \cap N_2(v)$.
	By minimality of $Q$, $w_1$ is not adjacent to $w_j$.
	Since $G$ has girth 5, $w_1$ is not adjacent to $u_0$, $v$ or $u_j$.
	Because $C_Q$ dominates its interior, $w_1$ is adjacent to $w_0$.
	Similarly, $w_2$ is adjacent to $w_j$, but not to $w_0$.
	This leaves two cases to consider.
	
\begin{figure}[h]
\centering
\begin{tikzpicture}
[scale =0.7, inner sep=0.8mm, 
vertex/.style={circle,thick,draw},
dvertex/.style={rectangle,thick,draw, inner sep=1.3mm}, 
thickedge/.style={line width=1.5pt}] 

\node[vertex] (v) at (0,0)  [label = 270:{$v$}] {};

\node[vertex] (u1) at (180:2) [label = 270:{$u_0$}] {};
\node[vertex] (u2) at (120:1.5) [label = 225:{$u_1$}] {};
\node[vertex] (u3) at (60:1.5) [label = 180:{$u_2$}] {};
\node[vertex] (u5) at (0:2) [label = 270:{$u_j$}] {};

\node[vertex] (w1) at (180:4) [label = 270:{$w_0$}] {};
\node[vertex] (w2) at (120:3) [label = 90:{$w_1$}] {};
\node[vertex] (w3) at (60:3) [label = 90:{$w_2$}] {};
\node[vertex] (w5) at (0:4) [label = 270:{$w_j$}] {};

\draw (v)--(u1) (v)--(u2) (v)--(u3) (v)--(u5);
\draw (w1) .. controls (120:6) and (60:6) .. (w5);
\draw (u1)--(w1) (u2)--(w2) (u3)--(w3) (u5)--(w5);

\draw (w1)--(w2)--(w3)--(w5);

\draw[thickedge] (w1) .. controls (120:6) and (60:6) .. (w5);
\draw[thickedge] (v)--(u1) (v)--(u5) (u1)--(w1) (u5)--(w5);

\node at (90:4.3) {$Q: w_0, w_j$};
\node at (270: 1) {(1)};

\begin{scope}[shift={(10.3,0)}]
	\node[vertex] (v) at (0,0)  [label = 270:{$v$}] {};
	
	\node[vertex] (u1) at (180:2) [label = 270:{$u_0$}] {};
	\node[vertex] (u2) at (120:1.5) [label = 225:{$u_1$}] {};
	\node[vertex] (u3) at (60:1.5) [label = 180:{$u_2$}] {};
	\node[vertex] (u5) at (0:2) [label = 270:{$u_j$}] {};
	
	\node[vertex] (w1) at (180:4) [label = 270:{$w_0$}] {};
	\node[vertex] (w2) at (135:3) [label = 90:{$w_1$}] {};
	\node[vertex] (w2') at (105:3) [label = 90:{$w_1'$}] {};
	\node[vertex] (w3) at (60:3) [label = 90:{$w_2$}] {};
	\node[vertex] (w5) at (0:4) [label = 270:{$w_j$}] {};
	
	\draw (v)--(u1) (v)--(u2) (v)--(u3) (v)--(u5);
	\draw (w1) .. controls (120:6) and (60:6) .. (w5);
	\draw (u1)--(w1) (u2)--(w2) (u3)--(w3) (u5)--(w5);
	
	\draw (w1)--(w2) (u2)--(w2') (w3)--(w5);
	
	\draw[thickedge] (w1) .. controls (120:6) and (60:6) .. (w5);
	\draw[thickedge] (v)--(u1) (v)--(u5) (u1)--(w1) (u5)--(w5);
	
	\node at (90:4.3) {$Q: w_0, w_j$};
	\node at (270: 1) {(2)};
\end{scope}

\end{tikzpicture}
\caption{If $j=3$ in the proof of Claim 1, there are two possibilities.
Either both $u_1$ and $u_2$ have degree two (1), as in Claim 1 Case 1, or one of them has degree at least three (2), as in Claim 1 Case 2.}
\label{fig_1_chord_j_eq_3}
\end{figure}
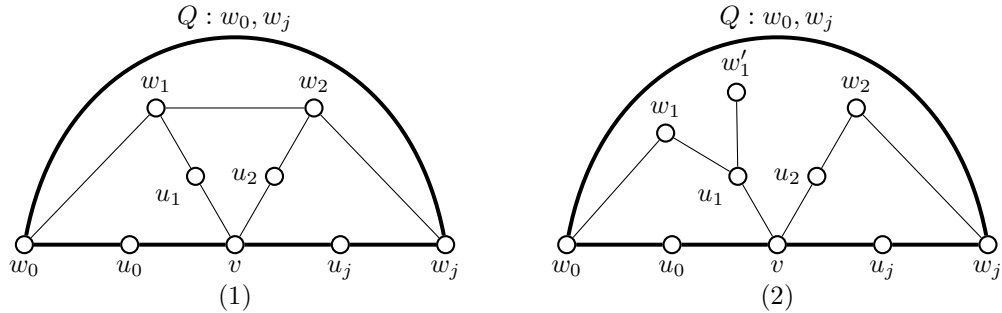
	
	\textit{Claim 1 Case 1:} The degrees of $u_1$ and $u_2$ satisfy $d(u_1) = d(u_2) = 2$.\\
	The path $w_1, u_1, v, u_2, w_2$ lies along the boundary of a face of $G$, so $w_1$ and $w_2$ are adjacent (See Figure \ref{fig_1_chord_j_eq_3} (1)). 
	Thus the vertices $w_0, w_1, w_2, w_j$ lie on a 4-cycle, contradicting the the girth of $G$.\\
	\textit{Claim 1 Case 2:} either $u_1$ or $u_2$ has degree at least three.\\
	Assume without loss of generality that $u_1$ has a vertex $w_1'$ of $N(u_1)\cap N_2(v)$ other than $w_1$ (See Figure \ref{fig_1_chord_j_eq_3} (2)).
	Since $C_Q$ dominates its interior and $G$ has no cycles of length 3 or 4, $w_1'$ is adjacent to either $w_0$ or $w_j$. 
	The cycle under the either the chord $w_0w_1'$ or the chord $w_jw_1'$ is contained strictly in $\Int[C_Q]$, contradicting the minimality of $Q$ and proving Claim 1.
	
	Since $j<3$, there are at least five neighbors $u_3$, $u_4$, \dots, $u_{\Delta - 1}$ of $v$ in $\ext(C_Q)$.
	We consider cases, according to whether or not $w_0$ and $w_j$ have neighbors in $\Int(C_Q)$.
	
	\textit{Case 1:} Neither $w_0$ nor $w_j$ have any neighbors in $\Int(C_Q)$.\\
	In $\Int[C_Q]$, the only neighbors of $w_0$ are $u_0$ and $w_j$, and the only neighbors of $w_j$ are $u_j$ and $w_0$.
	Thus the path $P: u_0, w_0, w_j, u_j$ lies on the boundary of a face contained in $\Int(C_Q)$, so there is a vertex $x$ such that the cycle $P, x$ bounds a face.
	By the assumption that $w_0w_j$ is a 1-chord with respect to $v$, we have $x\neq v$.
	Thus there is a 4-cycle on $v, u_0, x, u_j$, a contradiction (See Figure \ref{fig_1_chord_0_1_nbour}).
	
\begin{figure}[h]
\centering
\begin{tikzpicture}
[scale =0.7, inner sep=0.8mm, 
vertex/.style={circle,thick,draw},
dvertex/.style={rectangle,thick,draw, inner sep=1.3mm}, 
thickedge/.style={line width=1.5pt}] 

\node[vertex] (v) at (0,0)  [label = 270:{$v$}] {};

\node[vertex] (u1) at (135:2) [label = 270:{$u_0$}] {};
\node[vertex] (u5) at (45:2) [label = 270:{$u_j$}] {};

\node[vertex, fill=black] (w1) at (135:4) [label = 270:{$w_0$}] {};
\node[vertex, fill=black] (w5) at (45:4) [label = 270:{$w_j$}] {};

\node[vertex, black!40] (x) at (90:3) [label = 90:{$x$}] {};

\draw (v)--(u1) (v)--(u5);
\draw (w1) .. controls (110:6) and (70:6) .. (w5);
\draw (u1)--(w1) (u5)--(w5);

\draw[thickedge] (w1) .. controls (110:6) and (70:6) .. (w5);
\draw[thickedge] (v)--(u1) (v)--(u5) (u1)--(w1) (u5)--(w5);

\draw[dashed] (u1)--(x)--(u5);

\node at (90:4.4) {$Q$};
\node at (270: 1) {Case 1};

\begin{scope}[shift={(10.3,0)}]
	\node[vertex] (v) at (0,0)  [label = 270:{$v$}] {};
	
	\node[vertex] (u1) at (135:2) [label = 270:{$u_0$}] {};
	\node[vertex] (u5) at (45:2) [label = 270:{$u_j$}] {};
	
	\node[vertex] (w1) at (135:4) [label = 270:{$w_0$}] {};
	\node[vertex, fill=black] (w5) at (45:4) [label = 270:{$w_j$}] {};
	
	\node[vertex] (x) at (105:3) [label = 90:{$x$}] {};
	\node[vertex] (y) at (75:3) [label = 90:{$y$}] {};
	
	\draw (v)--(u1) (v)--(u5);
	\draw (w1) .. controls (110:6) and (70:6) .. (w5);
	\draw (u1)--(w1) (u5)--(w5);
	
	\draw[thickedge] (w1) .. controls (110:6) and (70:6) .. (w5);
	\draw[thickedge] (v)--(u1) (v)--(u5) (u1)--(w1) (u5)--(w5);
	
	\draw (w1)--(x)--(y)--(u5);
	
	\node at (90:4.4) {$Q$};
	\node at (270: 1) {Case 2};
\end{scope}

\end{tikzpicture}
\caption{In Case 1, we assume that neither $w_0$ nor $w_j$ has neighbours in $\Int(C_Q)$ (and colour these vertices black to indicate this).
In Case 2, we assume that $w_0$ has a neighbour in $\Int(C_Q)$, but $w_j$ does not.}
\label{fig_1_chord_0_1_nbour}
\end{figure}
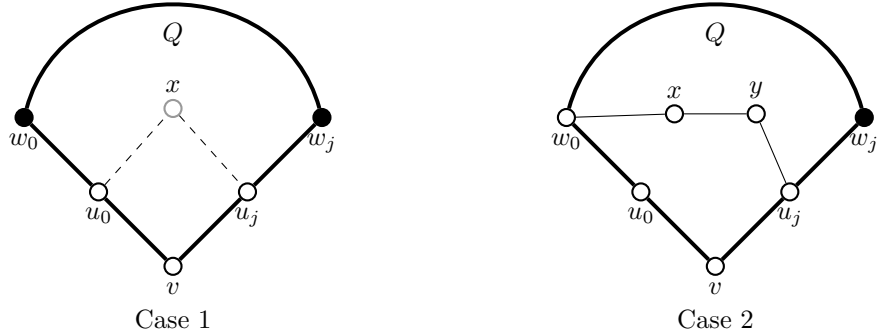
	
	\textit{Case 2:} Either $w_0$ or $w_j$ has a neighbor in $\Int(C_Q)$, but not both.\\
	Assume without loss of generality that there is a vertex $x$ in $\Int(C_Q)$ that is adjacent to $w_0$.
	If there are multiple vertices in $N_1(w_0) \cap \Int(C_Q)$, choose $x$ such that the edges $w_0w_j$ and $w_0x$ lie on the boundary of a common face.
	Because $w_j$ has no neighbor in $\Int(C_Q)$, the path $P: u_j, w_j, w_0, x$ lies on the boundary of some face $f$ in the interior of $C_Q$.
	Thus there is some vertex $y$ in $\Int[C_Q]$ such that the cycle $P, y$ bounds $f$.
	As $G$ has girth 5, the vertex $y$ is in $N_2(v)$ (See Figure \ref{fig_1_chord_0_1_nbour}).
	There are a number of cases to consider, based on the structure of the faces $f_j$ and $f_{j+1}$.	
	
	\textit{Case 2.1:} There is some vertex $s$ in $N_1(w_j) \cap N_1(u_{j+1})$, and $d(u_{j+1}) = 2$.\\
	Let $t$ be the neighbor of $s$ on the boundary of the face $f_{j+1}$, and observe that $t$ and $u_{j+2}$ are adjacent (see Figure \ref{fig_1_chord_21_22}).
	Since the girth of $G$ is 5, we observe the following:\\ 
	(1) the vertex $w_j$ has no neighbors in the cycle $v, u_j, w_j, s, u_{j+1}$ besides $v$ and $w_j$,\\
	(2) the vertex $t$ is not adjacent to either $w_0$ or $w_j$,\\ 
	(3) the vertex $y$ is not adjacent to $u_0$, $w_0$ or $w_j$.
	Thus there is no possible $y-t$ path of length 3 or less, a contradiction.
	
\begin{figure}[h]
\centering
\begin{tikzpicture}
[scale =0.7, inner sep=0.8mm, 
vertex/.style={circle,thick,draw},
dvertex/.style={rectangle,thick,draw, inner sep=1.3mm}, 
thickedge/.style={line width=1.5pt}] 

\node[vertex] (v) at (0,0)  [label = 270:{$v$}] {};

\node[vertex] (u1) at (165:2) [label = 270:{$u_0$}] {};
\node[vertex] (u5) at (75:2) [label = 180:{$u_j$}] {};

\node[vertex] (w1) at (165:4) [label = 270:{$w_0$}] {};
\node[vertex, fill=black] (w5) at (75:4) [label = 180:{$w_j$}] {};

\node[vertex] (x) at (135:3) [label = 90:{$x$}] {};
\node[vertex] (y) at (105:3) [label = 90:{$y$}] {};

\node[vertex] (u6) at (45:2) [label = 300:{$u_{j+1}$}] {};
\node[vertex] (u7) at (15:2) [label = 300:{$u_{j+2}$}] {};

\node[vertex] (w6) at (45:4) [label = 45:{$s$}] {};
\node[vertex] (w7) at (15:4) [label = 45:{$t$}] {};

\draw (v)--(u1) (v)--(u5);
\draw (w1) .. controls (140:6) and (100:6) .. (w5);
\draw (u1)--(w1) (u5)--(w5);

\draw[thickedge] (w1) .. controls (140:6) and (100:6) .. (w5);
\draw[thickedge] (v)--(u1) (v)--(u5) (u1)--(w1) (u5)--(w5);

\draw (w1)--(x)--(y)--(u5);

\draw (w5)--(w6)--(w7);
\draw (v)--(u6)--(w6) (v)--(u7)--(w7);

\node at (120:4.4) {$Q$};
\node at (270: 1) {Case 2.1};

\begin{scope}[shift={(10.3,0)}]
	\node[vertex] (v) at (0,0)  [label = 270:{$v$}] {};
	
	\node[vertex] (u1) at (165:2) [label = 270:{$u_0$}] {};
	\node[vertex] (u5) at (75:2) [label = 180:{$u_j$}] {};
	
	\node[vertex] (w1) at (165:4) [label = 270:{$w_0$}] {};
	\node[vertex, fill=black] (w5) at (75:4) [label = 180:{$w_j$}] {};
	
	\node[vertex] (x) at (135:3) [label = 90:{$x$}] {};
	\node[vertex] (y) at (105:3) [label = 90:{$y$}] {};
	
	\node[vertex] (u6) at (37.5:2) [label = 300:{$u_{j+1}$}] {};
	\node[vertex] (u7) at (0:2) [label = 300:{$u_{j+2}$}] {};
	
	\node[vertex] (w6) at (45:4) [label = 180:{$s$}] {};
	\node[vertex] (w6') at (30:4) [label = 0:{$t$}] {};
	\node[vertex] (w7) at (0:4) [label = 45:{$z$}] {};
	
	\draw (v)--(u1) (v)--(u5);
	\draw (w1) .. controls (140:5.5) and (100:5.5) .. (w5);
	\draw (u1)--(w1) (u5)--(w5);
	
	\draw[thickedge] (w1) .. controls (140:5.5) and (100:5.5) .. (w5);
	\draw[thickedge] (v)--(u1) (v)--(u5) (u1)--(w1) (u5)--(w5);
	
	\draw (w1)--(x)--(y)--(u5);
	
	\draw (w5)--(w6) (w6')--(w7);
	\draw (v)--(u6)--(w6) (v)--(u7)--(w7) (u6)--(w6');
	\draw[dashed] (w1) .. controls (135:8) and (75:8) .. (w6');
	
	\node at (120:4.2) {$Q$};
	\node at (270: 1) {Case 2.2};
\end{scope}

\end{tikzpicture}
\caption{The diagram on the left illustrates Case 2.1, in which $d(u_{j+1}) = 2$ and the vertex of $N_2(v)\cap N(u_{j+1})$ is adjacent to $w_j$.
On the right is Case 2.2, in which $d(u_{j+1}) > 2$, and some vertex of $N_2(v)\cap N(u_{j+1})$ is adjacent to $w_j$.}
\label{fig_1_chord_21_22}
\end{figure}
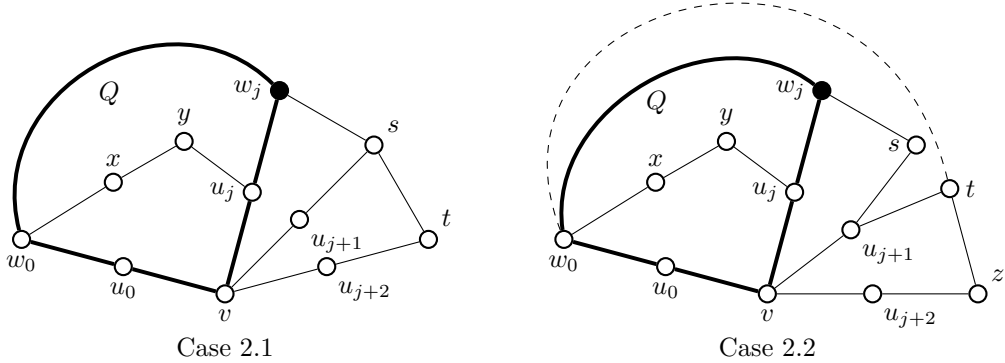
	
	\textit{Case 2.2:} There is a vertex $s$ in $N_1(w_j) \cap N_1(u_{j+1})$, and $d(u_{j+1}) \geq 3$.\\
	Since $u_{j+1}$ has at least two neighbors in $N_2(v)$, the neighbor $t$ of $u_{j+1}$ on the boundary of $f_{j+1}$ that is at distance 2 from $v$ is distinct from $s$.
	Let $z$ be the vertex $N_2(v)-\{t\}$ incident with $f_{j+1}$ (see Figure \ref{fig_1_chord_21_22}).
	Since $G$ has girth 5, $t$ is not adjacent to $w_j$.
	Since $d(t, y) \leq 3$, the vertices $t$ and $w_0$ are adjacent.
	
	Because the diameter of $G$ is 3, the vertices $t$ and $w_0$ are adjacent to ensure that $d(t, y) \leq 3$.
	The vertex $z$ is not adjacent to any vertex within distance 2 of $y$ by planarity, and the fact that $G$ has girth 5.
	Thus $d(z,y) > 3$, contradicting the diameter of $G$.
	
	\textit{Case 2.3:} There is no vertex in $N_1(w_j) \cap N_1(u_{j+1})$.\\
	Let $s$ and $t$ be the vertices of $N_2(v)$, incident with $f_j$, and adjacent to $u_j$ and $u_{j+1}$ respectively.
	Note that $s$ and $t$ are adjacent.
	If $t$ is incident with the face $f_{j+1}$, then $t$ has a neighbor $z$ in $N(u_{j+2})$ that is also incident with $f_{j+1}$ (see Figure \ref{fig_1_chord_23_setup} (1)).
	If $t$ is not incident with $f_{j+1}$, then there is a vertex $z'$ in $N(u_{j+1}) - \{t\}$ that is incident with $f_{j+1}$ (see Figure \ref{fig_1_chord_23_setup} (2)). 
	There are three ways to construct a $t-x$ geodesic of length at most 3.
	
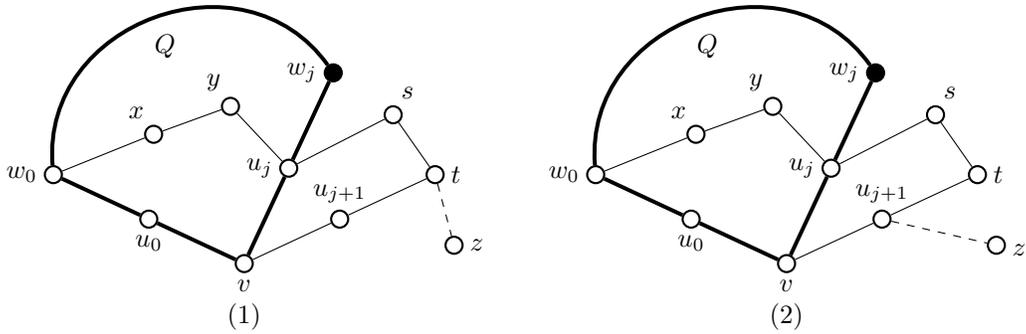
\begin{figure}[h]
\centering
\begin{tikzpicture}
[scale =0.7, inner sep=0.8mm, 
vertex/.style={circle,thick,draw},
dvertex/.style={rectangle,thick,draw, inner sep=1.3mm}, 
thickedge/.style={line width=1.5pt}] 

\node[vertex] (v) at (0,0)  [label = 270:{$v$}] {};

\node[vertex] (u1) at (155:2) [label = 270:{$u_0$}] {};
\node[vertex] (u5) at (65:2) [label = 180:{$u_j$}] {};

\node[vertex] (w1) at (155:4) [label = 180:{$w_0$}] {};
\node[vertex, fill=black] (w5) at (65:4) [label = 180:{$w_j$}] {};

\node[vertex] (x) at (125:3) [label = 110:{$x$}] {};
\node[vertex] (y) at (95:3) [label = 110:{$y$}] {};

\node[vertex] (u6) at (25:2) [label = 90:{$u_{j+1}$}] {};
\node[vertex] (w7) at (45:4) [label = 80:{$s$}] {};
\node[vertex] (w8) at (25:4) [label = 0:{$t$}] {};
\node[vertex] (w9) at (5:4) [label = 0:{$z$}] {};

\draw (v)--(u1) (v)--(u5);
\draw (w1) .. controls (130:6) and (90:6) .. (w5);
\draw (u1)--(w1) (u5)--(w5);

\draw[thickedge] (w1) .. controls (130:6) and (90:6) .. (w5);
\draw[thickedge] (v)--(u1) (v)--(u5) (u1)--(w1) (u5)--(w5);

\draw (w1)--(x)--(y)--(u5);

\draw (u5)--(w7)--(w8)--(u6)--(v);
\draw[dashed] (w8)--(w9);

\node at (110:4.4) {$Q$};
\node at (270: 1) {(1)};

\begin{scope}[shift={(10.3,0)}]
	\node[vertex] (v) at (0,0)  [label = 270:{$v$}] {};
	
	\node[vertex] (u1) at (155:2) [label = 270:{$u_0$}] {};
	\node[vertex] (u5) at (65:2) [label = 180:{$u_j$}] {};
	
	\node[vertex] (w1) at (155:4) [label = 180:{$w_0$}] {};
	\node[vertex, fill=black] (w5) at (65:4) [label = 180:{$w_j$}] {};
	
	\node[vertex] (x) at (125:3) [label = 110:{$x$}] {};
	\node[vertex] (y) at (95:3) [label = 110:{$y$}] {};
	
	\node[vertex] (u6) at (25:2) [label = 90:{$u_{j+1}$}] {};
	\node[vertex] (w7) at (45:4) [label = 80:{$s$}] {};
	\node[vertex] (w8) at (25:4) [label = 0:{$t$}] {};
	\node[vertex] (w9) at (5:4) [label = 0:{$z'$}] {};
	
	\draw (v)--(u1) (v)--(u5);
	\draw (w1) .. controls (130:6) and (90:6) .. (w5);
	\draw (u1)--(w1) (u5)--(w5);
	
	\draw[thickedge] (w1) .. controls (130:6) and (90:6) .. (w5);
	\draw[thickedge] (v)--(u1) (v)--(u5) (u1)--(w1) (u5)--(w5);
	
	\draw (w1)--(x)--(y)--(u5);
	
	\draw (u5)--(w7)--(w8)--(u6)--(v);
	\draw[dashed] (u6)--(w9);
	
	\node at (110:4.4) {$Q$};
	\node at (270: 1) {(2)};
\end{scope}

\end{tikzpicture}
\caption{In Case 2.3, either $d(u_{j+1}) = 2$, and $t$ has some neighbour $z$ incident with $f_{j+1}$ (1), or $d(u_{j+1}) > 2$, and $u_{j+1}$ has some neighbour $z'$ other than $t$ that is incident with $f_{j+1}$.}
\label{fig_1_chord_23_setup}
\end{figure}
	
	\textit{Case 2.3.1:} The vertices $t$ and $w_0$ are adjacent.\\
	In this case, $t, w_0, x$ a geodesic. 
	The graph $G$ contains one of the vertices $z$ or $z'$ described above, and has girth 5, and so either $d(z, y) > 3$ or $d(z', y) > 3$, respectively.
	
	\textit{Case 2.3.2:} There is a vertex $w_{\Delta - 1}$ that is adjacent to $t$, $w_0$ and $u_{\Delta - 1}$.\\
	The path $t, w_{\Delta - 1}, w_0, x$ is a $t-x$ geodesic (see Figure \ref{fig_1_chord_232_233}).
	One of $z$ or $z'$ is present in $G$, so by the planarity and girth constraints of $G$, either $d(z, y) > 3$ or $d(z', y) > 3$. 
	
\begin{figure}[h]
\centering
\begin{tikzpicture}
[scale =0.7, inner sep=0.8mm, 
vertex/.style={circle,thick,draw},
dvertex/.style={rectangle,thick,draw, inner sep=1.3mm}, 
thickedge/.style={line width=1.5pt}] 

\node[vertex] (v) at (0,0)  [label = 270:{$v$}] {};

\node[vertex] (u1) at (150:2) [label = 80:{$u_0$}] {};
\node[vertex] (u5) at (60:2) [label = 180:{$u_j$}] {};

\node[vertex] (w1) at (150:4) [label = 50:{$w_0$}] {};
\node[vertex, fill=black] (w5) at (60:4) [label = 180:{$w_j$}] {};

\node[vertex] (x) at (120:3) [label = 110:{$x$}] {};
\node[vertex] (y) at (90:3) [label = 110:{$y$}] {};

\node[vertex] (u6) at (20:2) [label = 90:{$u_{j+1}$}] {};
\node[vertex] (w7) at (40:4) [label = 120:{$s$}] {};
\node[vertex] (w8) at (20:4) [label = 0:{$t$}] {};
\node[vertex] (w9) at (0:4) [label = 0:{$z$ or $z'$}] {};

\node[vertex] (u0) at (180:2) [label = 90:{$u_{\Delta - 1}$}] {};
\node[vertex] (w0) at (180:4) [label = 180:{$w_{\Delta - 1}$}] {};

\draw (v)--(u1) (v)--(u5);
\draw (w1) .. controls (125:6) and (85:6) .. (w5);
\draw (u1)--(w1) (u5)--(w5);

\draw[thickedge] (w1) .. controls (125:6) and (85:6) .. (w5);
\draw[thickedge] (v)--(u1) (v)--(u5) (u1)--(w1) (u5)--(w5);

\draw (w1)--(x)--(y)--(u5);

\draw (u5)--(w7)--(w8)--(u6)--(v);
\draw[dashed] (w8)--(w9) (u6)--(w9);

\draw (v)--(u0)--(w0)--(w1);

\draw (w0) .. controls (130:8) and (70:8) .. (w8);

\node at (105:4.4) {$Q$};
\node at (270: 1) {Case 2.3.2};

\begin{scope}[shift={(10.3,0)}]
	\node[vertex] (v) at (0,0)  [label = 270:{$v$}] {};
	
	\node[vertex] (u1) at (165:2) [label = 270:{$u_0$}] {};
	\node[vertex] (u5) at (75:2) [label = 180:{$u_j$}] {};
	
	\node[vertex] (w1) at (165:4) [label = 180:{$w_0$}] {};
	\node[vertex, fill=black] (w5) at (75:4) [label = 180:{$w_j$}] {};
	
	\node[vertex] (x) at (135:3) [label = 110:{$x$}] {};
	\node[vertex] (y) at (105:3) [label = 110:{$y$}] {};
	
	\node[vertex] (u6) at (35:2) [label = 90:{$u_{j+1}$}] {};
	\node[vertex] (w7) at (55:4) [label = 80:{$s$}] {};
	\node[vertex] (w8) at (35:4) [label = 180:{$t$}] {};
	\node[vertex] (w9) at (20:4) [label = 0:{$z'$}] {};
	
	\node[vertex] (u7) at (0:2) [label = 90:{$u_{j+2}$}] {};
	\node[vertex] (wa) at (0:4) [label = 0:{$a$}] {};
	\node[vertex] (wb) at (100:6) [label = 270:{$b$}] {};
	
	\draw (v)--(u1) (v)--(u5);
	\draw (w1) .. controls (140:6) and (100:6) .. (w5);
	\draw (u1)--(w1) (u5)--(w5);
	
	\draw[thickedge] (w1) .. controls (140:6) and (100:6) .. (w5);
	\draw[thickedge] (v)--(u1) (v)--(u5) (u1)--(w1) (u5)--(w5);
	
	\draw (w1)--(x)--(y)--(u5);
	
	\draw (u5)--(w7)--(w8)--(u6)--(v);
	\draw (u6)--(w9)--(wa)--(u7)--(v);
	
	\draw (w1) .. controls (145:6) and (120:6) .. (wb) .. controls (80:6) and (55:6) .. (w8);
	
	\node at (120:4.4) {$Q$};
	\node at (270: 1) {Case 2.3.3};
\end{scope}

\end{tikzpicture}
\caption{The left figure illustrates Case 2.3.2 in which $t$ and $w_{\Delta - 1}$ are adjacent. 
The right figure shows Case 2.3.3, under the assumption that $G$ contains the vertex $z'$ that is not adjacent to $t$.}
\label{fig_1_chord_232_233}
\end{figure}
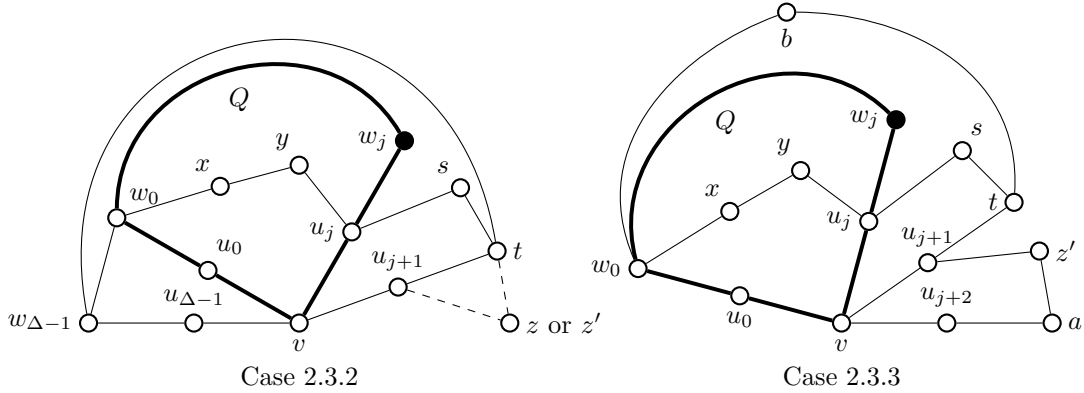
	
	\textit{Case 2.3.3:} There is some vertex $b$, that is not adjacent to $u_{\Delta - 1}$, but that is adjacent to both $t$ and $w_0$.
	Thus the $t-x$ geodesic is $t, b, w_0, x$.
	If $G$ contains $z$, which is adjacent to $t$, then $z$ is not adjacent to $w_0$ as this induces a 4-cycle on $z$, $w_0$, $b$ and $t$. 
	Thus, if $G$ contains $z$, we have the contradiction $d(z, y) > 3$.
	Therefore $z'$ is a vertex of $G$.
	Let $a$ be the vertex of $N_2(v) \cap N(z')$ that is incident with $f_{j+1}$ (see Figure \ref{fig_1_chord_232_233}, Case 2.3.3).
	The only possible $y-z'$ geodesic is $z', w_0, x, y$, so $z'$ and $w_0$ are adjacent.
	As $G$ is triangle-free, $a$ and $w_0$ are not adjacent.
	Therefore $d(a, y) > 3$, concluding Case 2.
	
	\textit{Case 3:} The vertices $w_0$ and $w_j$ each have a neighbor in $\Int(C_Q)$.\\
	Let $x$ and $y$ be vertices in $\Int(C_Q)$ that are adjacent to $w_0$ and $w_j$, respectively.
	Since $G$ has girth 5, $x$ is not adjacent to any vertex of $C_Q$ apart from $w_0$, and $y$ is not adjacent to any vertex of $C_Q$ besides $w_j$.
	There are two subcases to consider.
	
	\textit{Case 3.1:} At least one of vertices $u_0$ and $u_j$ have a neighbor in $\ext(C_Q)$.\\
	Assume without loss of generality that $u_0$ is adjacent to some vertex in $\ext(C_Q)$.
	Let $s$ be the neighbor of $u_0$ in $\ext(C_Q)$ that is incident with the face $f_{\Delta - 1}$, and let $t$ be the other neighbor of $s$ that is also incident with $f_{\Delta - 1}$.
	Note that $s$ is not adjacent to $w_j$, as this induces a 4-cycle on the vertices $s$, $w_j$, $w_0$ and $u_0$. 
	There are two ways that $G$ may contain an $s-y$ path of length at most 3, and we consider both as subcases.
	
\begin{figure}[h]
\centering
\begin{tikzpicture}
[scale =0.7, inner sep=0.8mm, 
vertex/.style={circle,thick,draw},
dvertex/.style={rectangle,thick,draw, inner sep=1.3mm}, 
thickedge/.style={line width=1.5pt}] 

\node[vertex] (v) at (0,0)  [label = 270:{$v$}] {};

\node[vertex] (u0) at (110:2) [label = 0:{$u_0$}] {};
\node[vertex] (uj) at (20:2) [label = 270:{$u_j$}] {};
\node[vertex] (ud) at (160:2) [label = 90:{$u_{\Delta - 1}$}] {};
\node[vertex] (x) at (85:3) [label = 90:{$x$}] {};
\node[vertex] (y) at (45:3) [label = 90:{$y$}] {};
\node[vertex] (s) at (135:4) [label = 180:{$s$}] {};
\node[vertex] (t) at (160:4) [label = 180:{$t$}] {};
\node[vertex] (a) at (80:6) [label = 270:{$a$}] {};

\node[vertex] (w0) at (110:4) [label = 240:{$w_0$}] {};
\node[vertex] (wj) at (20:4) [label = 340:{$w_j$}] {};

\draw (v)--(u0) (v)--(uj);
\draw (w0) .. controls (85:6) and (45:6) .. (wj);
\draw (u0)--(w0) (uj)--(wj);
\draw (w0)--(x) (wj)--(y);
\draw (u0)--(s)--(t)--(ud)--(v);

\draw[thickedge] (w0) .. controls (85:6) and (45:6) .. (wj);
\draw[thickedge] (v)--(u0) (v)--(uj) (u0)--(w0) (uj)--(wj);

\draw (s) .. controls (120:6) and (100:6) .. (a) .. controls (60:6) and (40:6) .. (wj);

\node at (65:4.4) {$Q$};
\node at (270: 1) {Case 3.1.1};

\begin{scope}[shift={(10.3,0)}]
	\node[vertex] (v) at (0,0)  [label = 270:{$v$}] {};
	
	\node[vertex] (u0) at (110:2) [label = 0:{$u_0$}] {};
	\node[vertex] (uj) at (20:2) [label = 270:{$u_j$}] {};
	\node[vertex] (ud) at (160:2) [label = 90:{$u_{\Delta - 1}$}] {};
	\node[vertex] (x) at (85:3) [label = 90:{$x$}] {};
	\node[vertex] (y) at (45:3) [label = 90:{$y$}] {};
	\node[vertex] (s) at (135:4) [label = 180:{$s$}] {};
	\node[vertex] (t) at (160:4) [label = 180:{$t$}] {};
	\node[vertex] (z) at (180:4) [label = 180:{$z$}] {};
	
	\node[vertex] (w0) at (110:4) [label = 240:{$w_0$}] {};
	\node[vertex] (wj) at (20:4) [label = 340:{$w_j$}] {};
	
	\draw (v)--(u0) (v)--(uj);
	\draw (w0) .. controls (85:6) and (45:6) .. (wj);
	\draw (u0)--(w0) (uj)--(wj);
	\draw (w0)--(x) (wj)--(y);
	\draw (u0)--(s)--(t)--(ud)--(v);
	
	\draw[thickedge] (w0) .. controls (85:6) and (45:6) .. (wj);
	\draw[thickedge] (v)--(u0) (v)--(uj) (u0)--(w0) (uj)--(wj);
	
	\draw[dashed] (ud)--(z)--(t);
	\draw (t) .. controls (125:8) and (50:9) .. (wj);
	
	\node at (65:4.4) {$Q$};
	\node at (270: 1) {Case 3.1.2};
\end{scope}

\end{tikzpicture}
\caption{In Case 3.1.1, there is an $s-y$ path $s, a, w_j, y$ containing some vertex $a$ in $N_2(v)\cup N_3(v) - \{t\}$.
In Case 3.1.2, the vertex $t$ is adjacent to $w_j$, and $s, t, w_j, y$ is an $s-y$ path of length 3.}
\label{fig_1_chord_31_311}
\end{figure}
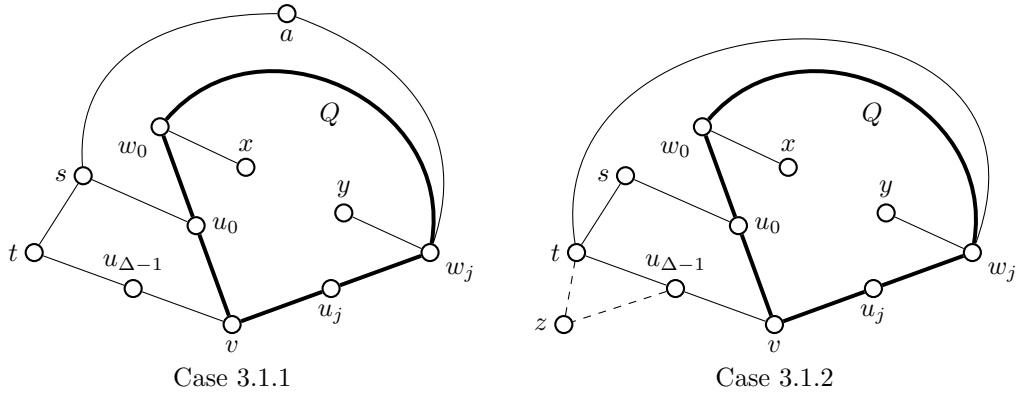
	
	\textit{Case 3.1.1:} There is some vertex $a\neq t$ that is adjacent to both $s$ and $w_j$.\\
	The path $s, a, w_j, y$ is the $s-y$ geodesic (see Figure \ref{fig_1_chord_31_311}). 
	Since $G$ has girth 5, $d(t,x) > 3$, a contradiction.
	
	\textit{Case 3.1.2:} The vertices $t$ and $w_j$ are adjacent.\\
	The $s-y$ geodesic is $s, t, w_j, y$ (see Figure \ref{fig_1_chord_31_311}).
	We consider the face $f_{\Delta - 2}$. 
	Either the vertex $t$ is incident with this face, and there is a vertex $z$ in $N_1(t)\cap N_1(u_{\Delta - 2})$, or $t$ is not incident with this face, and there is a vertex $z'$ in $N_1(u_{\Delta - 1}) \cap N_2(v)$.
	In both cases we derive a contradiction, as either $d(z,x) > 3$ or $d(z', x) > 3$.
	
	Since Case 3.1 yields a contradiction, we may assume that neither $u_0$ nor $u_j$ has a neighbor in $\ext(C_Q)$.
	Since $u_0$ has no neighbor in $\ext(C_Q)$, $w_0$ is incident with the face $f_{\Delta - 1}$.
	Similarly, $w_j$ is incident with $f_j$.
	Let $s$ be the vertex of $N_1(w_0) - \{u_0\}$ that is incident with $f_{\Delta - 1}$, and let $w_{j+1}$ be the vertex of $N_1(w_j) - \{u_j\}$ that is incident with $f_j$.
	
	\textit{Case 3.2:} The vertex $u_{\Delta - 1}$ has degree at least 3.\\
	In this case, $s$ is only incident with the face $f_{\Delta - 1}$, and not the face $f_{\Delta - 2}$.
	Let $t$ denote the neighbor of $u_{\Delta - 1}$ that is incident with $f_{\Delta - 2}$, and we let $z$ be the vertex of $N(t)-u_{\Delta - 1}$ that is incident with $f_{\Delta - 2}$ (See Figure \ref{fig_1_chord_32_321}).
	
\begin{figure}[h]
\centering
\begin{tikzpicture}
[scale =0.7, inner sep=0.8mm, 
vertex/.style={circle,thick,draw},
dvertex/.style={rectangle,thick,draw, inner sep=1.3mm}, 
thickedge/.style={line width=1.5pt}] 

\node[vertex] (v) at (0,0)  [label = 270:{$v$}] {};

\node[vertex] (u0) at (125:2) [label = 20:{$u_0$}] {};
\node[vertex] (uj) at (55:2) [label = 95:{$u_j$}] {};

\node[vertex] (w0) at (125:4) [label = 110:{$w_0$}] {};
\node[vertex] (wj) at (55:4) [label = 10:{$w_j$}] {};
\node[vertex] (x) at (107:3.4) [label = 90:{$x$}] {};
\node[vertex] (y) at (73:3.4) [label = 90:{$y$}] {};

\node[vertex] (ud1) at (155:2) [label = 270:{$u_{\Delta - 1}$}] {};
\node[vertex] (ud2) at (180:2) [label = 270:{$u_{\Delta - 2}$}] {};
\node[vertex] (s) at (145:4) [label = 140:{$s$}] {};
\node[vertex] (t) at (165:4) [label = 180:{$t$}] {};
\node[vertex] (z) at (180:4) [label = 180:{$z$}] {};

\node[vertex] (wj1) at (30:4) [label = 10:{$w_{j+1}$}] {};
\node[vertex] (uj1) at (30:2) [label = 350:{$u_{j+1}$}] {};

\draw (v)--(u0) (v)--(uj);
\draw (u0)--(w0) (uj)--(wj);

\draw[thickedge] (w0) .. controls (105:5.5) and (75:5.5) .. (wj);
\draw[thickedge] (v)--(u0) (v)--(uj) (u0)--(w0) (uj)--(wj);

\draw (v)--(ud1)--(s)--(w0);
\draw (ud1)--(t)--(z)--(ud2)--(v);
\draw (v)--(uj1)--(wj1)--(wj);

\draw (w0)--(x) (wj)--(y);

\node at (90:4.4) {$Q$};
\node at (270: 1) {Case 3.2};

\begin{scope}[shift={(10.3,0)}]
	\node[vertex] (v) at (0,0)  [label = 270:{$v$}] {};
	
	\node[vertex] (u0) at (125:2) [label = 20:{$u_0$}] {};
	\node[vertex] (uj) at (55:2) [label = 95:{$u_j$}] {};
	
	\node[vertex] (w0) at (125:4) [label = 110:{$w_0$}] {};
	\node[vertex] (wj) at (55:4) [label = 10:{$w_j$}] {};
	\node[vertex] (x) at (107:3.4) [label = 90:{$x$}] {};
	\node[vertex] (y) at (73:3.4) [label = 90:{$y$}] {};
	
	\node[vertex] (ud1) at (155:2) [label = 270:{$u_{\Delta - 1}$}] {};
	\node[vertex] (ud2) at (180:2) [label = 270:{$u_{\Delta - 2}$}] {};
	\node[vertex] (s) at (145:4) [label = 140:{$s$}] {};
	\node[vertex] (t) at (165:4) [label = 180:{$t$}] {};
	\node[vertex] (z) at (180:4) [label = 180:{$z$}] {};
	
	\node[vertex] (wj1) at (30:4) [label = 10:{$w_{j+1}$}] {};
	\node[vertex] (uj1) at (30:2) [label = 350:{$u_{j+1}$}] {};
	
	\draw (v)--(u0) (v)--(uj);
	\draw (u0)--(w0) (uj)--(wj);
	
	\draw[thickedge] (w0) .. controls (105:5.5) and (75:5.5) .. (wj);
	\draw[thickedge] (v)--(u0) (v)--(uj) (u0)--(w0) (uj)--(wj);
	
	\draw (v)--(ud1)--(s)--(w0);
	\draw (ud1)--(t)--(z)--(ud2)--(v);
	\draw (v)--(uj1)--(wj1)--(wj);
	
	\draw (t) .. controls (140:7) and (80:7) .. (wj);
	
	\draw (w0)--(x) (wj)--(y);
	
	\node at (90:4.4) {$Q$};
	\node at (270: 1) {Case 3.2.1};
\end{scope}

\end{tikzpicture}
\caption{If $d(u_{\Delta-1}) > 2$, then distinct neighbours $s$ and $t$ of $u_{\Delta -1}$ are incident with the faces $f_{\Delta - 1}$ and $f_{\Delta - 2}$, respectively (Case 3.2).
In Case 3.2.1, we consider the possibility that there is a $t-y$ path of the form $t, w_j, y$.}
\label{fig_1_chord_32_321}
\end{figure}
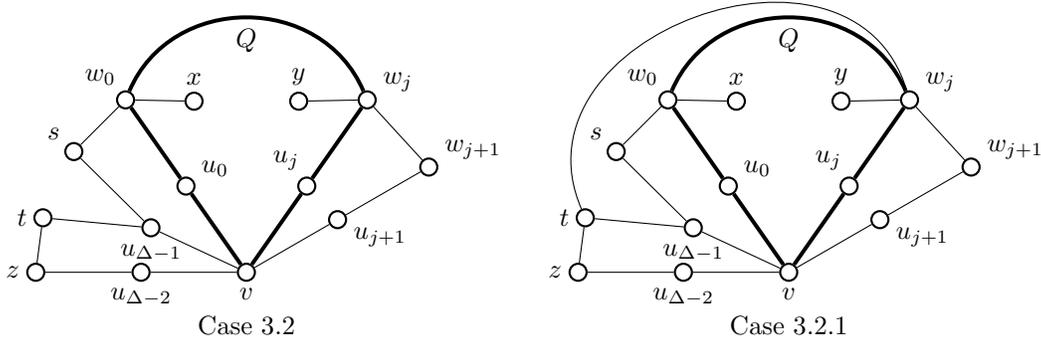
	
	Considering the girth and planarity of $G$, there are only three possibilities for a $y-t$ geodesic.
	
	\textit{Case 3.2.1:} The vertices $t$ and $w_j$ are adjacent.\\
	The $t-y$ geodesic is $t, w_j, y$ (See Figure \ref{fig_1_chord_32_321}).
	Since $G$ has girth 5, there is no $z-x$ path of length 3 or less, a contradiction.
	
	\textit{Case 3.2.2:} There is some vertex $a \neq z$ that is adjacent to both $t$ and $w_j$.\\
	It is possible that $a = w_{j+1}$, but this does not affect the argument.
	The $t-y$ geodesic is $t, a, w_j, y$. 
	Similar to Case 3.2.1, $d(z,x) > 3$.
	
	\textit{Case 3.2.3:} The vertices $z$ and $w_j$ are adjacent.\\
	The $t-y$ geodesic is $t, z, w_j, y$.
	Consider the vertex $u_{j+1}$.
	If it has degree 2, then there is a vertex $b \neq u_{j+1}$ that adjacent to $w_{j+1}$ and incident with the face $f_{j+1}$.
	If $d(u_{j+1}) \geq 3$, then there exists a vertex $b' \neq w_{j+1}$ that is adjacent to $u_{j+1}$ and incident with $f_{j+1}$.
	In either case, the vertex $b$ or $b'$ is not adjacent to $w_j$ since $G$ has girth 5.
	Whether $G$ contains $b$ or $b'$, we obtain a contradiction, since either $d(b, x) > 3$ or $d(b', x) > 3$.
	
	\textit{Case 3.3:} The vertex $u_{\Delta - 1}$ has degree 2.\\
	The vertex $s$ is the only neighbor of $u_{\Delta - 1}$ besides $v$. Denote by $t$ the vertex of $N_1(s) - \{u_{\Delta - 1} \}$ that is incident with the face $f_{\Delta - 2}$.
	Since $G$ is a plane graph of girth 5, $t$ is not adjacent to either $w_0$ or $w_j$.
	There are two subcases to consider: one for each way that $G$ can exhibit a $t-y$ geodesic.
	
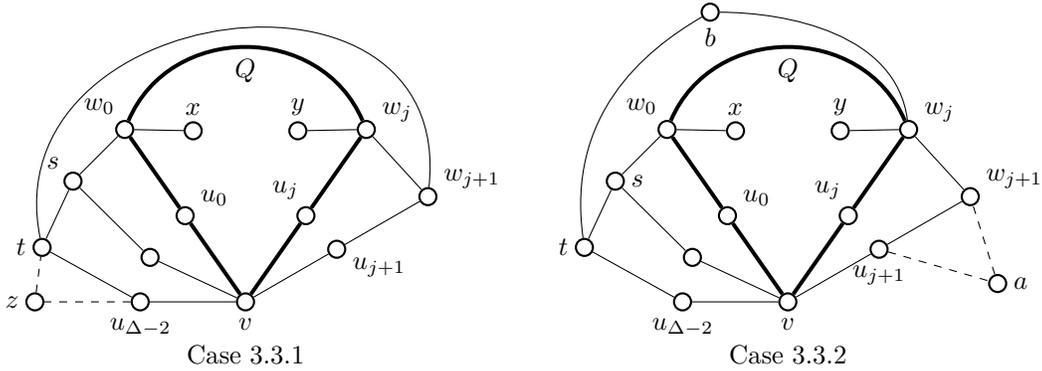
\begin{figure}[h]
\centering
\begin{tikzpicture}
[scale =0.7, inner sep=0.8mm, 
vertex/.style={circle,thick,draw},
dvertex/.style={rectangle,thick,draw, inner sep=1.3mm}, 
thickedge/.style={line width=1.5pt}] 

\node[vertex] (v) at (0,0)  [label = 270:{$v$}] {};

\node[vertex] (u0) at (125:2) [label = 20:{$u_0$}] {};
\node[vertex] (uj) at (55:2) [label = 95:{$u_j$}] {};

\node[vertex] (w0) at (125:4) [label = 110:{$w_0$}] {};
\node[vertex] (wj) at (55:4) [label = 10:{$w_j$}] {};
\node[vertex] (x) at (107:3.4) [label = 90:{$x$}] {};
\node[vertex] (y) at (73:3.4) [label = 90:{$y$}] {};

\node[vertex] (ud1) at (155:2) {}; 
\node[vertex] (ud2) at (180:2) [label = 270:{$u_{\Delta - 2}$}] {};
\node[vertex] (s) at (145:4) [label = 140:{$s$}] {};
\node[vertex] (t) at (165:4) [label = 180:{$t$}] {};
\node[vertex] (z) at (180:4) [label = 180:{$z$}] {};

\node[vertex] (wj1) at (30:4) [label = 10:{$w_{j+1}$}] {};
\node[vertex] (uj1) at (30:2) [label = 350:{$u_{j+1}$}] {};

\draw (v)--(u0) (v)--(uj);
\draw (u0)--(w0) (uj)--(wj);

\draw[thickedge] (w0) .. controls (105:5.5) and (75:5.5) .. (wj);
\draw[thickedge] (v)--(u0) (v)--(uj) (u0)--(w0) (uj)--(wj);

\draw (v)--(ud1)--(s)--(w0);
\draw (v)--(uj1)--(wj1)--(wj);
\draw (s)--(t)--(ud2)--(v);

\draw (t) .. controls (130:7.5) and (60:8) .. (wj1);
\draw[dashed] (ud2)--(z)--(t);

\draw (w0)--(x) (wj)--(y);

\node at (90:4.4) {$Q$};
\node at (270: 1) {Case 3.3.1};

\begin{scope}[shift={(10.3,0)}]
	\node[vertex] (v) at (0,0)  [label = 270:{$v$}] {};
	
	\node[vertex] (u0) at (125:2) [label = 20:{$u_0$}] {};
	\node[vertex] (uj) at (55:2) [label = 95:{$u_j$}] {};
	
	\node[vertex] (w0) at (125:4) [label = 110:{$w_0$}] {};
	\node[vertex] (wj) at (55:4) [label = 10:{$w_j$}] {};
	\node[vertex] (x) at (107:3.4) [label = 90:{$x$}] {};
	\node[vertex] (y) at (73:3.4) [label = 90:{$y$}] {};
	
	\node[vertex] (ud1) at (155:2) {}; 
	\node[vertex] (ud2) at (180:2) [label = 270:{$u_{\Delta - 2}$}] {};
	\node[vertex] (s) at (145:4) [label = 0:{$s$}] {};
	\node[vertex] (t) at (165:4) [label = 180:{$t$}] {};
	\node[vertex] (a) at (5:4) [label = 0:{$a$}] {};
	
	\node[vertex] (wj1) at (30:4) [label = 10:{$w_{j+1}$}] {};
	\node[vertex] (uj1) at (30:2) [label = 270:{$u_{j+1}$}] {};
	\node[vertex] (b) at (105:5.7) [label = 270:{$b$}] {};
	
	\draw (v)--(u0) (v)--(uj);
	\draw (u0)--(w0) (uj)--(wj);
	
	\draw[thickedge] (w0) .. controls (105:5.5) and (75:5.5) .. (wj);
	\draw[thickedge] (v)--(u0) (v)--(uj) (u0)--(w0) (uj)--(wj);
	
	\draw (v)--(ud1)--(s)--(w0);
	\draw (v)--(uj1)--(wj1)--(wj);
	\draw (s)--(t)--(ud2)--(v);
	
	\draw (t) .. controls (140:5.5) and (120:5.5) .. (b) .. controls (90:5.5) and (68:5.5) .. (wj);
	
	\draw (w0)--(x) (wj)--(y);
	\draw[dashed] (uj1)--(a)--(wj1);
	
	\node at (90:4.4) {$Q$};
	\node at (270: 1) {Case 3.3.2};
\end{scope}

\end{tikzpicture}
\caption{In Case 3.3, we assume that $u_{\Delta - 1}$ has only two neighbours. 
In subcase 3.3.1, we consider what happens when the vertices $t$ and $w_{j+1}$ are adjacent.}
\label{fig_1_chord_33_331}
\end{figure}
	
	\textit{Case 3.3.1:} The vertices $t$ and $w_{j+1}$ are adjacent.\\
	The $t-y$ path is $t, w_{j+1}, w_j, y$.
	Either $t$ or $u_{\Delta - 2}$ has some neighbor $z$ in $N_2(v)$ that has not yet been mentioned. 
	We obtain a contradiction as $d(z,x) > 3$ (see Figure \ref{fig_1_chord_33_331}).
	
	\textit{Case 3.3.2:} There is some vertex $b\neq w_{j+1}$ that is adjacent to both $t$ and $w_j$.\\
	We have the $y-t$ geodesic $t, b, w_j, y$.
	Either $d(u_{j+1}) = 2$, and so $w_{j+1}$ has a neighbor in $N_2(v)$ incident with $f_{j+1}$, or $d(u_{j+1}) \geq 3$ and $u_{j+1}$ has a neighbor in $N_2(v)-\{w_{j+1}\}$ incident with $f_{j+1}$.
	In either case, call this neighbor $a$, and note that $d(a,x) > 3$.
	
	In all cases, we derive a contradiction, completing the proof.
\end{proof}

\begin{thm}
	Let $G$ be a pentagulation of diameter 3, girth 5 and maximum degree $\Delta$, and let $v$ be a vertex of $G$ with maximum degree. 
	If $\Delta \geq 8$, then $G$ does not have any 2-chords with respect to $v$.
	\label{thm_len_2_chords}
\end{thm}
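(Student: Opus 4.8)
The plan is to argue by contradiction, following the architecture of the proof of Theorem~\ref{thm_len_1_chords} but now using that theorem as a tool: since $G$ has no $1$-chords with respect to $v$, any edge joining two vertices of $N_2(v)$ must lie in $\fv$. Suppose a $2$-chord exists. By Lemma~\ref{lem_unique_nbours_distinct} its cycle is a Jordan separating cycle, and as $G$ has diameter $3$ it dominates one of its two sides; so I would embed $G$ so that a minimal $2$-chord $Q\colon w_0, m, w_j$ has $C_Q$ dominating its interior, and relabel $N(v)=\{u_0,\dots,u_{\Delta-1}\}$ clockwise so that $w_0u_0, w_ju_j\in E(G)$. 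Before the case analysis I would record two facts. First, $C_Q$ is the $6$-cycle $v, u_0, w_0, m, w_j, u_j$. Second, the middle vertex satisfies $m\in N_3(v)$: it cannot be $v$ (else $Q$ would use edges of $\fv$), and it cannot lie in $N_1(v)$ (else the uniqueness in Lemma~\ref{lem_nhood_rules} would force $u_0=u_j$, contradicting Lemma~\ref{lem_unique_nbours_distinct}).

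Next I would bound the number $j$ of faces incident with $v$ that lie inside $C_Q$, exactly as in Claim~1 of Theorem~\ref{thm_len_1_chords}. For a neighbor $u_i$ of $v$ with $0<i<j$, Lemma~\ref{lem_nhood_rules} supplies a vertex $w_i\in N_2(v)\cap N(u_i)$ lying in $\Int(C_Q)$, and domination forces $w_i$ to be adjacent to some vertex of $C_Q$. Girth $5$ forbids adjacency to $u_0$, $v$ or $u_j$; Theorem~\ref{thm_len_1_chords} forbids adjacency to $w_0$ or $w_j$ along an edge off $\fv$ (it would be a $1$-chord); and adjacency to $m$ would produce a shorter $2$-chord, contradicting minimality. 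When $j\ge 4$ one may take $u_2$, which is non-adjacent to both $u_0$ and $u_j$, so that the only surviving possibility --- a face edge to $w_0$ or $w_j$ --- is also unavailable, giving a contradiction; the boundary case $j=3$ is disposed of by splitting on the degrees of $u_1$ and $u_2$, using Lemma~\ref{lem_deg_two_nbours} to exclude two consecutive degree-$2$ neighbors. This yields $j<3$, and since $\Delta\ge 8$ there remain at least five neighbors $u_3,\dots,u_{\Delta-1}$ of $v$ strictly outside $C_Q$, which the subsequent distance arguments require.

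The bulk of the proof is then the analogue of Cases~1--3 of Theorem~\ref{thm_len_1_chords}, organised by whether $w_0$ and $w_j$ have neighbors in $\Int(C_Q)$, but now carried out over the $6$-cycle $C_Q$ with the extra interior witness $m$. In each configuration I would name the interior vertices $x\in N(w_0)$ and $y\in N(w_j)$ guaranteed by domination, locate the faces $f_{\Delta-1}, f_{\Delta-2},\dots$ just outside $C_Q$ near $u_0$ and, symmetrically, near $u_j$, and label their $N_2(v)$ vertices. The diameter-$3$ hypothesis then forces short paths between these outside vertices and $x$ or $y$; girth $5$ and planarity pin each such geodesic down to one or two shapes, and tracking one further vertex incident with the next face always exhibits a vertex at distance greater than $3$ from $x$ or $y$, the desired contradiction. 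Throughout, whenever two vertices of $N_2(v)$ would become adjacent off $\fv$ I invoke Theorem~\ref{thm_len_1_chords}, and whenever a Jordan separating $5$-cycle dominated by an adjacent pair appears I invoke Lemma~\ref{lem_5_5} to empty its interior; these shortcuts replace several of the explicit girth computations used in the $1$-chord proof.

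The main obstacle is the final case, where both $w_0$ and $w_j$ have neighbors in $\Int(C_Q)$ (the analogue of Case~3). The interior of the hexagon together with the vertex $m\in N_3(v)$ admits more configurations of the outer faces $f_{\Delta-1}$ and $f_{\Delta-2}$ than the pentagon of the $1$-chord proof did, and the argument splits further according to whether $u_{\Delta-1}$ has degree $2$ or degree at least $3$, mirroring Cases~3.2 and~3.3 there. I expect every sub-case to close by the same mechanism --- producing a vertex whose distance to $x$ or $y$ exceeds $3$ --- so the real difficulty is organisational: enumerating the configurations of the faces adjacent to $C_Q$ without omission, rather than any single hard estimate.
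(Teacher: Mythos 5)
Your overall architecture (minimal $2$-chord, bound the number $j$ of faces of $v$ inside $C_Q$, then case-split on which of $w_0,w_j$ have neighbours in $\Int(C_Q)$ and derive distance contradictions) matches the paper's, but there is a concrete gap in your treatment of $j=3$. You propose to dispose of it ``by splitting on the degrees of $u_1$ and $u_2$, using Lemma~\ref{lem_deg_two_nbours}''; that lemma only forbids two \emph{adjacent} degree-$2$ vertices, and $u_1,u_2$ are non-adjacent (both lie in the independent set $N(v)$), so it says nothing here. The underlying mechanism from Claim~1 of Theorem~\ref{thm_len_1_chords} also does not transfer: there, if $d(u_1)=d(u_2)=2$ one gets $w_1w_2\in E(G)$ and hence the $4$-cycle $w_0,w_1,w_2,w_j$ closed up by the $1$-chord $w_0w_j$, contradicting girth $5$. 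With a $2$-chord $w_0,a,w_j$ the same construction closes up into the $5$-cycle $w_0,w_1,w_2,w_j,a$, which violates nothing. So your argument for $j<3$ breaks down exactly at $j=3$.

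The paper's route around this is the key idea you are missing: it first proves (Claim~2) that $w_0$ and $w_j$ cannot \emph{both} have neighbours in $\Int(C_Q)$, via a distance argument using the face $f_{j+2}$ outside $C_Q$ (the only ways to bring its vertex $t$ within distance $3$ of $y$ each leave its other vertex $s$ at distance more than $3$ from $x$). The case $j=3$ then dies immediately, because domination forces $w_1$ to be an interior neighbour of $w_0$ and $w_2$ an interior neighbour of $w_j$. This same Claim~2 also eliminates in advance the configuration you defer to the end as the ``hardest final case'' (both $w_0$ and $w_j$ with interior neighbours, split on $d(u_{\Delta-1})$); in the paper that case never arises, and only the ``exactly one'' and ``neither'' cases remain. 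As written, your proposal both fails at $j=3$ and leaves its most delicate case as an unverified sketch, when a single early claim handles both.
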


\begin{proof}
	Assume for the sake of contradiction that there does exist some 2-chord with respect to $v$.
	Repeat the argument used at the start of the proof of Theorem \ref{thm_len_1_chords}, and adopt the same labeling convention for the vertices of $N(v)$ and $N_2(v)$, and for the faces incident with the vertex $v$. 
	There is a minimal 2-chord $Q: w_0, a, w_j$, where $w_0$ and $w_j$ are vertices of $N_2(v)$, the vertex $a$ lies in $N_3(v)$, and the cycle $C_Q$ under $Q$ dominates its interior.
	The vertices $u_0$ and $u_j$ are the unique vertices of $N(v) \cap N(w_0)$ and $N(v)\cap N(w_j)$, respectively.
	
	\textit{Claim 1:} The index $j$ satisfies $j<4$.\\
	Assume to the contrary that $j\geq 4$, and observe per Lemma \ref{lem_nhood_rules} that $u_2$ has some neighbor $w_2$ in $N_2(v)$.
	By Theorem \ref{thm_len_1_chords}, the vertex $w_2$ is adjacent to neither $w_0$ nor $w_j$.
	Since $G$ has girth 5, $w_2$ is not adjacent to either $u_0$ or $u_j$.
	Since $C_Q$ dominates its interior, $w_2$ is adjacent to $a$.
	Thus $w_2, a, w_0$ is a 2-chord, which contradicts the minimality of $Q$ and proves Claim 1.
	
	\textit{Claim 2:} It is not possible that both $w_0$ and $w_j$ have neighbors in $\Int(C_Q)$.\\
	Assume to the contrary that $w_0$ has some neighbor $x$ in $\Int(C_Q)$ and $w_j$ has a neighbor $y$ in $\Int(C_Q)$.
	We have $x \neq y$: were $x=y$, there would be a 4-cycle on the vertices $w_0, x, w_j, a$.
	Since $G$ has girth 5, $x$ is not adjacent to $a$ or $w_j$, and $y$ is not adjacent to $a$ or $w_0$.
	The face $f_{j+2}$ is bounded by the 5-cycle $v, u_{j+2}, s, t, u_{j+3}$, where $s$ and $t$ are vertices of $N_2(v)$.
	Note that $d(t,y) \leq 3$.
	Per Theorem \ref{thm_len_1_chords}, the vertex $t$ is not adjacent to any vertices of $N_2(v)$ apart from $s$, and possibly one other vertex that is incident with the face $f_{j+3}$.
	Hence $G$ can only exhibit a $t-y$ path in one of two ways (see Figure \ref{fig_2_chord_twovert}):\\
	(1) the vertices $a$ and $t$ are adjacent, and the geodesic is $t, a, w_j, y$, or\\
	(2) there is some vertex $b$ in $N_3(v)$ that is adjacent to both $t$ and $w_j$, yielding a geodesic $t, b, w_j, y$.
	
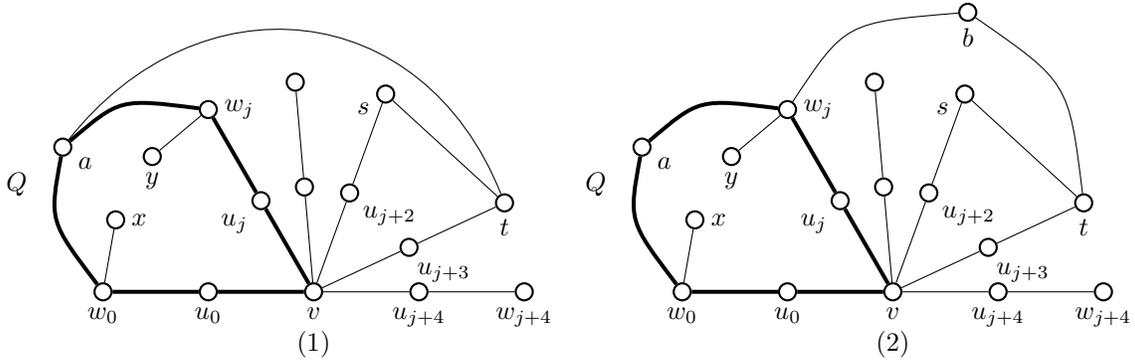
\begin{figure}[h]
\centering
\begin{tikzpicture}
[scale =0.7, inner sep=0.8mm, 
vertex/.style={circle,thick,draw},
dvertex/.style={rectangle,thick,draw, inner sep=1.3mm}, 
thickedge/.style={line width=1.5pt}] 

\node[vertex] (v) at (0,0)  [label = 270:{$v$}] {};

\node[vertex] (u0) at (180:2) [label = 270:{$u_0$}] {};
\node[vertex] (uj) at (120:2) [label = 225:{$u_j$}] {};
\node[vertex] (uj1) at (95:2) {};
\node[vertex] (uj2) at (70:2) [label = 315:{$u_{j+2}$}] {};
\node[vertex] (uj3) at (25:2) [label = 280:{$u_{j+3}$}] {};
\node[vertex] (uj4) at (0:2) [label = 270:{$u_{j+4}$}] {};

\node[vertex] (w0) at (180:4) [label = 270:{$w_0$}] {};
\node[vertex] (wj) at (120:4) [label = 0:{$w_j$}] {};
\node[vertex] (wj1) at (95:4) {};
\node[vertex] (wj2) at (70:4) [label = 190:{$s$}] {};
\node[vertex] (wj3) at (25:4) [label = 270:{$t$}] {};
\node[vertex] (wj4) at (0:4) [label = 270:{$w_{j+4}$}] {};

\node[vertex] (x) at (160:4) [label = 0:{$x$}] {};
\node[vertex] (y) at (140:4) [label = 270:{$y$}] {};

\node[vertex] (a) at (150:5.5) [label = 340:{$a$}] {};

\node at (160:6) {$Q$};

\draw[thickedge] (w0)--(u0)--(v)--(uj)--(wj);
\draw[thickedge] (w0) .. controls (165:5.2) .. (a) .. controls (135:5.2) .. (wj);

\draw (w0)--(x) (wj)--(y);
\draw (v)--(uj1) (v)--(uj2) (v)--(uj3) (v)--(uj4);
\draw (uj1)--(wj1) (uj2)--(wj2) (uj3)--(wj3) (uj4)--(wj4);
\draw (wj2)--(wj3);

\draw (a) .. controls (110:6.4) and (70:6) .. (wj3);

\node at (0,-1) {(1)};

\begin{scope}[shift={(11,0)}]
	\node[vertex] (v) at (0,0)  [label = 270:{$v$}] {};
	
	\node[vertex] (u0) at (180:2) [label = 270:{$u_0$}] {};
	\node[vertex] (uj) at (120:2) [label = 225:{$u_j$}] {};
	\node[vertex] (uj1) at (95:2) {};
	\node[vertex] (uj2) at (70:2) [label = 315:{$u_{j+2}$}] {};
	\node[vertex] (uj3) at (25:2) [label = 280:{$u_{j+3}$}] {};
	\node[vertex] (uj4) at (0:2) [label = 270:{$u_{j+4}$}] {};
	
	\node[vertex] (w0) at (180:4) [label = 270:{$w_0$}] {};
	\node[vertex] (wj) at (120:4) [label = 0:{$w_j$}] {};
	\node[vertex] (wj1) at (95:4) {};
	\node[vertex] (wj2) at (70:4) [label = 190:{$s$}] {};
	\node[vertex] (wj3) at (25:4) [label = 270:{$t$}] {};
	\node[vertex] (wj4) at (0:4) [label = 270:{$w_{j+4}$}] {};
	
	\node[vertex] (x) at (160:4) [label = 0:{$x$}] {};
	\node[vertex] (y) at (140:4) [label = 270:{$y$}] {};
	
	\node[vertex] (a) at (150:5.5) [label = 340:{$a$}] {};
	\node[vertex] (b) at (75:5.5) [label = 270:{$b$}] {};
	
	\node at (160:6) {$Q$};
	
	\draw[thickedge] (w0)--(u0)--(v)--(uj)--(wj);
	\draw[thickedge] (w0) .. controls (165:5.2) .. (a) .. controls (135:5.2) .. (wj);
	
	\draw (w0)--(x) (wj)--(y);
	\draw (v)--(uj1) (v)--(uj2) (v)--(uj3) (v)--(uj4);
	\draw (uj1)--(wj1) (uj2)--(wj2) (uj3)--(wj3) (uj4)--(wj4);
	\draw (wj2)--(wj3);
	
	\draw (wj) .. controls (100:5.2) .. (b) .. controls (50:5.2) .. (wj3);
	
	\node at (0,-1) {(2)};
\end{scope}

\end{tikzpicture}
\caption{In Claim 2, since $N_2(v)$ has no 1-chords but $G$ has diameter 3, either $t, a, w_j, y$ is a $t-y$ path (1), or $t, b, w_j, y$ is (2).}
\label{fig_2_chord_twovert}
\end{figure}
	
	Since $G$ has girth 5, and by Theorem \ref{thm_len_1_chords}, $G$ there are no 1-chords with respect to $v$. 
	Thus in both case (1) and (2), $d(s,x) > 3$, proving Claim 2.
	
	\textit{Claim 3:} $j < 3$.\\
	By Claim 1, we need only show that $j\neq 3$. 
	Suppose that $j=3$.
	By Lemma \ref{lem_nhood_rules}, $u_1$ and $u_2$ each have some neighbor, say $w_1$ and $w_2$ respectively, in $\Int(C_Q)$.
	Per Theorem \ref{thm_len_1_chords}, there are no 1-chords across $v$, so $w_1$ is not adjacent to $w_j$.
	By the minimality of $Q$, $w_1$ and $a$ are not adjacent, and since $G$ has girth 5, $w_1$ is not adjacent to $v$, $u_0$ or $u_j$.
	Similarly, $w_2$ is not adjacent to any of $w_0$, $a$, $v$, $u_0$ or $u_j$.
	Since $C_Q$ dominates its interior, $w_1$ is adjacent to $w_0$ and $w_2$ is adjacent to $w_3$.
	By Claim 2, this is not possible, proving Claim 3.
	
	There remain two cases to consider.
	
	\textit{Case 1:} Exactly one of $w_0$ or $w_j$ has a neighbor in $\Int(C_Q)$.\\
	Assume without loss of generality that $w_0$ has some neighbor, call it $x$, in $\Int(C_Q)$.
	The vertex $v$ has $d(v) \geq 8$, and by Claim 3, at most one neighbor of $v$ is contained in $\Int(C_Q)$.
	Thus $v$ has at least five neighbors in the exterior of $C_Q$.
	The face $f_{j+2}$ is bounded by a 5-cycle $v, u_{j+2}, s, t, u_{j+3}$, where $s$ and $t$ are vertices of $N_2(v)$. 
	Both $s$ and $t$ are within distance 3 of $x$.
	It is possible that $x$ is adjacent to $u_j$.
	However, $x$ is not adjacent to any other vertex of $V(C_Q) - \{w_0\}$, since $G$ has girth 5.
	As there are no 1-chords across $v$ per Theorem \ref{thm_len_1_chords}, there are two ways that $G$ may exhibit a $t-x$ geodesic.
	
\begin{figure}[h]
\centering
\begin{tikzpicture}
[scale =0.7, inner sep=0.8mm, 
vertex/.style={circle,thick,draw},
dvertex/.style={rectangle,thick,draw, inner sep=1.3mm}, 
thickedge/.style={line width=1.5pt}] 

\node[vertex] (v) at (0,0)  [label = 270:{$v$}] {};

\node[vertex] (u0) at (180:2) [label = 270:{$u_0$}] {};
\node[vertex] (uj) at (140:2) [label = 225:{$u_j$}] {};
\node[vertex] (uj1) at (120:2) {};
\node[vertex] (uj2) at (100:2) [label = 315:{$u_{j+2}$}] {};
\node[vertex] (uj3) at (50:2) {};
\node[vertex] (uj4) at (25:2) {};
\node[vertex] (uj5) at (0:2) [label = 270:{$u_{j+5}$}] {};

\node[vertex] (w0) at (180:4) [label = 270:{$w_0$}] {};
\node[vertex] (wj) at (140:4) [label = 0:{$w_j$}] {};
\node[vertex] (wj1) at (120:4) {};
\node[vertex] (wj2) at (100:4) [label = 190:{$s$}] {};
\node[vertex] (wj3) at (50:4) [label = 270:{$t$}] {};
\node[vertex] (wj4) at (25:4) {};
\node[vertex] (wj5) at (0:4) [label = 270:{$w_{j+5}$}] {};

\node[vertex] (x) at (160:4) [label = 0:{$x$}] {};

\node[vertex] (a) at (160:5.5) [label = 340:{$a$}] {};

\node at (160:6) {$Q$};

\draw[thickedge] (w0)--(u0)--(v)--(uj)--(wj);
\draw[thickedge] (w0) .. controls (170:5.2) .. (a) .. controls (150:5.2) .. (wj);

\draw (w0)--(x);
\draw (v)--(uj1) (v)--(uj2) (v)--(uj3) (v)--(uj4) (v)--(uj5);
\draw (uj1)--(wj1) (uj2)--(wj2) (uj3)--(wj3) (uj4)--(wj4) (uj5)--(wj5);
\draw (wj2)--(wj3);

\draw (a) .. controls (130:6.4) and (85:6) .. (wj3);

\node at (0,-1) {Case 1.1};

\begin{scope}[shift={(11,0)}]
	\node[vertex] (v) at (0,0)  [label = 270:{$v$}] {};
	
	\node[vertex] (u0) at (180:2) [label = 270:{$u_0$}] {};
	\node[vertex] (uj) at (140:2) [label = 225:{$u_j$}] {};
	\node[vertex] (uj1) at (120:2) {};
	\node[vertex] (uj2) at (100:2) [label = 315:{$u_{j+2}$}] {};
	\node[vertex] (uj3) at (50:2) {};
	\node[vertex] (uj4) at (25:2) {};
	\node[vertex] (uj5) at (0:2) [label = 270:{$u_{j+5}$}] {};
	
	\node[vertex] (w0) at (180:4) [label = 270:{$w_0$}] {};
	\node[vertex] (wj) at (140:4) [label = 90:{$w_j$}] {};
	\node[vertex] (wj1) at (120:4) [label = 0:{$y$}] {};
	\node[vertex] (wj2) at (100:4) [label = 190:{$s$}] {};
	\node[vertex] (wj3) at (50:4) [label = 270:{$t$}] {};
	\node[vertex] (wj4) at (25:4) {};
	\node[vertex] (wj5) at (0:4) [label = 270:{$z$}] {};
	
	\node[vertex] (x) at (160:4) [label = 0:{$x$}] {};
	
	\node[vertex] (a) at (160:5.0) [label = 340:{$a$}] {};
	\node[vertex] (b) at (130:6) [label = 270:{$b$}] {};
	
	
	\draw[thickedge] (w0)--(u0)--(v)--(uj)--(wj);
	\draw[thickedge] (w0) .. controls (170:4.8) .. (a) .. controls (150:4.8) .. (wj);
	
	\draw (w0)--(x);
	\draw (v)--(uj1) (v)--(uj2) (v)--(uj3) (v)--(uj4) (v)--(uj5);
	\draw (uj1)--(wj1) (uj2)--(wj2) (uj3)--(wj3) (uj4)--(wj4) (uj5)--(wj5);
	\draw (wj2)--(wj3);
	
	\draw (w0) .. controls (170:6.5) and (150:6) .. (b) .. controls (95:5.4) and (75:5.4) .. (wj3);
	
	\node at (0,-1) {Case 1.2};
\end{scope}

\end{tikzpicture}
\caption{There are two possibilities in Case 1, either $t, a, w_0, x$ is a $t-x$ path, as in subcase 1.1, or $t, b, w_0, x$ is, as in subcase 1.2.}
\label{fig_2_chord_onevert}
\end{figure}
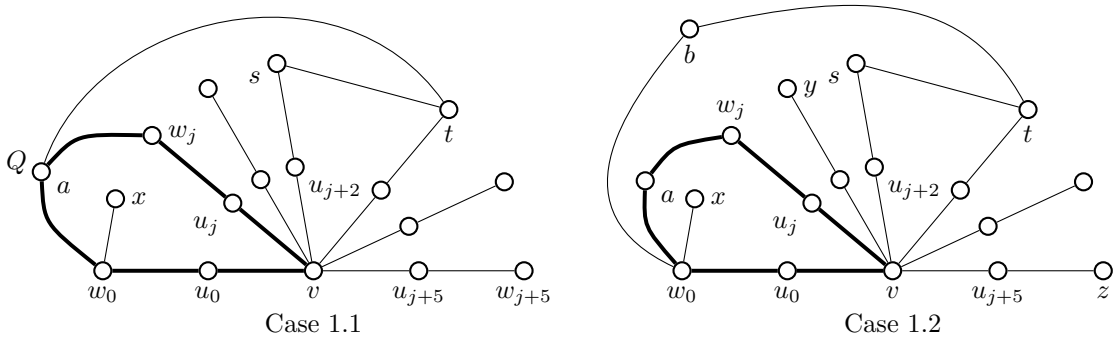
	
	\textit{Case 1.1:} The vertices $t$ and $a$ are adjacent.\\
	This case yields the path $t, a, w_0, x$ (see Figure \ref{fig_2_chord_onevert}).
	Since $G$ has girth 5, $s$ is not adjacent to $a$, $b$ or $w_j$. 
	Because there are no 1-chords across $v$ by Theorem \ref{thm_len_1_chords}, $t$ is not adjacent to $w_0$ (no neighbor of $s$ is adjacent to $w_0$).
	Thus $d(s,x) > 3$.
	
	\textit{Case 1.2:} There is a vertex $b$ in $N_3(v)$ that is adjacent to both $w_0$ and $t$.\\
	We have the $t-x$ geodesic $t, b, w_0, x$ (see Figure \ref{fig_2_chord_onevert}).
	There are two possibilities for an $s-x$ path of length at most 3.\\
	(1) either $s$ and $a$ are adjacent, or\\
	(2) there is some vertex $c$ in $N_3(v)$ that is adjacent to both $s$ and $w_0$.\\
	In either case, letting $y$ and $z$ be vertices of $N_1(u_{j+1}) \cap N_2(v)$ and $N_1(u_{j+5}) \cap N_2(v)$, respectively.
	Observe that $d(y,z) > 3$, completing Case 1.
	
	\textit{Case 2:} Neither $w_0$ nor $w_j$ has a neighbor in $\Int(C_Q)$.\\
	We claim that both $u_0$ and $u_j$ have neighbors in $\Int(C_Q)$.
	Assume to the contrary and without loss of generality that $u_0$ has no neighbor in $\Int(C_Q)$.
	Since $w_0$ has no neighbor in $\Int(C_Q)$, the path $a, w_0, u_0, v$ lies on the boundary of some face $f$ in $\Int(C_Q)$.
	Since $f$ is bounded by a 5-cycle, there is some vertex $z$ that is adjacent to both $a$ and $v$.
	Thus $v, z, a$ is a $v-a$ path of length 2, which contradicts the fact that $Q: w_0, a, w_j$ is a 2-chord (i.e., $a$ is in $N_3(v)$). 
	Hence there exist vertices $x$ and $y$ in $\Int(C_Q)$ that are adjacent to $u_0$ and $u_j$ respectively.
	Since $G$ contains no 4-cycles, $x \neq y$, and neither $x$ nor $y$ is adjacent to $a$.
	The face $f_{j+2}$ is bounded by a 5-cycle $v, u_{j+2}, s, t, u_{j+3}$, where $s$ and $t$ are vertices of $N_2(v)$.
	Because there are no 1-chords across $v$ (per Theorem \ref{thm_len_1_chords}), $s$ is not adjacent to any vertex of $N_2(v) \cap N_1(u_0)$ or $N_2(v) \cap N_1(u_j)$.
	As $d(s,x) \leq 3$, $s$ is adjacent to $a$ (and there is some vertex adjacent to both $a$ and $x$).
	Similarly, since $d(t,x) \leq 3$, $t$ is adjacent to $a$.
	However, we have a triangle on $a$, $s$ and $t$, a contradiction that completes the proof.
\end{proof}

\begin{thm}
	There does not exist a pentagulation with diameter 3, girth 5 and maximum degree greater than or equal to 8.
	\label{thm_girth5_DNE}
\end{thm}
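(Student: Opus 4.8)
The plan is to assume such a $G$ exists, fix a vertex $v$ of maximum degree $\Delta \ge 8$, and combine Theorems \ref{thm_len_1_chords} and \ref{thm_len_2_chords} to force a forbidden separating cycle. By those two theorems there are no $1$-chords and no $2$-chords with respect to $v$. I would first translate this into two structural facts, using Lemma \ref{lem_nhood_rules}: (i) the only edges joining two vertices of $N_2(v)$ are the ``rungs'' $pq$ appearing on the boundaries of the pentagonal faces incident with $v$, since any other such edge is a $1$-chord; and (ii) every vertex of $N_3(v)$ has exactly one neighbour in $N_2(v)$ --- at least one, since it lies at distance $3$ from $v$, and at most one, since two would give a $2$-chord.

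Next I would exploit $\Delta \ge 8$ to choose two neighbours $u_w, u_{w'} \in N(v)$ that are \emph{antipodal} in the cyclic order of the faces around $v$, so that both arcs between them contain an intermediate neighbour at cyclic distance $\ge 2$ from each of $u_w, u_{w'}$ (for $\Delta = 8$, place them at cyclic distance $4$). Let $w, w'$ be $N_2(v)$-neighbours of $u_w, u_{w'}$, which exist by Lemma \ref{lem_nhood_rules}. Since $\operatorname{diam}(G) = 3$ we have $d(w, w') \le 3$, and I would enumerate the $w$--$w'$ paths of length at most $3$: a path through $v$ has length $4$; a single edge or a chain of rungs keeps the two gateways within cyclic distance $\le 3$, which our choice forbids; a length-$2$ path through $N_3(v)$ is a $2$-chord; and every mixed case (an $N_2$- or $N_1$-interior vertex meeting an $N_3$-vertex, and so on) reduces to a rung chain, a $1$-chord, or a $2$-chord. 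The sole survivor is an \emph{$N_3$-bridge} $w, x_1, x_2, w'$ with $x_1, x_2 \in N_3(v)$, $x_1 x_2 \in E(G)$, and $x_1, x_2$ having unique $N_2(v)$-neighbours $w, w'$ respectively. (If $N_3(v) = \emptyset$ no bridge exists, so $d(w, w') \ge 4$ already contradicts the diameter.)

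The bridge produces the cycle $C^* : v, u_w, w, x_1, x_2, w', u_{w'}$, whose seven vertices are distinct (for instance $x_1 \ne x_2$, else $w, x_1, w'$ is a $2$-chord). By Lemma \ref{lem_slightly_long_cycle} the cycle $C^*$ is Jordan separating, and by Remark \ref{rem:cycle_sep} it must dominate its interior or its exterior. The two edges $vu_w$ and $vu_{w'}$ split the remaining neighbours of $v$ into the two arcs, one lying in $\Int(C^*)$ and the other in $\ext(C^*)$. On each arc I would pick an intermediate neighbour $u_k$ at cyclic distance $\ge 2$ from both $u_w$ and $u_{w'}$, together with an $N_2(v)$-neighbour $w_k$ of $u_k$ (Lemma \ref{lem_nhood_rules}). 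Such a $w_k$ is adjacent to \emph{no} vertex of $C^*$: not $v$, as $w_k \in N_2(v)$; not $u_w$ or $u_{w'}$, by uniqueness of the gateway $u_k$; not $w$ or $w'$, since that edge would be a rung, forcing cyclically adjacent gateways contrary to cyclic distance $\ge 2$; and not $x_1$ or $x_2$, whose only $N_2$-neighbours are $w, w' \ne w_k$. As one such $w_k$ lies in $\Int(C^*)$ and another in $\ext(C^*)$, the cycle $C^*$ dominates neither region, contradicting Remark \ref{rem:cycle_sep} and finishing the proof.

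The main obstacle is the path enumeration in the second step: one must check that, once both $1$-chords and $2$-chords are outlawed, the \emph{only} way two far-apart second-neighbours can lie within distance $3$ is through an $N_3$-bridge. Once this is established, the finish is a clean application of the separating-cycle results (Lemma \ref{lem_slightly_long_cycle} and Remark \ref{rem:cycle_sep}); the decisive observation is that the bridge is exactly long enough to build a $7$-cycle which, by construction, cannot dominate either of the two large regions it separates.
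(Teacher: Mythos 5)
Your proposal is correct and follows essentially the same route as the paper: both arguments fix a maximum-degree vertex $v$, use Theorems \ref{thm_len_1_chords} and \ref{thm_len_2_chords} to force the geodesic between two second-neighbours with cyclically distant gateways to pass through two vertices of $N_3(v)$, build the resulting $7$-cycle through $v$, and contradict Remark \ref{rem:cycle_sep} by exhibiting an undominated second-neighbour on each side. Your final step (each $N_3(v)$-vertex has a unique $N_2(v)$-neighbour) is just a repackaging of the paper's closing observation that any further adjacency to the bridge would create a forbidden $2$-chord.
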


\begin{proof}
	Assume to the contrary that $G$ is a pentagulation of girth 5, diameter 3 and maximum degree $\Delta \geq 8$.
	Let $v$ be a vertex of $G$ with maximum degree, and label the neighbors $u_1$, $u_2$, \dots, $u_{\Delta}$ of $v$ such that each path $u_i, v, u_{i+1}$ lies on the boundary of a face (subscripts taken mod $\Delta$).
	By Lemma \ref{lem_nhood_rules}, for each $i$ in $\{1,2,\dots, \Delta \}$, there is a vertex $w_i$ in $N(u_i) \cap N_2(v)$. 
	Note that each vertex $w_i$ is not adjacent to $u_j$ for any $j\neq i$, and $d(w_0, w_4) \leq 3$.
	
	We claim that any $w_0-w_4$ geodesic $Q$ is a 3-chord across $v$, i.e., the path $Q$ is of the form $w_0, a, b, w_4$, where $a$ and $b$ are vertices of $N_3(v)$.
	By Theorem \ref{thm_len_1_chords}, there are no 1-chords across $v$, so $w_0$ and $w_4$ are not adjacent.
	Similarly, there are no 2-chords across $v$ per Theorem \ref{thm_len_2_chords}, so $Q$ is not of the form $w_0, c, w_4$, where $c$ is some vertex of $N_3(v)$.
	The vertex $v$ is not in $Q$, since $Q$ has length at most 3 and $d(v, x_0) = d(v, x_4) = 2$.
	The path $Q$ does not contain any vertex of $N(v)$: 
	If $Q$ contains a vertex $u_j$ of $N(v)$, and $Q$ had length 2, then $Q$ is of the form $Q: w_0, u_j, w_4$, which is impossible.
	If $Q$ contains $u_j$ and has length 3, it is either of the form $w_0, u_j, x, w_4$ or $w_0, x, u_j, w_4$, where $x$ is some vertex of $N_2(v)$.
	But then either $xw_4$ or $w_0x$ is a 1-chord across $v$, which is impossible, so $V(Q) \cap N(v) = \emptyset$.
	To complete the proof of the claim, it suffices to show that $V(Q) \cap N_2(v) = \{w_0, w_4\}$.
	Assume to the contrary that there is a vertex $x$ of $Q$, that is not $w_0$ or $w_4$, in $N_2(v)$.
	If $Q$ has length 2, then it is of the form $w_0, x, w_4$.
	Since there are no 1-chords across $v$, $x$ is adjacent to $u_1$ or $u_{\Delta - 1}$, so $xw_4$ is a 1-chord across $v$, a contradiction.
	If $Q$ has length 3, then it is either $w_0, x, y, w_4$ or $w_0, y, x, w_4$, where $y$ is a vertex of $N_2(v)$ ($y$ is not in $N_3(v)$, since there are no 2-chords across $v$).
	By symmetry, we may assume without loss of generality that $Q: w_0, x, y, w_4$.
	Since there are no 1-chords across $v$, $x$ is a neighbor of $u_1$ or $u_{\Delta - 1}$, and $y$ is a neighbor of $u_3$ or $u_5$. 
	In all possible cases, $xy$ is a 1-chord across $v$, which proves the claim.
	
	The cycle $C_Q: w_0, a, b, w_4, u_4, v, u_0$ under $Q: w_0, a, b, w_4$ is a separating cycle that dominates either its interior or exterior. 
	Thus either $w_2$ or $w_6$ is adjacent to a vertex of $C_Q$.
	Suppose $w_2$ is adjacent to a vertex of $C_Q$ (the proof for $w_6$ is identical).
	As $G$ has girth 5, $w_2$ is not adjacent to any of $u_0$, $v$ or $u_4$.
	Because $G$ contains no 1-chords across $v$, $w_2$ is not adjacent to either $w_0$ or $w_4$.
	Thus $w_2$ is adjacent to $a$ or $b$.
	If $w_2$ is adjacent to $a$, then $w_2, a, w_0$ is a 2-chord across $v$, and if $w_2$ is adjacent to $b$, then $w_2, b, w_4$ is a 2-chord.
	In either case we obtain a contradiction, completing the proof.
\end{proof}

The main result follows immediately from Corollary \ref{cor:5_3_notriangle}, Theorem \ref{thm_girth5_DNE} and Theorem \ref{thm:5_one4cycle}.

\begin{thm}
	Let $G$ be a pentagulation of diameter 3, order $n$ and maximum degree $\Delta \geq 8$.
	The order of $G$ satisfies $n \leq 3\Delta - 1$.
	\label{thm_final_ddp_pentag}
\end{thm}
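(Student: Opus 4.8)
The plan is to reduce the statement to a short dichotomy built entirely on the three cited results, so that no new structural analysis is required. First I would observe that, by Corollary \ref{cor:5_3_notriangle}, the diameter-$3$ pentagulation $G$ contains no $3$-cycle; since $G$ is a pentagulation its shortest cycle is the boundary of some face, necessarily a $5$-cycle, so the girth of $G$ is at least $4$. This leaves exactly two possibilities, according to whether or not $G$ contains a $4$-cycle, and I would treat them separately.

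If $G$ contains a $4$-cycle, then because $\Delta \geq 8$ the bound $n \leq 3\Delta - 1$ is immediate from Theorem \ref{thm:5_one4cycle}, which already absorbs both the regime of a dominating or dislocated $4$-cycle (via Theorem \ref{thm:5_two4cycles}) and the regime of the lonely $4$-cycle. Nothing further is needed in this branch.

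If $G$ contains no $4$-cycle, then, combined with the absence of $3$-cycles, this forces the girth of $G$ to be exactly $5$: there are no cycles of length $3$ or $4$, while any face boundary supplies a cycle of length $5$. But then $G$ would be a pentagulation of diameter $3$, girth $5$, and maximum degree $\Delta \geq 8$, which is precisely the object Theorem \ref{thm_girth5_DNE} forbids. Hence this branch is vacuous, and every diameter-$3$ pentagulation with $\Delta \geq 8$ must fall into the first branch, giving $n \leq 3\Delta - 1$.

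The hard part is not this assembling step but the results feeding into it, which are already discharged in the excerpt. The genuine obstacle is Theorem \ref{thm_girth5_DNE}, whose proof rests on the delicate $1$-chord and $2$-chord analyses of Theorems \ref{thm_len_1_chords} and \ref{thm_len_2_chords}. In the final write-up I would take care only to confirm that the girth-$5$ case is genuinely captured — no $3$- or $4$-cycles, yet a $5$-cycle present — so that Theorem \ref{thm_girth5_DNE} applies verbatim, and that the hypothesis $\Delta \geq 8$ is available in both branches, as each cited theorem requires it.
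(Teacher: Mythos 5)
Your proposal is correct and matches the paper's own argument exactly: the paper derives Theorem \ref{thm_final_ddp_pentag} immediately from Corollary \ref{cor:5_3_notriangle} (no triangles), Theorem \ref{thm:5_one4cycle} (the case with a $4$-cycle), and Theorem \ref{thm_girth5_DNE} (the girth-$5$ case is vacuous). The dichotomy on the presence of a $4$-cycle and the check that both branches have $\Delta \geq 8$ available are precisely what is needed.
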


The bound in Theorem \ref{thm_final_ddp_pentag} is sharp for odd $\Delta$.
Consider the graph $\mathcal{H}$ in Figure \ref{fig:5_HI}.
We create a graph $G(\Delta)$ of maximum degree $\Delta = 2k+1$ from $\mathcal{H}$ as follows:
replace each white-vertex path of length 3 by a collection of internally disjoint paths: $k$ paths of length 3 and $k-1$ paths of length 2 (so $\mathcal{H}$ itself is $G(3)$).
It's easy to check that $G(\Delta)$ can be embedded such that each face is bounded by a 5-cycle, and that it has diameter 3, maximum degree $\Delta$ and $n = 3\Delta - 1$ vertices.

\section{Conclusion}
\label{sec:conclusion}

Theorem \ref{thm_final_ddp_pentag} and the sharpness example below it largely solve the degree-diameter problem for diameter 3 pentagulations. 
Between Theorem \ref{thm_final_ddp_pentag} and the results of \cite{ddpmpbg_dalfo_16, pglf_dupreez_21, mpgd2_seyffarth_89}, the degree-diameter problem has been solved exactly for all plane graphs of diameter 3 in which all faces are bounded by cycles of the same length.
A rough summary of the upper bounds is given in Table \ref{tab:summary}.
\renewcommand{\arraystretch}{1.3}

\begin{table}[ht]
	\centering
	\begin{tabular}{|c|c|c|c|c|c|}
		\hline
		& $\rho = 3$                  & $\rho = 4$              & $\rho = 5$                  & $\rho = 6$    & $\rho = 7$ \\ \hline
		$d=2$ & $\frac{3}{2}\Delta + 1$$^*$ & $\Delta + 2$            & 5                           & ---           & ---        \\ \hline
		$d=3$ & unknown                     & $3\Delta - 1$$^\dagger$ & $3\Delta - 1$$^*$$^\dagger$ & $2\Delta + 2$ & 7          \\ \hline
	\end{tabular}
	\caption{Table of maximum orders $n(\Delta, d)$ among plane graphs in which each face is bounded by a cycle of length $\rho$. Bounds with an asterisk $*$ are sharp for $\Delta$ odd, others are always sharp. Bounds with a dagger $\dagger$ are sharp only for $\Delta \geq 8$.}
	\label{tab:summary}
\end{table}

For large diameter, getting exact bounds is both difficult and tedious.
The last likely tractable exact bound still unknown is the bound for diameter 3 triangulations ($d = \rho = 3$). 
We end with some further problems:
\begin{itemize}[topsep=-\parskip]
	\item What is the maximum order of a diameter 3 triangulation?
	\item Let $\mu$ denote the size of the smallest face of a plane graph. What is the smallest function $\mu(d)$ such that every plane graph of diameter $d$ and smallest face size $\mu(d)$ has order $\mathcal{O}(\Delta)$?
	\item Find bounds on $n(\Delta, d)$ in plane graphs where every face has the same size $\rho$, or where every face has at least minimum size $\mu$.
\end{itemize}

\section*{Acknowledgments}
This research was supported by the South African NRF, Grant number 120104.
Thank-you to David Erwin for valuable discussion.

\bibliographystyle{plain}
\bibliography{ddp5}{}

\end{document}